\documentclass[12pt]{amsart}

\usepackage[hmargin=2.3cm,bmargin=2.3cm,tmargin=2.9cm]{geometry}
\usepackage[usenames]{color}

\usepackage{amsmath,amssymb,amsthm,amsfonts,enumerate,url,mathbbol,mathrsfs}
\usepackage{esint,tikz,graphicx,pifont,float}
\usepackage{hyperref}
\usepackage{cancel}

\usepackage[normalem]{ulem}
\usepackage[utf8]{inputenc}

\hyphenation{equi-par-ti-ti-on}

\theoremstyle{plain}
\newtheorem{theorem}{Theorem}[section]
\newtheorem{lemma}[theorem]{Lemma}
\newtheorem{proposition}[theorem]{Proposition}
\newtheorem{corollary}[theorem]{Corollary}

\theoremstyle{definition}
\newtheorem{definition}[theorem]{Definition}
\newtheorem{example}[theorem]{Example}
\newtheorem{conjecture}[theorem]{Conjecture}

\newtheorem{assumption}[theorem]{Assumption}
\newtheorem{remark}[theorem]{Remark}

\numberwithin{equation}{section}
\numberwithin{figure}{section}

\DeclareMathOperator{\interior}{int}
\DeclareMathOperator{\dist}{dist}
\DeclareMathOperator{\supp}{supp}

\newcommand{\R}{\mathbb{R}}
\newcommand{\N}{\mathbb{N}}

\newcommand{\Graph}{\mathcal{G}} 
\newcommand{\NewGraph}{\widetilde{\mathcal{G}}} 
\newcommand{\HGraph}{\mathcal{H}} 
\newcommand{\NewHGraph}{\widetilde{\mathcal{H}}} 
\newcommand{\EdgeSet}{\mathcal{E}} 
\newcommand{\cardE}{M} 
\newcommand{\VertexSet}{\mathcal{V}} 
\newcommand{\cardV}{N} 
\newcommand{\DG}{\mathsf{G}} 
\newcommand{\DE}{\mathsf{E}} 
\newcommand{\DV}{\mathsf{V}} 
\newcommand{\De}{\mathsf{e}} 

\newcommand{\mG}{\mathsf{G}}

\newcommand{\Tree}{{\mathcal T}} 

\newcommand{\parti}{{\mathcal P}} 
\newcommand{\nminenergy}{{\Xi}^N} 
\newcommand{\dminenergy}{{\Xi}^D} 



\newcommand{\cutset}{\mathcal{C}} 
\newcommand{\colourset}{\mathfrak{T}} 



\def\:{\thinspace:\thinspace}


\title{A theory of spectral partitions of metric graphs} 

\subjclass[2010]{34B45, 35P15, 81Q35}

\keywords{Quantum graphs, Laplace operators, eigenvalues, eigenfunctions, spectral minimal partitions}

\author[J.~B.~Kennedy]{James B.~Kennedy}
\author[P.~Kurasov]{Pavel Kurasov}
\author[C.~L{\'e}na]{Corentin L{\'e}na}
\author[D.~Mugnolo]{Delio Mugnolo}

\address{James B.~Kennedy, Grupo de F\'isica Matem\'atica, Faculdade de Ci\^encias, Universidade de Lisboa, Campo Grande, Edif\'icio C6, P-1749-016 Lisboa, Portugal}
\email{jbkennedy@fc.ul.pt}

\address{Pavel Kurasov, Department of Mathematics, Stockholm University, SE-106 91 Stockholm, Sweden}
\email{kurasov@math.su.se}

\address{Corentin L{\'e}na, Department of Mathematics, Stockholm University, SE-106 91 Stockholm, Sweden}
\email{corentin@math.su.se}

\address{Delio Mugnolo, Lehrgebiet Analysis, Fakult\"at Mathematik und Informatik, Fern\-Universit\"at in Hagen, D-58084 Hagen, Germany}
\email{delio.mugnolo@fernuni-hagen.de}


\thanks{The authors were partially supported by the Center for
  Interdisciplinary Research (ZiF) in Bielefeld within the framework of
  the cooperation group on ``Discrete and continuous models in the
  theory of networks''. 
  All the authors would like to acknowledge networking support by the COST Action CA18232. 
  The work of J.B.K.~was supported by the 
  Funda\c{c}\~ao para a Ci\^encia e a Tecnologia, Portugal, 
  via the program ``Investigador FCT'', reference IF/01461/2015, and 
  via project PTDC/MAT-CAL/4334/2014. P.K. was also supported by 
  the Swedish Research Council grant D0497301.
  C.L. was partially supported by the Funda\c{c}\~ao para a Ci\^encia e a Tecnologia, Portugal (under the project OPTFORMA, IF/00177/2013) and the Swedish Research Council (under the grant D0497301). D.M. was also supported by the Deutsche Forschungsgemeinschaft (Grant 397230547).
  The authors would like to thank Matthias Hofmann (Lisbon) and Marvin Pl\"umer (Hagen) for a number of 
  helpful comments and suggestions, including M.H. for the idea for 
  Example~\ref{ex:matthias-exhaustive-example}.}

\begin{document}

\begin{abstract}
We introduce an abstract framework for the study of clustering in metric graphs: after suitably metrising the space of graph partitions, we restrict Laplacians to the clusters thus arising and use their spectral gaps to define several notions of partition energies; this is the graph counterpart of the well-known theory of spectral minimal partitions on planar domains and includes the setting in [Band \textit{et al}, Comm.\ Math.\ Phys.\ \textbf{311} (2012), 815--838] as a special case. We focus on the existence of optimisers for a large class of functionals defined on such partitions, but also study their qualitative properties, including stability, regularity, and parameter dependence. We also discuss in detail their interplay with the theory of nodal partitions.
Unlike in the case of domains, the one-dimensional setting of metric graphs allows for explicit computation and analytic -- rather than numerical -- results. Not only do we recover the main assertions in the theory of spectral minimal partitions on domains, as studied in [Conti \textit{et al}, Calc.\ Var.\ \textbf{22} (2005), 45--72; Helffer \textit{et al}, Ann.\ Inst.\ Henri Poincar\'e Anal.\ Non Lin\'eaire \textbf{26} (2009), 101--138], but we can also generalise some of them and answer (the graph counterparts of) a few open questions.
\end{abstract}

\maketitle

\tableofcontents

\section{Introduction} 
\label{sec:intro}

How to cut a connected object -- be it a manifold, a domain, or a graph -- into $k$ mutually disjoint, connected parts?
Partitioning of objects is a natural topic in geometry and has important consequences in applied sciences, like data analysis or image segmentation. This article is devoted to the issue of graph partitioning via properties of eigenvalues.

Consider a connected, compact metric graph~\cite{BerKuc13,Mug14,Kur20}: the lowest eigenvalue of the Laplacian with natural vertex conditions is 0, with the constant functions as associated eigenfunctions. Thus, all further eigenfunctions are orthogonal to the constants and hence sign-changing: it is conceivable to use the support of their positive and negative parts as a natural splitting of the graph, similar to the classical Cheeger approach, and indeed this has been discussed by several authors~\cite{Nic87,Pos12,Kur13,DPR16_clustering,KenMug16}. 

More generally, one can consider the eigenfunctions associated with the $k$-th eigenvalue and, inspired by Sturm's Oscillation Theorem (or in higher dimensions by Courant's Nodal Domain Theorem), hope that they deliver a reasonable splitting into $k$ subsets, which we then interpret as \textit{clusters}. However, {\it a priori} there is no reason for this splitting to result in $k$ pieces: the zeros of the  eigenfunction may divide the graph into fewer -- or more -- than $k$ pieces, or nodal domains. (Similar issues have been observed in the case of discrete graphs by Davies et al.\ in~\cite{DavGlaLey01}, and recently extended to the case of general quadratic forms generating positive semigroups in~\cite{KelSch19}.)
Indeed, accurate estimates on the number $\nu_k$ of nodal domains of a $k$-th eigenfunction involve the topology of the metric graph (via its first Betti number) and have been proved in~\cite{GnuSmiWeb04,Ber08,Ban14}, see~\cite[\S~5.2]{BerKuc13} for an overview.

A different approach to the task of splitting a \emph{domain} $\Omega\subset\R^2$ into precisely $k$ connected pieces was introduced in~\cite{ConTerVer05} and amounts to seeking those $k$-partitions $(\Omega_i)_{1\le i\le k}$ of $\Omega$ such that, upon imposing Dirichlet conditions at the boundary of each $\Omega_i$ and considering the lowest eigenvalue $\lambda_1(\Omega_i)$ of the Dirichlet Laplacian on $\Omega_i$, a certain function -- typically the maximum -- of the vector $(\lambda_1(\Omega_i))_{1\le i\le k}$ is minimised. The attention devoted to such \emph{spectral minimal partitions} was greatly boosted when a connection to nodal domains \`a la Courant was established in \cite{HelHofTer09}. In another direction, related to Cheeger partitions and free discontinuity problems, the authors of \cite{BuFrGi18} proved existence and some regularity for minimal partitions associated with the Robin Laplacian.

The aim of the present article is to develop a comprehensive theory of spectral partitions on metric graphs. In the case of planar domains, whenever defining a spectral partition the only issue is to make sure that each subdomain can be associated with a Dirichlet eigenvalue; hence, it is sufficient and also natural to consider those partitionings consisting of open, mutually disjoint sets $\Omega_i$ whose closures are contained in the closure of $\Omega$; the boundaries of nodal domains turn out to be smooth curves -- along which Dirichlet conditions are imposed -- meeting at a finite number of points. But when it comes to metric graphs, there is no such natural definition of partitioning. A first major issue is how to proceed when cutting through vertices, more specifically: what transformations of the connectivity of a piece, or \emph{cluster}, $\Graph_i$ should be allowed for, should $\Graph_i$ contain a vertex that (as a vertex in the given metric graph $\Graph$) lies at the boundary between it and a further cluster $\Graph_j$. While the few existing works that have considered metric graph partitioning, in particular~\cite{BanBerRaz12}, avoided this problem by only allowing for cuts through the interiors of edges, or equivalently vertices of degree 2, we will define several ``partitioning'' rules of increasing generality, based on which types of cuts may be allowed: the most important ones give rise to what we call \textit{rigid} and \textit{loose} partitions. This requires a whole new theory, which we illustrate in Section~\ref{sec:part} with the help of a few elementary examples the most ubiquitous of which is a lasso graph, demonstrating that loose and rigid partitions may be considered natural objects.

The main existence result in the theory of spectral minimal partitions on domains was obtained in~\cite[\S~2]{ConTerVer05} by variational methods; a similar -- but technically much more involved -- approach was recently used in~\cite{BuFrGi18} to deal with the eigenvalues of Robin Laplacians. The natural constraints studied there are however too rough for our setting, and we have to proceed differently.
Given a compact metric graph $\mathcal G$, we introduce in Section~\ref{sec:abstract-existence} a Polish metric space $\mathfrak P(\mathcal G)$ of its partitions, study general lower semi-continuous functionals with respect to the induced topology, and discuss qualitative properties of their minima. Our abstract approach pays off: among other things, loose and rigid partitions are indeed natural simply because -- unlike many other, ostensibly natural, classes -- they define closed subsets of the metric space $\mathfrak P(\mathcal G)$ and are hence particularly suitable for minimisation purposes.

This theoretical toolbox is then applied in Section~\ref{sec:ex-min} to several classes of optimal partition problems, including those appearing in earlier studies on nodal partitions, like~\cite{BanBerRaz12}. By checking that the relevant energy functionals actually satisfy the (rather mild) sufficient conditions introduced in Section~\ref{sec:exis}, we can finally prove existence of optimal partitions for Laplacians with either Dirichlet or natural vertex conditions. (Needless to say, minimising among rigid or loose partitions will generally yield different optima, an issue we will touch upon in Section~\ref{sec:lndmnd}.) Our investigations show that both optimisation probelms are well-motivated. Dirichlet conditions at the cut points -- the classical choice in the earlier literature, both on domains and metric graphs -- are naturally related to the issue of nodal domains, a connection that led to the very birth of this field in~\cite{ConTerVer05}. Imposing natural conditions at the cut points, on the other hand, will be shown to lead to well-posed spectral problems whose minimizing partitions consist of clusters that are \textit{connected} in a more straightforward, apparent sense.
These results can be further generalised considering graph counterparts of the energy functionals first introduced in~\cite{ConTerVer05} -- essentially, the mean value of $p$-th powers of spectral gaps of a suitable Laplacian, defined clusterwise; this amounts to studying minima of functionals $\Lambda^D_p$ and $\Lambda^N_p$, $p\in (0,\infty]$, defined on the partition space $\mathfrak P(\mathcal G)$. 
Similar ideas may also be developed if more general conditions -- say, $\delta$-couplings -- should imposed at the cut points, analogously to what was done in \cite{BuFrGi18} for the case of domains, although we will not develop such ideas here. 
Neumann domains, a Neumann-type analogue of nodal domains, have been studied recently on quantum graphs~\cite{AloBan19,AlBaBeEg18}, and it is natural to ask whether there is a similar link between these and Neumann-type partitions as there is between Dirichlet partitions and nodal domains. We also leave this question to future work.
Also, one could in principle study other spectral quantities, for example by considering higher eigenvalues. 
We leave these as open problems to be discussed in later investigations.

Beginning with~\cite{HelHofTer09}, much research has been devoted not just to the issue of regularity of spectral minimal partitions of planar domains, but also to the shape of the partition elements. In view of the Faber--Krahn inequality, all subdomains of $\Omega$ would try to get as close as possible to a disc in order to minimise the lowest Dirichlet eigenvalue. A spectral minimal $k$-partition is such that all these subdomains can, roughly speaking, find an optimal compromise: this is conjectured to lead asymptotically (for large $k$) to a hexagonal tiling of $\Omega$, see~\cite[\S~10.9]{BNHe17}.
What do optimal \textit{graph} partitions with respect to the energy functionals $\Lambda^D_p$ and $\Lambda^N_p$ look like? Section~\ref{sec:nodal} is devoted to this question.
Some properties of spectral minimal partitions for the Dirichlet Laplacian have been already discussed by Band \textit{et al} \cite{BanBerRaz12} \emph{under the assumption that they actually exist}, and provided there is a ground state that does not vanish at any vertex. We can generalise some of their findings by dropping this structural assumption, which in fact typically fails if the graph is highly symmetric. 
While one simple optimisation problem does actually deliver a $2$-partition that agrees precisely with the nodal domains of a second eigenfunction of the Laplacian with natural transmission conditions, an exact characterisation of $k$-partitions minimising any reasonable energy functional for $k\ge 3$ is still missing. This problem is not trivial in the case of spectral partitioning of a domain $\Omega$, either. In the case of general metric graphs the nice qualitative description provided in~\cite{HelHofTer09} for the domain case does not hold anymore; however, there is some evidence that, for $k$ large enough, any optimal partition will consist of a collection of stars (which in view of their minimising properties~\cite{Fri05} can be regarded as graph analogues of disks, or hexagons). We content ourselves with showing that the interplay between Dirichlet minimal partitions and nodal domains does hold for trees, at least; we briefly summarise similarities and differences between the qualitative properties of spectral minimal partitions on planar domains and metric graphs in Section~\ref{sec:domgra}.

To show the flexibility of our approach, analogous spectral partitions that \textit{maximise} two different energy functionals $\Xi^D_p$ and $\Xi^N_p$  are discussed in Section~\ref{sec:ex-max} by showing that they also satisfy our basic topological assumptions.

In Section~\ref{sec:examples} we turn to the issue of the dependence of minimisers of $\Lambda^D_p$ and $\Lambda^N_p$ on $p$, and also on the edge lengths of the underlying metric graph $\Graph$, for a fixed topology. Here the simplicity of the 1-dimensional setting of metric graphs is highly advantageous: while such a $p$-dependence is only numerically observed in the case of domains, we study in detail spectral minimal partitions of a $3$-star and show their dependence of $p$ in an analytic way.

Finally, in Section~\ref{sec:comparison} we present examples comparing the different optimisation problems (for $\Lambda^D_p$, $\Lambda^N_p$, $\Xi^D_p$ and $\Xi^N_p$) and the corresponding optimal energies on a fixed graph: these different problems tend, naturally, to split the graph in different ways. Here we present a couple of heuristic conjectures based on our examples; in future work we intend to return to the question of how these different problems behave in a more rigorous and complete way.

To enhance the readability of the article and for ease of reference, the following table collects a number of new notions and symbols used throughout the paper.

\begin{center}
\begin{tabular}{c|l|l}
Symbol & Description/name & See\\
\hline
&\\[-8pt]
$\Graph$, $\NewGraph$, $\Graph'$ & metric graph, ur-graph & Sec.~\ref{subsec:def}, Def.~\ref{def:urg}\\
& canonical representative & Def.~\ref{def:urg}\\
$\DG$ & underlying discrete (ur-)graph & Def.~\ref{def:discrete-graph}, Def.~\ref{def:urg}\\
$\lambda_1(\Graph)$, $\lambda_1(\Graph;\VertexSet_D)$ & first Dirichlet eigenvalue & Eq.~\eqref{eq:lambda-1}\\
$\mu_2(\Graph)$ & first nontrivial natural eigenvalue & Eq.~\eqref{eq:mu-2}\\
& cut of a graph & Def.~\ref{def:graph-cut-new1}\\
& (nontrivial) cut through a vertex & Def.~\ref{def:graph-cut-new2}\\
$\parti = \{\Graph_1,\ldots,\Graph_k\}$ & ($k$-)partition & Def.~\ref{def:partition}\\
$\Graph_i$ & cluster of a partition & Def.~\ref{def:partition}\\
$\Omega_i$ & cluster support corresponding to $\Graph_i$ & Def.~\ref{def:cluster-support}\\
$\Omega$ & partition support & Def.~\ref{def:cluster-support}\\
$\mathcal{C}(\parti)$ & cut set (cut points) of $\parti$ & Def.~\ref{def:separation-set}\\
$\VertexSet_D(\Graph_i)$ & set of cut points in $\Graph_i$ & Def.~\ref{def:separation-set}\\
$\partial\parti$ & separation set (points) of $\parti$ & Def.~\ref{def:separation-set}\\
& neighbour, neighbouring cluster & Def.~\ref{def:neighbours}\\
& loose, rigid, faithful, internally & \\
& \qquad connected, proper partition & Def.~\ref{def:classification}\\
$\mathfrak{P}$, $\mathfrak{P}_k$ & set of loose ($k$-)partitions & Eq.~\eqref{eq:loose-and-rigid}\\
$\mathfrak{R}$, $\mathfrak{R}_k$ & set of rigid ($k$-)partitions & Eq.~\eqref{eq:loose-and-rigid}\\
$\rho_{\Omega_i}$ & set of possible rigid clusters for $\Omega_i$ & Eq.~\eqref{eq:rigid-cluster-set}\\
$\mathcal C$ & cut pattern, similar partition & Def.~\ref{defi:simcut}\\
$\colourset$, $\colourset_{\cutset}$ & primitive partition (associated with cut pattern $\mathcal C$) & Def.~\ref{def:primitive-partition}\\
$\Gamma_{\DG}$ & set of ur-graphs for $\DG$ & Sec.~\ref{subsec:part-convergence}\\
$d_{\Gamma_{\DG}}(\Graph,\NewGraph)$ & distance between metric graphs\\
& \qquad with same discrete ur-graph & Sec.~\ref{subsec:part-convergence}\\
$[v]$ & equivalence class of vertices converging to $v$ & Sec.~\ref{subsec:part-convergence}\\
& (strongly) lower semi-continuous & Def.~\ref{def:lsc}\\
$\denergy[p](\parti)$, $\nenergy[p] (\parti)$ & Dirichlet, natural partition energy & Eq.~\eqref{eq:nenergy}, Eq.~\eqref{eq:denergy}\\
$\doptenergy[k,p](\Graph)$, $\noptenergy[k,p](\Graph)$ & rigid Dirichlet, natural minimal energy & Eq.~\eqref{eq:minimal-energies}\\
$\doptenergyloose[k,p](\Graph)$, $\noptenergyloose[k,p](\Graph)$ & loose Dirichlet, natural minimal energy & Eq.~\eqref{eq:minimal-energies}\\
& Dirichlet, natural ($k$-)equipartition & Def.~\ref{def:equipartition}\\
& nodal, generalised nodal, bipartite partition & Def.~\ref{def:nodal-partition}, Def.~\ref{def:generalised-nodal}, Def.~\ref{def:proximity}\\
$\nu$, $\nu(\psi)$ & number of nodal domains (of $\psi$) & Prop.~\ref{prop:weak-courant}\\
$\dminenergy(\parti)$, $\nminenergy(\parti)$ & min.\ Dirichlet, natural partition energy & Eq.~\eqref{eq:dminenergy}, Eq.~\eqref{eq:nminenergy}\\
$\dmaxmin[k](\Graph)$, $\nmaxmin[k](\Graph)$ & Dirichlet, natural max-min energy & Eq.~\eqref{max-min-energies}\\
\end{tabular}
\end{center}

\section{Graphs and partitions}
\label{sec:partitions}

\subsection{Basic definitions}
\label{subsec:def}

We start with the metric graphs we shall be considering; it will be necessary to consider the formalism we will be using in some detail, which mirrors the one used in \cite{KuSt02}. By a metric graph $\Graph = (\VertexSet,\EdgeSet)$ we understand a pair consisting of a \emph{vertex set} $\VertexSet = \VertexSet (\Graph) = \{v_1,\ldots,v_{\cardV}\}$ and an \emph{edge set} $\EdgeSet = \EdgeSet (\Graph) = \{e_1,\ldots,e_{\cardE}\}$; throughout the paper we will always assume these to be finite sets.

Each edge $e_m = e_m (\Graph)$, $m=1,\ldots,M$, is identified with a compact interval $[x_{2m-1},x_{2m}] \subset \R$ of length $|e_m| = x_{2m}-x_{2m-1}$ belonging to a separate copy of $\R$. Each edge should connect two vertices: formally, we introduce an equivalence relation on the set of endpoints $\{x_j\}_{j=1}^{2{\cardE}}$, thus partitioning it into nonempty, mutually disjoint sets
\begin{equation} \label{eqrel}
	\{x_j\}_{j=1}^{2{\cardE}} = V_1 \cup \ldots \cup V_{\cardV} .
\end{equation}
The vertex $v_n \in \VertexSet (\Graph)$ is identified with the set $V_n = V_n (\Graph)$. 

If $x_{2m-1},x_{2m} \in V_{m_1} \cup V_{m_2}$ for some $m$, then we write $e_m \equiv v_{m_1}v_{m_2}$ and  in this case we say that $e_m$ is \textit{incident} with the vertices $v_{m_1},v_{m_2}$. We refer to the cardinality of $V_n$ as the \emph{degree} of $v_n$, written $\deg v_n$.

Vertices of degree two are allowed, but are called \emph{dummy vertices}; these can be introduced and removed at will without altering any of the properties of the graph (in particular the spectral quantities) in which we will be interested, as we shall discuss below.
\emph{Loops}, that is, edges incident with only one vertex, and multiple edges, that is, distinct edges incident with the same pair of vertices, are also allowed.

Any metric graph $\Graph = (\VertexSet,\EdgeSet)$ will be identified with a \emph{set} of equivalence classes of points
by extending the equivalence relation  \eqref{eqrel} to all points in the interior of each edge (interval); this will be done by associating with any $x \in \interior e_m = (x_{2m-1},x_{2m})$
equivalence class $\{x\}$ formed by one element. 
With this in mind, in future we will take points $x \in \Graph$, and in particular regard the vertices $v_n$ as points in the set, without further comment. \emph{However}, for some purposes it is important to remember that $v_n$ and $V_n$ are different objects; indeed, our theory relies essentially upon the possibility to \textit{cut through a vertex $v_n$} by subdividing $V_n$ into two or more nonempty, mutually disjoint subsets.

We will write $|\Graph| = \sum_{e \in \EdgeSet} |e| = \sum_{m=1}^M |e_m|$ for the finite total length of the graph, the sum of the lengths of the edges. We refer to the monographs \cite{BerKuc13,Mug14} for more information on metric graphs in general.

A metric graph has both an underlying discrete structure and a notion of distance defined on it, and both will be important to us. The interested reader may take~\cite{Die05} as a standard reference for classical notions and results in combinatorial graph theory (e.g., regarding \textit{subdivisions} as will appear).

\begin{definition}
\label{def:discrete-graph}
Given a metric graph $\Graph = (\VertexSet,\EdgeSet)$ the \emph{underlying discrete graph} (or \emph{associated discrete graph}) is the discrete graph $\DG = (\DV, \DE)$ for which there are bijections $\Phi: \DV \to \VertexSet$ and $\Psi: \DE \to \EdgeSet$ such that for all $\De \in \DE$ and all $e \in \EdgeSet$, $\Psi(\De)=e$ implies the vertices incident with $\De$ in $\DG$ are mapped by $\Phi$ to the vertices incident with $e$ in $\Graph$. If $\Psi(\De)=e$, then we say that $\De$ and $e$ \emph{correspond} to each other.
\end{definition}

We next recall how a canonical distance is introduced on metric graphs. Given $x,y\in \mathcal G$, we take $\dist(x,y)$ to be the minimal length among all paths connecting $x$ with $y$, see~\cite[Def.~3.14]{Mug14} for details. If the graph is not connected then we set the distance between points belonging to different connected components to be infinity.
Throughout this paper we always consider on $\Graph$ the topology induced by this distance: in particular, given a subset $\Omega$ of $\Graph$ we can consider its interior
\[
	\interior \Omega :=\left\{ x \in \Omega : \{ y \in \Graph: \dist (x,y) < \varepsilon \} \subset \Omega \text{ for some } \varepsilon > 0 \right\},
\]
and its boundary
\begin{equation}
\label{eq:elt-boundary-set}
	\partial \Omega := \overline{\Omega}\setminus\interior\Omega.
\end{equation}
(Regarding terminology, we consider $\partial \Omega$ to consist of those points that \textit{separate} $\Omega$ from $\Graph\setminus \Omega$.) Equipped with the distance function, each metric graph is a metric space.

Summarising, we shall assume throughout that:

\begin{assumption}
\label{assumption}
The metric graph $\Graph$ is finite compact and  connected, {\it i.e.} the vertex set is finite, 
the edge set is finite, each edge has finite length and there is a continuous path connecting any
two points on the graph. 
\end{assumption}

The only assumption we will occasionally have cause to drop is of connectedness, but in such cases we will always state this explicitly.

\emph{Isometric isomorphisms}, i.e., bijective mappings between metric graphs (even those with different edge sets!) that preserve distances, define an equivalence relation $\approx$ on the class of all metric graphs which satisfy Assumption~\ref{assumption}. If two graphs are isometrically isomorphic to each other, then we are in one or both of the following situations:
\begin{enumerate}
\item the edge and vertex sets of one are a permutation (i.e. relabelling) of the edge and vertex sets of the other;
\item the graphs differ by the presence of dummy vertices.
\end{enumerate}
See also \cite[Definition~5]{KuSt02} and the discussion around it. We will need to make the following definition for technical purposes, which will be necessary for the constructions in the coming sections up to and including Section~\ref{sec:abstract-existence}.

\begin{definition}\label{def:urg}
\begin{enumerate}
\item We call any equivalence class of metric graphs satisfying Assumption~\ref{assumption} with respect to $\approx$, an \emph{ur-graph}.
\item If $\Graph$ is an ur-graph, then its \emph{canonical representative} is the metric graph representative of $\Graph$ which has no vertices of degree two (or, if $\Graph$ is a loop, then its canonical representative is any representative with exactly one vertex of degree two).
\item We will call the underlying discrete graph of the canonical representative of an ur-graph $\Graph$ the \emph{underlying discrete ur-graph} of $\Graph$ (or \emph{discrete ur-graph associated with $\Graph$}).
\end{enumerate}
\end{definition}

In practice, we will not distinguish between different representatives of the same ur-graph; indeed, for spectral analysis different representatives of an ur-graph are indistinguishable (see Remark~\ref{rem:dummy}). We will tacitly tend to identify an ur-graph $\Graph$ with any of its representatives as convenient, most commonly (but not always) its canonical representative. We will thus also speak of ur-graphs as being compact metric spaces, and as satisfying Assumption~\ref{assumption}, etc.

There is a canonical notion of (scalar-valued) continuous functions over $\Graph$ with respect to the distance defined above, and we stress that this notion is invariant under taking different representatives of the same ur-graph.

Similarly, the Lebesgue measure, defined edgewise, induces in a canonical way a measure on $\Graph$, allowing us to define square integrable functions on $\Graph$:
\begin{displaymath}
\begin{aligned}
	L^2 (\Graph) &:= \bigoplus_{e_m \in \EdgeSet} L^2(e_m) \simeq \bigoplus_{m=1}^{\cardE} L^2 ([x_{2m-1},x_{2m}]),\\
	C(\Graph) &:= \left\{f \in \bigoplus_{e_m \in \EdgeSet} C(e_m): f(x_j)=f(x_k) =:f(v_n) \hbox{ if } x_j,x_k \in v_n\text{ for some } v_n\in \VertexSet \right\}.
\end{aligned}
\end{displaymath}

In order to define Laplacian-type operators we will require the Sobolev spaces
\begin{displaymath}
	H^1(\Graph) := \{f \in C(\Graph)\cap \bigoplus_{e_m\in\EdgeSet}  H^1(e_m)\}
\end{displaymath}
and, for a given distinguished set $\VertexSet_D \subset \VertexSet$ of vertices,
\begin{displaymath}
	H^1_0 (\Graph) \equiv H^1_0(\Graph;\VertexSet_D): = \{f \in H^1(\Graph): f(v_n)=0 \text{ for all } v_n \in \VertexSet_D\}.
\end{displaymath}
Given the sesquilinear form
\begin{equation}
\label{eq:form}
	a(f,g) = \int_\Graph f'\cdot \bar{g}'\,\textrm{d}x \simeq \sum\limits_{m=1}^{\cardE} \int_{x_{2m-1}}^{x_{2m}} f'\cdot \bar{g}'\,\textrm{d}x,\qquad f,g\in H^1(\Graph),
\end{equation}
the associated self-adjoint operator on $L^2(\Graph)$ is the Laplacian $ -\Delta = - \frac{d^2}{dx^2} $ defined on the domain of functions from $ \bigoplus_{e_m\in\EdgeSet}  H^2(e_m)
$ satisfying continuity and Kirchhoff conditions (sum of inward-pointing derivatives is zero) at every vertex. Such vertex conditions, which we will call \emph{natural}, are also known as standard, free, or sometimes Neumann--Kirchhoff conditions. The Laplacian with Dirichlet conditions on a subset $\VertexSet_D$ and natural conditions at all other vertices is the operator on $L^2(\Graph)$ which is associated with the form $a$ restricted to $H^1_0(\Graph;\VertexSet_D)$.

Due to the positivity of $a$ and the compact embedding of $H^1(\Graph)$ in $L^2(\Graph)$, the Laplacian on the connected, compact graph $\Graph$ with natural vertex conditions has a sequence of non-negative eigenvalues, which we will denote by
\begin{displaymath}
	0 = \mu_1 (\Graph) < \mu_2 (\Graph) \leq \mu_3 (\Graph) \leq \ldots \to \infty,
\end{displaymath}
repeating them according to their (finite) multiplicities; the eigenfunction corresponding to $\mu_1 = \mu_1 (\Graph)$ is just the constant function. 

We shall do likewise for the eigenvalues of the Laplacian on $\Graph$ with some Dirichlet conditions:
\begin{displaymath}
	0 < \lambda_1 (\Graph;\VertexSet_D) < \lambda_2 (\Graph;\VertexSet_D) \leq \ldots \to \infty.
\end{displaymath}
In practice we will abbreviate these to $\lambda_k (\Graph)$ or even just $\lambda_k$, $k\geq 1$, if the vertex set and the graph are clear from the context. These eigenvalues admit the usual minimax and maximin characterisations; in particular, we have
\begin{equation}
\label{eq:lambda-1}
	\lambda_1 (\Graph)= \lambda_1 (\Graph;{\mathcal V}_D) = \inf \left\{ \frac{a(f,f)}{\|f\|_{L^2(\Graph)}^2}: 0 \not\equiv f \in H^1_0 (\Graph;\mathcal V_D) \right\},
\end{equation}
while
\begin{equation}
\label{eq:mu-2}
	\mu_2 (\Graph) = \inf \left\{ \frac{a(f,f)}{\|f\|_{L^2(\Graph)}^2}: 0 \not\equiv  f \in H^1 (\Graph),\,\int_\Graph f\,\textrm{d}x=0 \right\}.
\end{equation}
In both \eqref{eq:lambda-1} and \eqref{eq:mu-2}, the infima are achieved only by the respective eigenfunctions, which are sign-changing and may be multiple in \eqref{eq:mu-2}, but are unique up to scalar multiples and  non-zero everywhere in \eqref{eq:lambda-1}.

\begin{remark}
\label{rem:dummy}
Suppose $\Graph$ and $\Graph'$ are isometrically isomorphic to each other in the sense described above. Then the respective spaces $L^2$, $C$ and $H^1$ on the two graphs are also isometrically isomorphic to each other. It follows in particular that the corresponding Laplacians with natural vertex conditions are unitarily equivalent to each other, and the respective eigenvalues are equal: $\mu_k (\Graph) = \mu_k (\Graph')$ for all $k \geq 1$. Likewise, if we fix a set $\VertexSet_D (\Graph) \subset \VertexSet (\Graph)$ of vertices of $\Graph$, and choose $\Graph'$ in such a way that the image of each point in $\VertexSet_D (\Graph)$ under the isomorphism is also a vertex of $\Graph'$, so that we may write $\VertexSet_D (\Graph) \simeq \VertexSet_D (\Graph')$, then $H^1_0 (\Graph;\VertexSet_D (\Graph))$ and $H^1_0 (\Graph'; \VertexSet_D (\Graph'))$ are also isometrically isomorphic. Thus the corresponding Dirichlet Laplacians are likewise unitarily equivalent, and $\lambda_k (\Graph; \VertexSet_D (\Graph)) = \lambda_k (\Graph'; \VertexSet_D (\Graph'))$ for all $k \geq 1$. In other words, the eigenvalues and eigenfunctions may be associated with the corresponding ur-graph; and for our purposes, within an ur-graph, i.e., an equivalence class of isometrically isomorphic graphs, we may at any time pick any representative, as convenient.
\end{remark}

Finally, we mention in passing fundamental inequalities for the eigenvalues $\lambda_1 (\Graph)$ and $\mu_2 (\Graph)$ originally due to Nicaise \cite{Nic87}, which we will require on several occasions throughout the paper.

\begin{theorem}[Nicaise' inequalities]
\label{thm:nicaise}
Let $\Graph$ be any finite, compact connected (ur-) graph. Then
\begin{equation}
\label{eq:nicaise}
	\lambda_1 (\Graph) \geq \frac{\pi^2}{4|\Graph|^2} \qquad \text{and} \qquad \mu_2 (\Graph) \geq \frac{\pi^2}{|\Graph|^2},
\end{equation}
where in the first case $\Graph$ is equipped with at least one Dirichlet vertex. Equality in either inequality implies that $\Graph$ is a path graph (interval) of length $|\Graph|$, with a Dirichlet vertex at exactly one endpoint and a natural (Neumann) condition at the other in the first case, and natural conditions at both endpoints in the second case.
\end{theorem}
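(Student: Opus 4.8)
The plan is to prove the first (Dirichlet) inequality directly by a symmetric rearrangement argument onto an interval, and then to deduce the second (Neumann) inequality from it via a nodal decomposition of the $\mu_2$-eigenfunction.

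\emph{Step 1 (first inequality).} We may assume $\Graph$ connected, with the ground state $f$ of $(\Graph;\VertexSet_D)$ strictly positive on $\Graph\setminus\VertexSet_D$ (as noted after \eqref{eq:mu-2}), normalised so that $\max_\Graph f=1$. Since $f$ restricted to each edge solves $-f''=\lambda_1 f$, each level set $\{f=t\}$ is finite, the distribution function $\mu(t):=|\{x\in\Graph:f(x)>t\}|$ is continuous and strictly decreasing from $\mu(0)=|\Graph|=:L$ to $\mu(1)=0$, and for every $t\in(0,1)$ the set $\{f=t\}$ separates a Dirichlet vertex from a point where $f$ attains its maximum, so $\#\{f=t\}\ge 1$. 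Let $f^\ast:[0,L]\to[0,1]$ be the decreasing rearrangement of $f$ and put $g(s):=f^\ast(L-s)$, a non-decreasing function on $[0,L]$ with $g(0)=0$, so $g\in H^1_0([0,L];\{0\})$. Equimeasurability gives $\int_0^L g^2=\int_\Graph f^2$, while the co-area identity $-\mu'(t)=\sum_{x:f(x)=t}|f'(x)|^{-1}$ (for a.e.\ $t$), the Cauchy--Schwarz bound
\[
\sum_{x:f(x)=t}|f'(x)|\ \ge\ \frac{(\#\{f=t\})^2}{-\mu'(t)}\ \ge\ \frac{1}{-\mu'(t)},
\]
and the identity $\int_0^L(g')^2=\int_0^1(-\mu'(t))^{-1}\,{\rm d}t$ together yield the P\'olya--Szeg\H{o} inequality $\int_0^L(g')^2\le\int_\Graph|f'|^2$. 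Combining these,
\[
\lambda_1(\Graph)=\frac{\int_\Graph|f'|^2}{\int_\Graph f^2}\ \ge\ \frac{\int_0^L(g')^2}{\int_0^L g^2}\ \ge\ \inf_{0\not\equiv h\in H^1_0([0,L];\{0\})}\frac{\int_0^L(h')^2}{\int_0^L h^2}\ =\ \frac{\pi^2}{4L^2},
\]
the last infimum being attained by $h(s)=\sin(\pi s/(2L))$.

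\emph{Step 2 (second inequality).} Let $\psi$ be an eigenfunction for $\mu_2(\Graph)$; by \eqref{eq:mu-2} it has zero mean, hence changes sign. Choose a connected component $\Graph_+$ of $\{\psi>0\}$ and a connected component $\Graph_-$ of $\{\psi<0\}$. By continuity $\psi$ vanishes on $\partial\Graph_\pm$, and at any vertex of $\Graph$ in the interior of $\Graph_\pm$ all incident edges belong to $\Graph_\pm$, so the Kirchhoff condition for $\psi$ on $\Graph$ is exactly the Kirchhoff condition for $\psi|_{\Graph_\pm}$ there; thus $\psi|_{\Graph_\pm}$ is an eigenfunction of the Laplacian on $\Graph_\pm$ with Dirichlet conditions on $\partial\Graph_\pm$ and eigenvalue $\mu_2(\Graph)$, and being of one sign it is a ground state, so $\lambda_1(\Graph_\pm)=\mu_2(\Graph)$. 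Step 1 then gives $|\Graph_\pm|\ge\pi/(2\sqrt{\mu_2(\Graph)})$, and since $\Graph_+$ and $\Graph_-$ are disjoint subsets of $\Graph$ we conclude $|\Graph|\ge|\Graph_+|+|\Graph_-|\ge\pi/\sqrt{\mu_2(\Graph)}$, i.e.\ $\mu_2(\Graph)\ge\pi^2/|\Graph|^2$.

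\emph{Equality and main obstacle.} In Step 1, equality forces equality in Cauchy--Schwarz for a.e.\ $t$, hence $\#\{f=t\}=1$ for a.e.\ $t\in(0,1)$; one then argues that this excludes interior local extrema of $f$, vertices of degree $\ge 3$, internal pendant vertices, and cycles (each of these produces a set of positive length of values $t$ with $\#\{f=t\}\ge 2$, by inspecting the sine-type behaviour of $f$ near the vertex or around the cycle), so that $\Graph$ must be an interval with $\VertexSet_D$ a single endpoint and a natural condition at the other, and $f(s)=\sin(\pi s/(2L))$. In Step 2, equality additionally forces $|\Graph_+|+|\Graph_-|=|\Graph|$, so $\{\psi\ne 0\}$ has full measure and consists of exactly two components, each of which --- by the equality case just established --- is an interval with $\psi$ vanishing at exactly one endpoint and a degree-one natural vertex of $\Graph$ at the other; gluing the two intervals at the common zero of $\psi$ shows $\Graph$ is a path with natural conditions at both ends and $\psi(s)=\cos(\pi s/|\Graph|)$. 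The genuinely delicate point throughout is making the rearrangement on a metric graph rigorous --- establishing the co-area identity for $\mu$ while handling the finitely many critical values of $f$ and the values of $f$ at vertices, and controlling the regularity of $f^\ast$ --- and, above all, extracting from the a.e.\ equality $\#\{f=t\}=1$ the precise topological conclusion that $\Graph$ is an interval; this is intuitively clear but requires a careful local analysis near vertices and around cycles.
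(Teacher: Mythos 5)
Your argument is correct in outline, but note that the paper itself does not prove Theorem~\ref{thm:nicaise}: it simply cites Nicaise's Th\'eor\`eme~3.1 for the two inequalities and Friedlander (resp.\ Kurasov--Naboko) for the rigidity statement. What you have written is essentially a self-contained reconstruction of the classical argument from those references: the Dirichlet bound via decreasing rearrangement onto an interval (the metric-graph P\'olya--Szeg\H{o} inequality, with the co-area identity and the Cauchy--Schwarz step), and the Neumann bound deduced from it by restricting a $\mu_2$-eigenfunction to one positive and one negative nodal domain, on which $\lambda_1 = \mu_2(\Graph)$ and whose supports overlap only in a finite set. So you are not taking a different route from the paper so much as supplying the proof the paper outsources; a nice feature of your write-up is that the equality analysis for both inequalities is reduced to the single statement that $\#\{f=t\}=1$ for a.e.\ $t$, and the exclusions you list (degree-$\ge 3$ vertices, interior extrema, extra pendant or Dirichlet vertices, cycles) are indeed exactly what is needed to force the path configuration.

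Two points should be tightened. First, the strict positivity and uniqueness of the Dirichlet ground state ``as noted after \eqref{eq:mu-2}'' can fail when $\VertexSet_D$ disconnects $\Graph$ (e.g.\ a path with one interior Dirichlet vertex splitting it into two equal halves, where $\lambda_1$ has multiplicity two and eigenfunctions supported on one half); your rearrangement argument does not actually need it --- replace $f$ by $|f|$, which is still a minimiser, and observe that the inequality only uses continuity, $f\ge 0$, $f|_{\VertexSet_D}=0$, $\max f=1$ and connectedness of $\Graph$ for the level-set count $\#\{f=t\}\ge 1$ --- but as stated the appeal to the paper is slightly too strong. Second, in Step~2 the assertion ``being of one sign it is a ground state'' deserves its one-line justification: if $\lambda_1(\Graph_\pm)<\mu_2(\Graph)$, the (non-negative) ground state of $\Graph_\pm$ would be $L^2$-orthogonal to $\psi|_{\Graph_\pm}$, which is impossible since $\psi$ is strictly positive on a set of full measure in $\Graph_+$ (resp.\ strictly negative in $\Graph_-$). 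With these remarks, and granting that the rigidity analysis is, as you acknowledge, a careful but routine local case analysis near vertices, around cycles and at interior extrema, the proof is sound.
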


\begin{proof}
The inequalities may be found in \cite[Th\'eor\`eme~3.1]{Nic87}. For the characterisation of equality, see for example \cite{Fri05} (or also \cite[Theorem~3]{KuNa14} in the case of natural conditions).
\end{proof}

We refer to \cite{BerKuc13,Kur20,Mug14} for more background details on the properties of metric graphs and Laplacian-type differential operators on them; we also refer to \cite{BeKeKuMu18} and the references therein for more details on the eigenvalues $\lambda_k(\Graph)$, $\mu_k(\Graph)$ and their dependence on properties of the 
graph $\Graph$.

\subsection{A motivating example}
\label{sec:motivation}

Our goal is to study cutting metric graphs into pieces forming a partition; we shall call these pieces \textit{clusters}. We
shall require later on  that partitions have certain  ``good'' properties, but we need to discuss first what kinds of splittings are possible at all.

This subject is not new. Both Cheeger-like splittings as introduced in~\cite{Nic87} and the investigations in~\cite{BanBerRaz12} restrict to the case 
of cuts performed in the interior of edges. These kinds of partitions are referred to as \textit{proper} in \cite{BanBerRaz12}, where their interplay with 
nodal domains (studied e.g.\ in~\cite{GnuSmiWeb04,Ber08}) is discussed.

Here we wish to consider essentially all possibilities for making cuts, in particular when cuts are made at vertices of degree at least $3$, using a concrete example to motivate what we will introduce subsequently. More precisely, we will consider the \emph{lasso graph} $\Graph$ depicted in Figure~\ref{fig:basic-example}, formed by three edges $e_1,e_2,e_3$, as shown.
\begin{figure}[H]
\begin{tikzpicture}[scale=1.2]
\coordinate (a) at (0,0);
\coordinate (b) at (2,0);
\coordinate (c) at (4,0);
\draw[thick] (a) -- (b);
\draw[thick,bend left=90]  (b) edge (c);
\draw[thick,bend right=90]  (b) edge (c);
\draw[fill] (0,0) circle (1.75pt);
\draw[fill] (2,0) circle (1.75pt);
\draw[fill] (4,0) circle (1.75pt);
\node at (b) [anchor=west] {$v$};
\node at (a) [anchor=north] {$w$};
\node at (c) [anchor=east] {$z$};
\node at (1,0) [anchor=south] {$e_1$};
\node at (3,0.6) [anchor=south west] {$e_2$};
\node at (3,-0.6) [anchor=north west] {$e_3$};
\end{tikzpicture}
\caption{The lasso $\Graph$.}\label{fig:basic-example}
\end{figure}

As done in \cite{BanBerRaz12}, we will refer to any partitions where the cuts are made only at interior points of edges as proper. Figure~\ref{fig:basic-example-proper} illustrates two different ways to split $\Graph$ into two clusters, i.e., to create a proper $2$-partition.
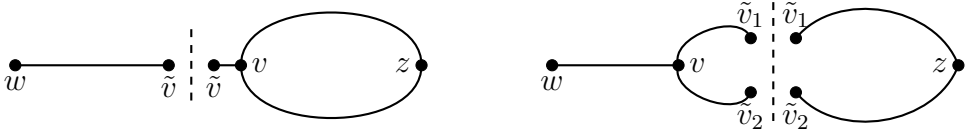
\begin{figure}[H]
\begin{minipage}[l]{7cm}
\begin{tikzpicture}[scale=1.2]
\coordinate (a) at (-0.5,0);
\coordinate (b1) at (1.2,0);
\coordinate (b2) at (1.7,0);
\coordinate (b) at (2,0);
\coordinate (c) at (4,0);
\draw[thick] (a) -- (b1);
\draw[thick] (b2) -- (b);
\draw[thick,bend left=90]  (b) edge (c);
\draw[thick,bend right=90]  (b) edge (c);
\draw[fill] (a) circle (1.75pt);
\draw[fill] (b) circle (1.75pt);
\draw[fill] (b1) circle (1.75pt);
\draw[fill] (b2) circle (1.75pt);
\draw[fill] (c) circle (1.75pt);
\node at (b) [anchor=west] {$v$};
\node at (b1) [anchor=north] {$\tilde{v}$};
\draw[thick,dashed] (1.45,0.4) -- (1.45,-0.4);
\node at (b2) [anchor=north] {$\tilde{v}$};
\node at (a) [anchor=north] {$w$};
\node at (c) [anchor=east] {$z$};
\end{tikzpicture}
\end{minipage}
\begin{minipage}[r]{7cm}
\begin{tikzpicture}[scale=1.2]
\coordinate (a) at (-0.5,0);
\coordinate (b0) at (.9,0);
\coordinate (b1+) at (1.7,.3);
\coordinate (b1-) at (1.7,-.3);
\coordinate (b2+) at (2.2,.3);
\coordinate (b2-) at (2.2,-.3);
\coordinate (c) at (4,0);
\draw[thick] (a) -- (b0);
\draw[thick,bend left=90]  (b0) edge (b1+);
\draw[thick,bend right=90]  (b0) edge (b1-);
\draw[thick,bend left=60]  (b2+) edge (c);
\draw[thick,bend right=60]  (b2-) edge (c);
\draw[fill] (a) circle (1.75pt);
\draw[fill] (b0) circle (1.75pt);
\draw[fill] (b1+) circle (1.75pt);
\draw[fill] (b1-) circle (1.75pt);
\draw[fill] (b2+) circle (1.75pt);
\draw[fill] (b2-) circle (1.75pt);
\draw[fill] (c) circle (1.75pt);
\node at (b0) [anchor=west] {$v$};
\node at (b1+) [anchor=south] {$\tilde{v}_1$};
\node at (b1-) [anchor=north] {$\tilde{v}_2$};
\node at (b2+) [anchor=south] {$\tilde{v}_1$};
\node at (b2-) [anchor=north] {$\tilde{v}_2$};
\node at (a) [anchor=north] {$w$};
\node at (c) [anchor=east] {$z$};
\draw[thick,dashed] (1.95,0.7) -- (1.95,-0.7);
\end{tikzpicture}
\end{minipage}
\caption{Two proper 2-partitions of the lasso $\Graph$: with \emph{separating points} at the dummy vertices $\tilde v$ and $\tilde v_1$, $\tilde v_2$, respectively.}\label{fig:basic-example-proper}
\end{figure}

Note that using our convention to consider ur-graphs, any cut at an interior point of an edge can be
considered as a vertex cut: every such point on the original graph can be seen as a dummy
degree two vertex, before cutting one should choose a representative (from the equivalence class) 
with degree two vertex at the point one wish to cut through.

At any rate, we will refer to the points at which cuts are made as \emph{separating points}; these will be introduced more formally in Section~\ref{sec:part}.

Proper partitions are relatively easy to deal with, but are rather restrictive. 
Any sort of functional we define on partitions should depend continuously on the points at which we are cutting, 
therefore we necessarily have to consider the limits that may arise when the cut reaches a vertex of degree $\geq 3$
in the original graph. 
The authors of \cite{BaLe17} faced a similar problem in a somewhat different context and considered supremisers and minimizers; since we wish to obtain existence results 
we shall consider vertex cuts in full generality.

Let us study what happens to partitions as the cutting points approach the degree $3$ vertex in the lasso graph above. Our intuition
tells us:
\begin{itemize}
\item starting with the partition on the left of Figure~\ref{fig:basic-example-proper} and letting the separating point $\tilde{v}$ tend towards $v$, the
 limit partition should be the one depicted in Figure~\ref{fig:basic-example-faithful}.
\item starting with the partition on the right Figure~\ref{fig:basic-example-proper} and letting $\tilde{v}_1,\tilde{v_2}$ tend towards $v$ the
limit partition should coincide with the one depicted in Figure~\ref{fig:basic-example-rigid}.
\end{itemize}

The edge sets within each cluster are the same in both partitions; the way endpoints of these edges are organised into vertices is different. 
These partitions can be obtained by cutting the lasso graph through vertex $ v$ in two different ways: 
separating the corresponding equivalence class of end points into two and three subclasses respectively.

Partitions of the first type (corresponding to Figure~\ref{fig:basic-example-faithful}) inherit all possible connections from the original graph and reflect its topology as closely as possible; for this reason we will refer to them as \emph{faithful}.

We will call any other partition where we are still only altering the connectivity of our clusters at separating points \emph{rigid}; this is, in particular, the case of the partition in Figure~\ref{fig:basic-example-rigid} (though it is also true of the faithful partition from Figure~\ref{fig:basic-example-faithful}.

\begin{figure}[H]
\begin{tikzpicture}[scale=1.2]
\coordinate (a) at (-.5,0);f
\coordinate (b1) at (1.5,0);
\coordinate (b) at (2,0);
\coordinate (c) at (4,0);
\draw[thick] (a) -- (b1);
\draw[thick,bend left=90]  (b) edge (c);
\draw[thick,bend right=90]  (b) edge (c);
\draw[fill] (a) circle (1.75pt);
\draw[fill] (b) circle (1.75pt);
\draw[fill] (b1) circle (1.75pt);
\draw[fill] (c) circle (1.75pt);
\node at (b) [anchor=west] {$v$};
\node at (b1) [anchor=north] {$v$};
\node at (a) [anchor=north] {$w$};
\node at (c) [anchor=east] {$z$};
\draw[thick,dashed] (1.75,0.4) -- (1.75,-0.4);
\end{tikzpicture}
\caption{A faithful 2-partition of the lasso $\Graph$; the only separating point is $v$.}\label{fig:basic-example-faithful}
\end{figure}
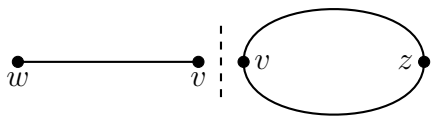

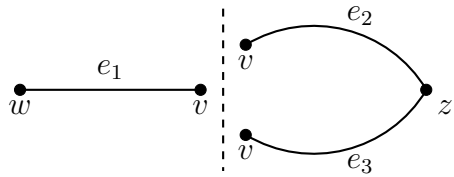
\begin{figure}[H]
\begin{tikzpicture}[scale=1.2]
\coordinate (e) at (6.5,0);
\coordinate (f) at (8.5,0);
\coordinate (g) at (9,0.5);
\coordinate (h) at (9,-0.5);
\coordinate (i) at (11,0);
\draw[thick] (e) -- (f);
\draw[thick,bend right=45] (i) edge (g);
\draw[thick,bend left=45] (i) edge (h);
\draw[fill] (6.5,0) circle (1.75pt);
\draw[fill] (8.5,0) circle (1.75pt);
\draw[fill] (9,0.5) circle (1.75pt);
\draw[fill] (9,-0.5) circle (1.75pt);
\draw[fill] (11,0) circle (1.75pt);
\node at (e) [anchor=north] {$w$};
\node at (f) [anchor=north] {$v$};
\node at (g) [anchor=north] {$v$};
\node at (h) [anchor=north] {$v$};
\node at (i) [anchor=north west] {$z$};
\node at (7.5,0) [anchor=south] {$e_1$};
\node at (10,0.6) [anchor=south west] {$e_2$};
\node at (10,-0.6) [anchor=north west] {$e_3$};
\draw[thick,dashed] (8.75,0.9) -- (8.75,-0.9);
\end{tikzpicture}
\caption{A rigid 2-partition of the lasso $\Graph$; again, the only separating point is $v$.}\label{fig:basic-example-rigid}
\end{figure}

Rigid partitions may appear less natural than faithful ones. As charming the notion of faithful partition  may look at a first glance, it turns out that it is of little use: we will elaborate on this in Section~\ref{sec:abstract-existence}. Indeed, the point of departure of this article involves introducing a suitable, arguably natural metric on the space of graph partitions with respect to which neither the set of proper partitions, nor the set of faithful ones, is closed; however, the set of rigid partitions is. This decisive topological feature is the main reason why we believe it is natural to consider them.

We conclude by considering a further relaxation, which also explains the use of the term \emph{rigid}: namely, we may allow cuts not only at the points separating clusters but also at interior points of clusters (note that these points are not necessarily interior points on some edges: these points could be vertices lying inside clusters), as long as each cluster stays connected: we shall refer to this kind of partition as \textit{loose}.

\begin{figure}[H]
\begin{tikzpicture}[scale=1.2]
\coordinate (a) at (0,0);
\coordinate (b) at (2,0);
\coordinate (c) at (2.5,0);
\coordinate (d1) at (4.5,-.5);
\coordinate (d2) at (4.5,.5);
\draw[thick] (a) -- (b);
\draw[thick,bend right=45] (c) edge (d1);
\draw[thick,bend left=45] (c) edge (d2);
\draw[fill] (a) circle (1.75pt);
\draw[fill] (b) circle (1.75pt);
\draw[fill] (c) circle (1.75pt);
\draw[fill] (d1) circle (1.75pt);
\draw[fill] (d2) circle (1.75pt);
\node at (a) [anchor=north] {$w$};
\node at (b) [anchor=north] {$v$};
\node at (c) [anchor=west] {$v$};
\node at (d1) [anchor=north west] {$z$};
\node at (d2) [anchor=north west] {$z$};
\node at (1,0) [anchor=south] {$e_1$};
\node at (3.7,0.6) [anchor=south west] {$e_2$};
\node at (3.7,-0.6) [anchor=north west] {$e_3$};
\draw[thick,dashed] (2.25,0.4) -- (2.25,-0.4);
\draw[thick,dashed] (4.2,0) -- (4.8,0);
\end{tikzpicture}
\caption{A loose 2-partition of the lasso $\Graph$; in this case, the only separating point is $v$ but we are additionally cutting through $z$.}\label{fig:basic-example-loose}
\end{figure}
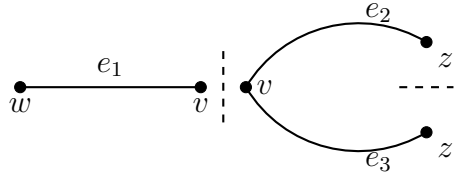
Loose partitions also define a closed set of partitions with respect to the natural metric we are going to introduce in Section~\ref{sec:abstract-existence}.

Finally, we regard partitions consisting of non-connected clusters as invalid.
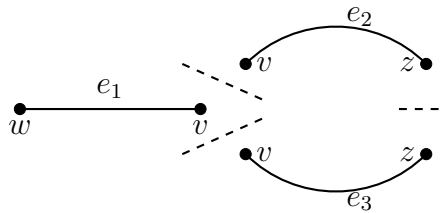
\begin{figure}[H]
\begin{tikzpicture}[scale=1.2]
\coordinate (e) at (6.5,0);
\coordinate (f) at (8.5,0);
\coordinate (g1) at (9,0.5);
\coordinate (g2) at (9,-0.5);
\coordinate (h1) at (11,.5);
\coordinate (h2) at (11,-.5);
\draw[thick] (e) -- (f);
\draw[thick,bend right=45] (g2) edge (h2);
\draw[thick,bend left=45] (g1) edge (h1);
\draw[fill] (e) circle (1.75pt);
\draw[fill] (f) circle (1.75pt);
\draw[fill] (g1) circle (1.75pt);
\draw[fill] (g2) circle (1.75pt);
\draw[fill] (h1) circle (1.75pt);
\draw[fill] (h2) circle (1.75pt);
\node at (e) [anchor=north] {$w$};
\node at (f) [anchor=north] {$v$};
\node at (g1) [anchor=west] {$v$};
\node at (g2) [anchor=west] {$v$};
\node at (h1) [anchor=east] {$z$};
\node at (h2) [anchor=east] {$z$};
\node at (7.5,0) [anchor=south] {$e_1$};
\node at (10,0.8) [anchor=south west] {$e_2$};
\node at (10,-0.8) [anchor=north west] {$e_3$};
\draw[thick,dashed] (8.3,0.5) -- (9.2,0.1);
\draw[thick,dashed] (8.3,-0.5) -- (9.2,-0.1);
\draw[thick,dashed] (10.7,0) -- (11.3,0);
\end{tikzpicture}
\caption{An invalid 2-partition of the lasso $\Graph$; cutting through both $v$ and $z$ has led to a disconnected cluster. This is a faithful $3$-partition, though.}\label{fig:basic-example-invalid}
\end{figure}

\subsection{Graph partitions}
\label{sec:part}

After informally sketching the ideas that motivate our classification, let us now introduce our notion of partition more precisely. We first need to recall an operation transforming a graph into another one by joining or cutting through its vertices (cf., e.g., \cite[Definitions~3.1 and~3.2]{BeKeKuMu18}). We phrase this slightly differently, using the formalism introduced in Section~\ref{subsec:def} and especially Definition~\ref{def:urg}.

\begin{definition}
\label{def:graph-cut-new1}
Let $\Graph,\NewGraph$ be ur-graphs. Then $\NewGraph$ is called a \emph{cut} of $\Graph$ if there exist a representative $\Graph'$ of $\Graph$ and a representative $\NewGraph'$ of $\NewGraph$ with vertex sets $\VertexSet(\Graph') = \{v_1(\Graph),\ldots,v_{\cardV}(\Graph)\}$ and $\VertexSet(\NewGraph') = \{\tilde{v}_1(\Graph'),\ldots,\tilde{v}_{\widetilde{{\cardV}}}(\Graph')\}$ and edge sets $\EdgeSet(\Graph')$ and  $\EdgeSet(\NewGraph')$, respectively, such that
\begin{enumerate}
\item[(a)] $\EdgeSet(\NewGraph') = \EdgeSet(\Graph')$,
\item[(b)] $\widetilde{{\cardV}}\geq {\cardV}$, and
\item[(c)] for all $\tilde{n} = 1,\ldots,\widetilde{{\cardV}}$, in the notation and identification of Section~\ref{subsec:def}, we have
\begin{displaymath}
	\widetilde{V}_{\tilde{n}}(\NewGraph') \subset V_n(\Graph')
\end{displaymath}
for some $n=1,\ldots,{\cardV}$.
\end{enumerate}
\end{definition}

In words, the graph $\NewGraph$ is formed from $\Graph$ by first picking a collection of vertices, in general including dummy vertices in the interior of edges, of $\Graph$ (this is the choice $\Graph'$), and then cutting through each such vertex $v_n$ of $\Graph'$ by removing adjacency relations to create new vertices $\tilde{v}_{\tilde{n}}$ out of $v_n$. In practice, however, we will tend to suppress the tildes wherever feasible. As stated earlier, we will also tend not to distinguish between the ur-graph $\Graph$ and its representative $\Graph'$; in particular, in a slight abuse of notation, we will regard the vertices $v_1,\ldots,v_N$ of $\Graph'$ as being vertices of $\Graph$. 
We also stress that we do \textit{not} require cutting through vertices to produce a \textit{connected} metric graph $\widetilde{\Graph}$.

Let us make this clearer by considering what happens if we only cut $\Graph$ at a single vertex.

\begin{definition}
\label{def:graph-cut-new2}
Given two ur-graphs $\Graph,\NewGraph$, keep the setup and notation of Definition~\ref{def:graph-cut-new1}.
\begin{enumerate}
\item Suppose there exist a representative $\Graph'$ of $\Graph$, a representative $\NewGraph'$ of $\NewGraph$, and 
\begin{itemize}
\item[(a)] vertices $v_{n_0}\in \VertexSet(\Graph')$ and $\tilde{v}_{\tilde{n}_1} ,\ldots,\tilde{v}_{\tilde{n}_k}\in \VertexSet(\NewGraph')$ such that $V_{n_0} = \widetilde{V}_{\tilde{n}_1}\cup\ldots\cup \widetilde{V}_{\tilde{n}_k}$, and
\item[(b)] there is equality $\widetilde{V}_{\tilde{n}}(\NewGraph') = V_n(\Graph')$ in condition (c) of Definition~\ref{def:graph-cut-new1} for all $\tilde{n}$ \emph{except} $\tilde{n}_1,\ldots,\tilde{n}_k$.
\end{itemize}
Then we say that $\NewGraph$ has been obtained from $\Graph$ by \emph{cutting through the vertex} $v_{n_0}$ (to obtain the vertices $\tilde{v}_{\tilde{n}_1},\ldots,\tilde{v}_{\tilde{n}_k}$). We call the vertices $\tilde{v}_{\tilde{n}_1}, \ldots, \tilde{v}_{\tilde{n}_k}$ the \emph{image} of the vertex $v_{n_0}$ under the cut.
\item We also say that the vertex $v_{n_0}$ in $\Graph$ \emph{corresponds to} the vertices $\tilde{v}_{\tilde{n}_1},\ldots,\tilde{v}_{\tilde{n}_k}$ in $\NewGraph$, and that $\Graph$ is obtained from $\NewGraph$ by \emph{gluing the vertices} $\tilde{v}_{\tilde{n}_1},\ldots,\tilde{v}_{\tilde{n}_k}$ to form $v_{n_0}$.
\item We say that the vertex $v_{n_0} (\Graph)$ \emph{has been cut nontrivially} if $k \geq 2$ in (1).
\end{enumerate}
\end{definition}

Definition~\ref{def:graph-cut-new2} may be generalised (or applied inductively) to the situation described in Definition~\ref{def:graph-cut-new1}; in particular, we will use the language of cutting through (possibly multiple) vertices and nontrivial cuts in the context of Definition~\ref{def:graph-cut-new1}.

We are finally in a position to define the central notions of this paper.
\begin{definition}[Partitions of a graph]
\label{def:partition}
Let $k\geq 1$ and let $\Graph$ be an ur-graph.
\begin{enumerate}
\item We call any set of $k$ distinct connected metric graphs
\[
	\parti := \{ \Graph_{1},\ldots,\Graph_{k}\}
\]
a \emph{$k$-partition} of $\Graph$ if there is a  cut $\NewGraph = \bigsqcup_{j=1}^{k_0}\Graph_{i_j}$ of $\Graph$, $k_0 \geq k$, whose connected components include $\Graph_1,\ldots,\Graph_{k}$, i.e., $\Graph_{i_j}=\Graph_j$ for all $j=1,\ldots,k$, where $i_{j_1} \neq i_{j_2}$ for $j_1\neq j_2$. In this case, we refer to the components $\Graph_{1},\ldots,\Graph_{k}$ as the \emph{clusters} of the partition $\parti$ (\textit{arising} from the cut $\NewGraph$).
\item If in (1) there exists a cut $\NewGraph$ of $\Graph$ such that $\NewGraph = \bigsqcup_{i=1}^k \Graph_i$, then we say the partition $\parti = \{ \Graph_1,\ldots, \Graph_k\}$ is \emph{exhaustive}.\footnote{Note that in the case of domains, some sources, such as \cite{HelHofTer09},  refer to exhaustive partitions as \emph{strong}.}
\end{enumerate}
\end{definition}

With this definition, the $k$ clusters are themselves compact metric graphs, which may be identified with subsets of $\mathcal G$. It will however often be useful to consider explicitly the subsets of $\Graph$ which correspond to the clusters; to this end we make the following definition.

\begin{definition}[Cluster supports]
\label{def:cluster-support}
Let $\Graph$ be an ur-graph and let $\parti = \{\Graph_1,\ldots,\Graph_k\}$ be a $k$-partition of $\Graph$, arising from the cut $\NewGraph = \bigsqcup_{i=1}^{k_0}\Graph_i$, $k_0\geq k$, of $\Graph$. 
We identify $\Graph$ and $\NewGraph$ with any respective representatives satisfying the conditions of Definition~\ref{def:graph-cut-new1}, that is, in such a way that $\EdgeSet (\Graph) = \EdgeSet (\NewGraph)$.
\begin{enumerate}
\item For each $i=1,\ldots,k$, we denote by $\Omega_i$ the unique closed subset of $\Graph$ such that
\begin{displaymath}
\{ e \in \EdgeSet (\NewGraph): e \subset \Graph_i\}= \left\{ e \in \EdgeSet (\Graph): e \subset \Omega_i \right\}
\end{displaymath}
and call the set $\Omega_i$ the \emph{cluster support} (associated with the cluster $\Graph_i$), or just \emph{support} for short.
\item We call the set
\begin{equation}
\label{eq:partition-support}
	\Omega := \bigcup\limits_{i=1}^k \Omega_i
\end{equation}
the \emph{support} of the partition $\parti$.
\end{enumerate}
\end{definition}

With this definition, the cluster supports $\Omega_1,\ldots,\Omega_k$ are really a partition of $\Graph$ in the ``classical'' sense; let us elaborate on this point. Indeed, we may think of the $\Omega_i$ as the subsets of $\Graph$ out of which we form new graphs, the clusters $\Graph_i$, by cutting through vertices as desired. Thus, by construction, the $\Omega_i$ are closed, connected subsets of $\Graph$, and their interiors $\interior \Omega_i$, $i=1,\ldots,k$, are pairwise disjoint. Moreover, $\parti$ is exhaustive if and only if the set $\Omega \subset \Graph$ actually equals $\Graph$. (For various practical reasons we are taking the cluster supports to be closed, not open, subsets of $\Graph$.)

Finally, with the right choice of representative of $\Graph$, we may suppose that, for each $i=1,\ldots,k$, we have $\Omega_i = e_{i_1} \cup \ldots \cup e_{i_{{\cardE}_i}}$ for some edges $e_{i_1},\ldots,e_{i_{{\cardE}_i}} \in \EdgeSet (\Graph)$. This means that $\partial \Omega_i \subset \VertexSet (\Graph)$ for all $i=1,\ldots,k$; and for each $e \in \EdgeSet (\Graph)$ there exists at most one $i=1,\ldots,k$ such that $e \subset \Omega_i$, exactly one if $\parti$ is exhaustive. (We emphasise that $\partial\Omega_i$ is always the topological boundary of the closed set $\Omega_i$ in the compact metric space $\Graph$.)

From now on, whenever $\parti = \{\Graph_1,\ldots,\Graph_k\}$ is a $k$-partition of $\Graph$, we will \emph{always} use the notation $\Omega_1,\ldots,\Omega_k$ to denote the corresponding cluster supports, and $\Omega$ for the support of $\parti$ (if distinct from $\Graph$), without further comment.

Observe that if $\parti$ is exhaustive and $x\in\partial \Omega_i$ for some $i$, then there must be at least one $j\ne i$ such that $x\in \partial \Omega_i\cap \partial \Omega_j$.

\begin{definition}
\label{def:separation-set}
Let $\Graph$ be an ur-graph and let $\parti = \{\Graph_1,\ldots,\Graph_k\}$ be a $k$-partition of $\Graph$ for some $k\geq 1$.
\begin{enumerate}
\item We call a vertex $v \in \VertexSet (\Graph) \cap \Omega$ a \emph{cut point} (of $\parti$) if there is no vertex
\begin{displaymath}
	\tilde v \in \bigcup\limits_{i=1}^k \VertexSet (\Graph_i)
\end{displaymath}
such that $v = \tilde v$, that is, if $v$ is nontrivially cut when constructing the partition. We refer to the set
\begin{displaymath}
	\cutset= \cutset (\parti) \subset \Omega
\end{displaymath}
of all cut points of $\parti$ as the \emph{cut set} of the partition $\parti$.
\item We will denote by
\begin{equation}
\label{eq:dirichlet-vertex-definition}
	\VertexSet_D (\Graph_i) \subset \Graph_i
\end{equation}
the set of all vertices in $\Graph_i$ which are obtained by nontrivially cutting through vertices of $\Graph$; in a slight abuse of terminology, we will also refer to its elements as \textit{cut points}.
\item We call the \textit{separation set} (of $\parti$) the set
\begin{equation}
\label{eq:part-boundary-set}
	\partial \parti := \bigcup\limits_{i=1}^k \partial \Omega_i \subset \Omega.
\end{equation}
We refer to its elements as \emph{separating points}.
\end{enumerate}
\end{definition}

It follows from our definition of partitions that every separating point is a cut point, although the converse need not be true; we also reiterate that we are assuming without loss of generality (by taking the right representative of the ur-graph) that each cut point, and in particular each separating point, is a vertex. Both the cut and the separation sets are clearly always finite.

\begin{definition}
\label{def:neighbours}
Let $\parti = \{ \Graph_1, \ldots, \Graph_k \}$ be a $k$-partition of  $\Graph$, and denote by $\Omega_1,\ldots,\Omega_k$ the respective cluster supports. We say that $\Omega_i,\Omega_j$, $i,j=1,\ldots,k$, $i \neq j$, are  \emph{neighbours} if $\partial\Omega_i \cap \partial\Omega_j \neq \emptyset$. In this case, we will also loosely refer to the corresponding clusters $\Graph_i$ and $\Graph_j$ as neighbours. Similarly, given a cut point $v \in \partial\parti$, we will refer to each $\Omega_i$ such that $v \in \partial\Omega_i$ as a \emph{neighbouring support} of $v$.
\end{definition}

It turns out that there are several different, reasonably natural possibilities for defining classes of partitions of a metric graphs, as we intimated in Section~\ref{sec:motivation}. We stress that exhaustivity of a partition, in the sense of Definition~\ref{def:partition}(2), is not related to the following classification: exhaustivity does not imply, nor is it implied by, any of the following properties.

\begin{definition}[Classification of partitions]
\label{def:classification}
Let $\Graph$ be an ur-graph.
\begin{enumerate}
\item Any partition $\parti$ of $\Graph$ satisfying Definition~\ref{def:partition} will be called \textit{loose}.
\item A loose partition $\parti$ of $\Graph$ will be called \emph{rigid} if its cut  and separation sets agree, that is,  we only cut vertices on the boundary of $\Omega_i$ to create the graph $\Graph_i$.
\item A partition $\parti$ of $\Graph$ will be called \emph{faithful} if it is rigid and additionally whenever a separating point $v$ lies in the cluster support $\Omega_i$, then in the corresponding cluster $\Graph_i$ the image of $v$ under the cut $\NewGraph$ is incident with all edges $e$ that were incident with $v$ in $\Graph$, such that $e$ also lies in $\Graph_i$.

\item A partition $\parti$ of $\Graph$ will be called \emph{internally connected} if it is rigid and $\interior \Omega_i = \Omega_i \setminus \partial\Omega_i$ is connected, equivalently, if $\Graph_i \setminus \VertexSet_D (\Graph_i)$ is connected, for all $i=1,\ldots,k$.
\item A partition $\parti$ of $\Graph$ will be called \emph{proper} if it is rigid and all separating points are vertices of degree two in $\Graph$.
\end{enumerate}
\end{definition}

By definition, the cut and separation sets are allowed to be different only in a loose partition. It is clear from the definitions that every proper partition is faithful and internally connected, every faithful and every internally connected partition is rigid, and every rigid partition is loose, but the converse statements do not hold. For example, if $\Graph$ is a graph divided into cluster supports $\Omega_1,\ldots,\Omega_k$, then \emph{any} choice of spanning metric trees $\Graph_1,\ldots,\Graph_k$ of these cluster supports determines a further loose partition.\footnote{A \emph{spanning metric tree} of a metric graph $\Graph$ is, by definition, a tree $\NewGraph$ which is a cut of $\Graph$.}

\begin{example}\label{exa:inthecase}
In the case of the lasso graph discussed in Section~\ref{sec:motivation}, we may choose to split $\Graph$ into the cluster supports $\Omega_1 = e_1$ (interval) and $\Omega_2 = e_2 \cup e_3$ (loop), so that $\partial \parti = \partial \Omega_1 = \partial \Omega_2 = \{v\}$. Suppose we wish $\parti$ to be exhaustive: in order to determine it, we need to specify the clusters $\Graph_1$ and $\Graph_2$: while a cluster $\Graph_1$ is uniquely determined by $\Omega_1$, namely, it is the edge $e_1$, for the cluster $\Graph_2$ there are two possible choices, depicted in Figures~\ref{fig:basic-example-faithful} and~\ref{fig:basic-example-rigid}, which lead to a faithful and a non-faithful but rigid 2-partition, respectively; both are internally connected. The third choice, of Figure~\ref{fig:basic-example-loose}, where to produce $\Graph_2$ we also cut through $z$, gives rise to a loose 2-partition.

If we allow $\parti$ to be non-exhaustive and, say, take $\parti = \{\Graph_1\}$, then $\parti$ is faithful and internally connected (but still not proper). In this case, what happens to the set $\Omega_2$ under any cut giving rise to $\parti$ is irrelevant for the classification of $\parti$.
\end{example}

\begin{example}\label{ex:tree}
In the case of metric trees, our classification of partitions from Definition~\ref{def:classification} boils down to three cases.

Cutting through a vertex of degree two creates by definition a proper $2$-partition.

Cutting through a single vertex $v$ of degree $\deg v >2$ may produce $k$ connected components for any $2\le k\le \deg v $; the associated $k$-partition $\parti$ that arises in this way is necessarily exhaustive. More interestingly, if $k=\deg v$, then $\parti$ is both internally connected and faithful.
If on the other hand $k<\deg v$, then $\parti$ is not internally connected (for there is some cluster such that at least two different edges lie ``on different sides'' of the separating point $v$); it is faithful though, because by definition each cluster must be a connected metric graph in its own right, hence no further cut can be made through $v$ in any of the clusters.

In particular, all loose partitions of metric trees are necessarily faithful, but there are rigid partitions that are not internally connected.
\end{example}

We will be primarily interested in the classes of loose and rigid partitions, 
and in exhaustive partitions. For a fixed ur-graph $\Graph$ and $k\geq 1$, we denote the class of all exhaustive loose $k$-partitions of $\Graph$ by $\mathfrak{P}_k(\Graph)$, or simply by $\mathfrak{P}_k$ if the graph $\Graph$ is clear from the context, the set of all exhaustive rigid $k$-partitions of $\Graph$ by $\mathfrak{R}_k (\Graph)$ or $\mathfrak{R}_k$, and
\begin{equation}
\label{eq:loose-and-rigid}
	\mathfrak{P} = \mathfrak{P} (\Graph) := \bigcup\limits_{k=1}^\infty \mathfrak{P}_k, \qquad \mathfrak{R} = \mathfrak{R} (\Graph)
	:= \bigcup\limits_{k=1}^\infty \mathfrak{R}_k,
\end{equation}
the set of all loose exhaustive, and all rigid exhaustive, partitions of $\Graph$, respectively.

Finally, if $\Omega_1,\ldots,\Omega_k \subset \Graph$ are closed subsets of $\Graph$ with pairwise disjoint interiors, then for each $i=1,\ldots,k$, we set
\begin{equation}
\label{eq:rigid-cluster-set}
	\rho_{\Omega_i}
\end{equation}
to be the finite set of all possible clusters $\Graph_i$ that have $\Omega_i$ as a cluster support and such that the partition $\parti = \{\Graph_1, \ldots, \Graph_k\}$ is \emph{rigid}. Note that $\rho_{\Omega_i} \neq \emptyset$; indeed, $\rho_{\Omega_i}$ always contains exactly one cluster corresponding to a faithful partition of $\parti$. We may also loosely refer to the clusters of such a partition as rigid clusters; we will do likewise for loose, faithful, internally connected and proper clusters. Observe that as long as \emph{proper} partitions are considered, there is no such ambiguity: each cluster support uniquely determines a cluster; in particular, the set $\rho_{\Omega_i}$ always contains a single element.

\begin{example}\label{exa:inthecase-2}
Returning again to the lasso graph discussed in Section~\ref{sec:motivation}, given the cluster supports $\Omega_1 = e_1$ (interval) and $\Omega_2 = e_2 \cup e_3$ (loop), we have that $\rho_{\Omega_1}$ consists of a single element, the graph given by the edge $e_1$, while the set $\rho_{\Omega_2}$ constains two graphs: an interval and a loop, see Figures~\ref{fig:basic-example-faithful} and~\ref{fig:basic-example-rigid}.
\end{example}

\section{Topological issues of graph partitions}
\label{sec:abstract-existence}

Here we wish to construct a suitable topology on spaces of partitions, which will allow us to give existence results for minimisers of suitable functionals. Throughout this section, we will \emph{only} work with exhaustive partitions, as these are more suited to topologisation and they will be of primary interest in the sequel.

\subsection{Primitive partitions}
\label{sec:colour-type}

\begin{definition}\label{defi:simcut}
Let $\Graph$ be an ur-graph and let $\parti_1 = \{\Graph_{1}^{(1)},\ldots, \Graph_{k}^{(1)} \}$ and $\parti_2 = \{\Graph_{1}^{(2)}, \ldots, \Graph_{k}^{(2)} \}$ be two exhaustive, loose $k$-partitions of $\Graph$. Then we say that $\parti_1$ and $\parti_2$ are \emph{similar}, or \emph{share a common cut pattern} (of $\Graph$), if, up to the correct choice of representative of the ur-graph $\Graph$ and numbering of the clusters, for each $i=1,\ldots,k$ the clusters $\Graph_{i}^{(1)}$ and $\Graph_{i}^{(2)}$ have the same underlying discrete graph (see Definition~\ref{def:discrete-graph}), and there is a bijection between the cut sets $\cutset (\parti_1),\cutset (\parti_2)$ (see Definition~\ref{def:separation-set}).
\end{definition}

\begin{proposition}
\label{prop:colour-equivalence}
Suppose $\Graph$ is a fixed ur-graph and let $k\geq 1$.
\begin{enumerate}[(1)]
\item Similarity between $k$-partitions of $\Graph$, as in Definition~\ref{defi:simcut}, is an equivalence relation. It divides $\mathfrak{P}_k (\Graph)$ into a finite number of cells. 
\item If two exhaustive, loose $k$-partitions $\parti,\widetilde\parti$ of $\Graph$ are similar, then after renumbering the clusters if necessary, for each $i=1,\ldots,k$, $\Graph_i^{(2)}$ can be obtained from $\Graph_i^{(1)}$ by lengthening or shortening edges of $\Graph_i^{(1)}$.
\item If two exhaustive, loose $k$-partitions $\parti,\widetilde\parti$ of $\Graph$ are similar and $\parti$ is rigid (respectively, faithful, internally connected or proper), then so too is $\widetilde\parti$.
\end{enumerate}
\end{proposition}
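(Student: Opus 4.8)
The plan is to prove the three parts in order, with part (1) really being a bookkeeping exercise and parts (2), (3) following by unwinding the definitions once the right ``cut pattern'' data has been isolated. For part (1), reflexivity and symmetry of similarity are immediate from Definition~\ref{defi:simcut}; transitivity requires only composing the two bijections of cut sets and the two identifications of underlying discrete graphs of the clusters. To see that $\mathfrak{P}_k(\Graph)$ decomposes into finitely many cells, I would argue that a similarity class is determined by a finite amount of combinatorial data. Fix a representative of the canonical ur-graph; any exhaustive loose $k$-partition arises from a cut $\NewGraph$ of a representative $\Graph'$ of $\Graph$ obtained by adding finitely many dummy vertices to edges of $\Graph$ and then cutting through a (finite) collection of vertices. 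The discrete ur-graph of $\Graph$ has finitely many edges, so the list of which edges belong to which cluster support $\Omega_i$ is one choice from a finite set; the combinatorial type of each cluster $\Graph_i$ (its underlying discrete graph, together with which of its vertices are images of which vertices of $\Graph$ under the cut) ranges, for fixed edge-set, over a finite set, because the number of ways to partition the finitely many edge-endpoints sitting over a given vertex is finite. Hence the map sending a partition to (the edge-to-cluster assignment, the discrete graph of each cluster, the cut-set data) takes only finitely many values, and two partitions with the same value are similar; this gives the finite cell decomposition.

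For part (2), suppose $\parti^{(1)}$ and $\parti^{(2)}$ are similar. After the renumbering of clusters provided by Definition~\ref{defi:simcut}, for each $i$ the clusters $\Graph_i^{(1)}$ and $\Graph_i^{(2)}$ have the same underlying discrete graph $\DG_i$. By Definition~\ref{def:discrete-graph} this means that each metric graph $\Graph_i^{(\ell)}$ is obtained from the fixed combinatorial graph $\DG_i$ by assigning a positive length to each edge; since the two metric clusters differ only in this assignment of edge lengths, $\Graph_i^{(2)}$ is obtained from $\Graph_i^{(1)}$ precisely by lengthening or shortening its edges, which is the assertion. (One should note here that $\Graph_i^{(1)}$ and $\Graph_i^{(2)}$ are themselves ur-graphs, so ``same underlying discrete graph'' is to be read at the level of canonical representatives, but this causes no difficulty: inserting or deleting dummy degree-two vertices is a special case of redistributing lengths along a path, and in any case can be normalised away.)

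For part (3), this is a matter of checking that each of the properties ``rigid'', ``faithful'', ``internally connected'', ``proper'' from Definition~\ref{def:classification} depends only on the similarity class. Rigidity says $\cutset(\parti) = \partial\parti$; but whether a vertex of a cluster that is a cut point also lies on the boundary of its cluster support is visible in the discrete data (it is the statement that the corresponding vertex of $\DG_i$ is incident, in $\DG$, with an edge not assigned to cluster $i$), and similarity preserves exactly this incidence information together with the bijection of cut sets, so rigidity transfers. Faithfulness adds the condition that at each separating point the image vertex in $\Graph_i$ is incident with \emph{all} edges of $\Graph_i$ that were incident with it in $\Graph$ --- again a purely combinatorial condition on the discrete graphs and the cut, hence preserved. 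Internal connectedness is the statement that $\Graph_i \setminus \VertexSet_D(\Graph_i)$ is connected; since similar partitions have, clusterwise, isomorphic underlying discrete graphs with the same distinguished vertex sets $\VertexSet_D(\Graph_i)$ (these are the images of cut vertices, matched under the cut-set bijection), deleting them yields combinatorially isomorphic discrete graphs, and connectedness of a graph is a property of its discrete structure alone. Finally, properness requires additionally that every separating point be a degree-two vertex of $\Graph$; but by our standing convention separating points are always taken to be vertices of the chosen representative, and ``degree two in $\Graph$'' for such a vertex means precisely that it is incident with exactly two edge-ends --- a condition recorded in the discrete data and preserved under similarity. So all four properties are constant on cells, which proves (3).

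\textbf{Main obstacle.} The only genuinely delicate point is the finiteness claim in part (1): one has to be careful that the possible combinatorial types of a cluster, \emph{including} the data of how its vertices sit over the vertices of $\Graph$ via the cut, really do form a finite set. This is where the convention of working with the canonical discrete ur-graph (finitely many edges, each endpoint lying over a fixed vertex of $\Graph$) is essential --- without it, the freedom to insert arbitrarily many dummy vertices would make the naive ``combinatorial type'' set infinite, and one would need to quotient by dummy-vertex moves before counting. Parts (2) and (3) are then essentially immediate consequences of having the right notion of ``same combinatorial data'' in hand.
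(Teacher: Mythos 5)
Your parts (2) and (3) are essentially the paper's own proof: (2) is exactly the observation that two metric graphs with the same underlying discrete graph differ only in the assignment of edge lengths, and (3) is the paper's remark that the cut pattern records all the connectivity information at the cut points, so each of the four properties in Definition~\ref{def:classification} is constant on similarity classes. The equivalence-relation part of (1) is also fine.

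The gap is in the finiteness count in (1). The invariant you propose --- ``which edges belong to which cluster support'', plus a splitting pattern of the edge-endpoints over each vertex --- is either not well defined or not finitely valued. In any proper partition (the generic case in the paper, cf.\ the lasso examples) the cut points lie in the \emph{interiors} of edges of the canonical representative, so an edge of $\Graph$ is subdivided and its pieces are shared among several clusters; if instead you read ``edges'' as edges of the auxiliary representative $\Graph'$ carrying the dummy vertices, then $\Graph'$ ranges over infinitely many possibilities, and your ``main obstacle'' paragraph does not repair this: once cut, an interior cut point becomes a degree-one vertex of a cluster (an element of $\VertexSet_D(\Graph_i)$), not a removable dummy vertex, so it cannot be normalised away by passing to canonical representatives --- what must be shown is that the \emph{number} of such points is bounded. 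This needs a short argument which is missing: for an exhaustive $k$-partition, any segment of an edge bounded on both sides by interior cut points becomes, after cutting, a connected component of the defining cut $\NewGraph$ all by itself, hence is an entire cluster; so there are at most $k$ such ``middle'' segments in the whole graph, whence each edge carries at most $k+1$ interior cut points and in total at most $M+k$ of them ($M$ the number of edges of $\Graph$). With this bound, the subdivision patterns, piece-to-cluster assignments and vertex splittings form a finite set and your counting goes through. (The paper itself declares (1) immediate, so this is precisely the point where your write-up had to supply an argument, and the one supplied fails as stated.)
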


\begin{proof}
(1) is immediate, since all properties of similarity may be characterised in terms of bijections.

(2) It suffices to prove that the same is true of any two metric graphs $\Graph_1$ and $\Graph_2$ which have the same underlying discrete graph $\DG$. But this, in turn, is an immediate consequence of the definition (Definition~\ref{def:discrete-graph}): the edges $e_1^{(1)},\ldots,e_M^{(1)}$ of $\Graph_1$ and $e_1^{(2)},\ldots,e_M^{(2)}$ of $\Graph_2$ are in a canonical bijection to each other, both being in bijective correspondence with the edges $\De_1,\ldots,\De_M$ of $\DG$; moreover, this bijective correspondence preserves all adjacency and incidence relations. Hence, if for each $i=1,\ldots,k$ we replace the edge $e_i^{(1)}$ with an edge of length $|e_i^{(2)}|$, then the resulting graph is isometrically isomorphic to $\Graph_2$.

(3) follows since cut patterns completely describe the connectivity of the resulting clusters in the neighbourhood of any cut point.
\end{proof}

\begin{definition}
\label{def:primitive-partition}
We call the equivalence classes with respect to the above equivalence relation \emph{primitive $k$-partitions}.
We will denote the primitive partition associated with the cut pattern $\cutset$ by
\begin{equation*}
\label{eq:colour-type-2}
	\colourset = \colourset_{\cutset} \subset \mathfrak{P}_k.
\end{equation*}
\end{definition}
Since by definition $\mathfrak P_k$ consists of exhaustive partitions, any primitive $k$-partition is necessarily exhaustive.

\subsection{Partition convergence}
\label{subsec:part-convergence}

The equivalence discussed in the previous subsection gives rise to a notion of convergence of partitions within each primitive partition, which we now wish to introduce. To begin with, we need the notion of convergence of a sequence of graphs having the same underlying discrete topology, similar to what was considered in \cite[\S~1]{BaLe17}.

Given a finite discrete graph $\DG = (\DV,\DE)$, let $\Gamma_\DG$ be the set of all ur-graphs whose underlying discrete graph is $\DG$,  in the sense of Definition~\ref{def:discrete-graph}. We assume here and throughout that the indexing of the edges and vertices is consistent,  in the sense that if $\DE = \{ \De_1, \ldots, \De_M \}$ and $\Graph^{(1)},\Graph^{(2)} \in \Gamma_\DG$, then up to the correct choice of representatives of the ur-graphs we have $\EdgeSet (\Graph^{(n)}) = \{ e_1^{(n)}, \ldots, e_M^{(n)} \}$ and the bijection $\Psi_i : \DE \to \EdgeSet$ maps $\De_i$ to $e_i^{(n)}$ for all $i=1,\ldots,M$, with corresponding statements for the vertices, $n=1,2$. Observe that each $\Graph\in\Gamma_\DG$ is uniquely determined by its vector $(|e|)_{e\in\mathcal E}$ of edge lengths, hence we can define
\begin{equation}
\label{eq:dist-gamma}
	d_{\Gamma_\DG}(\Graph,\NewGraph):=d_{\R^M}\left((|e|)_{e\in \EdgeSet},(|\tilde{e}|)_{\tilde{e}\in \tilde\EdgeSet} \right),
\end{equation}
$\Graph,\NewGraph \in \Gamma_\DG$, where $d_{\R^M}$ is the Euclidean distance on $\R^M$. 

\begin{proposition}
\label{prop:dist-gamma}
Given a discrete graph $\DG$, $(\Gamma_\DG,d_{\Gamma_\DG})$ is a separable metric space with respect to the Euclidean distance in $\R^M$.
\end{proposition}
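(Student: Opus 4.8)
The plan is to exploit the explicit identification, recorded just before the statement, of each ur-graph $\Graph \in \Gamma_\DG$ with its vector of edge lengths $(|e|)_{e \in \EdgeSet} \in \R^M$. Concretely, define the map $\iota \colon \Gamma_\DG \to \R^M$ sending $\Graph$ to $(|e_1|,\ldots,|e_M|)$, where the indexing is the consistent one fixed in the paragraph preceding the proposition. The first step is to observe that $\iota$ takes values in the open positive cone $(0,\infty)^M$: every edge of a metric graph satisfying Assumption~\ref{assumption} has strictly positive (finite) length. The second step is to check that $\iota$ is a bijection onto $(0,\infty)^M$. Injectivity is essentially Proposition~\ref{prop:colour-equivalence}(2) (or rather the remark that each $\Graph \in \Gamma_\DG$ is uniquely determined by its edge-length vector, which is stated explicitly): if $\iota(\Graph) = \iota(\NewGraph)$ then, edge by edge, $\Graph$ and $\NewGraph$ have the same underlying discrete graph with identically long corresponding edges, so they are isometrically isomorphic, i.e. equal as ur-graphs. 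Surjectivity is the construction in the other direction: given any $(\ell_1,\ldots,\ell_M) \in (0,\infty)^M$, build a metric graph on the combinatorial skeleton $\DG$ by assigning edge $\De_i$ the interval $[0,\ell_i]$ and gluing endpoints according to the incidence relations of $\DG$; its ur-graph lies in $\Gamma_\DG$ and maps to $(\ell_1,\ldots,\ell_M)$.

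The third step is to note that, by the very definition \eqref{eq:dist-gamma}, $d_{\Gamma_\DG}(\Graph,\NewGraph) = d_{\R^M}(\iota(\Graph),\iota(\NewGraph))$, so $\iota$ is an isometry from $(\Gamma_\DG, d_{\Gamma_\DG})$ onto $((0,\infty)^M, d_{\R^M})$. From this everything follows immediately: $d_{\Gamma_\DG}$ inherits the axioms of a metric (non-negativity, symmetry, the triangle inequality, and $d_{\Gamma_\DG}(\Graph,\NewGraph)=0 \iff \iota(\Graph)=\iota(\NewGraph) \iff \Graph = \NewGraph$ using injectivity) from the Euclidean metric. Separability also transfers under isometry: $(0,\infty)^M \subset \R^M$ is separable (e.g. the points with rational coordinates in $(0,\infty)^M$ form a countable dense subset), and its preimage under $\iota$ — the ur-graphs all of whose edge lengths are rational — is then a countable dense subset of $\Gamma_\DG$.

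I do not expect any genuine obstacle here; the statement is essentially a bookkeeping exercise once the identification $\Gamma_\DG \leftrightarrow (0,\infty)^M$ is made precise. The one point that deserves a word of care — and the closest thing to a ``hard part'' — is that the identification is only canonical \emph{after} the indexing convention on edges (and vertices) has been fixed; without it the edge-length vector is only defined up to a permutation, and one is really looking at $(0,\infty)^M$ modulo the automorphism group of $\DG$. Since the paragraph before the proposition explicitly pins down a consistent indexing via the bijections $\Psi_i \colon \DE \to \EdgeSet$, this is already taken care of, and it suffices to remark that the metric $d_{\Gamma_\DG}$ is well-defined (independent of the choice of representatives of the ur-graphs, which only permute endpoints within vertices and insert or delete dummy degree-two vertices, neither of which changes the multiset — here, with the fixed indexing, the vector — of edge lengths). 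With that remark in place, the proof is the three steps above.
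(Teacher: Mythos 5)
Your proof is correct and follows exactly the route the paper has in mind: the paper offers no explicit proof, treating the proposition as an immediate consequence of the observation (stated just before \eqref{eq:dist-gamma}) that each $\Graph \in \Gamma_\DG$ is determined by its edge-length vector, so that $\Gamma_\DG$ is identified isometrically with a subset of $\R^M$ and inherits the metric axioms and separability. Your extra remark about the fixed indexing (versus working modulo automorphisms of $\DG$) is a fair and careful reading of the paper's convention, not a deviation from it.
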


This metric structure induces the same topology as the one discussed in~\cite[\S~2]{BanBerRaz12} and the one used in \cite{BaLe17}.

We can now consider Cauchy sequences $\Graph^{(n)}$ in $\Gamma_\DG$; however, they need not converge in $\Gamma_\DG$, since one or more  edge lengths may tend to 0.
We  can however consider the canonical completion $\overline{\Gamma_\DG}$ of $\Gamma_\DG$: it consists of equivalence classes of Cauchy sequences of metric graphs in $\Gamma_\DG$ with respect to the equivalence relation of having distance $d_{\Gamma_\DG}(\Graph^{(n)},\NewGraph^{(n)})$ vanishing as $n\to \infty$. 
One can identify $\overline{\Gamma_\DG}$ with the simplex of all vectors in the positive orthant of $\R^M$ whose size agrees with the total length of $\Graph$, i.e.,
\begin{displaymath}
	\overline{\Gamma_\DG}\simeq \left\{(x_1,\ldots,x_M):x_i\ge 0 \hbox{ and  }\sum\limits_{i=1}^M x_i = \sum\limits_{i=1}^M |e_i|\right\}\ .
\end{displaymath}
The limit of a converging sequence $(\Graph^{(n)})_{n\in\N}\subset \overline{\Gamma_\DG}$ may hence be identified with an ur-graph $\Graph^{(\infty)}$ whose edge lengths are the (possibly vanishing) limits of the edge lengths of the approximating graphs $\Graph^{(n)}$; accordingly $\Graph^{(\infty)}$ may well have a different underlying discrete graph with a lower number of vertices and edges; and it may contain loops and parallel edges even if the approximating graphs do not. We may group the vertices of $\Graph^{(n)}$ according to the rule
\begin{quote}
$v^{(n)},w^{(n)} \in \VertexSet (\Graph^{(n)})$ are equivalent if and only if $\dist_{\Graph^{(n)}} (v^{(n)},w^{(n)}) \stackrel{n\to\infty}\longrightarrow 0$.
\end{quote} 
Thus with each vertex $v^{(\infty)}$ of $\Graph^{(\infty)}$ is associated a unique equivalence class of vertices of $\Graph^{(n)}$ of this form, which we will denote by $[v^{(\infty)}]$.

Let us explicitly formulate the following useful observations.

\begin{lemma}
\label{lem:part-convergence}
Let $(\Graph^{(n)})_{n\in\mathbb N}$ converge to $\Graph^{(\infty)}$ in $\overline{\Gamma_\DG}$. Then
\begin{enumerate}
\item the total length $|\Graph^{(n)}|$ tends to $|\Graph^{(\infty)}|$;
\item $\Graph^{(\infty)}$ is connected provided the $\Graph^{(n)}$ are.
\end{enumerate}
\end{lemma}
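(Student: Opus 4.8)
The statement has two parts. For (1), the total length is just $|\Graph^{(n)}| = \sum_{i=1}^M |e_i^{(n)}|$, and since $\Graph^{(n)} \to \Graph^{(\infty)}$ in $\overline{\Gamma_\DG}$ means exactly that the edge-length vectors converge in $\R^M$ (by the identification of $\overline{\Gamma_\DG}$ with a simplex in $\R^M$), continuity of the sum $\R^M \to \R$ immediately gives $|\Graph^{(n)}| \to \sum_{i=1}^M |e_i^{(\infty)}| = |\Graph^{(\infty)}|$. The only subtlety is that some coordinates of the limit vector may vanish and hence the discrete graph of $\Graph^{(\infty)}$ may have fewer edges; but those edges simply contribute $0$ to the sum, so the identity is unaffected. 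This part is essentially a one-line observation.

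**Part (2): connectedness passes to the limit.** This is the substantive point. The plan is to argue by contradiction using the distance function: suppose $\Graph^{(\infty)}$ is disconnected, so there exist two points $x, y \in \Graph^{(\infty)}$ with $\dist_{\Graph^{(\infty)}}(x,y) = \infty$. Without loss of generality $x, y$ can be taken to be vertices of $\Graph^{(\infty)}$ (every point lies on some edge, whose endpoints lie in the same component). Pick a representative sequence, and for each $n$ choose vertices $x^{(n)} \in [x]$, $y^{(n)} \in [y]$ in $\Graph^{(n)}$. Since each $\Graph^{(n)}$ is connected, there is a path in $\Graph^{(n)}$ from $x^{(n)}$ to $y^{(n)}$; take a minimal one, so its length is $\dist_{\Graph^{(n)}}(x^{(n)}, y^{(n)}) \le |\Graph^{(n)}|$, which by part (1) is bounded uniformly in $n$. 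A minimal path is a simple path, hence uses each of the $M$ edges at most once, so it is described by a subset $S_n \subseteq \DE$ of the (common) discrete edge set together with a consistent vertex sequence. Since there are only finitely many such combinatorial path-types, pass to a subsequence along which the combinatorial path is constant, say supported on edges $\{\De_{j_1}, \ldots, \De_{j_\ell}\}$ visited in a fixed order. Now take the limit: the corresponding edges $e_{j_1}^{(n)}, \ldots, e_{j_\ell}^{(n)}$ converge to edges of $\Graph^{(\infty)}$ (some possibly of length $0$, i.e.\ collapsed), and the consecutive-incidence relations along the path are preserved — if two vertices of $\Graph^{(n)}$ were glued (incident edges) and the connecting geometry degenerates, they end up in the same vertex equivalence class $[\cdot]$ of $\Graph^{(\infty)}$. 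Hence the limiting data yields a walk in $\Graph^{(\infty)}$ from $x = [x^{(n)}]$ to $y = [y^{(n)}]$ of finite length $\le \limsup_n |\Graph^{(n)}| = |\Graph^{(\infty)}| < \infty$, contradicting $\dist_{\Graph^{(\infty)}}(x,y) = \infty$.

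**The main obstacle.** The delicate step is the bookkeeping in the limit: making precise that a combinatorial path in $\Graph^{(n)}$, together with the convergence of edge lengths to (possibly zero) limits and the collapsing of vertices into equivalence classes $[v^{(\infty)}]$, genuinely descends to a connected walk in $\Graph^{(\infty)}$. One has to check that when an edge length tends to $0$ its two endpoints fall into the same class $[\cdot]$ (which is immediate from the defining rule $\dist_{\Graph^{(n)}}(v^{(n)},w^{(n)}) \to 0$, since the edge itself realizes a path of vanishing length), and that adjacency of consecutive edges along the path is inherited in $\Graph^{(\infty)}$ (a shared endpoint vertex of $\Graph^{(n)}$ lies in a single class, which is a vertex of $\Graph^{(\infty)}$ incident to both limiting edges, unless an edge collapsed, in which case the classes merge and connectivity is only reinforced). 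Everything else is a routine compactness-of-finite-combinatorics argument plus part (1); I would keep the write-up at the level of "pass to a subsequence where the minimal-path type stabilizes, then take limits edgewise," without belabouring the incidence bookkeeping.
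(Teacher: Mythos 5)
Your proposal is correct, but it is worth noting that the paper offers no proof of this lemma at all: it is stated as an ``explicit observation'' immediately after the identification of $\overline{\Gamma_\DG}$ with a simplex in $\R^M$, and is treated as immediate from that identification. Your part (1) is exactly that one-line observation (continuity of the coordinate sum, with collapsed edges contributing $0$), so there is nothing to compare there. For part (2) your contradiction argument --- shortest paths in $\Graph^{(n)}$, finiteness of combinatorial path types, a stabilising subsequence, and an edgewise passage to the limit respecting the classes $[v]$ --- is sound: collapsed edges do force their endpoints into one class, and incidences of surviving edges are inherited, so the limiting walk does connect the two chosen vertices, contradicting infinite distance. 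However, it is heavier than necessary. Since all the $\Graph^{(n)}$ share the same underlying discrete graph $\DG$, connectedness of the $\Graph^{(n)}$ is precisely connectedness of $\DG$; and $\Graph^{(\infty)}$ is, by construction, the metric graph obtained from $\DG$ by deleting no edges but \emph{contracting} exactly those edges whose lengths tend to zero (this is what the vertex classes $[v^{(\infty)}]$ encode). Edge contraction never disconnects a connected graph, so $\Graph^{(\infty)}$ is connected --- no shortest paths, no subsequence extraction, and no bookkeeping of path types are needed. What your longer route buys is independence from the combinatorial description of the limit (it only uses metric convergence and the definition of the classes $[v]$), but in the present setting the contraction argument is both shorter and closer to how the paper intends the lemma to be read.
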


We also note for future reference that the Laplacian eigenvalues we are considering, introduced in Section~\ref{subsec:def}, behave well with respect to this notion of convergence. Here the correspondence between vertices is necessary to identify the correct limiting vertex conditions.

\begin{lemma}
\label{lem:eig-convergence}
Let $(\Graph^{(n)})_{n\in\mathbb N}$ converge to $\Graph^{(\infty)}\neq\emptyset$ in $\overline{\Gamma_\DG}$. Then
\begin{enumerate}
\item $\mu_2 (\Graph^{(n)}) \to \mu_2 (\Graph^{(\infty)})$;
\item if a vertex set $\DV_D$ in the underlying discrete graph $\DG$ is chosen and Dirichlet conditions are applied at all vertices in $\Graph^{(n)}$ corresponding to $\DV_D$, and if Dirichlet conditions are applied at exactly those vertices $v$ of $\Graph^{(\infty)}$ such that at least one vertex in $[v]$ corresponds to a vertex in $\DV_D$, then $\lambda_1 (\Graph^{(n)}) \to \lambda_1 (\Graph^{(\infty)})$.
\end{enumerate}
If $\Graph^{(n)} \to \emptyset$, then $\mu_2 (\Graph^{(n)}) \to \infty$ as $n \to \infty$. If in addition $\VertexSet_D (\Graph^{(n)}) \neq \emptyset$, then also $\lambda_1 (\Graph^{(n)}) \to \infty$.
\end{lemma}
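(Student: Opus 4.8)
The plan is to treat the two eigenvalue statements uniformly via the variational characterisations \eqref{eq:lambda-1} and \eqref{eq:mu-2}, splitting each into an upper bound (lim sup) and a lower bound (lim inf). The key geometric fact I would extract first is that, because $\Graph^{(n)} \to \Graph^{(\infty)}$ in $\overline{\Gamma_\DG}$, each edge $e_i^{(n)}$ converges in length to $e_i^{(\infty)}$, and the edges that ``disappear'' are exactly those with vanishing limiting length; moreover, by the equivalence-class construction preceding the lemma, the vertices of $\Graph^{(n)}$ grouped into a class $[v]$ are joined by paths of total length $\to 0$. So there is a natural ``collapsing'' map and a natural ``stretching'' map relating functions on $\Graph^{(n)}$ and on $\Graph^{(\infty)}$, affine on each edge, which distort the Dirichlet form $a(\cdot,\cdot)$ and the $L^2$-norm by factors tending to $1$ on the surviving edges and controllably on the collapsing ones. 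The correspondence of vertices is exactly what guarantees that these maps respect the (limiting) vertex conditions: a function vanishing on a Dirichlet vertex set of $\Graph^{(n)}$ pushes forward to one vanishing on the vertex set of $\Graph^{(\infty)}$ prescribed in (2), and conversely a test function on $\Graph^{(\infty)}$ pulls back to an admissible test function on $\Graph^{(n)}$; in the $\mu_2$ case the mean-zero constraint is handled by subtracting the (vanishing-in-the-limit) average of the pulled-back/pushed-forward function.

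\emph{Upper bound.} Fix an eigenfunction $\psi^{(\infty)}$ on $\Graph^{(\infty)}$ realising $\lambda_1(\Graph^{(\infty)})$ (resp.\ $\mu_2(\Graph^{(\infty)})$). Pull it back to $\Graph^{(n)}$: on a surviving edge use the affine reparametrisation; on a collapsing edge interpolate linearly between the (nearly equal) endpoint values. Since the collapsing edges have length $\to 0$, their contribution to $a$ is $O(\text{length}^{-1})\cdot(\text{value gap})^2$, and the value gap is itself $O(\text{length})$ by continuity of $\psi^{(\infty)}$ at the corresponding vertex — hence that contribution $\to 0$; the $L^2$-mass on collapsing edges $\to 0$ too. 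On surviving edges the Rayleigh quotient converges to that of $\psi^{(\infty)}$. In the natural case one must restore the mean-zero constraint by subtracting the average, which tends to $0$; one checks this perturbs the Rayleigh quotient negligibly. This gives $\limsup_n \lambda_1(\Graph^{(n)}) \le \lambda_1(\Graph^{(\infty)})$ and similarly for $\mu_2$.

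\emph{Lower bound.} Take (normalised) eigenfunctions $\psi^{(n)}$ on $\Graph^{(n)}$ for $\lambda_1(\Graph^{(n)})$ (resp.\ $\mu_2(\Graph^{(n)})$). By Nicaise's inequalities (Theorem~\ref{thm:nicaise}) and Lemma~\ref{lem:part-convergence}(1), $\lambda_1(\Graph^{(n)})$ and $\mu_2(\Graph^{(n)})$ stay bounded (the upper bound just proven already gives this), so the Dirichlet energies $a(\psi^{(n)},\psi^{(n)})$ are bounded; together with the $L^2$-normalisation this yields uniform $H^1$-bounds on the surviving edges. Extract a subsequence along which $\psi^{(n)}$ converges weakly in $H^1$ and strongly in $L^2$ (and uniformly) on each surviving edge to some limit function; the uniform Hölder bound from the energy estimate forces the limit to be continuous across each vertex of $\Graph^{(\infty)}$ (values at vertices in a common class $[v]$ differ by $O(\text{length of joining path})\cdot(\text{energy})^{1/2}\to 0$), so it defines an element of $H^1(\Graph^{(\infty)})$; it inherits the Dirichlet conditions of $\Graph^{(\infty)}$ from (2) (resp.\ the mean-zero condition, since the $L^2$-mass escaping onto collapsing edges $\to 0$). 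Its $L^2$-norm is $1$ because no mass is lost in the limit, hence it is nontrivial. By weak lower semicontinuity of the Dirichlet form, $a^{(\infty)}(\psi^{(\infty)},\psi^{(\infty)}) \le \liminf_n a^{(n)}(\psi^{(n)},\psi^{(n)})$, so the limit function's Rayleigh quotient is $\le \liminf_n \lambda_1(\Graph^{(n)})$ (resp.\ $\mu_2$); since it is an admissible test function, $\lambda_1(\Graph^{(\infty)}) \le \liminf_n \lambda_1(\Graph^{(n)})$, and likewise for $\mu_2$. Combining the two bounds gives convergence.

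\emph{The case $\Graph^{(n)}\to\emptyset$.} Here all edge lengths tend to $0$, so $|\Graph^{(n)}| \to 0$, and Nicaise's inequalities \eqref{eq:nicaise} give $\mu_2(\Graph^{(n)}) \ge \pi^2/|\Graph^{(n)}|^2 \to \infty$, and $\lambda_1(\Graph^{(n)}) \ge \pi^2/(4|\Graph^{(n)}|^2) \to \infty$ whenever a Dirichlet vertex is present.

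The main obstacle I expect is the lower-bound step: making precise that, despite edges collapsing, the limit of the eigenfunctions is genuinely continuous on $\Graph^{(\infty)}$ (i.e.\ that values over a collapsing cluster of vertices agree in the limit) and carries full $L^2$-mass and the correct vertex conditions — essentially a $\Gamma$-convergence / Mosco-convergence argument for the forms $a^{(n)}$ on the varying spaces $H^1_0(\Graph^{(n)})$, for which the bookkeeping of which vertices merge and which Dirichlet conditions survive (governed precisely by the classes $[v]$ and the prescription in (2)) is the delicate part.
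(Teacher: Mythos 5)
Your plan is correct in substance, but it takes a different route from the paper, whose ``proof'' of this lemma is essentially a citation: part (1) is referred to the method of \cite[Appendix~A]{BaLe17} (said to adapt easily to (2)), with \cite{BeLaSu19} cited for a detailed treatment of both, the non-degenerate case (no edge lengths shrinking) deferred to \cite[\S~3.1]{BerKuc13}, and only the degenerate case $\Graph^{(n)}\to\emptyset$ argued directly via Nicaise's inequalities (Theorem~\ref{thm:nicaise}) -- exactly as in your last step. What you propose is, in effect, a self-contained reconstruction of the analytic content of those references: a two-sided Rayleigh-quotient (Mosco-type) argument, with the upper bound by transplanting the limit eigenfunction (affine reparametrisation on surviving edges, near-constant extension on collapsing ones, mean correction in the natural case) and the lower bound by $H^1$-compactness, the edgewise H\"older estimate to force a single limiting value over each collapsing cluster $[v]$, and bookkeeping of which Dirichlet/mean-zero constraints survive; this correctly identifies the delicate points and is exactly where the vertex-class prescription in (2) is used. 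If you were to write it out in full, a few estimates should be tightened: the H\"older bound gives value discrepancies of order $(\text{path length})^{1/2}\|f'\|_{L^2}$, not order of the length; with the exact affine pullback the two endpoint values of a collapsing edge are both equal to $\psi^{(\infty)}(v)$, so the constant extension suffices; and the claim that no $L^2$-mass is lost needs the uniform sup-bound on the eigenfunctions (available from the $H^1$-bound since $|\Graph^{(\infty)}|>0$) to control the mass sitting on collapsing edges. Your approach buys a self-contained proof at the cost of length; the paper's buys brevity by outsourcing the functional-analytic core.
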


\begin{proof}
(1) follows from the method described in \cite[Appendix~A]{BaLe17} (which can also be easily adapted to (2)); alternatively, see \cite{BeLaSu19} for a more detailed treatment of both. The case where no edge lengths converge to zero is already covered in \cite[\S~3.1]{BerKuc13}. In the degenerate case where all edge lengths converge to zero, Nicaise' inequalities (Theorem~\ref{thm:nicaise}) imply that $\mu_2 (\Graph^{(n)}) \geq \pi^2/|\Graph^{(n)}|^2 \to \infty$ and $\lambda_1 (\Graph^{(n)}) \geq \pi^2/4|\Graph^{(n)}|^2 \to \infty$ (in the latter case as long as at least one Dirichlet vertex is present).
\end{proof}

With this background, we can now return to partitions and in particular define the notion of convergence of a sequence of partitions. For the rest of the section, we assume that $\Graph$ is a fixed 
ur-graph satisfying (up to the correct choice of representative) Assumption~\ref{assumption}, and fix a primitive $k$-partition $\colourset$ of $\Graph$; we suppose that the clusters of each partition $\parti = \{\Graph_1, \ldots, \Graph_k\}$ have the respective underlying discrete graphs $\DG_1,\ldots,\DG_k$ (for a fixed order). Then, as above, setting $E_i:=|\DE(\DG_i)|$ to be the number of edges of $\DG_i$, each $\Graph_1$ may be uniquely identified with a vector in $\R^{E_i}_+$; this means that each $\parti = (\Graph_1,\ldots,\Graph_k) $ may be identified with a vector in $\R^{E_1}_+ \times \ldots \times \R^{E_k}_+ \simeq \R^E_+$ whose (strictly) positive entries sum to the total length $|\Graph|$ of the graph $\Graph$, that is, we have the identification
\begin{equation}
\label{eq:colourset-identification}
	\colourset \simeq \Theta_\colourset := \left\{ x = (x_1,\ldots,x_E) \in \R^E : x_j > 0
	\text{ for all $j$ and } |x|_1 = |\Graph| \right\},
\end{equation}
where $E=\sum\limits_{i=1}^k E_i=\sum\limits_{i=1}^k |\DE(\mG_i)|$ and $|x|_1$ is the $1$-norm of the vector $x$. Now if two partitions $\parti = \{\Graph_1,\ldots,\Graph_k\}$ and $\widetilde{\parti} = \{\NewGraph_1, \ldots, \NewGraph_k \}$ are similar, $\parti,\widetilde{\parti} \in \colourset = \colourset_{\cutset}$ (and in particular consist of clusters that have the same underlying discrete graphs, say $\DG_1,\ldots,\DG_k$), then we can introduce
\begin{equation}
\label{eq:dist-pk}
	d(\parti,\widetilde{\parti}):=\sum\limits_{i=1}^k d_{\Gamma_{\DG_i}}(\Graph_i,\NewGraph_i),
\end{equation}
where $d_{\Gamma_{\DG_i}}$ is the distance introduced in equation~\ref{eq:dist-gamma}. This distance induces an equivalent topology to the one induced by the Euclidean distance between the points in the set $\Theta_\colourset$ corresponding to the respective partitions $\parti$ and $\widetilde{\parti}$. The following result is immediate.

\begin{lemma}\label{lem:primmetr}
Let $\cutset$ be a cut pattern of $\Graph$. Then $\colourset=\colourset_{\cutset}$ is a metric space with respect to the distance introduced in~\eqref{eq:dist-pk}.
\end{lemma}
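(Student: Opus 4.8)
The assertion is essentially a formal consequence of Proposition~\ref{prop:dist-gamma}: the plan is to observe that, by construction, every partition in $\colourset_{\cutset}$ gives rise to the \emph{same} tuple $(\DG_1,\ldots,\DG_k)$ of underlying discrete graphs, so that the summands $d_{\Gamma_{\DG_i}}$ appearing in~\eqref{eq:dist-pk} are all well defined, and then to check that a finite sum of metrics, pulled back along the coordinate projections of a product, is again a metric.

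First I would record the well-definedness of~\eqref{eq:dist-pk}. Since $\colourset_{\cutset}$ is a single equivalence class for the similarity relation of Definition~\ref{defi:simcut} (which is indeed an equivalence relation by Proposition~\ref{prop:colour-equivalence}(1)), any two partitions $\parti,\widetilde\parti\in\colourset_{\cutset}$ admit, after fixing the order of the clusters compatibly with the fixed order $\DG_1,\ldots,\DG_k$ of their underlying discrete graphs, a cluster-by-cluster matching with $\Graph_i,\NewGraph_i\in\Gamma_{\DG_i}$ for each $i$; hence each $d_{\Gamma_{\DG_i}}(\Graph_i,\NewGraph_i)$ is defined by~\eqref{eq:dist-gamma}, and so is the sum in~\eqref{eq:dist-pk}. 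Equivalently, under the identification~\eqref{eq:colourset-identification} of $\colourset_{\cutset}$ with $\Theta_\colourset\subset\R^E_+$, the quantity $d(\parti,\widetilde\parti)$ is the restriction to $\Theta_\colourset$ of a distance on $\R^E$ obtained by applying the $1$-norm blockwise, which makes the metric axioms transparent.

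It then remains to verify the three metric axioms. Non-negativity and symmetry of $d$ are inherited termwise from the corresponding properties of each $d_{\Gamma_{\DG_i}}$, which hold by Proposition~\ref{prop:dist-gamma}; the triangle inequality for $d$ follows by summing the triangle inequalities for the $d_{\Gamma_{\DG_i}}$ over $i=1,\ldots,k$. For the identity of indiscernibles, $d(\parti,\widetilde\parti)=0$ forces $d_{\Gamma_{\DG_i}}(\Graph_i,\NewGraph_i)=0$ for every $i$, being a sum of non-negative terms; hence $\Graph_i=\NewGraph_i$ as ur-graphs for every $i$, again by Proposition~\ref{prop:dist-gamma}, and therefore $\parti=\widetilde\parti$. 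The converse is clear. There is no genuine obstacle here; the only point that requires a moment's care is the bookkeeping behind the well-definedness of~\eqref{eq:dist-pk}, namely that passing to the right representatives of the ur-graph $\Graph$ and to the right numbering of the clusters within the single primitive class does not affect any of the distances $d_{\Gamma_{\DG_i}}$ — which is exactly what Definition~\ref{defi:simcut} and Proposition~\ref{prop:colour-equivalence} guarantee.
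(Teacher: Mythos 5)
Your argument is correct and is essentially the paper's own: the paper simply declares the lemma ``immediate'' after identifying $\colourset_{\cutset}$ with the simplex-type set $\Theta_\colourset\subset\R^E$ of cluster edge-length vectors via~\eqref{eq:colourset-identification}, which is exactly the identification you invoke. Your write-up just makes explicit the routine verification (well-definedness via Definition~\ref{defi:simcut} and Proposition~\ref{prop:colour-equivalence}, and the metric axioms inherited termwise from Proposition~\ref{prop:dist-gamma}), including the point of indiscernibles, which the paper leaves implicit in that same identification.
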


In order to check the plausibility of this metrisation of the partition space, let us explicitly record the following observation.

\begin{proposition}
\label{prop:exhaustive-closed}
Suppose $\colourset$ is any primitive partition and $(\parti_n)_{n\in\N}\subset \colourset$ is a sequence of $k$-partitions which is Cauchy with respect to the metric \eqref{eq:dist-pk}. Then the limit partition $\parti_\infty \in \overline{\colourset}$ is also exhaustive.
\end{proposition}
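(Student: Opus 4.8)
The plan is to use the identification $\colourset\simeq\Theta_\colourset$ from~\eqref{eq:colourset-identification} together with the fact that the metric~\eqref{eq:dist-pk} is equivalent to the Euclidean distance on $\Theta_\colourset$, and to argue that the limit of a Cauchy sequence of points in $\Theta_\colourset$ automatically lies in the simplex of vectors with the correct $1$-norm. First I would observe that, since each $\parti_n\in\colourset$ is by definition exhaustive, the corresponding vector $x^{(n)}\in\Theta_\colourset$ satisfies $|x^{(n)}|_1=|\Graph|$, i.e.\ the sum of the (positive) edge lengths of all clusters equals the total length of $\Graph$. A Cauchy sequence in $(\colourset,d)$ corresponds, via the bi-Lipschitz identification, to a Cauchy sequence $(x^{(n)})$ in $\R^E$, which converges to some $x^{(\infty)}\in\R^E_{\ge 0}$; the continuity of $x\mapsto|x|_1$ gives $|x^{(\infty)}|_1=|\Graph|$ as well. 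Thus $x^{(\infty)}$ lies in the closed simplex $\overline{\Theta_\colourset}$, which is exactly the set identified with $\overline{\colourset}$, and the limiting object $\parti_\infty$ has cluster supports whose total length still exhausts $|\Graph|$.

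The second, more geometric, step is to translate the condition $|x^{(\infty)}|_1=|\Graph|$ back into exhaustivity of the partition $\parti_\infty$ in the sense of Definition~\ref{def:partition}(2). Here I would use Lemma~\ref{lem:part-convergence}: the clusters $\Graph_i^{(n)}$ converge in $\overline{\Gamma_{\DG_i}}$ to limit clusters $\Graph_i^{(\infty)}$ whose edge lengths are the (possibly vanishing) limits of those of $\Graph_i^{(n)}$, and such a limit cluster is still connected. Some edges of some clusters may shrink to zero length and disappear, changing the underlying discrete graphs; but each surviving edge of $\Graph^{(\infty)}$ — recall $\Graph$ itself is fixed, so its edge lengths are constant along the sequence — must be accounted for as (part of) exactly one cluster support, precisely because the total surviving cluster length equals $|\Graph|$ and the cluster supports have pairwise disjoint interiors. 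Concretely, for each edge $e$ of (a suitable representative of) $\Graph$, exactly one cluster contains $e$ in its support for every $n$, and since the $e$-coordinate does not degenerate, the same cluster contains $e$ in the limit; summing the lengths over all edges of $\Graph$ shows $\bigcup_i\Omega_i^{(\infty)}=\Graph$, which is exactly exhaustivity.

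A detail I would make sure to address is that the discrete graphs $\DG_i$ underlying the clusters may genuinely change in the limit (edges or vertices may be lost), so $\parti_\infty$ need not itself lie in $\colourset$ — only in its closure $\overline{\colourset}$; this is already built into the statement, and the argument above only uses convergence of edge-length vectors, not constancy of the discrete type. I would also note that connectedness of each limit cluster, needed for $\parti_\infty$ to be a legitimate partition at all, is supplied by Lemma~\ref{lem:part-convergence}(2). The main obstacle, such as it is, is bookkeeping: one must be careful that the coordinates of $\Theta_\colourset$ correspond to edges of the \emph{clusters} (which may collapse) while the total length being preserved is a statement about the \emph{fixed} graph $\Graph$; once the map between cluster edges and edges of $\Graph$ (via cut patterns) is kept straight, the conservation of $1$-norm under Euclidean limits does all the work, and the proof is short.
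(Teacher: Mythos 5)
Your argument is correct and is essentially the paper's own (implicit) justification: the paper records Proposition~\ref{prop:exhaustive-closed} without a separate proof, precisely because under the identification $\colourset\simeq\Theta_\colourset$ of \eqref{eq:colourset-identification} the constraint $|x|_1=|\Graph|$ persists in the closure $\overline{\Theta_\colourset}$, so the limit cluster supports -- closed sets with pairwise disjoint interiors -- still have total length $|\Graph|$ and therefore cover $\Graph$, which is exactly the two-step argument you give via Lemma~\ref{lem:part-convergence}. The only loose phrase is that ``the $e$-coordinate does not degenerate'' (individual coordinates may well vanish in the limit), but this is immaterial since your length-conservation bookkeeping already yields $\bigcup_i\Omega_i^{(\infty)}=\Graph$.
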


However, this metric space is non-complete, since given a Cauchy sequence $(\parti_n)_{n\in\N}\subset \colourset$ it cannot be excluded that one or more clusters vanish in the limit, i.e., $|\Graph_i^{(n)}|\to 0$, leading to an $m$-partition of $\Graph$ with $m<k$ as a limit object; 
these correspond to the limit points in the Euclidean set $\Theta_\colourset$ from \eqref{eq:colourset-identification} with one or more entries equal to zero.

The spectral energies we will consider in the sequel will turn out to be continuous with respect to this metric, cf.~Lemmata~\ref{lem:energy-convergence} and~\ref{lem:min-energy-convergence}. This is 
an immediate corollary of Lemma~\ref{lem:eig-convergence}. However, if the $\parti_n$ are $k$-partitions and $\parti_n \to \parti_\infty$ for some $m$-partition $\parti_\infty$ with $m<k$, then we do \emph{not} in general expect spectral continuity, since the corresponding partition energies will diverge to $\infty$ (cf.\ the proof of Lemma~\ref{lem:energy-non-convergence}).

Nevertheless, as above, it is natural to consider the canonical completion $\overline{\colourset}$, which consists of equivalence classes of Cauchy sequences of partitions with respect to the equivalence relation of having vanishing distance $d(\parti_n,\widetilde{\parti}_n)$ in the limit, which corresponds to $\overline{\Theta_\colourset} \subset \R^E$.

\begin{lemma}
\label{lem:colour-set-compact}
Let $\cutset$ be a cut pattern of $\Graph$. Then $\overline{\colourset_{\cutset}}$ is compact.
\end{lemma}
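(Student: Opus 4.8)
The plan is to reduce the compactness of $\overline{\colourset_{\cutset}}$ to the elementary fact that a closed and bounded subset of a finite-dimensional Euclidean space is compact. Recall from~\eqref{eq:colourset-identification} that $\colourset$ is identified with
\[
	\Theta_\colourset = \left\{ x \in \R^E : x_j > 0 \text{ for all } j \text{ and } |x|_1 = |\Graph| \right\},
\]
and that, by the definition of $\overline{\colourset}$ given just above the lemma, $\overline{\colourset}$ corresponds to the closure $\overline{\Theta_\colourset}$ inside $\R^E$, namely
\[
	\overline{\Theta_\colourset} = \left\{ x \in \R^E : x_j \geq 0 \text{ for all } j \text{ and } |x|_1 = |\Graph| \right\}.
\]
The first step is therefore to verify carefully that the metric $d$ from~\eqref{eq:dist-pk} on $\colourset$, transported through this identification, induces the same topology as the Euclidean metric restricted to $\Theta_\colourset$ -- this is already asserted in the paragraph preceding Lemma~\ref{lem:primmetr} and follows because $d(\parti,\widetilde\parti) = \sum_i d_{\Gamma_{\DG_i}}(\Graph_i,\NewGraph_i)$ is, up to the canonical bijection between edges and their lengths, exactly the $\ell^1$-sum of Euclidean distances between the corresponding blocks of edge-length vectors, hence bi-Lipschitz equivalent to the full Euclidean distance on $\R^E$. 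Consequently the completion $\overline{\colourset}$ is isometric (up to bi-Lipschitz equivalence of metrics, which suffices for topological statements) to $\overline{\Theta_\colourset}$ with its Euclidean metric.

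The second step is then purely to observe that $\overline{\Theta_\colourset}$ is a closed, bounded subset of $\R^E$: it is the intersection of the closed positive orthant with the affine hyperplane $\{|x|_1 = |\Graph|\}$, which is plainly closed, and it is bounded since every coordinate of any $x \in \overline{\Theta_\colourset}$ satisfies $0 \le x_j \le |x|_1 = |\Graph|$. By the Heine--Borel theorem $\overline{\Theta_\colourset}$ is compact, and hence so is $\overline{\colourset_{\cutset}}$.

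I do not expect any genuine obstacle here; the content of the lemma is essentially bookkeeping, confirming that the abstract completion $\overline{\colourset}$ really is the concrete compact simplex-like set $\overline{\Theta_\colourset}$ already written down in the text. The one point requiring a little care -- and the closest thing to a ``hard part'' -- is making the identification between $\overline{\colourset}$ and $\overline{\Theta_\colourset}$ precise, i.e.\ checking that equivalence classes of $d$-Cauchy sequences of partitions in $\colourset$ correspond bijectively and homeomorphically to points of $\overline{\Theta_\colourset}$: a $d$-Cauchy sequence $(\parti_n)$ corresponds to a Euclidean-Cauchy sequence of vectors $(x^{(n)})$ in $\Theta_\colourset$, which converges in $\R^E$ to some $x^{(\infty)}$ with $x^{(\infty)}_j \ge 0$ and $|x^{(\infty)}|_1 = |\Graph|$ (the last equality passing to the limit since $|\cdot|_1$ is continuous), and conversely every such $x^{(\infty)}$ arises as such a limit; two Cauchy sequences are $d$-equivalent precisely when their associated vectors converge to the same point. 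This bijection is a homeomorphism because the metrics agree up to bi-Lipschitz equivalence, which is exactly what was recorded after~\eqref{eq:dist-pk}. With that identification in hand, compactness is immediate from Heine--Borel, completing the proof.
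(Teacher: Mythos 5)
Your proof is correct and follows essentially the same route as the paper, which simply identifies $\overline{\colourset_{\cutset}}$ with the closed bounded set $\overline{\Theta_\colourset}\subset\R^E$ and invokes Heine--Borel. The additional care you take in checking that the metric \eqref{eq:dist-pk} is topologically equivalent to the Euclidean one is a fuller version of what the paper treats as immediate.
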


\begin{proof}
This is immediate since $\overline{\colourset_{\cutset}}$ may be identified with the closed and bounded subset $\overline{\Theta_\colourset}$ of $E$-dimensional Euclidean space.
\end{proof}

More generally, if $A \subset \mathfrak{P}_k$ is any set of $k$-partitions, then $\overline{A}$ is the union of the sets $\overline{A \cap \colourset}$ over all primitive partitions $\colourset$. Obviously, it is possible that a given $m$-partition $\parti \in \overline{A}$ may lie in the closure of more than one primitive partition. Moreover, the sets $\mathfrak{P}_k$ and $\mathfrak{R}_k$ are themselves not closed, although, as we will see shortly,  $\bigcup\limits_{i\le k}\mathfrak{P}_k$ and $\bigcup\limits_{i\le k}\mathfrak{R}_k$ are.

It is also natural to ask which types of partition from our classification, Definition~\ref{def:classification}, are closed in the metric \eqref{eq:dist-pk}. 

\begin{example}\label{ex:lasso-closure}
Let us review the proper 2-partitions of the lasso graph of Section~\ref{sec:motivation}. As the separating point $\tilde{v}$ wanders towards $v$ in Figure~\ref{fig:basic-example-proper}, the corresponding partition $\mathcal P$ converges, with respect to the metric introduced in~\eqref{eq:dist-pk}, towards the faithful (but non-proper) 2-partition in Figure~\ref{fig:basic-example-faithful}. On the other hand, as the $\tilde{v}_1,\tilde{v}_2$ approach $v$ in Figure~\ref{fig:basic-example-proper-2}, the corresponding proper (and hence faithful) partition $\mathcal P$ converges towards the rigid, non-faithful 2-partition in Figure~\ref{fig:basic-example-rigid}. Observe that the cut pattern and hence the underlying discrete graphs of these two limiting partitions are different.
\end{example}

Hence, neither the class of proper, nor faithful partitions is closed; nor is the class of internally connected partitions, as can be shown using Example~\ref{ex:tree}.
In particular, connectivity of the clusters, even if it holds for a sequence of partitions, can be destroyed in the limit.
On the other hand, if $(\parti_n)_{n\in\N}\subset \colourset$ is a sequence of loose partitions of a given primitive partition, then the limit object is clearly still a well-defined $m$-partition for some $1\leq m \leq k$; in particular, it is loose, and thus $\bigcup\limits_{i\le k}\mathfrak{P}_k$ is closed. The following proposition establishes that a corresponding statement holds for rigid partitions; and it is for this reason that we will tend to favour these two partition classes over the respective classes of proper, faithful and internally connected ones.

\begin{proposition}
\label{prop:rigid-class-closed}
Suppose $\colourset$ is any primitive partition and $(\parti_n)_{n\in\N}\subset \colourset$ is a sequence of rigid $k$-partitions which is Cauchy with respect to the metric \eqref{eq:dist-pk}. Then the limit partition $\parti_\infty \in \overline{\colourset}$ is a rigid $m$-partition for some $m$, $1\leq m \leq k$. In particular, $\bigcup\limits_{i\le k}\mathfrak{R}_k$ is closed.
\end{proposition}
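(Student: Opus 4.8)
The plan is to work within a fixed primitive partition $\colourset$, where by Lemma~\ref{lem:colour-set-compact} every Cauchy sequence $(\parti_n)$ converges (after identification with $\overline{\Theta_\colourset} \subset \R^E$) to some limit point $\parti_\infty \in \overline{\colourset}$ whose edge-length vector has nonnegative entries. By Proposition~\ref{prop:exhaustive-closed} the limit is still exhaustive. The entries that vanish in the limit correspond exactly to edges of clusters that shrink to a point; grouping the clusters according to whether their total length survives, $\parti_\infty$ is naturally an exhaustive $m$-partition of $\Graph$ for some $1 \le m \le k$ (the surviving clusters), exactly as in the discussion following Proposition~\ref{prop:exhaustive-closed}. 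The real content is to show that this limiting $m$-partition is \emph{rigid}, i.e.\ that its cut set and separation set coincide (Definition~\ref{def:classification}(2)).

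The key observation is that rigidity is a purely combinatorial condition on the \emph{cut pattern}: by Proposition~\ref{prop:colour-equivalence}(3), rigidity depends only on the similarity class, not on the particular edge lengths. So the first step is to describe how the cut pattern of $\parti_n$ degenerates as some edge lengths go to $0$. Here I would use the completion framework of Section~\ref{subsec:part-convergence}: as $n \to \infty$, within each surviving cluster $\Graph_i^{(n)}$ certain subgraphs collapse, and the vertices of $\Graph_i^{(n)}$ that get identified in the limit are grouped by the equivalence relation $[\,\cdot\,]$ (distance $\to 0$). The limiting cluster $\Graph_i^{(\infty)}$ has as its vertices these equivalence classes, and — crucially — a vertex $[v^{(\infty)}]$ of $\Graph_i^{(\infty)}$ is a cut point of $\parti_\infty$ precisely when at least one vertex in the class $[v^{(\infty)}]$ was a cut point of $\parti_n$, equivalently lay on $\partial\Omega_i^{(n)}$ (using rigidity of each $\parti_n$). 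The second step is then to verify the two inclusions needed for rigidity of $\parti_\infty$. The inclusion $\partial\parti_\infty \subseteq \cutset(\parti_\infty)$ holds for every partition (as noted after Definition~\ref{def:separation-set}), so only $\cutset(\parti_\infty) \subseteq \partial\parti_\infty$ needs work: if $[v^{(\infty)}]$ is a cut point of $\parti_\infty$, pick a representative cut point $v^{(n)} \in \partial\Omega_i^{(n)}$; since each $\Omega_i^{(n)}$ is a union of edges with boundary in the vertex set, and the collapsing edges are contained in the cluster supports, one checks that the edges of $\Graph$ meeting $[v^{(\infty)}]$ still straddle the boundary between $\Omega_i^{(\infty)}$ and a neighbouring support, so $[v^{(\infty)}] \in \partial\Omega_i^{(\infty)} \subseteq \partial\parti_\infty$. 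This is where one must be careful that collapsing an \emph{interior} subgraph of a cluster cannot turn an interior cut point into a genuine interior cut of the limit cluster — but since in a rigid partition there are no interior cut points to begin with, no such point can be created by a limit, which is exactly the point.

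The main obstacle, I expect, is the bookkeeping of step one: correctly tracking which vertices of the approximating clusters are identified in the limit and showing that the combinatorial data "cut set $=$ separation set" is preserved under this identification, especially when several cut points of $\parti_n$ lying on different $\partial\Omega_i^{(n)}$ merge into a single limit vertex, or when an entire cycle of a cluster collapses so that a loop or parallel edge appears (as flagged in Section~\ref{subsec:part-convergence}). One must check that in none of these degenerations does a separating point get "absorbed into the interior" of a limit cluster. A clean way to organise this is to argue at the level of the cut of $\Graph$ that produces $\parti_n$: each $\parti_n$ arises from a cut $\NewGraph_n$ of $\Graph$, and in the limit these cuts converge (again in the completion $\overline{\Gamma_\DG}$ sense applied to $\NewGraph_n$) to a cut $\NewGraph_\infty$ of $\Graph^{(\infty)}$ — but since $\Graph^{(\infty)}$ is just $\Graph$ with some interior subgraphs of clusters collapsed and $\Graph$ itself is fixed, $\NewGraph_\infty$ is honestly a cut of $\Graph$, and by construction it only cuts vertices on cluster-support boundaries. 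That gives rigidity directly. Finally, once rigidity is established within each primitive class, the closedness of $\bigcup_{i \le k}\mathfrak{R}_k$ follows from the general remark preceding the proposition that the closure of a subset of $\mathfrak{P}_k$ is the union of its closures in the finitely many primitive partitions, together with the fact that a limit of $k$-partitions is an $m$-partition with $m \le k$.
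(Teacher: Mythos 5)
Your overall strategy is the paper's: pass to a limit inside a fixed primitive partition via compactness, use connectedness of the limit clusters (Lemma~\ref{lem:part-convergence}) to see that at most $k$ clusters survive, and then show that the cut set of $\parti_\infty$ coincides with its separation set. The only structural difference is the direction of the final step: the paper shows that no vertex $v\in\interior\Omega_i^{(\infty)}$ is cut through in $\Graph_i^{(\infty)}$, whereas you aim to show that every cut point of $\parti_\infty$ lies on $\partial\Omega_i^{(\infty)}$ for some surviving support -- contrapositive formulations of the same claim. The problem is that in your write-up this claim is precisely the step that is never proved. You assert that ``one checks that the edges of $\Graph$ meeting $[v^{(\infty)}]$ still straddle the boundary between $\Omega_i^{(\infty)}$ and a neighbouring support'', but no check is given, and your cautionary remark guards against the wrong danger: since each $\parti_n$ is rigid there are no interior cut points to be absorbed, so that observation is vacuous. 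The delicate configuration is a \emph{boundary} cut point $v^{(n)}\in\partial\Omega_i^{(n)}\cap\partial\Omega_j^{(n)}$ whose neighbouring cluster $\Graph_j^{(n)}$ vanishes in the limit ($|\Graph_j^{(n)}|\to 0$): then there is no neighbouring support left to ``straddle'', the limit vertex can well lie in $\interior\Omega_i^{(\infty)}$, and one must argue what happens to the cut there. Neither your main sketch nor your closing alternative (arguing with the cuts $\NewGraph_n$ of $\Graph$ and asserting that the limiting cut ``only cuts vertices on cluster-support boundaries'' -- boundaries of the $\Omega_i^{(n)}$ or of the $\Omega_i^{(\infty)}$? that is exactly the question) engages with this case, and your ``precisely when'' description of the limit cut set is likewise stated without proof, although only the easy forward direction of it is needed.

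For comparison, the paper's proof consists almost entirely of a short local argument at exactly this point: fix $v\in\interior\Omega_i^{(\infty)}$, observe that $v\in\interior\Omega_i^{(n)}$ for all sufficiently large $n$, invoke rigidity of $\parti_n$ to conclude that $v$ is not cut in $\Graph_i^{(n)}$ and that the edges incident with $v$ remain incident with $v$ and do not shrink, and then note that these incidence relations persist in $\Graph_i^{(\infty)}$. Whichever direction one argues in, some statement of this kind relating the limit support near the vertex in question to the approximating supports $\Omega_i^{(n)}$ is the substance of the proof; your proposal defers it, so as written it is a plan rather than a proof. (The remaining ingredients -- exhaustiveness via Proposition~\ref{prop:exhaustive-closed}, and deducing closedness of $\bigcup_{i\le k}\mathfrak{R}_i$ from the finiteness of the set of primitive partitions -- are fine and agree with the paper.)
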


\begin{proof}
Fix $i=1,\ldots,k$. Now since obviously $\Graph_{i}^{(n)} \to \Graph_{i}^{(\infty)}$ with respect to the metric of equation~\eqref{eq:dist-gamma}, by Lemma~\ref{lem:part-convergence} we have that $\Graph_{i}^{(\infty)}$ is connected; in particular, $\parti_\infty$ cannot have more than $k$ clusters. To check the rigidity condition, we also need to show that any vertex $v \in \interior \Omega_i^{(\infty)}$ is not cut through in $\Graph_{i}^{(\infty)}$. So let $v \in \interior \Omega_i^{(\infty)}$ be arbitrary, then we first observe that $v \in \interior \Omega_i^{(n)}$ for all sufficiently large $n$. Since $\parti_n$ was assumed rigid, any edge of $\Graph$ incident with $v$ remains incident with $v$ in $\Graph_{i}^{(n)}$, and none of these edges in $\Graph_{i}^{(n)}$ has length converging to zero. In particular, the incidence relations at $v$ are preserved in the limit graph $\Graph_{i}^{(\infty)}$. We conclude that $\parti_\infty$ is rigid. 
\end{proof}

\subsection{Existence results for energy functionals}\label{sec:exis}

In this section we prove a general existence result for extremisers of functionals $\Lambda : \parti \mapsto \R$ defined on certain sets of partitions. 

Since each primitive partition is a metric space by Lemma~\ref{lem:primmetr}, as is the disjoint union of all primitive partitions (up to allowing the distance function to attain the value $+\infty$), all usual topological notions are well-defined: lower semicontinuity will play a key role in what follows.

\begin{definition}
\label{def:lsc}
Let $A \subset \mathfrak{P} (\Graph)$ be a set of exhaustive partitions. We say the functional $J: A \to \R$ is
\begin{enumerate}
\item \emph{lower semi-continuous} (lsc) if, whenever $\colourset$ is a primitive partition and $(\parti_n)_{n\in\mathbb N}\subset  A \cap \colourset$ converges to some $\parti \in A \cap \colourset$, we have that $J (\parti) \leq \liminf_{n\to\infty} J (\parti_n)$;
\item \emph{strongly lower semi-continuous} (slsc) if, whenever $\colourset$ is a primitive partition and $(\parti_n)_{n\in\mathbb N}$ $\subset  A \cap \colourset$ converges to some $\parti \in \overline{A \cap \colourset}$, we have that $J (\parti) \in \R$ is well defined and $J (\parti) \leq \liminf_{n\to\infty} J (\parti_n)$.
\end{enumerate}
\end{definition}

(Strong) upper semi-continuity and (strong) continuity may be defined analogously. Note, however, that continuity of $J$ is not assumed on the closure of its domain $A$; in particular, even if $J$ is continuous on the whole of $\mathfrak{P}$ or $\mathfrak{R}$, it need not be bounded from above or below, not even on the set of all $k$-partitions, since we do not rule out discontinuities, or even divergence, $J (\parti_n) \to \pm\infty$, if one or more clusters of $\parti_n$ disappear in the limit. (This will, for example, be the case for the \textit{continuous} functionals $\denergy[k,p]$ and $\nenergy[k,p]$, see Lemmata~\ref{lem:energy-convergence} and~\ref{lem:energy-non-convergence}.)

\begin{theorem}
\label{thm:abstract-min-existence}
Let $k\geq 1$ and let  $A \subset \mathfrak{P} = \mathfrak{P} (\Graph)$ with $A \cap \mathfrak{P}_k \neq \emptyset$. Suppose that the functional $J : \overline{A} \to \R$ is strongly lower semi-continuous on $A$.
Suppose in addition that \emph{at least one} of the following conditions holds:
\begin{enumerate}
\item $J (\parti_n) \to \infty$ whenever there exist clusters $\Graph_n $ in $\parti_n \in A$ such that $|\Graph_n| \to 0$ as $n\to\infty$; or
\item for every $\ell$-partition $\parti^{(\ell)} \in \overline{A}$, $\ell=1,\ldots,k-1$, there exists an $(\ell+1)$-partition $\parti^{(\ell+1)} \in \overline{A}$ such that $J (\parti^{(\ell+1)}) \leq J (\parti^{(\ell)})$.
\end{enumerate}
Then there is at least one exhaustive $k$-partition $\parti^\ast \in \overline{A} \cap \mathfrak{P}_k$ realising
\begin{equation}
\label{eq:abstract-min-existence}
	J (\parti^\ast) = \inf \{ J (\parti) : \parti \in A \cap \mathfrak{P}_k \}.
\end{equation}
If $A \subset \mathfrak{R}$, that is, if we restrict to rigid partitions, then there is at least one exhaustive rigid $k$-partition $\parti^\ast$ satisfying \eqref{eq:abstract-min-existence}.
\end{theorem}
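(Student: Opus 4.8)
The plan is to reduce the claim to a compactness argument on the canonical completion of each primitive partition, using the finiteness of the number of primitive $k$-partitions (Proposition~\ref{prop:colour-equivalence}(1)) together with strong lower semicontinuity. First I would let $\iota := \inf\{J(\parti) : \parti \in A \cap \mathfrak{P}_k\}$, which is finite (or at least $<+\infty$) since $A \cap \mathfrak{P}_k \neq \emptyset$; one also needs $\iota > -\infty$, which will come out of the compactness step rather than being assumed. Take a minimising sequence $(\parti_n)_{n\in\N}\subset A\cap\mathfrak{P}_k$ with $J(\parti_n)\to\iota$. Since there are only finitely many primitive $k$-partitions $\colourset_1,\ldots,\colourset_r$ (Proposition~\ref{prop:colour-equivalence}(1)), by the pigeonhole principle a subsequence of $(\parti_n)$ lies entirely in a single primitive partition $\colourset$. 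Relabel so that $(\parti_n)\subset A\cap\colourset$. Via the identification $\colourset\simeq\Theta_\colourset\subset\R^E$ from \eqref{eq:colourset-identification}, the sequence corresponds to points in the bounded set $\Theta_\colourset$, so by Lemma~\ref{lem:colour-set-compact} (equivalently, Bolzano--Weierstrass on $\overline{\Theta_\colourset}$) a further subsequence converges to some $\parti_\infty\in\overline{\colourset}$.

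Now I would split into the two cases according to which of conditions (1), (2) holds. In case (1): if $\parti_\infty$ were an $m$-partition with $m<k$, then some cluster lengths $|\Graph^{(n)}_i|\to 0$ along the subsequence, whence $J(\parti_n)\to\infty$ by hypothesis (1), contradicting $J(\parti_n)\to\iota$ (which is finite, being bounded above by the value of $J$ at any fixed element of $A\cap\mathfrak{P}_k$). Hence $\parti_\infty$ is a genuine $k$-partition; by Proposition~\ref{prop:exhaustive-closed} it is exhaustive, so $\parti_\infty\in\overline{A\cap\colourset}\cap\mathfrak{P}_k$. Strong lower semicontinuity of $J$ on $A$ then gives $J(\parti_\infty)$ well-defined and $J(\parti_\infty)\le\liminf_n J(\parti_n)=\iota$; since $\parti_\infty\in A\cap\mathfrak{P}_k$ (here slsc guarantees $\parti_\infty$ is actually in the domain where $J$ is $\R$-valued, and membership in $A\cap\mathfrak P_k$ follows because the limit is an exhaustive $k$-partition in $\overline{A\cap\colourset}$ — I should be slightly careful here and, if necessary, invoke that $A$ is chosen so that $\overline{A\cap\colourset}\cap\mathfrak P_k\subset A$, or simply conclude $\parti_\infty\in\overline A\cap\mathfrak P_k$ as the statement actually asks), we get $J(\parti_\infty)\ge\iota$ as well, so $J(\parti_\infty)=\iota$ and $\parti^\ast:=\parti_\infty$ works.

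In case (2): the limit $\parti_\infty$ may genuinely be an $\ell$-partition with $\ell<k$; slsc still applies since $\parti_\infty\in\overline{A\cap\colourset}$, giving $J(\parti_\infty)\le\iota$. Now apply hypothesis (2) repeatedly: from $\parti_\infty\in\overline A$ (an $\ell$-partition) produce an $(\ell+1)$-partition $\parti^{(\ell+1)}\in\overline A$ with $J(\parti^{(\ell+1)})\le J(\parti_\infty)\le\iota$, and iterate $k-\ell$ times to reach a $k$-partition $\parti^\ast\in\overline A\cap\mathfrak{P}_k$ with $J(\parti^\ast)\le\iota$. On the other hand $\parti^\ast\in\overline A\cap\mathfrak P_k$, and — modulo the same domain caveat, using slsc to see $J(\parti^\ast)$ is well-defined and that approximating $\parti^\ast$ within its primitive cell by elements of $A\cap\mathfrak P_k$ shows $J(\parti^\ast)\ge\iota$ — we conclude $J(\parti^\ast)=\iota$. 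Finally, the rigid case: if $A\subset\mathfrak{R}$, then by Proposition~\ref{prop:rigid-class-closed} every limit partition obtained above (both $\parti_\infty$ and every intermediate partition produced by condition (2)) is again rigid, so the minimiser $\parti^\ast$ lies in $\overline A\cap\mathfrak R_k$, as claimed.

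The main obstacle I expect is the bookkeeping around the \emph{domain} of $J$: the theorem assumes $J$ is defined on $\overline A$ but only slsc on $A$, so one must be careful that the minimiser $\parti^\ast$ lands where $J$ is $\R$-valued and that the infimum over $A\cap\mathfrak P_k$ is not changed by passing to $\overline A$. The clean way to handle this is to note that slsc, by definition, already forces $J(\parti)\in\R$ for every $\parti\in\overline{A\cap\colourset}$ reached as a limit of a sequence in $A\cap\colourset$, and that $\iota$ is unchanged because any $\parti^\ast\in\overline A\cap\mathfrak P_k\cap\colourset$ can be approximated within $\Theta_\colourset$ by points of $A\cap\mathfrak P_k$ (its positive-coordinate neighbours), along which $J\to J(\parti^\ast)$ if $J$ is in fact continuous, or $\liminf J\ge J(\parti^\ast)$ via lower semicontinuity from the other side is \emph{not} what we need — rather we use that $J(\parti^\ast)\le\iota$ was just shown and $J(\parti^\ast)\ge\iota$ is automatic once $\parti^\ast\in A\cap\mathfrak P_k$ or, if it is only in $\overline A$, is not required by the statement, which merely asks $\parti^\ast\in\overline A\cap\mathfrak P_k$ realising the infimum over $A\cap\mathfrak P_k$. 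So in fact the cleanest reading only needs $J(\parti^\ast)\le\iota$ plus $\parti^\ast\in\overline A\cap\mathfrak P_k$, together with the trivial $\iota\le\inf$, and the potential circularity dissolves; the whole subtlety is purely notational and the pigeonhole-plus-Bolzano--Weierstrass core is the real content.
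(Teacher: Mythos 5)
Your proposal is correct and follows essentially the same route as the paper's proof: pigeonhole over the finitely many primitive partitions, compactness of $\overline{\colourset}$ (Lemma~\ref{lem:colour-set-compact}), strong lower semicontinuity to pass to the limit, condition (1) to exclude degenerate limits or condition (2) to rebuild a $k$-partition from the limit, and Proposition~\ref{prop:rigid-class-closed} for the rigid case. Your closing worry about whether $J(\parti^\ast)\geq \inf\{J(\parti):\parti\in A\cap\mathfrak{P}_k\}$ when $\parti^\ast$ lies only in $\overline{A}$ is a subtlety the paper's own proof glosses over in exactly the same way, so it does not separate your argument from theirs.
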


If we assume $A$ to be contained in the set of proper, or faithful, or internally connected partitions, then in general the minimiser $\parti^\ast$ is merely rigid, since the former sets are not closed. We will give concrete examples of this elsewhere; see for example Example~\ref{ex:3-star} and also Section~\ref{sec:exhaustive-issue}, and cf.\ also Example~\ref{ex:lasso-closure}. 
We emphasise that lower semi-continuity by itself is not enough to guarantee the existence of a minimiser in $A$, since the lower semi-continuity condition does not require $J (\parti_n) \to J (\parti_\infty)$ if $A$ is open and $A \ni \parti_n \to \parti_\infty \in \partial A$, even if $J (\parti_\infty)$ is actually well defined. We likewise need (1) or (2) to prevent the only limits of any minimising sequences from being $m$-partitions for some $m<k$.

While the monotonicity-like condition in (2) may seem a little unusual, it will be directly applicable to the partitions of max-min type considered in Section~\ref{sec:ex-max}.

\begin{proof}[Proof of Theorem~\ref{thm:abstract-min-existence}]
Let $(\parti_n)_{n\geq 1}$ be a sequence of $k$-partitions in $A \cap \mathfrak{P}_k$ such that $J (\parti_n) \to \inf_{A\cap \mathfrak{P}_k} J (\parti)$ as $n\to \infty$. Since there are only finitely many primitive partitions, there must exist a subsequence, which we shall still denote by $(\parti_n)$, such that the $\parti_n$ are all similar, $\parti_n \in \colourset$ for some primitive partition $\colourset$. We will write $\parti_n := \{\Graph_{1}^{(n)},\ldots,\Graph_{k}^{(n)}\}$.

By Lemma~\ref{lem:colour-set-compact} there exists an $m$-partition $\parti_\infty \in \overline{A \cap \colourset}$, $m \leq k$, such that up to a subsequence $\parti_n \to \parti_\infty$. If $A \subset \mathfrak{R}$, that is, if all partitions under consideration are rigid, then since $\mathfrak{R}$ is closed by Proposition~\ref{prop:rigid-class-closed}, also $\parti_\infty \in \mathfrak{R}$.

To finish the proof, it suffices to show that $\parti_\infty$ is actually a $k$-partition, since the strong lower semi-continuity of $J$ already implies that
\begin{equation}
\label{eq:lsc-limit}
	J (\parti_\infty) \leq \liminf_{n\to\infty} J (\parti_n).
\end{equation}
Assume condition (1). Then since the sequence $(J(\parti_n))$ is bounded from above, $|\Omega_{i}^{(n)}|$ cannot converge to zero for any $i$, and hence, by Lemma~\ref{lem:part-convergence}, also $|\Graph_{i}^{(\infty)}|>0$ as required. We can thus take $\parti^\ast = \parti_\infty$.

Instead assume condition (2). Suppose that the minimising partition $\parti_\infty$ found above is an $m$-partition for some $1\leq m \leq k$. In this case, (2) still gives us \eqref{eq:lsc-limit}, and then, upon sufficiently many applications of the second part of (2) to $\parti_\infty$ we obtain a $k$-partition $\widetilde{\parti}$ with
\begin{displaymath}
	J (\widetilde{\parti}) \leq J (\parti_\infty) = \inf_{\parti \in A \cap \mathfrak{P}_k } J (\parti),
\end{displaymath}
meaning we have found a minimal $k$-partition $\widetilde{\parti} = \parti^\ast$. If $A \subset \mathfrak{R}$, then $\widetilde{\parti} \subset \overline{A} \subset \mathfrak{R}$ by assumption.
\end{proof}

\section{Existence of spectral minimal partitions}
\label{sec:ex-min}

We can now introduce the first major types of spectral energy functionals we wish to consider. From now on, we will no longer need to distinguish between metric and ur-graphs, so we will always suppress this technicality and assume that $\Graph$ is a fixed metric graph -- say, the canonical representative of an ur-graph. We also fix $k\geq 1$ and suppose that $\parti = \{ \Graph_1, \ldots, \Graph_k \} \in \mathfrak{P}_k = \mathfrak{P}_k(\Graph)$ is a $k$-partition of $\Graph$. On each of the graphs $\Graph_i$, $i=1,\ldots,k$, we consider either:
\begin{enumerate}
\item the smallest nontrivial eigenvalue $\mu_2 (\Graph_i)$ of the Laplacian with natural vertex conditions, given by \eqref{eq:mu-2}; we have $\mu_2 (\Graph_i) > 0$ since $\Graph_i$ is connected by definition; or
\item the smallest eigenvalue $\lambda_1 (\Graph_i) = \lambda_1 (\Graph_i; \VertexSet_D (\Graph_i)) > 0$ of the Laplacian with Dirichlet conditions at the vertex set $\VertexSet_D (\Graph_i)$, cf.~\eqref{eq:lambda-1} and~\eqref{eq:dirichlet-vertex-definition}.
\end{enumerate}
In either case, we associate a spectral energy with the graph $\Graph_i$, and thus, collating these over all $i$, with the partition $\parti$ of $\Graph$. There are multiple possible ways to do so; the particular problems we shall consider in this section are as follows: for any given $p \in (0,\infty]$, we consider the energies
\begin{equation}
\label{eq:nenergy}
	\nenergy[p] (\parti) = \begin{cases} \left(\frac{1}{k}\sum\limits_{i=1}^k \mu_2(\Graph_i)^p\right)^{1/p}
	\qquad &\text{if } p \in (0,\infty),\\ \max\limits_{i=1,\ldots,k} \mu_2(\Graph_i) 
	\qquad &\text{if } p = \infty, \end{cases}
\end{equation}
and
\begin{equation}
\label{eq:denergy}
	\denergy[p] (\parti) = \begin{cases} \left(\frac{1}{k}\sum\limits_{i=1}^k \lambda_1(\Graph_i)^p\right)^{1/p}
	\qquad &\text{if } p \in  (0,\infty),\\ \max\limits_{i=1,\ldots,k} \lambda_1(\Graph_i) 
	\qquad &\text{if } p = \infty, \end{cases}
\end{equation}
in each case for a given $k$-partition $\parti = \{\Graph_1,\ldots,\Graph_k \} \in \mathfrak{P}_k$ of a graph $\Graph$. Here we have written, and we will always understand,
\begin{displaymath}
	\lambda_1 (\Graph_i) = \lambda_1 (\Graph_i; \VertexSet_D (\Graph_i)),
\end{displaymath}
where we will \emph{always} take the set of Dirichlet vertices of $\Graph_i$ to be the set $\VertexSet_D (\Graph_i)$ of cut points in $\Graph_i$, as defined in Definition~\ref{def:separation-set} (indeed, this motivates the notation $\VertexSet_D$); likewise, we will write $H^1_0 (\Graph_i)$ in place of $H^1_0 (\Graph_i;\VertexSet_D (\Graph_i))$. The problem is then to minimise the energies \eqref{eq:nenergy} and \eqref{eq:denergy}, respectively, that is, to solve for
\begin{displaymath}
	\inf_{\parti \in A \cap \mathfrak{P}_k} \nenergy[p] (\parti) \qquad \text{and} \qquad
	\inf_{\parti \in A \cap \mathfrak{P}_k} \denergy[p] (\parti)
\end{displaymath}
for a suitable set or class of partitions $A$. There are multiple reasonably natural possible choices for the set $A$ over which we can seek the infimum, in particular, we may consider any of the classes listed in Definition~\ref{def:classification}. Iin keeping with the usual convention when dealing with domains we will be mostly interested in exhaustive partitions, 
although the problems we consider would also be well posed without this restriction.

For example, we recall that in \cite{BanBerRaz12} the authors were interested in proper (and exhaustive) partitions, and in particular thoroughly studied the local minima of $\denergy[\infty]$ and their geometric properties (see especially~\cite[Theorems~2.7 and 2.10]{BanBerRaz12}); however, the actual question of existence of minimisers was not discussed (whether within the class of proper partitions or in general). We also recall that the internally connected partitions are exactly those rigid partitions for which $\Graph_i$ remains connected after removing the sets $\VertexSet_D (\Graph_i) \simeq \partial\Omega_i$, making them an \emph{a priori} natural class of partitions on which to consider Dirichlet problems. However, as noted in Section~\ref{sec:abstract-existence}, these classes are not closed in the natural partition topology; hence we cannot expect to find a minimiser within the respective classes (see Example~\ref{ex:3-star} below) -- even though the energies \eqref{eq:nenergy} and \eqref{eq:denergy} are continuous, as we will show shortly. Based on the fact that the classes of rigid and loose partitions are closed, they will be of primary interest for us, that is, for $p\in (0,\infty]$ we will principally consider the four problems of finding
\begin{equation}
\label{eq:minimal-energies}
\begin{aligned}
	\noptenergy[k,p] = \noptenergy[k,p](\Graph) &:=\inf_{\parti \in \mathfrak{R}_k} \nenergy[p] (\parti), \\
	\doptenergy[k,p] = \doptenergy[k,p](\Graph) &:=\inf_{\parti \in \mathfrak{R}_k} \denergy[p] (\parti), \\
	\noptenergyloose[k,p] = \noptenergyloose[k,p](\Graph) &:=\inf_{\parti \in \mathfrak{P}_k} \nenergy[p] (\parti), \\
	\doptenergyloose[k,p] = \doptenergyloose[k,p](\Graph) &:=\inf_{\parti \in \mathfrak{P}_k} \denergy[p] (\parti),
\end{aligned}
\end{equation}
where we recall that $\mathfrak{P}_k$ and $\mathfrak{R}_k$ are the sets of all loose and rigid exhaustive $k$-partitions of $\Graph$, respectively. We will refer to these partition problems as \emph{Neumann} (or \emph{natural}) and \emph{Dirichlet} problems. 

\begin{example}
\label{ex:3-pumpkin-2part}
Let $\Graph$ be an equilateral pumpkin graph on 3 edges of, say, length 1. Then it is easy to check (cf.\ Lemma~\ref{lem:neumann-identify-observation} or Example~\ref{ex:pumpkin-example}) that $\mathcal L^N_{2,p}=\frac{4\pi^2}{9}$ for all $p\in (0,\infty]$: this value is attained by the partition $\mathcal P$ in Figure~\ref{fig:3-pumpkin-2-partition}, which is also unique up to isomorphism. (This rigid partition is also minimal among the loose ones, i.e., $\Lambda^N_p(\mathcal P)= \noptenergy[2,p]= \noptenergyloose[2,p]$ for all $p\in (0,\infty]$.)
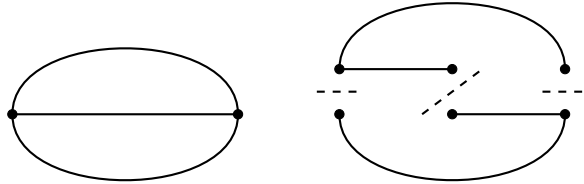
\begin{figure}[H]
\begin{tikzpicture}
\coordinate (a) at (0,0);
\coordinate (b) at (3,0);
\draw[thick,bend left=90]  (a) edge (b);
\draw[thick,bend right=90]  (a) edge (b);
\draw[thick] (a) -- (b);
\draw[fill] (a) circle (1.75pt);
\draw[fill] (b) circle (1.75pt);
\end{tikzpicture}
\qquad
\begin{tikzpicture}
\coordinate (a1) at (0,.3);
\coordinate (b1) at (3,.3);
\coordinate (a2) at (0,-.3);
\coordinate (b2) at (3,-.3);
\coordinate (c1) at (1.5,.3);
\coordinate (c2) at (1.5,-.3);
\draw[thick,bend left=90]  (a1) edge (b1);
\draw[thick,bend right=90]  (a2) edge (b2);
\draw[thick] (a1) -- (c1);
\draw[thick] (c2) -- (b2);
\draw[fill] (a1) circle (1.75pt);
\draw[fill] (a2) circle (1.75pt);
\draw[fill] (b1) circle (1.75pt);
\draw[fill] (b2) circle (1.75pt);
\draw[fill] (c1) circle (1.75pt);
\draw[fill] (c2) circle (1.75pt);
\draw[thick,dashed] (-.3,0) -- (.3,0);
\draw[thick,dashed] (2.7,0) -- (3.3,0	);
\draw[thick,dashed] (1.1,-.3) -- (1.9,.3);
\end{tikzpicture}
\caption{A minimal Neumann 2-partition of a pumpkin on 3 edges}\label{fig:3-pumpkin-2-partition}
\end{figure}
\end{example}

The principal goal of this section is to show that such minimal partitions always exist, and indeed for all four problems listed above.

\begin{remark}
\label{rem:neumann-on-domains}
In the case of domains, only the Dirichlet, not the Neumann problem, has been studied, since unlike on metric graphs, on domains the latter minimisation problem is not well defined. Let us expand on this point a little. Suppose for simplicity that $\Omega \subset \R^2$ is a smooth domain and consider $k$-partitions $\parti = \{\Omega_1,\ldots,\Omega_k\}$ of $\Omega$ for some fixed $k\geq 1$. In the easier case of non-exhaustive partitions, we take $\omega_\varepsilon$ to be a dumbbell consisting of two disks of radius $d=d(k,\Omega)>0$ small but fixed and handle of fixed length and thickness $\varepsilon>0$, where the parameters are chosen in such a way that $\Omega$ constains at least $k$ disjoint copies of $\omega_\varepsilon$, for any $\varepsilon>0$ sufficiently small. If $\parti_\varepsilon$ is then the partition consisting of $k$ such copies of $\omega_\varepsilon$, then since $\mu_2 (\omega_\varepsilon) \to 0$ as $\varepsilon \to 0$ in accordance with \cite[Section~2, pp.~16--19]{Col17}, we have that, for any $p \in (0,\infty]$,
\begin{displaymath}
	0 \leq \inf_\parti \nenergy[p](\parti) \leq \lim_{\varepsilon \to 0} \nenergy[p](\parti_\varepsilon) = 0,
\end{displaymath}
meaning that the minimisation problem is not well defined. If we restrict to exhaustive partitions, then the same principle and conclusion apply, but the construction is harder to describe in general. Instead, we illustrate the idea with a figure (Figure~\ref{fig:square-neumann-2-partition}) sketching such a construction for $2$-partitions of the square.
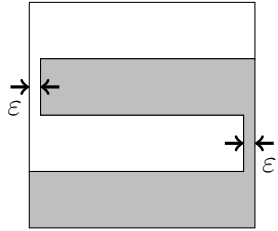
\begin{figure}[h]
\label{fig:square-neumann-2-partition}
\begin{tikzpicture}
\filldraw[fill=white] (0,3) -- (3,3) -- (3,2.25) -- (0.15,2.25) -- (0.15,1.5) -- (2.85,1.5) -- (2.85,0.75) -- (0,0.75) -- (0,3);
\filldraw[fill=lightgray] (3,2.25) -- (0.15,2.25) -- (0.15,1.5) -- (2.85,1.5) -- (2.85,0.75) -- (0,0.75) -- (0,0) -- (3,0) -- (3,2.25);
\draw[very thick,->] (-0.25,1.875) -- (0,1.875);
\draw[very thick,->] (0.4,1.875) -- (0.15,1.875);
\draw[very thick,->] (2.6,1.125) -- (2.85,1.125);
\draw[very thick,->] (3.25,1.125) -- (3,1.125);
\fill (0.05,1.825) node[anchor=north east] {$\varepsilon$};
\fill (2.95,1.075) node[anchor=north west] {$\varepsilon$};
\end{tikzpicture}
\caption{A sequence of exhaustive $2$-partitions of the unit square: one partition element is white, the other grey. The thin joining passages are of width $\varepsilon>0$; it can be shown that as $\varepsilon \to 0$, the corresponding Neumann partition energy also converges to $0$.}
\end{figure}
\end{remark}

Let us next give a few basic properties of the problems \eqref{eq:minimal-energies}.
It is immediate that $\noptenergy[k,p] \geq \noptenergyloose[k,p]$ and $\doptenergy[k,p] \geq \doptenergyloose[k,p]$, since $\mathfrak{R}_k \subset \mathfrak{P}_k$. Actually, before proceeding let us note that the only quantity of interest in the Dirichlet case is $\doptenergy[k,p]$:

\begin{lemma}
\label{lem:dirichlet-min-max-rigid-loose}
For any graph $\Graph$, any $k\geq 1$ and any $p \in (0,\infty]$, we have $\doptenergy[k,p] = \doptenergyloose[k,p]$.
\end{lemma}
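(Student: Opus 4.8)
The inequality $\doptenergy[k,p] \geq \doptenergyloose[k,p]$ is immediate since $\mathfrak{R}_k \subset \mathfrak{P}_k$, so the task is to show $\doptenergy[k,p] \leq \doptenergyloose[k,p]$. The plan is to argue that passing from a loose partition to a rigid one can only decrease the Dirichlet energy, and more specifically that for any loose $k$-partition there is a rigid $k$-partition with at most the same energy. First I would fix an arbitrary loose exhaustive $k$-partition $\parti = \{\Graph_1,\ldots,\Graph_k\}$ with cluster supports $\Omega_1,\ldots,\Omega_k$. Loose means we may have made additional cuts at interior points of the $\Omega_i$, beyond the separating points; these extra cut points lie in $\interior\Omega_i$ and contribute Dirichlet conditions in $\VertexSet_D(\Graph_i)$.

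The key step is monotonicity of $\lambda_1$ under gluing: if $\widehat{\Graph}_i$ is obtained from $\Graph_i$ by gluing together some vertices of $\VertexSet_D(\Graph_i)$ that came from interior (non-separating) cuts — in particular, re-gluing $\Graph_i$ along all interior cut points so that only the separating points of $\Omega_i$ remain cut — then $H^1_0(\widehat{\Graph}_i) \supset H^1_0(\Graph_i)$ (a function vanishing at more points, and continuous across fewer identifications, certainly embeds into the larger, less constrained space once we identify the clusters as subsets of the common support $\Omega_i$). Hence by the variational characterisation \eqref{eq:lambda-1}, $\lambda_1(\widehat{\Graph}_i) \leq \lambda_1(\Graph_i)$. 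Doing this for every $i$ yields a partition $\widehat{\parti}$ with the same cluster supports, which is now rigid (its cut set equals its separation set by construction), still exhaustive, still a genuine $k$-partition since each $\widehat{\Graph}_i$ remains connected (re-gluing vertices cannot disconnect), and with $\lambda_1(\widehat{\Graph}_i) \leq \lambda_1(\Graph_i)$ for all $i$. Since $t \mapsto t^p$ is monotone increasing on $(0,\infty)$ for $p \in (0,\infty)$, and likewise the max is monotone, we get $\denergy[p](\widehat{\parti}) \leq \denergy[p](\parti)$ for every $p \in (0,\infty]$. Taking the infimum over $\parti \in \mathfrak{P}_k$ on the right and noting $\widehat{\parti} \in \mathfrak{R}_k$ gives $\doptenergy[k,p] \leq \doptenergyloose[k,p]$, completing the argument.

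The main obstacle — really the only point requiring care — is the precise justification of the form-domain inclusion $H^1_0(\Graph_i) \subset H^1_0(\widehat{\Graph}_i)$ at the level of functions on the common cluster support $\Omega_i$. One must check that a function in $H^1_0(\Graph_i)$, when viewed on $\Omega_i$, is continuous across the re-glued vertex (it need not have been, since in $\Graph_i$ the vertex was cut) — but this is automatic because such a function vanishes at every interior cut point, so the values on all the pieces meeting at that point agree (they are all zero), hence continuity across the gluing holds; and of course it still vanishes on the separating points and on whatever subset of them is retained. The Dirichlet condition at the remaining separating points is inherited, and the $H^1$-regularity edgewise is unchanged. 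Thus the inclusion holds and the Rayleigh quotient in \eqref{eq:lambda-1} can only decrease, which is exactly what the argument needs.
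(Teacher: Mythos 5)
Your proposal is correct and takes essentially the same route as the paper: given a loose partition, pass to a rigid partition with the same cluster supports, observe that the Dirichlet form domain of each loose cluster embeds into that of the corresponding rigid cluster (vanishing at the extra interior cut points makes the function continuous across the re-gluing), and conclude $\lambda_1$ can only decrease, hence $\doptenergy[k,p] \leq \doptenergyloose[k,p]$. The only cosmetic difference is that the paper picks an arbitrary rigid cluster in $\rho_{\Omega_i}$ (which is harmless by Remark~\ref{rem:dirichlet-subset-graph-distinction}) rather than the specific re-glued one you construct.
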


\begin{proof}
We only have to prove ``$\leq$''. Let $\widetilde\parti \in \mathfrak{P}_k$; we will construct a partition $\parti \in \mathfrak{R}_k$ which has the same cluster supports and whose every cluster has an eigenvalue which is no larger than the eigenvalue of the corresponding cluster of $\widetilde\parti$. In fact, if $\NewGraph_i$ has support $\Omega_i$ and $\Graph_i \in \rho_{\Omega_i}$ is any rigid cluster with the same cluster support, then $H^1_0 (\NewGraph_i)$ may be identified with a subspace of $H^1_0(\Graph_i)$ since the zero condition can only be imposed at more points of $\NewGraph_i$ than of $\Graph_i$. It follows from the variational characterisation \eqref{eq:lambda-1} that $\lambda_1 (\NewGraph_i) \geq \lambda_1 (\Graph_i)$ for all $i$. The claim now follows.
\end{proof}

\begin{remark}
\label{rem:dirichlet-subset-graph-distinction}
Let us explicitly stress that if $\parti = \{\Graph_1,\ldots,\Graph_k\} \in \mathfrak{R}_k$ is a \emph{rigid} partition (hence, cut points and separation points agree) and we are interested in $\denergy[p](\parti)$, then $\lambda_1 (\Graph_i)$ is independent of the choice of the graph $\Graph_i \in \rho_{\Omega_i}$ associated with $\Omega_i$, $i=1,\ldots,k$, as long as this is made in accordance with Definition~\ref{def:partition}. In other words, $\lambda_1 (\Graph_i)$ is independent of $\Graph_i \in \rho_{\Omega_i}$. This is because a Dirichlet condition is imposed at all cut/separation points anyway; thus, it does not matter whether (or how) these vertices are joined in $\Graph_i$. Hence, in these cases, one may ignore the distinction between the cluster supports $\Omega_i$ and the clusters $\Graph_i$ (in particular, minimising over the class of faithful partitions is the same as minising over rigid partitions for the Dirichlet problem). In practice, we will always do this, that is, \emph{when considering (only) Dirichlet partition problems (among all rigid partitions) we will not distinguish between clusters and their supports}.
\end{remark}

We next establish that both our spectral energy functionals \eqref{eq:nenergy} and \eqref{eq:denergy} are indeed continuous with respect to the notion of partition convergence introduced in Section~\ref{subsec:part-convergence}.

\begin{lemma}
\label{lem:energy-convergence}
Suppose $\parti_n$ and $\parti_\infty$ are exhaustive loose $k$-partitions of a graph $\Graph$ such that the $\parti_n$ are similar, and $\parti_n \to \parti_\infty$ with respect to the metric of \eqref{eq:dist-pk}. Then, for any given $p \in (0,\infty]$,
\begin{displaymath}
	\nenergy[p] (\parti_n) \to \nenergy[p] (\parti_\infty) \quad \text{and} \quad \denergy[p] (\parti_n) \to \denergy[p] (\parti_\infty)
\end{displaymath}
as $n\to\infty$.
\end{lemma}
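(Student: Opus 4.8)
The plan is to reduce the convergence of the energies to the clusterwise convergence of eigenvalues already recorded in Lemma~\ref{lem:eig-convergence}, and then invoke continuity of elementary functions of finitely many variables.

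First I would note that the metric \eqref{eq:dist-pk} is only defined between similar partitions, so the assumption that $\parti_n \to \parti_\infty$ with respect to this metric already forces $\parti_\infty$ to be similar to the $\parti_n$; hence $\parti_n, \parti_\infty \in \colourset$ for one common primitive partition $\colourset$. Write $\DG_1, \ldots, \DG_k$ for the fixed underlying discrete graphs of the clusters, and $\parti_n = \{\Graph_1^{(n)}, \ldots, \Graph_k^{(n)}\}$, $\parti_\infty = \{\Graph_1^{(\infty)}, \ldots, \Graph_k^{(\infty)}\}$ with compatible numbering. By the definition \eqref{eq:dist-pk} of the metric, $\parti_n \to \parti_\infty$ is equivalent to $\Graph_i^{(n)} \to \Graph_i^{(\infty)}$ in $\Gamma_{\DG_i}$ for each $i = 1, \ldots, k$; and through the identification \eqref{eq:colourset-identification} of $\colourset$ with $\Theta_\colourset$, every edge length of each $\Graph_i^{(\infty)}$ is strictly positive, so $\Graph_i^{(\infty)} \in \Gamma_{\DG_i}$ is a genuine nonempty metric graph. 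In particular no edge degenerates in the limit and the classes $[v]$ of vertices of $\Graph_i^{(n)}$ collapsing as $n \to \infty$ are singletons.

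Next I would apply Lemma~\ref{lem:eig-convergence} to each cluster separately. For the natural energy, part~(1) of that lemma gives $\mu_2(\Graph_i^{(n)}) \to \mu_2(\Graph_i^{(\infty)})$ for every $i$. For the Dirichlet energy, recall the standing convention that the Dirichlet vertex set of a cluster is the set $\VertexSet_D$ of cut points of the partition lying in it (Definition~\ref{def:separation-set}); since all the $\parti_n$ and $\parti_\infty$ realise the same cut pattern, these cut points correspond consistently along the sequence and to the limit, and because no vertices merge, the limiting vertex conditions are exactly the Dirichlet conditions at $\VertexSet_D(\Graph_i^{(\infty)})$. Thus part~(2) of Lemma~\ref{lem:eig-convergence} applies and yields $\lambda_1(\Graph_i^{(n)}) \to \lambda_1(\Graph_i^{(\infty)})$ for every $i$. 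Finally, for fixed $p \in (0, \infty)$ the map $(a_1, \ldots, a_k) \mapsto \bigl(\frac{1}{k} \sum_{i=1}^k a_i^p\bigr)^{1/p}$ is continuous on $(0, \infty)^k$, and for $p = \infty$ the map $(a_1, \ldots, a_k) \mapsto \max_i a_i$ is continuous on $\R^k$; applying these to the vectors $\bigl(\mu_2(\Graph_i^{(n)})\bigr)_i$ and $\bigl(\lambda_1(\Graph_i^{(n)})\bigr)_i$ gives $\nenergy[p](\parti_n) \to \nenergy[p](\parti_\infty)$ and $\denergy[p](\parti_n) \to \denergy[p](\parti_\infty)$.

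The one genuinely delicate point is the passage of the Dirichlet conditions to the limit: as edge lengths shrink, two cut points — or a cut point and an ordinary vertex of a cluster — could a priori merge and thereby change the set at which zero conditions are imposed. This is exactly what is prevented by confining the whole sequence to a single primitive partition $\colourset$, which keeps the discrete graphs, and hence the cut patterns, fixed along the sequence and in the limit; with that understood, everything else is a routine combination of Lemma~\ref{lem:eig-convergence} with the continuity of the elementary functions above.
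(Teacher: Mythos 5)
There is a genuine gap, and it sits exactly at the point you flag as ``delicate'' and then dismiss. Your argument only covers the case $\parti_\infty\in\colourset$, i.e.\ the case in which the limit partition is itself similar to the $\parti_n$, all cluster edge lengths stay strictly positive and no vertices merge. But the hypothesis of the lemma deliberately asserts only that \emph{the $\parti_n$} are similar, not that $\parti_\infty$ is similar to them: the convergence ``with respect to the metric of \eqref{eq:dist-pk}'' is to be understood in the completion $\overline{\colourset}$, and the limit may be an exhaustive loose $k$-partition with a coarser cut pattern. Your claim that confining the sequence to one primitive partition ``keeps the discrete graphs, and hence the cut patterns, fixed along the sequence and in the limit'' is false: within a fixed $\colourset$ the lengths of cluster edges adjacent to the separating points vary, and such an edge can shrink to zero while its cluster keeps positive total length. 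Example~\ref{ex:lasso-closure} is precisely this situation: proper $2$-partitions of the lasso, all similar to one another, converge to the faithful (or to the rigid non-faithful) $2$-partition, whose clusters have different underlying discrete graphs. Your appeal to the identification \eqref{eq:colourset-identification} to conclude that all edge lengths of $\Graph_i^{(\infty)}$ are positive is circular -- being a $k$-partition only forces each \emph{cluster} length to be positive, not each edge length.

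The degenerate case is not a pathology one may exclude; it is the case the lemma is actually needed for. In the proof of strong lower semicontinuity (Lemma~\ref{lem:energy-non-convergence}) and hence of the existence theorems, the limit of a minimising sequence lies in $\overline{A\cap\colourset}$ and, when it is still a $k$-partition, is typically \emph{not} similar to the approximating partitions; continuity of $\nenergy[p]$ and $\denergy[p]$ must hold there too. To close the gap you must invoke Lemma~\ref{lem:eig-convergence} in its full strength: part (1) already allows edges of $\Graph_i^{(n)}$ to collapse, and for part (2) one has to check that the Dirichlet set of the limit cluster $\Graph_i^{(\infty)}$ -- by convention its set of cut points as a cluster of $\parti_\infty$ -- coincides with the set of vertices $v$ such that at least one vertex of the equivalence class $[v]$ of merging vertices carried a Dirichlet (cut-point) condition along the sequence. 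That bookkeeping with the classes $[v]$, which your proposal explicitly assumes away by declaring them singletons, is the real content of the proof; the final step (continuity of the $p$-mean and of the maximum in finitely many variables) is fine as you wrote it.
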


\begin{proof}
This follows immediately from the definitions of $\nenergy[p]$ and $\denergy[p]$ and Lemma~\ref{lem:eig-convergence}.
\end{proof}

Actually, we can say more.

\begin{lemma}
\label{lem:energy-non-convergence}
The functionals $\nenergy[p]$ and $\denergy[p]$ are \emph{strongly} lower semi-continuous on $\mathfrak{P}_k (\Graph)$ (see Definition~\ref{def:lsc}(2)), for any given $p \in (0,\infty]$.
\end{lemma}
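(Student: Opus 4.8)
The plan is to prove strong lower semi-continuity by combining the continuity result of Lemma~\ref{lem:energy-convergence} with the divergence-to-infinity behaviour recorded in Lemma~\ref{lem:eig-convergence}. Fix a primitive partition $\colourset$ and a sequence $(\parti_n)_{n\in\N}\subset\mathfrak P_k\cap\colourset$ converging to some $\parti\in\overline{\mathfrak P_k\cap\colourset}$; by Proposition~\ref{prop:exhaustive-closed} the limit $\parti$ is exhaustive, and by the discussion following Lemma~\ref{lem:colour-set-compact} it is an $m$-partition for some $1\le m\le k$, identified with a point of $\overline{\Theta_\colourset}\subset\R^E$ some of whose coordinates may vanish. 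The first thing to check is that $\nenergy[p](\parti)$ and $\denergy[p](\parti)$ are well defined real numbers: each surviving cluster $\Graph_i^{(\infty)}$ is a connected compact metric graph of positive total length (so $\mu_2(\Graph_i^{(\infty)})>0$), and it carries Dirichlet conditions at exactly the limiting cut points as prescribed by Lemma~\ref{lem:eig-convergence}(2), so $\lambda_1(\Graph_i^{(\infty)})>0$; hence the finite averages (or maxima) over $i=1,\ldots,m$ defining $\nenergy[p](\parti)$ and $\denergy[p](\parti)$ make sense.

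The core of the argument is to split into two cases according to whether $m=k$ or $m<k$. If $m=k$, then no cluster has collapsed, and in fact $\parti\in\mathfrak P_k\cap\colourset$; this is precisely the situation of Lemma~\ref{lem:energy-convergence}, so $\nenergy[p](\parti_n)\to\nenergy[p](\parti)$ and $\denergy[p](\parti_n)\to\denergy[p](\parti)$, and a fortiori $\nenergy[p](\parti)\le\liminf_{n\to\infty}\nenergy[p](\parti_n)$ (with equality) and likewise for $\denergy[p]$. If instead $m<k$, then at least one cluster support $\Omega_i^{(n)}$ has total length tending to $0$; say (after relabelling) $|\Graph_j^{(n)}|\to 0$ for some $j$. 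By Lemma~\ref{lem:eig-convergence}, $\mu_2(\Graph_j^{(n)})\to\infty$, and since $\VertexSet_D(\Graph_j^{(n)})\neq\emptyset$ for a collapsing cluster of an exhaustive partition (its boundary is nonempty, by the observation that every separating point of an exhaustive partition is shared, so the collapsing cluster acquires a Dirichlet vertex), also $\lambda_1(\Graph_j^{(n)})\to\infty$. For $p\in(0,\infty)$ one has $\nenergy[p](\parti_n)^p=\frac1k\sum_{i=1}^k\mu_2(\Graph_i^{(n)})^p\ge\frac1k\mu_2(\Graph_j^{(n)})^p\to\infty$, and for $p=\infty$, $\nenergy[\infty](\parti_n)\ge\mu_2(\Graph_j^{(n)})\to\infty$; the same estimate applies to $\denergy[p]$ with $\lambda_1$ in place of $\mu_2$. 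Hence $\liminf_{n\to\infty}\nenergy[p](\parti_n)=+\infty\ge\nenergy[p](\parti)$ (and similarly for $\denergy[p]$), which is the required inequality.

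The one genuinely fiddly point — the ``main obstacle'', such as it is — is making sure that a collapsing cluster really does inherit at least one Dirichlet vertex, so that $\lambda_1(\Graph_j^{(n)})\to\infty$ can be invoked rather than just $\mu_2(\Graph_j^{(n)})\to\infty$; this follows because for an exhaustive partition each cluster support with nonempty boundary meets $\partial\parti$, and a cluster whose length shrinks to zero while the ambient graph has fixed positive length cannot be all of $\Graph$, hence has nonempty boundary, hence a nonempty set $\VertexSet_D(\Graph_j)$ of cut points, and the correspondence of vertices in Lemma~\ref{lem:eig-convergence}(2) ensures the limiting Dirichlet conditions are accounted for along the sequence. (In fact one does not even need the Dirichlet case to be delicate: the inequality $\liminf\ge\nenergy[p](\parti)$ is automatic once the left side is $+\infty$, and for $\denergy[p]$ the monotonicity $\lambda_1\ge\mu_2\cdot c$ is not needed — $\lambda_1$ of a short interval-like graph blows up directly by Nicaise.) Everything else is a one-line quotation of Lemmata~\ref{lem:energy-convergence} and~\ref{lem:eig-convergence} together with the elementary fact that a finite $\ell^p$-average dominates $k^{-1/p}$ times any single term.
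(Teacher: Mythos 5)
Your proof is correct and follows essentially the same route as the paper: the non-degenerate case $m=k$ is handled by the continuity of Lemma~\ref{lem:energy-convergence}, and the degenerate case $m<k$ by the blow-up of the eigenvalues of a collapsing cluster, which the paper obtains directly from Nicaise's inequalities (Theorem~\ref{thm:nicaise}) while you route it through the last part of Lemma~\ref{lem:eig-convergence} (itself proved via Nicaise). Your extra care in checking that a collapsing cluster of an exhaustive $k$-partition, $k\ge 2$, carries a Dirichlet vertex is a detail the paper leaves implicit, but it does not change the argument.
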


\begin{proof}
If $\parti_n \in \mathfrak{P}_k$ are $k$-partitions of a graph $\Graph$ and $\parti_n \to \parti_\infty$ for some $m$-partition $\parti_\infty$ with $m < k$, then there exists at least one sequence of clusters, say $\Graph_i^{(n)}$, whose total length $|\Graph_i ^{(n)}| \to 0$. Nicaise' inequalities (Theorem~\ref{thm:nicaise}) yield
\begin{equation}\label{eq:nic-coerc}
	\mu_2 (\Graph_{i}^{(n)}) \geq \frac{\pi^2}{|\Graph_{i}^{(n)}|^2} = \frac{\pi^2}{|\Omega_{i}^{(n)}|^2} \quad \text{and} \quad
	\lambda_1 (\Graph_{i}^{(n)}) \geq \frac{\pi^2}{4|\Graph_{i}^{(n)}|^2} = \frac{\pi^2}{4|\Omega_{i}^{(n)}|^2}
\end{equation}
for all $i=1,\ldots,k$ and all $n\geq 1$, whence $\nenergy[p] (\parti_n), \denergy[p] (\parti_n) \to \infty$ as $n \to \infty$, for any $p \in (0,\infty]$. The strong lower semi-continuity follows since $\nenergy[p](\parti_\infty), \denergy[p](\parti_\infty) < \infty$, when combined with the result of Lemma~\ref{lem:energy-convergence}.
\end{proof}

Given a graph $\Graph$, we can now prove the existence of $k$-partitions achieving the infimal values $\noptenergy[k,p] (\Graph)$ and $\doptenergy[k,p] (\Graph)$ (and hence $\doptenergyloose[k,p](\Graph)$), as well as $\noptenergyloose[k,p](\Graph)$.

\begin{theorem}
\label{thm:smp-existence}
Fix $k\geq 1$ and $p\in  (0,\infty]$ and let $A \subset \mathfrak{P}_k$ be any set of $k$-partitions. Then there exist $k$-partitions $\parti^N,\parti^D  \in \overline{A} \cap \mathfrak{P}_k$ of $\Graph$ such that
\begin{displaymath}
	\nenergy[p] (\parti^N) = \inf_{A \subset \mathfrak{P}_k} \nenergy[p] (\parti) \qquad \text{and} \qquad \denergy[p] (\parti^D) = \inf_{A \subset \mathfrak{P}_k} \denergy[p] (\parti).
\end{displaymath}
In particular: if $A \subset \mathfrak{R}_k$ is a set of rigid $k$-partitions, then $\parti^N$ and $\parti^D$ are also rigid $k$-partitions, respectively.
\end{theorem}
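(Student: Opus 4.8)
The plan is to obtain Theorem~\ref{thm:smp-existence} as a direct application of the abstract existence result, Theorem~\ref{thm:abstract-min-existence}, to the two functionals $J = \nenergy[p]$ and $J = \denergy[p]$ on the set $A$; we may assume $A \neq \emptyset$, the statement being vacuous otherwise.

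First I would verify the hypotheses of Theorem~\ref{thm:abstract-min-existence} for each of these choices of $J$. Since $A \subset \mathfrak{P}_k$ we have $A \cap \mathfrak{P}_k = A \neq \emptyset$. Both $\nenergy[p]$ and $\denergy[p]$ are finite on every exhaustive $\ell$-partition with $1 \le \ell \le k$ (each cluster is a connected compact metric graph, so that $\mu_2 > 0$ and $\lambda_1 > 0$ are finite), hence $J$ is a well-defined map $\overline{A} \to \R$; and by Lemma~\ref{lem:energy-non-convergence} the functionals $\nenergy[p]$ and $\denergy[p]$ are strongly lower semi-continuous on $\mathfrak{P}_k$, hence a fortiori on $A$. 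It then remains only to check condition~(1) of Theorem~\ref{thm:abstract-min-existence}: if $(\parti_n)_{n\in\N} \subset A$ and some clusters $\Graph_i^{(n)}$ of $\parti_n$ satisfy $|\Graph_i^{(n)}| \to 0$, then $J(\parti_n) \to \infty$. This is precisely the coercivity estimate already recorded in~\eqref{eq:nic-coerc}: Nicaise' inequalities (Theorem~\ref{thm:nicaise}) give $\mu_2(\Graph_i^{(n)}) \ge \pi^2/|\Graph_i^{(n)}|^2 \to \infty$, and, since a cluster can only vanish when $k \ge 2$, in which case the vanishing cluster carries at least one Dirichlet vertex (its support has nonempty boundary, being part of an exhaustive partition with $k\ge 2$), also $\lambda_1(\Graph_i^{(n)}) \ge \pi^2/(4|\Graph_i^{(n)}|^2) \to \infty$; in either case the averaged ($p < \infty$) and max ($p = \infty$) energies inherit the divergence for every $p \in (0,\infty]$.

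Applying Theorem~\ref{thm:abstract-min-existence} then produces exhaustive $k$-partitions $\parti^N, \parti^D \in \overline{A} \cap \mathfrak{P}_k$ with $\nenergy[p](\parti^N) = \inf_{\parti \in A}\nenergy[p](\parti)$ and $\denergy[p](\parti^D) = \inf_{\parti \in A}\denergy[p](\parti)$. For the final assertion, if $A \subset \mathfrak{R}_k$ then $\overline{A} \subset \bigcup_{i \le k}\mathfrak{R}_i$, which is closed by Proposition~\ref{prop:rigid-class-closed}; since $\parti^N$ and $\parti^D$ are genuine $k$-partitions, this forces $\parti^N, \parti^D \in \mathfrak{R}_k$ (equivalently, one may simply invoke the last sentence of Theorem~\ref{thm:abstract-min-existence}). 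Finally, specialising to $A = \mathfrak{R}_k$ and $A = \mathfrak{P}_k$, and using Lemma~\ref{lem:dirichlet-min-max-rigid-loose} in the Dirichlet case, shows in addition that $\noptenergy[k,p]$, $\doptenergy[k,p] = \doptenergyloose[k,p]$ and $\noptenergyloose[k,p]$ are all attained.

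I do not anticipate any genuine difficulty: the result is essentially a corollary of the abstract framework of Section~\ref{sec:exis}, and the only point requiring even minimal attention is confirming that the divergence hypothesis~(1) of Theorem~\ref{thm:abstract-min-existence} holds literally — which, as indicated above, is immediate from Nicaise' inequalities via~\eqref{eq:nic-coerc} and was in effect already established in the proof of Lemma~\ref{lem:energy-non-convergence}.
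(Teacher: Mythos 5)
Your proposal is correct and follows essentially the same route as the paper: the paper's proof likewise consists of applying Theorem~\ref{thm:abstract-min-existence} via condition~(1), citing Lemma~\ref{lem:energy-non-convergence} for strong lower semi-continuity and \eqref{eq:nic-coerc} (Nicaise' inequalities) for the divergence of the energy when a cluster's length vanishes. The extra checks you record (well-definedness on $\overline{A}$, the presence of a Dirichlet vertex on a vanishing cluster, and the rigid case via Proposition~\ref{prop:rigid-class-closed}) are consistent with, and implicit in, the paper's argument.
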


We will see in Section~\ref{sec:p-dependence} that for fixed $k\geq 1$ the minimal partitions may depend on $p$.

\begin{proof}
It suffices to show that the functionals $\nenergy[p]$ and $\denergy[p]$, which are both defined on $\mathfrak{P} \supset \overline{A}$, satisfy the conditions of Theorem~\ref{thm:abstract-min-existence}(1). Strong lower semi-continuity was established in Lemma~\ref{lem:energy-non-convergence}, while condition (1) follows immediately from \eqref{eq:nic-coerc}.
\end{proof}

\begin{corollary}
\label{cor:smp-existence}
Fix $k\geq 1$ and $p\in (0,\infty]$. Then there exist a loose $k$-partition $\widetilde\parti^N \in \mathfrak{P}_k$ and rigid $k$-partitions $\parti^N, \parti^D \in \mathfrak{R}_k$ such that
\begin{displaymath}
	\nenergy[p](\widetilde\parti^N)=\noptenergyloose[k,p](\Graph),\qquad \nenergy[p](\parti^N)=\noptenergy[k,p](\Graph), \qquad \text{and}\qquad
	\denergy[p](\parti^D)=\doptenergy[k,p](\Graph)=\doptenergyloose[k,p](\Graph).
\end{displaymath}
\end{corollary}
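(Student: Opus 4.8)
The plan is to deduce Corollary~\ref{cor:smp-existence} directly from Theorem~\ref{thm:smp-existence} by choosing the set $A$ of admissible partitions appropriately in each of the three cases, together with the already-established identity of Lemma~\ref{lem:dirichlet-min-max-rigid-loose}. No new estimates are needed; the work is entirely bookkeeping about which closed class one minimises over.

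First I would treat the loose Neumann energy. Applying Theorem~\ref{thm:smp-existence} with $A=\mathfrak{P}_k$ gives a partition $\widetilde\parti^N\in\overline{\mathfrak P_k}\cap\mathfrak P_k=\mathfrak P_k$ (the intersection collapses since $\mathfrak P_k$ consists precisely of exhaustive loose $k$-partitions and we land back in it) with $\nenergy[p](\widetilde\parti^N)=\inf_{\parti\in\mathfrak P_k}\nenergy[p](\parti)=\noptenergyloose[k,p](\Graph)$ by the definition in~\eqref{eq:minimal-energies}. Next, for the rigid Neumann energy I would apply the same theorem with $A=\mathfrak{R}_k$; the ``in particular'' clause of Theorem~\ref{thm:smp-existence} (which rests on the closedness of $\bigcup_{i\le k}\mathfrak R_k$ from Proposition~\ref{prop:rigid-class-closed}) then yields a \emph{rigid} $k$-partition $\parti^N\in\mathfrak R_k$ with $\nenergy[p](\parti^N)=\inf_{\parti\in\mathfrak R_k}\nenergy[p](\parti)=\noptenergy[k,p](\Graph)$.

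For the Dirichlet case I would again invoke Theorem~\ref{thm:smp-existence} with $A=\mathfrak{R}_k$, obtaining a rigid $k$-partition $\parti^D\in\mathfrak R_k$ realising $\denergy[p](\parti^D)=\inf_{\parti\in\mathfrak R_k}\denergy[p](\parti)=\doptenergy[k,p](\Graph)$. Finally, Lemma~\ref{lem:dirichlet-min-max-rigid-loose} gives $\doptenergy[k,p](\Graph)=\doptenergyloose[k,p](\Graph)$, so the same $\parti^D$ simultaneously realises the loose Dirichlet minimal energy, and the displayed chain of equalities in the statement follows. Assembling the three partitions $\widetilde\parti^N$, $\parti^N$, $\parti^D$ completes the argument.

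The only point requiring a little care -- and the closest thing to an obstacle -- is making sure the minimisers produced by Theorem~\ref{thm:smp-existence} genuinely lie in $\mathfrak P_k$ (resp.\ $\mathfrak R_k$) rather than merely in the closure: this is exactly what condition~(1) of Theorem~\ref{thm:abstract-min-existence}, verified via the Nicaise coercivity bound~\eqref{eq:nic-coerc} in the proof of Theorem~\ref{thm:smp-existence}, guarantees, since it prevents any cluster length from degenerating to zero along a minimising sequence. Everything else is a direct translation of the definitions in~\eqref{eq:minimal-energies}.
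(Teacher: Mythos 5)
Your proposal is correct and coincides with the paper's intended argument: the corollary is stated without separate proof precisely because it follows by applying Theorem~\ref{thm:smp-existence} with $A=\mathfrak{P}_k$ (loose Neumann case) and $A=\mathfrak{R}_k$ (rigid Neumann and Dirichlet cases), combined with Lemma~\ref{lem:dirichlet-min-max-rigid-loose} for the equality $\doptenergy[k,p]=\doptenergyloose[k,p]$. Your remark that condition~(1) of Theorem~\ref{thm:abstract-min-existence}, verified via the Nicaise bound~\eqref{eq:nic-coerc}, is what places the minimisers in $\mathfrak{P}_k$ resp.\ $\mathfrak{R}_k$ rather than merely in the closure is exactly the right point of care.
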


\begin{remark}
\label{rem:noptenergyloose}
If, for any $k\geq 1$ and $p\in (0,\infty]$, $\widetilde\parti^N = \{\Graph_1,\ldots,\Graph_k\}$ is a loose $k$-partition achieving the minimum for $\noptenergy[k,p](\Graph)$, then we may always assume without loss of generality that the clusters $\Graph_1,\ldots,\Graph_k$ are all trees. This is because cutting through any vertices in $\Graph_i$ can only decrease $\mu_2 (\Graph_i)$ (see, e.g., \cite[Theorem~3.10(1)]{BeKeKuMu18}). We will see this principle in action in Example~\ref{ex:spantree} below.
\end{remark}

We next give a simple example to show that the minimal partition realising $\doptenergy[k,p] (\Graph)$ within the class of all internally connected exhaustive partitions need not be internally connected and exhaustive, even if it can be approximated by such partitions.

\begin{example}
\label{ex:3-star}
Let $\Graph$ be a star graph consisting of three edges $e_1$, $e_2$, $e_3$, each of length one, attached at a common vertex $v_0$. Then there is an optimal internally connected but non-exhaustive $2$-partition, for both $\doptenergy[k,\infty]$ and $\noptenergy[k,\infty]$, given by, say, $\parti^\ast = \{\Graph_1, \Graph_2 \}$, with $\Graph_1 = e_1$, $\Graph_2 = e_2$.

If, however, we search for rigid and exhaustive minimal partitions, then up to permutation of the edges, their cluster supports must all have the form $\Omega_1 = e_1$ and $\Omega_2 = e_2 \cup e_3$, see Example~\ref{ex:tree}. In the Dirichlet case, since $\partial\parti^\ast = \{v_0\}$ and removing $v_0$ disconnects $\Omega_2$, any rigid minimiser is not internally connected. A similar principle holds if we search for an optimal $k$-partition of a star on $n$ equal rays $e_1,\ldots, e_n$, with $n > k$.

We observe in passing that in all these cases the non-exhaustive partition achieving $\doptenergy[k,\infty] (\Graph)$ is \emph{nodal}, while the exhaustive partitions are \emph{generalised nodal}, see Definitions~\ref{def:nodal-partition} and~\ref{def:generalised-nodal} below.
\end{example}

\begin{remark}
\label{rem:dirichlet-k-mon}
(1) For any graph $\Graph$ and any given $p\in (0,\infty]$ and $1 \leq k_1 \leq k_2$, we have the monotonicity statements
\begin{equation}
\label{eq:dirichlet-k-mon}
	\doptenergy[k_1,p](\Graph) \leq \doptenergy[k_2,p](\Graph)
\end{equation}
and
\begin{equation}
\label{eq:neumann-loose-k-mon}
		\noptenergyloose[k_1,p](\Graph) \leq \noptenergyloose[k_2,p](\Graph).
\end{equation}
The argument is the same in both cases, so we restrict ourselves to the Dirichlet case: we suppose $\parti = \{\Graph_1,\ldots,\Graph_{k_2}\}$ is an exhaustive rigid $k_2$-partition realising $\doptenergy[k_2,p](\Graph)$, and for simplicity we take $k_1 = k_2 - 1$. Suppose without loss of generality that $\Graph_{k_2-1}$ and $\Graph_{k_2}$ are neighbours (see Definition~\ref{def:neighbours}), and that $\lambda_1 (\Graph_{k_2}) = \max_{i=1,\ldots,k_2} \lambda_1 (\Graph_i)$. We glue these two clusters together: more precisely, we define $\NewGraph_{k_1}$ to be the rigid cluster, unique in the sense of Remark~\ref{rem:dirichlet-subset-graph-distinction}, whose support is exactly $\Omega_{k_1} \cup \Omega_{k_2}$; we also set $\NewGraph_i := \Graph_i$ for all $i=1,\ldots,k_1$. Then it is easy to check that $\widetilde{\parti} := \{\NewGraph_1, \ldots, \NewGraph_{k_1}\}$ is an exhaustive rigid $k_1$-partition of $\Graph$. Moreover, by eigenvalue monotonicity with respect to graph inclusion, $\lambda_1 (\NewGraph_{k_1}) \leq \min \{\lambda_1 (\Graph_{k_2-1}), \lambda_1( \Graph_{k_2})\} = \lambda_1 (\NewGraph_{k_2-1})$. Hence, for any $p \in (0,\infty)$,
\begin{displaymath}
\begin{aligned}
	\frac{1}{k} \sum_{i=1}^k \lambda_1 (\Graph_i)^p &\geq \frac{1}{k} \sum_{i=1}^{k-1} \lambda_1(\widetilde\Graph_i)^p 
		+ \frac{1}{k}\max_{i=1,\ldots,k} \lambda_1 (\widetilde\Graph_i)^p\\
	&=\frac{1}{k-1}\sum_{i=1}^k \lambda_1 (\widetilde\Graph_i)^p  -\frac{1}{k(k-1)} \sum_{i=1}^k \lambda_1 (\widetilde\Graph_i)^p 
		+ \frac{1}{k}\max_{i=1,\ldots,k} \lambda_1 (\widetilde\Graph_i)^p\\
	&\geq \frac{1}{k-1}\sum_{i=1}^k \lambda_1 (\widetilde\Graph_i)^p  -\frac{k-1}{k(k-1)} \max_{i=1,\ldots,k} \lambda_1 (\widetilde\Graph_i)^p 
	+ \frac{1}{k}\max_{i=1,\ldots,k}\lambda_1 (\widetilde\Graph_i)^p = \denergy[k,p] (\widetilde\parti)^p.
\end{aligned}
\end{displaymath}
Since the inequality $\denergy[\infty] (\parti) \geq \denergy[\infty] (\widetilde\parti)$ is immediate, we obtain $\denergy[p] (\parti) \geq \denergy[p](\widetilde{\parti})$ for all $p \in (0,\infty]$. This yields \eqref{eq:dirichlet-k-mon}. For $\noptenergyloose[]$ we use Remark~\ref{rem:noptenergyloose} to guarantee that without loss of generality the clusters are all trees; now \cite[Theorem~3.10(1)]{BeKeKuMu18} gives the monotonicity when gluing $\Graph_{k_2-1}$ and $\Graph_{k_2}$ together, which may be done at a single vertex.

(2) Note that inequality in (1) need not always be strict: for $p=\infty$, if $\Graph$ is the equilateral star graph from Example~\ref{ex:3-star} (see also Examples~\ref{ex:tree} and~\ref{ex:pavels-task}), then
\begin{displaymath}
	\doptenergy[2,\infty](\Graph) = \doptenergy[3,\infty](\Graph) = \frac{\pi^2}{4},
\end{displaymath}
where in the notation of Example~\ref{ex:3-star} the optimal $3$-partition is given by $\Omega_i = e_i$ for $i=1,2,3$. Inequality need not be strict even if the corresponding minimal partitions are internally connected; an example is the graph considered in Proposition~\ref{prop:min-part-3M}: we will show there that for this graph we even have $\doptenergy[4,\infty] = \doptenergy[5,\infty]$ despite there being internally connected partitions realising both minima.
\end{remark}

However,  although for each $k$ there is a rigid $k$-partition achieving $\noptenergy[k,p](\Graph)$, it is not actually clear whether the monotonicity property analogous to \eqref{eq:dirichlet-k-mon} holds: a necessary condition is the following seemingly obvious conjecture, which will also play a role in Section~\ref{sec:ex-max}.

\begin{conjecture}
\label{conj:kirchhoff-two-cut}
Suppose $\Graph$ is a finite, compact, connected metric graph. Then there exists an exhaustive rigid $2$-partition $\parti_2 = \{\Graph_1, \Graph_2 \}$ of $\Graph$ such that
\begin{displaymath}
	\mu_2 (\Graph) \leq \min \{ \mu_2 (\Graph_1), \mu_2 (\Graph_2) \}.
\end{displaymath}
\end{conjecture}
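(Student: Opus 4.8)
The plan is to split into two cases according to whether $\Graph$ possesses a \emph{cut point}: a single point of $\Graph$ (which, after choosing the representative suitably, is a vertex) whose removal disconnects $\Graph$.

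\textbf{Case 1: $\Graph$ has a cut point $v$.} First I would cut $\Graph$ through $v$, distributing the edges incident with $v$ between two new vertices $v',v''$ so that the resulting graph $\NewGraph$ is the disjoint union $\Graph_1\sqcup\Graph_2$ of the two components thereby created; this is an exhaustive rigid $2$-partition $\parti_2=\{\Graph_1,\Graph_2\}$ with $\cutset(\parti_2)=\{v\}$. Because cutting a single vertex alters the graph only on a null set, $L^2(\NewGraph)=L^2(\Graph)$, whereas $H^1(\Graph)$ embeds in $H^1(\NewGraph)$ as the kernel of the single bounded linear functional $f\mapsto f(v')-f(v'')$, hence as a subspace of codimension one. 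The Courant--Fischer interlacing for quadratic forms restricted to a subspace of codimension one then gives $\mu_2(\Graph)\le\mu_3(\NewGraph)$; and since $\NewGraph$ is disconnected into two (connected) pieces, $\mu_1(\NewGraph)=\mu_2(\NewGraph)=0$ and $\mu_3(\NewGraph)=\min\{\mu_2(\Graph_1),\mu_2(\Graph_2)\}$, which is the claim. This already settles every tree (any interior vertex is a cut point) and, more generally, every graph possessing a bridge (a midpoint of the bridge is a cut point).

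\textbf{Case 2: $\Graph$ has no cut point.} Then $\Graph$ is $2$-connected, but one can still disconnect it into two connected pieces by cutting through exactly two points: two distinct points of $\Graph$ if $\Graph$ is a cycle, and the two endpoints of a non-bridge, non-loop edge $e$ otherwise (isolating $e$, so that $\Graph_1\simeq e$ and $\Graph_2\simeq\Graph\setminus e$, the latter connected). Now $H^1(\Graph)$ has codimension \emph{two} in $H^1(\Graph_1\sqcup\Graph_2)$, so interlacing only yields $\mu_2(\Graph)\le\mu_4(\Graph_1\sqcup\Graph_2)$, and $\mu_4$ --- the second smallest nonzero eigenvalue of the pair --- need not be $\le\min\{\mu_2(\Graph_1),\mu_2(\Graph_2)\}$. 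The key point is that it does suffice once the cut is \emph{balanced}: if a disconnecting two-point cut can be chosen with $\mu_2(\Graph_1)=\mu_2(\Graph_2)=:m$, then the smallest nonzero eigenvalue of $\Graph_1\sqcup\Graph_2$ has multiplicity at least two, whence $\mu_3(\Graph_1\sqcup\Graph_2)=\mu_4(\Graph_1\sqcup\Graph_2)=m$ and interlacing gives $\mu_2(\Graph)\le m=\min\{\mu_2(\Graph_1),\mu_2(\Graph_2)\}$. To produce a balanced cut I would maximise $\min\{\mu_2(\Graph_1),\mu_2(\Graph_2)\}$ over the disconnecting two-point cuts of $\Graph$: within any one primitive partition these cuts form a subset of the compact set $\overline{\colourset_{\cutset}}$ (Lemma~\ref{lem:colour-set-compact}), on which $\mu_2$ of each cluster is continuous by Lemma~\ref{lem:eig-convergence}, and there are only finitely many cut patterns, so a maximiser exists; one then argues that a maximiser must be balanced, since at a configuration with, say, $\mu_2(\Graph_1)<\mu_2(\Graph_2)$ one can slide a separating point so as to lengthen the cluster with the smaller eigenvalue and shorten the other, using monotonicity of $\mu_2$ under lengthening and shortening of edges to strictly increase the minimum.

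\textbf{Main obstacle.} The hard part is this last step. Turning ``slide a separating point to improve the minimum'' into a proof is delicate: when a separating point crosses a vertex the two clusters change their underlying discrete graphs, so $\mu_2$ must be tracked across such transitions via Lemma~\ref{lem:eig-convergence}, and degenerate limits (a cluster shrinking to a point, where $\mu_2\to\infty$) must be excluded as genuine optimisers rather than merely tolerated by the maximisation; more seriously, it is not clear that the smaller-$\mu_2$ cluster can always be lengthened while the partition stays exhaustive, rigid and disconnecting, so monotonicity may have to be invoked along several edges simultaneously, and one cannot exclude a priori that for some $2$-connected $\Graph$ no balanced disconnecting two-point cut exists --- in which case one would have to permit cuts at three or more points, or change strategy. (The obvious alternative, taking the nodal partition $\{\Omega_1,\Omega_2\}$ of a $\mu_2$-eigenfunction, for which $\lambda_1(\Omega_i;\partial\Omega_i)=\mu_2(\Graph)$ exactly, runs into the same wall: one would need $\mu_2(\Omega_i)\ge\lambda_1(\Omega_i;\partial\Omega_i)$, and the codimension-$|\partial\Omega_i|$ interlacing delivers this only when $|\partial\Omega_i|=1$, which is Case~1 again.) This is presumably why the statement is left as a conjecture.
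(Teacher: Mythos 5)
This statement is left open in the paper: it is stated as a conjecture, and the only supporting result proved there is Lemma~\ref{lem:kirchhoff-two-cut}, which establishes it under the additional hypothesis that $\Graph$ has a bridge (an edge or vertex whose removal disconnects $\Graph$), in particular for trees. Your Case~1 proves exactly that special case, by a route different from but essentially equivalent to the paper's: the paper attaches $\Graph_2$ as a pendant to $\Graph_1$ at the single vertex $v$ and quotes the surgery principle $\mu_2(\Graph)\le\mu_2(\Graph_1)$ (from \cite{KuMaNa13} or \cite[Theorem~3.4]{BeKeKuMu18}), whereas you observe that $H^1(\Graph)$ sits in $H^1(\Graph_1\sqcup\Graph_2)$ with codimension one and use min--max interlacing to get $\mu_2(\Graph)\le\mu_3(\Graph_1\sqcup\Graph_2)=\min\{\mu_2(\Graph_1),\mu_2(\Graph_2)\}$. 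Both arguments are correct; the interlacing argument is in fact the standard proof of the cited surgery theorem, so nothing is gained or lost, and your coverage (existence of a cut point) coincides with the paper's bridge hypothesis.

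For Case~2 (the $2$-connected case) your proposal is, as you yourself say, not a proof, and this is precisely the part that remains open in the paper. Your reduction is sound as far as it goes: a two-point disconnecting cut with $\mu_2(\Graph_1)=\mu_2(\Graph_2)=m$ does give $\mu_2(\Graph)\le\mu_4(\Graph_1\sqcup\Graph_2)=m$ by codimension-two interlacing. But the existence of such a balanced cut is exactly where the argument stops: the maximiser of $\min\{\mu_2(\Graph_1),\mu_2(\Graph_2)\}$ over two-point disconnecting cuts exists by the compactness/continuity machinery of Section~\ref{sec:abstract-existence} (modulo excluding degenerate limits), but there is no argument that a maximiser must be balanced --- the ``slide a separating point'' step can be obstructed both combinatorially (the admissible two-point disconnecting cuts need not form a connected parameter set, so no intermediate-value argument is available across cut patterns) and geometrically (lengthening the cluster with the smaller $\mu_2$ may be impossible while keeping the cut disconnecting, rigid and exhaustive, and $\mu_2$ of a cluster is not monotone under arbitrary reshaping, only under specific surgeries). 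So the honest summary is: your write-up correctly reproves the paper's partial result by a different (interlacing) argument and correctly identifies the genuine obstruction in the $2$-connected case; it does not, and does not claim to, settle the conjecture, which the paper likewise leaves unresolved (note that in the proof of Theorem~\ref{thm:max-existence} the authors deliberately bypass it for large $k$ by forcing all clusters to be trees and then invoking Lemma~\ref{lem:kirchhoff-two-cut}).
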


In fact, if Conjecture~\ref{conj:kirchhoff-two-cut} is \emph{not} true, then there is a graph $\Graph$ such that $\mu_2 (\Graph) = \noptenergy[1,p](\Graph) > \nenergy[p] (\parti)$ for every exhaustive rigid $2$-partition $\parti$ of $\Graph$ and any $p \in (0,\infty]$, and hence $\noptenergy[1,p](\Graph) > \noptenergy[2,p] (\Graph)$. However, it is true for a large class of graphs, as the following observation shows. This will also be used in the proof of Theorem~\ref{thm:max-existence} below.

\begin{lemma}
\label{lem:kirchhoff-two-cut}
Conjecture~\ref{conj:kirchhoff-two-cut} is true whenever $\Graph$ has a \emph{bridge}, that is, an edge or a vertex whose removal disconnects $\Graph$. In particular, it is true for trees.
\end{lemma}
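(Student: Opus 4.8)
The plan is to treat the two cases of a ``bridge'' separately, in each case producing an explicit exhaustive rigid $2$-partition $\parti_2 = \{\Graph_1,\Graph_2\}$ for which the variational characterisation \eqref{eq:mu-2} of $\mu_2$ forces $\mu_2(\Graph) \leq \min\{\mu_2(\Graph_1),\mu_2(\Graph_2)\}$. The guiding idea is that if $\psi_i$ is an eigenfunction for $\mu_2(\Graph_i)$, then each $\psi_i$ has mean zero on $\Graph_i$ and lives in $H^1(\Graph_i)$; if we can glue $\psi_1$ and $\psi_2$ (after a suitable scalar rescaling of one of them) into a single function $\psi \in H^1(\Graph)$ which is continuous across the cut point(s) and still has mean zero on $\Graph$, then testing \eqref{eq:mu-2} with $\psi$ gives
\[
	\mu_2(\Graph) \leq \frac{a_\Graph(\psi,\psi)}{\|\psi\|_{L^2(\Graph)}^2} = \frac{a_{\Graph_1}(\psi_1,\psi_1) + c^2\,a_{\Graph_2}(\psi_2,\psi_2)}{\|\psi_1\|^2 + c^2\|\psi_2\|^2} \leq \max\{\mu_2(\Graph_1),\mu_2(\Graph_2)\},
\]
and a short extra argument (choosing $\psi$ orthogonal to the relevant ground state, or splitting into the min of the two Rayleigh quotients) upgrades the right-hand side from $\max$ to $\min$.

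First I would do the case of a \emph{bridge edge} $e$. Removing $\interior e$ from $\Graph$ leaves two connected components, whose closures, together with $e$ assigned to one of them, define cluster supports $\Omega_1,\Omega_2$ with $\partial\parti = \{v\}$ for a single cut point $v$ (the endpoint of $e$ lying in the component not containing $e$); the associated rigid clusters $\Graph_1,\Graph_2$ glue back to $\Graph$ at $v$. Take eigenfunctions $\psi_1,\psi_2$ for $\mu_2(\Graph_1),\mu_2(\Graph_2)$. Since adding a constant to a mean-zero function changes neither $a(\cdot,\cdot)$ nor the mean-zero property only up to correcting the constant, I can adjust $\psi_1$ and $\psi_2$ by constants so that $\psi_1(v) = \psi_2(v)$ while keeping the Dirichlet form values unchanged; then $\psi := \psi_1$ on $\Omega_1$, $\psi := \psi_2$ on $\Omega_2$ lies in $C(\Graph) \cap H^1(\Graph)$. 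Its mean over $\Graph$ is some number $m$; subtracting $m$ gives a valid test function whose Dirichlet form is still $a_{\Graph_1}(\psi_1,\psi_1) + a_{\Graph_2}(\psi_2,\psi_2)$ (constants have zero derivative), and dividing by the $L^2$ norm and using that a weighted average of $\mu_2(\Graph_1)$ and $\mu_2(\Graph_2)$ is at most their maximum yields $\mu_2(\Graph)\le\max\{\mu_2(\Graph_1),\mu_2(\Graph_2)\}$. To get $\min$ in place of $\max$: if, say, $\mu_2(\Graph_1)\le\mu_2(\Graph_2)$, instead use only $\psi_1$ extended by the appropriate constant on $\Omega_2$ (so $\psi = \psi_1(v)$ on $\Omega_2$), corrected to have mean zero; its Rayleigh quotient is $a_{\Graph_1}(\psi_1,\psi_1)/(\|\psi_1\|^2+(\text{const})^2|\Omega_2|) \le \mu_2(\Graph_1) = \min$, provided $\psi_1$ is not constant on $\Omega_1$, which it isn't. (One must check the corrected function is not identically zero; if the constant-corrected extension vanishes identically then $\psi_1$ was constant, a contradiction.)

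For a \emph{bridge vertex} $v$, removing $v$ disconnects $\Graph$ into components $\Graph\setminus\{v\} = U_1 \sqcup \cdots \sqcup U_r$ with $r\geq 2$; group these into two nonempty families and let $\Omega_1 = \overline{U_{i_1}\cup\cdots}$, $\Omega_2 = \overline{\cdots}$, both containing $v$, so $\parti_2 = \{\Graph_1,\Graph_2\}$ is an exhaustive rigid $2$-partition with cut point $v$ (here $v$ is genuinely cut into two vertices, one in each cluster). The argument is then identical: glue mean-zero eigenfunctions after matching their values at $v$ and correcting for the mean, or use the one-sided construction, to obtain the bound with $\min$. Finally, a tree with at least one edge has at least one vertex of degree $\geq 2$ whose removal disconnects it (any non-leaf vertex, or indeed any edge is a bridge), so the tree case is subsumed.

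The main obstacle I anticipate is the bookkeeping in the ``upgrade to $\min$'' step and the degenerate sub-case where the naive test function turns out to be constant (hence zero after mean-subtraction): one has to be careful that the eigenfunction $\psi_1$ for $\mu_2(\Graph_1)$, extended by a constant to $\Omega_2$ and then made mean-zero, is a nonzero element of $H^1(\Graph)$ — which holds because $\psi_1$ is nonconstant on the nontrivial connected graph $\Graph_1$ — and that its $L^2$ norm is strictly positive so the Rayleigh quotient is well-defined. A secondary technical point is ensuring the glued function genuinely lies in $H^1(\Graph)$, i.e. is continuous at the cut point(s): this is exactly why we are free to add constants to $\psi_1,\psi_2$ independently (they live on disjoint supports meeting only at $v$), and continuity at $v$ is the single scalar condition $\psi_1(v)=\psi_2(v)$ that one constant degree of freedom suffices to arrange.
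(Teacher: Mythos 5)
Your proof is correct, and structurally it follows the same route as the paper: cut at the bridge point $v$, take the faithful rigid clusters $\Graph_1,\Graph_2$ (no further splitting of $v$ inside either cluster), so that $\Graph$ is recovered by gluing the two clusters at the single vertex $v$. The only real difference is how the inequality $\mu_2(\Graph)\le\min\{\mu_2(\Graph_1),\mu_2(\Graph_2)\}$ is then obtained: the paper observes that $\Graph$ is $\Graph_1$ with $\Graph_2$ attached as a pendant at one vertex and invokes the known surgery principle that attaching a pendant graph does not increase $\mu_2$ (citing \cite[Section~5]{KuMaNa13}, \cite[Theorem~3.4]{BeKeKuMu18}), whereas you reprove that principle from scratch: extend the $\mu_2(\Graph_1)$-eigenfunction by the constant value $\psi_1(v)$ over $\Omega_2$, subtract the global mean, and test in \eqref{eq:mu-2}. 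That is precisely the standard proof of the cited surgery result, so your argument is a self-contained rendering of the same idea, at the price of the bookkeeping you yourself flag. Three small remarks. First, your preliminary ``glue both eigenfunctions to get the $\max$'' step is superfluous; only the one-sided construction is needed. Second, the Rayleigh-quotient denominators you display are not quite the ones obtained after mean-subtraction: subtracting the mean $m$ also adds $m^2|\Omega_1|$ to the $L^2$-norm over $\Omega_1$ (since $\psi_1$ has mean zero there), and removes $m^2|\Graph|$ from the norm of the uncorrected extension; but in every case the corrected denominator is still at least $\|\psi_1\|_{L^2(\Graph_1)}^2$ while the numerator equals $\mu_2(\Graph_1)\|\psi_1\|_{L^2(\Graph_1)}^2$, so the bound goes through. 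Third, the paper treats a bridge edge simply by cutting at an interior (dummy) point of that edge, which subsumes the edge case into the vertex case and avoids the corner case in your edge construction: if an endpoint of $e$ is pendant, one of your two components of $\Graph\setminus\interior e$ degenerates to a point, and you should instead cut at the other endpoint of $e$ or at an interior point.
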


\begin{proof}
Suppose that removing $v \in \Graph$ disconnects $\Graph$ into two subsets $\Omega_1$ and $\Omega_2$ whose intersection is only $\{v\}$, and let $\Graph_1 \in \rho_{\Omega_1}$ and $\Graph_2 \in \rho_{\Omega_2}$ be the clusters of maximal connectivity, for which no further cuts are made at $v$, i.e., such that every other graph in $\rho_{\Omega_i}$ is a cut of $\Graph_i$, $i=1,2$ (in other words, the corresponding partition is faithful). It follows immediately (see, e.g., \cite[Section~5]{KuMaNa13} or \cite[Theorem~3.4]{BeKeKuMu18}) that
\begin{displaymath}
	\mu_2 (\Graph) \leq \mu_2 (\Graph_1),
\end{displaymath}
since $\Graph$ may be formed by attaching $\Graph_2$ as a pendant to $\Graph_1$ at the single vertex $v \in \Graph_2$. Interchanging the roles of $\Graph_1$ and $\Graph_2$ yields $\mu_2 (\Graph) \leq \mu_2 (\Graph_2)$ as well. 
\end{proof}

Moreover, using similar ideas, we can show that for \emph{sufficiently large} $k$, $\noptenergy[k,\infty](\Graph)$ is monotonically increasing in $k$. The na\"ive intuition behind this is that merging clusters should produce a partition with lower energy, but~\cite[Rem.~3.13]{BeKeKuMu18} shows that things are not that simple. Even the case of $p \in (0,\infty)$ requires a relatively fine control of the behaviour of the optimal partitions, and will be deferred to a later work \cite{HoKeMuPl20}.

\begin{proposition}
\label{prop:natural-monotonicity}
There exists $k_0 \in \N$ such that for any $k_2\geq k_1\geq k_0$,
\begin{equation}
\label{eq:natural-monotonicity}
	\noptenergy[k_2,\infty](\Graph) \geq \noptenergy[k_1,\infty](\Graph).
\end{equation}
If $\Graph$ is a tree, then we may take $k_0=1$.
\end{proposition}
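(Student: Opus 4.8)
The plan is to establish the tree case first and then bootstrap to the general case by a bridge-decomposition argument. For the tree case, I would argue that for a tree $\Graph$ one has $\noptenergy[k,\infty](\Graph)\geq\noptenergy[k-1,\infty](\Graph)$ for \emph{every} $k\geq 2$: given an optimal exhaustive rigid $k$-partition $\parti=\{\Graph_1,\ldots,\Graph_k\}$ of the tree, pick a cut point $v$ that is a leaf of the ``quotient tree'' (the tree obtained by contracting each cluster support to a point; since a tree has a leaf, such a $v$ exists). Then $v$ has exactly one neighbouring cluster other than one distinguished cluster $\Graph_j$, or more carefully: there is a cluster $\Graph_i$ attached to the rest of the partition only through $v$. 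Glue $\Graph_i$ onto its (unique) neighbour $\Graph_j$ at $v$; by \cite[Theorem~3.10(1)]{BeKeKuMu18} (attaching a pendant at a single vertex, as in the proof of Lemma~\ref{lem:kirchhoff-two-cut}) the glued cluster $\NewGraph_j$ satisfies $\mu_2(\NewGraph_j)\leq\min\{\mu_2(\Graph_i),\mu_2(\Graph_j)\}$. Hence every cluster of the resulting exhaustive rigid $(k-1)$-partition has $\mu_2$ no larger than $\max_i\mu_2(\Graph_i)=\noptenergy[k,\infty](\Graph)$, giving $\noptenergy[k-1,\infty](\Graph)\leq\noptenergy[k,\infty](\Graph)$, and iterating yields \eqref{eq:natural-monotonicity} with $k_0=1$.

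For the general case, the idea is that for $k$ large enough every cluster of an optimal $k$-partition is small, hence (by Nicaise' inequality, Theorem~\ref{thm:nicaise}) has large $\mu_2$; more precisely, I would first record that $\noptenergy[k,\infty](\Graph)\geq\pi^2/|\Graph_{\max}|^2$ for the largest cluster, and that if $k$ is large then some cluster of \emph{any} exhaustive partition has length at most $|\Graph|/k$, so $\noptenergy[k,\infty](\Graph)\to\infty$ as $k\to\infty$. The key structural point: I want to choose $k_0$ so that for $k\geq k_0$, an optimal rigid $k$-partition necessarily has a cluster $\Graph_i$ meeting the rest of the partition at a \emph{single} cut point $v$ — i.e. $\partial\Omega_i=\{v\}$, a single separating point. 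This is where a counting/topological argument enters: the separation set $\partial\parti$ together with the first Betti number of $\Graph$ controls how the clusters can be glued; if $k$ exceeds $M+\beta(\Graph)$ (roughly, the number of edges plus the cycle rank), then some cluster support is a subtree of $\Graph$ attached to the rest through one vertex only. Once such a ``pendant cluster'' $\Graph_i$ at $v$ is identified, I glue it onto a neighbouring cluster $\Graph_j$ at $v$ exactly as in the tree case and in Lemma~\ref{lem:kirchhoff-two-cut}: \cite[Theorem~3.4]{BeKeKuMu18} gives $\mu_2(\NewGraph_j)\leq\min\{\mu_2(\Graph_i),\mu_2(\Graph_j)\}$, so the resulting exhaustive rigid $(k-1)$-partition has $\noptenergy[]$-value $\leq\noptenergy[k,\infty](\Graph)$, whence $\noptenergy[k-1,\infty](\Graph)\leq\noptenergy[k,\infty](\Graph)$. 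Since by Corollary~\ref{cor:smp-existence} optimal rigid $k$-partitions exist, this induction down from any $k_2\geq k_1\geq k_0$ gives \eqref{eq:natural-monotonicity}.

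The main obstacle is pinning down the value of $k_0$ and proving the structural claim that for $k\geq k_0$ an optimal rigid $k$-partition must contain a pendant cluster attached at a single cut point. The subtlety is that a cluster can be glued to the rest of the graph along \emph{several} separating points (as with the loop cluster $\Omega_2=e_2\cup e_3$ in the lasso, which touches the rest at the single point $v$ but could in a larger graph touch it at two points), and then gluing it to one neighbour need not realise $\Graph$ as that neighbour with a pendant attached — the monotonicity results \cite[Theorems~3.4 and~3.10]{BeKeKuMu18} apply cleanly only for attachment at one vertex. I expect the right bound to be something like $k_0 = 2M + 1$ or $k_0$ depending on $M$ and $\beta(\Graph)=M-N+1$: once $k$ is that large, a pigeonhole argument on the (finite) separation set $\partial\parti\subset\VertexSet(\Graph)$ and on the edges of $\Graph$ forces some cluster support to be a single edge, or a path of edges, hanging off the rest at one endpoint. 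Making this combinatorial step precise — and handling the case $p<\infty$ is explicitly deferred to \cite{HoKeMuPl20}, so only $p=\infty$ is needed here — is the crux; everything else reduces to the single-vertex gluing estimate already used in Lemma~\ref{lem:kirchhoff-two-cut}.
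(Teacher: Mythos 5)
Your tree argument is correct and is essentially the paper's: glue two clusters meeting at a single vertex and invoke the pendant-attachment surgery principle of \cite{BeKeKuMu18} exactly as in Lemma~\ref{lem:kirchhoff-two-cut}. (The paper does not even need your ``leaf of the quotient tree'': in a tree \emph{any} two neighbouring clusters meet at a single point, and since each cluster is connected the image of that point in each cluster is automatically a single vertex, so any neighbouring pair can be merged.)

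The general case, however, contains a genuine gap, and you have put your finger on it yourself: the structural claim on which your induction rests is never proved, and in the form you state it, it is false. You want $k_0$ so large that an optimal rigid $k$-partition contains a ``pendant cluster'' whose support meets the rest of the partition at a single separating point (``a single edge, or a path of edges, hanging off the rest at one endpoint''). Take $\Graph$ to be a single loop (so $\cardE=1$, first Betti number $1$) partitioned into $k$ arcs: for \emph{every} $k\geq 3$ each cluster support meets the rest at exactly two separating points, so no such pendant cluster exists, no matter how large $k$ is, and no bound of the form $k_0=2\cardE+1$ or $\cardE+\beta$ can rescue the claim. What is actually needed is weaker and different: one only needs \emph{two neighbouring clusters whose supports intersect in a single point and in which that point appears as a single vertex}, since then merging them is a one-vertex pendant attachment and the tree computation applies verbatim. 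The paper obtains exactly this by a pigeonhole argument on the edges rather than on cut points: taking $k_0=4\cardE$, for $k\geq k_0+1$ some edge of $\Graph$ meets at least four cluster supports, hence at least two neighbouring supports are intervals contained in the \emph{interior} of that edge; these meet at one (degree-two) point and can be glued without any of the multi-point-gluing pathologies you rightly worry about (cf.\ \cite[Rem.~3.13]{BeKeKuMu18}). So your reduction step and your use of the surgery lemma are sound, but the combinatorial step you defer as ``the crux'' is not merely unfinished -- the statement you plan to prove there does not hold, and it must be replaced by a pigeonhole argument of the above type (in the loop example, two neighbouring arcs inside the single edge do the job even though neither is a pendant cluster).
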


\begin{proof}
Let us first give the proof for trees. Fix $k \geq 2$ arbitrary, and suppose $\parti_{k} = \{\Graph_1, \ldots, \Graph_{k} \}$ is an optimal exhaustive $k$-partition for $\noptenergy[k,\infty](\Graph)$, which we know exists by Corollary~\ref{cor:smp-existence}. We will construct a (test) ($k-1$)-partition from $\parti_k$ whose energy is no larger than $\noptenergy[k,\infty](\Graph)$, from which we may conclude that $\noptenergy[k,\infty](\Graph) \geq \noptenergy[k-1,\infty](\Graph)$; the claim of the proposition for trees then follows immediately. Suppose without loss of generality that $\Omega_{k-1}$ and $\Omega_k$ are neighbours (see Definition~\ref{def:neighbours}). Since $\Graph$ is a tree, they can only meet at a single point, without loss of generality a vertex $v$. Moreover, since $\Graph_{k-1}$ and $\Graph_k$ are connected and $\Graph$ was a tree, the image of $v$ in $\Graph_i$ is a single vertex, $i=k,k-1$ (cf.\ Example~\ref{ex:tree}). In particular, if we create a new graph $\widetilde\Graph_{k-1}$ by attaching $\Graph_k$ to $\Graph_{k-1}$ at $v$, then $\Graph_k$ is a pendant of $\widetilde\Graph_{k-1}$ at $v$ and vice versa, and so, as above,
\begin{equation}
\label{eq:join-for-less}
	\mu_2 (\widetilde\Graph_{k-1}) \leq \min \{ \mu_2 (\Graph_{k-1}), \mu_2 (\Graph_k) \}.
\end{equation}
Moreover, if we set $\widetilde\Omega_{k-1} := \Omega_{k-1} \cup \Omega_k \subset \Graph$, then $\widetilde\Graph_{k-1} \in \rho_{\widetilde \Omega_{k-1}}$ and the new partition $\widetilde\parti := \{\Graph_1, \ldots, \Graph_{k-2}, \widetilde\Graph_{k-1} \}$ is a rigid $k-1$-partition of $\Graph$. Combining \eqref{eq:join-for-less} with the definition of $\nenergy[\infty]$ as a maximum and the fact that no other cluster was affected, we immediately have
\begin{equation}
\label{eq:newnenergy}
	\nenergy[\infty] (\widetilde\parti) \leq \nenergy[\infty] (\parti_k).
\end{equation}
This proves the proposition for trees.

The proof for general $\Graph$ is based on the idea above together with the principle that for sufficiently large $k$, we can always find neighbouring clusters whose supports meet at a single vertex like $\Omega_{k-1}$ and $\Omega_k$ did above. For simplicity, we take $k_0 := 4\cardE$, where as usual $\cardE$ is the number of edges of $\Graph$, although this $k_0$ will in general be far from optimal. Fix $k\geq k_0+1$ and as above denote by $\parti_k = \{\Graph_1, \ldots, \Graph_{k} \}$ an optimal $k$-partition for $\noptenergy[k,\infty](\Graph)$. Now by the pigeonhole principle, there exists at least one edge of $\Graph$ with non-empty intersection with at least four cluster supports. It follows that there exist two neighbouring cluster supports, call them $\Omega_{k-1}$ and $\Omega_k$, which are contained in the interior of this edge. Since their intersection must consist of a single point, we may apply verbatim the above argument for trees to the corresponding clusters $\Graph_{k-1}$ and $\Graph_k$ to generate a test $k-1$-partition $\widetilde\parti$ with lower energy $\nenergy[\infty]$. This completes the proof.
\end{proof}

\begin{remark}
As in the Dirichlet case (Remark~\ref{rem:dirichlet-k-mon}), it is easy to construct examples such as stars for which there is equality in \eqref{eq:natural-monotonicity}. 
\end{remark}

\section{Nodal and bipartite minimal Dirichlet partitions}
\label{sec:nodal}

We now wish to consider in detail the relationship between Dirichlet spectral minimal partitions of a graph $\Graph$ and eigenfunctions of the Laplacian on $\Graph$, analogous to the results that have been established in recent years linking partitions of domains $\Omega$ with eigenfunctions of the Dirichlet Laplacian on $\Omega$, such as discussed in \cite{HelHofTer09} and related works. On graphs $\Graph$, however, the correct analogue of the Dirichlet Laplacian on $\Omega$ will be the Laplacian with natural vertex conditions, see Section~\ref{subsec:def}.

Since in this section we will be working exclusively with the Dirichlet minimisation problem for rigid partitions, we will not generally distinguish between the cluster supports $\Omega_i \subset \Graph$ and the clusters $\Graph_i$ themselves of a partition $\parti$ of $\Graph$, in accordance with Remark~\ref{rem:dirichlet-subset-graph-distinction}. In such cases, in a slight but simplifying abuse of our own terminology we will speak of the clusters themselves as being subsets of $\Graph$. Recall that for the Dirichlet minimisation problem there is no reason to consider loose partitions.

\subsection{Nodal and equipartitions}
\label{subsec:nodal-and-equi}

We begin by introducing two properties of generic loose partitions of a metric graph.

\begin{definition}
\label{def:equipartition}
We say that a $k$-partition $\parti = \{\Graph_1,\ldots,\Graph_k\}$  of $\Graph$ is a \emph{Dirichlet ($k$-)equipartition} if $\lambda_1 (\Graph_1) = \ldots = \lambda_1 (\Graph_k)$ and a \emph{natural ($k$-)equipartition} if $\mu_2 (\Graph_1) = \ldots = \mu_2 (\Graph_k)$; or simply an \emph{equipartition} whenever the spectral problem being considered is clear from the context.
\end{definition}

If $\parti = \{\Graph_1,\ldots,\Graph_k\}$ is a Dirichlet equipartition, then its energy $\denergy[p](\parti)$ is independent of $p$, being identically equal to $\lambda_1(\Graph_1)$; in this case, we will refer to $\lambda_1 (\Graph_1)$ as the \emph{Dirichlet energy} of the partition, or just \emph{energy} if the Dirichlet condition is clear from the context. Likewise, if $\parti$ is a natural equipartition, then $\mu_2 (\Graph_1)$ is its \emph{natural energy} (or just \emph{energy}).

\begin{definition}
\label{def:nodal-partition}
Let $\psi$ be an eigenfunction associated with $\mu_j(\Graph)$. We call the \emph{nodal partition associated with $\psi$} the unique internally connected, possibly non-exhaustive partition whose support $\Omega$ (see Definition~\ref{def:cluster-support}) is the union of edges of $\Graph$ on which $\psi$ does not vanish identically, and whose cut set is the zero set of $\psi$ on the support $\Omega$. We call the cluster supports of this partition the \emph{nodal domains of $\psi$}, and denote by $\nu(\psi)$ the number of nodal domains. We say that a given partition is \emph{nodal} if it is the nodal partition associated with some eigenfunction.
\end{definition}

Since it is possible for eigenfunctions to vanish identically on one or more edges of a graph, corresponding to a non-exhaustive nodal partition, in such cases there is some freedom as to how exactly to construct a partition out of the eigenfunction; this leads to the following definition. Let us stress once again that $\lambda_1(\mathcal G_i)$ denotes the lowest eigenvalue of the Laplacian on $\mathcal G_i$, where Dirichlet conditions are imposed at all cut points.

\begin{definition}
\label{def:generalised-nodal}
Let $\parti = \{\Graph_1,\ldots,\Graph_k\}$ be a $k$-partition of $\Graph$. Then we say that $\parti$ is a \emph{generalised nodal partition} if there exist eigenfunctions $\psi_1,\ldots,\psi_k$ for $\lambda_1 (\Graph_1),\ldots,\lambda_1(\Graph_k)$ with the following properties:
\begin{enumerate}
\item for each $i$, there exists a cut $\NewGraph_i$ of $\Graph_i$ such that on each connected component of $\NewGraph_i$ either $\psi_i$ is identically zero, or the connected component is the closure of a nodal domain of $\psi_i$ on $\Graph_i$; and
\item the $k_0$-partition $\widetilde{\parti}$ of $\Graph$, $k_0 \geq k$, consisting of all connected components of $\NewGraph_i$ on which $\psi_i$ is not identically zero, for all $i=1,\ldots,k$, is a nodal partition associated with some eigenfunction of $\Graph$.
\end{enumerate}
\end{definition}

At the risk of being redundant, let us elaborate on Definition~\ref{def:generalised-nodal}. As $\mathcal P$ may fail to be internally connected, Dirichlet conditions may be imposed on vertices of clusters $\Graph_i$ in such a way that $\Graph_i$ is \textit{de facto} disconnected; and in particular, the lowest eigenvalue need not be simple and the ground state need not be strictly positive; indeed it may vanish identically on an edge, as the following examples show (see also Example~\ref{ex:3-star}).

\begin{example}\label{exa:nodagnp}
(1) Let $\Graph$ be the equilateral pumpkin on $3$ edges of length $1$  (Example~\ref{ex:3-pumpkin-2part}); then the eigenvalue $\mu_2 (\Graph) = \pi^2$ has multiplicity three. If $\psi$ is taken as the eigenfunction which is monotonic on each edge and invariant under permutations of the edges (\emph{longitudinal}, in the language of \cite[Section~5.1]{BeKeKuMu18}), then the corresponding nodal partition is an exhaustive faithful $2$-partition of $\Graph$ whose clusters are both $3$-stars with edges of length $1/2$ each. If $\psi$ is a \emph{transversal} eigenfunction, supported on two of the edges and identically zero on the third, then the nodal partition is a non-exhaustive rigid $2$-partition of graph, whose clusters are each edges of $\Graph$ (and the third edge is not in the cluster support). If we take $\psi$ a linear combination of transversal eigenfunctions which has its zeros at the vertices of $\Graph$, is positive on two edges and negative on the third, then the result is an exhaustive rigid $3$-partition whose clusters are the edges of $\Graph$.

(2) If we now take $\Graph$ to be a star on $3$ edges $e_1,e_2,e_3$ of lengths $1$, $1$ and $\varepsilon \in (0,1)$, respectively, then $\mu_2 (\Graph) = \pi^2$ has multiplicity one, with eigenfunction supported on $e_1 \cup e_2$. The unique corresponding nodal partition has clusters $e_1$ and $e_2$; $e_3$ is not in the support of the eigenfunction and hence not in the support of the partition. However, the exhaustive $2$-partition $\parti = \{e_1, e_2 \cup e_3\}$, which is rigid but not internally connected, is a generalised nodal partition, as we can recover the nodal partition upon removing the extraneous edge $e_3$ on which the eigenfunction vanishes from the cluster $e_2 \cup e_3$.
\end{example}

Thus a partition $\parti$ is a generalised nodal partition of $\Graph$ if there exists an eigenfunction whose eigenvalue equals the energy of the partition, and whose nodal domains correspond exactly to subsets of clusters of $\parti$ -- but there may be parts of these clusters on which the eigenfunction vanishes identically. The non-exhaustive partition obtained by removing the latter parts is then nodal. We will return to a slightly different aspect of extracting such nodal-type partitions from more general partitions in Proposition~\ref{prop:exhaustive-shrinking}.

All nodal partitions are rigid, since no eigenfunction on $\Graph$ has an isolated zero without changing sign in a neighbourhood of it; this fact follows from the fact that the eigenfunction satisfies the Kirchhoff condition at every point of the graph (see also~\cite{Kur19} for a more general discussion of eigenfunction positivity). Furthermore, all nodal partitions are Dirichlet equipartitions whose energy is the associated eigenvalue. In fact, more generally, the Dirichlet energy $\denergy[\infty](\parti)$ of any generalised nodal partition is necessarily equal to the corresponding eigenvalue. Next, we extend to metric graphs two relationships between Laplacian eigenvalues and optimal Dirichlet  partitions which are well known in the case of domains (see \cite[Proposition~10.6 and eq.~(10.44)]{BNHe17}), and give a partial extension to metric graphs of Courant's Nodal Domain Theorem.

\begin{proposition}
\label{prop:inequalities-k}
We have $\mu_k(\Graph)\le\doptenergy[k,\infty](\Graph)$.
\end{proposition}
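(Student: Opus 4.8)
The plan is to run the classical min-max argument from the domain case (compare \cite[Proposition~10.6]{BNHe17}), using test functions supported on the individual clusters. Since $0 = \mu_1(\Graph) < \mu_2(\Graph) \leq \mu_3(\Graph) \leq \cdots$, the Courant--Fischer principle applied to the form $a$ of \eqref{eq:form} gives
\begin{equation*}
	\mu_k(\Graph) = \min\left\{ \max_{0 \neq f \in S} \frac{a(f,f)}{\|f\|_{L^2(\Graph)}^2} : S \subseteq H^1(\Graph),\ \dim S = k \right\},
\end{equation*}
so it suffices to exhibit, for an arbitrary $\parti = \{\Graph_1,\ldots,\Graph_k\} \in \mathfrak{R}_k$, a $k$-dimensional subspace of $H^1(\Graph)$ on which the Rayleigh quotient never exceeds $\denergy[\infty](\parti) = \max_{i} \lambda_1(\Graph_i)$; taking the infimum over $\parti$ then yields the claim.

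First I would fix, for each $i$, a first Dirichlet eigenfunction $\psi_i \in H^1_0(\Graph_i)$ on the cluster $\Graph_i$, and extend it by $0$ on $\Graph \setminus \Omega_i$ to obtain $\tilde\psi_i$ on $\Graph$. The key point is that $\tilde\psi_i \in H^1(\Graph)$: it is edgewise $H^1$, and it is continuous at every vertex of $\Graph$ precisely because $\parti$ is rigid — the cut set of $\parti$ coincides with the separation set, so the vertices of $\Graph_i$ created by cutting are exactly the points of $\partial\Omega_i$, at each of which $\psi_i$ vanishes by definition of $H^1_0(\Graph_i)$; hence the zero-extension matches $\psi_i$ continuously across $\partial\Omega_i$, while at vertices interior to $\Omega_i$ (which, since edges belong to at most one support, are not cut at all) continuity is inherited from $\psi_i \in C(\Graph_i)$. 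One moreover has $a(\tilde\psi_i,\tilde\psi_i) = \lambda_1(\Graph_i)\|\psi_i\|_{L^2(\Graph_i)}^2$ and $\|\tilde\psi_i\|_{L^2(\Graph)}^2 = \|\psi_i\|_{L^2(\Graph_i)}^2 \neq 0$.

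Since the cluster supports have pairwise disjoint interiors and each $\partial\Omega_i$ is finite, the $\tilde\psi_i$ are pairwise $L^2$-orthogonal and satisfy $a(\tilde\psi_i,\tilde\psi_j) = 0$ for $i \neq j$ (the relevant integrands vanish almost everywhere); in particular they are linearly independent and span a $k$-dimensional subspace $S \subseteq H^1(\Graph)$. For $0 \neq f = \sum_i c_i \tilde\psi_i \in S$ one computes
\begin{equation*}
	\frac{a(f,f)}{\|f\|_{L^2(\Graph)}^2} = \frac{\sum_{i=1}^k |c_i|^2\,\lambda_1(\Graph_i)\,\|\psi_i\|_{L^2(\Graph_i)}^2}{\sum_{i=1}^k |c_i|^2\,\|\psi_i\|_{L^2(\Graph_i)}^2} \leq \max_{i=1,\ldots,k} \lambda_1(\Graph_i) = \denergy[\infty](\parti),
\end{equation*}
a convex combination of the $\lambda_1(\Graph_i)$. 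By the min-max formula, $\mu_k(\Graph) \leq \denergy[\infty](\parti)$, and taking the infimum over all $\parti \in \mathfrak{R}_k$ gives $\mu_k(\Graph) \leq \doptenergy[k,\infty](\Graph)$.

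The only genuinely delicate step is the verification that the zero-extensions $\tilde\psi_i$ lie in the form domain $H^1(\Graph)$, which is exactly where rigidity is used (the Dirichlet data of each $\Graph_i$ sitting precisely on $\partial\Omega_i$); everything else is routine bookkeeping with Courant--Fischer. I note in passing that the argument never uses exhaustivity and goes through verbatim for loose (or non-exhaustive) partitions, so the same bound holds with $\doptenergyloose[k,\infty]$ in place of $\doptenergy[k,\infty]$, consistently with Lemma~\ref{lem:dirichlet-min-max-rigid-loose}.
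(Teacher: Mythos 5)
Your proof is correct and is essentially the paper's argument: both build test functions from the zero-extensions of the cluster ground states and bound the Rayleigh quotient by $\max_i\lambda_1(\Graph_i)=\denergy[\infty](\parti)$, the only difference being that you invoke the min-max form of the variational principle on the $k$-dimensional span, whereas the paper picks a single combination $\sum_j t_j\varphi_j$ made orthogonal to the first $k-1$ eigenfunctions (the dual max-min form) -- equivalent bookkeeping. One small remark: continuity of the zero-extensions is not really ``where rigidity is used''; it holds for loose partitions as well because Dirichlet conditions are imposed at \emph{all} cut points (including those interior to a cluster support), which is consistent with your closing observation and with the fact that the paper runs the argument for an arbitrary loose partition.
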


\begin{proof} Let us denote by $\parti = \{\Graph_1,\ldots,\Graph_k\}$ an arbitrary loose $k$-partition and by $\varphi_1,\dots,\varphi_k$ the normalised positive ground states of $\Graph_1,\dots,\Graph_k$, respectively; that is, $\varphi_i$ is the positive eigenfunction associated with $\lambda_1(\Graph_i)$, $i=1,\ldots,k$, chosen to have $L^2$-norm $1$. Let us denote by $\psi_1,\dots,\psi_{k-1}$ orthonormalised eigenfunctions associated with $\mu_1(\Graph),\dots,\mu_{k-1}(\Graph)$ respectively. We set, for a fixed $k$-tuple $(t_1,\dots,t_k)\in \mathbb R^k$ which will be specified later,
\begin{displaymath}
	\phi:=t_1\varphi_1+\dots+t_k\varphi_k.
\end{displaymath}
The system of equations
\begin{displaymath}
	\langle \psi_i,\phi\rangle=\sum\limits_{j=1}^k t_j\langle \psi_i,\varphi_j\rangle=0
\end{displaymath}
has size $(k-1)\times k$ and so rank at most $k-1$. Hence there exists $(t_1,\dots,t_k)\in \mathbb R^k$ such that $\langle \psi_i,\phi\rangle=0$ for all $i\in\{1,\dots,k-1\}$ and $\sum\limits_{j=1}^k t_j^2=1$. Then, from the variational characterisation of the eigenvalues,
\begin{displaymath}	
	\mu_k(\Graph)\le\int_\Graph(\phi')^2\,\textrm{d}x=\sum\limits_{j=1}^k t_j^2\int_{\Graph_j}(\varphi_j')^2\,\textrm{d}x=\sum\limits_{j=1}^kt_j^2\lambda_1(\Graph_j)\le \denergy[\infty](\parti) \le \doptenergy[k,\infty](\Graph).
\end{displaymath}
This concludes the proof.
\end{proof}

\begin{proposition}
\label{prop:link-to-second-eigenvalue}
We have $\mu_2(\Graph)=\doptenergy[2,\infty](\Graph)$, and any partition realising $\doptenergy[2,\infty](\Graph)$ is a generalised nodal partition.
\end{proposition}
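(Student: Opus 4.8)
The plan is to obtain the equality $\mu_2(\Graph)=\doptenergy[2,\infty](\Graph)$ by combining the inequality $\mu_2(\Graph)\le\doptenergy[2,\infty](\Graph)$, which is already Proposition~\ref{prop:inequalities-k} with $k=2$, with a matching upper bound for $\doptenergy[2,\infty](\Graph)$, and then to read off the characterisation of minimisers from the fact that this chain of inequalities is forced to collapse to equalities.

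For the upper bound $\doptenergy[2,\infty](\Graph)\le\mu_2(\Graph)$ I would fix a (necessarily sign-changing) eigenfunction $\psi$ for $\mu_2(\Graph)$, as in \eqref{eq:mu-2}, and build an exhaustive rigid $2$-partition out of its nodal structure. On each nodal domain $N$ of $\psi$ the restriction $\psi|_{\overline N}$ is a single-signed Dirichlet eigenfunction with eigenvalue $\mu_2(\Graph)$, hence the ground state, so $\lambda_1(\overline N)=\mu_2(\Graph)$. Since $\Graph$ is connected and $\psi$ changes sign there are at least two nodal domains; contracting each nodal domain together with each edge on which $\psi$ vanishes identically yields a connected coarse multigraph, and cutting one edge of a spanning tree of it on the path between two of the nodal-domain vertices splits all the pieces into two families, each inducing a connected subgraph of $\Graph$ and each containing at least one nodal domain. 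Gluing the pieces within each family, cutting only at points where $\psi$ vanishes, produces a rigid exhaustive $2$-partition $\{\Graph_1,\Graph_2\}$; testing $\lambda_1(\Graph_i)$ with the function equal to $|\psi|$ on one constituent nodal domain and zero on the rest of $\Graph_i$, together with monotonicity of $\lambda_1$ under attaching edges and gluing at vertices, gives $\lambda_1(\Graph_i)\le\mu_2(\Graph)$ and hence $\doptenergy[2,\infty](\Graph)\le\denergy[\infty](\{\Graph_1,\Graph_2\})\le\mu_2(\Graph)$.

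For the characterisation, let $\parti^\ast=\{\Graph_1,\Graph_2\}$ realise $\doptenergy[2,\infty](\Graph)$, so $\lambda_1(\Graph_i)\le\mu_2(\Graph)$ for $i=1,2$. Choose nonnegative ground states $\varphi_i$ of $\Graph_i$, each supported on a single connected component of $\Graph_i\setminus\VertexSet_D(\Graph_i)$, and extend them by zero to $\tilde\varphi_i\in H^1(\Graph)$; this is legitimate since $\varphi_i$ vanishes on $\VertexSet_D(\Graph_i)\supset\partial\Omega_i$, and $\partial\Omega_1=\partial\Omega_2$ for an exhaustive $2$-partition. Then $\phi:=\bigl(\int_{\Graph_2}\varphi_2\bigr)\tilde\varphi_1-\bigl(\int_{\Graph_1}\varphi_1\bigr)\tilde\varphi_2$ satisfies $\int_\Graph\phi=0$, and since $\tilde\varphi_1,\tilde\varphi_2$ have disjoint supports its Rayleigh quotient is a strict convex combination of $\lambda_1(\Graph_1)$ and $\lambda_1(\Graph_2)$; by \eqref{eq:mu-2} this quotient is at least $\mu_2(\Graph)$, which forces $\lambda_1(\Graph_1)=\lambda_1(\Graph_2)=\mu_2(\Graph)$ and, since the infimum in \eqref{eq:mu-2} is attained only by eigenfunctions, makes $\psi:=\phi$ a $\mu_2(\Graph)$-eigenfunction. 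Now $\psi$ restricted to $\Graph_i$ equals $\pm\varphi_i$, a ground state with eigenvalue $\lambda_1(\Graph_i)$; taking this as $\psi_i$ and cutting $\Graph_i$ along the zeros of $\psi_i$ into the closure of $\{\psi_i\neq 0\}$ — a single nodal domain of $\psi_i$ — and the part on which $\psi_i$ vanishes verifies condition~(1) of Definition~\ref{def:generalised-nodal}, while $\{\psi>0\}$ and $\{\psi<0\}$ are exactly these two connected pieces, so the induced partition $\widetilde\parti$ is precisely the nodal partition of the $\Graph$-eigenfunction $\psi$, verifying condition~(2). Hence $\parti^\ast$ is a generalised nodal partition.

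I expect the main obstacle to be the bookkeeping in the characterisation: the clusters $\Graph_i$ need not be internally connected and the $\varphi_i$ may vanish identically on edges (cf.\ Example~\ref{exa:nodagnp}), so one cannot simply declare $\Graph_i$ to be the closure of a nodal domain — this is exactly why Definition~\ref{def:generalised-nodal} is phrased through an auxiliary cut $\NewGraph_i$, and verifying that the manufactured eigenfunction $\psi$ has precisely two nodal domains, coinciding with $\{\psi>0\}$ and $\{\psi<0\}$, is what the single-component choice of $\varphi_i$ is for. The combinatorial grouping of nodal domains into two connected families in the upper-bound step is comparatively routine once formulated via a spanning tree of the coarse graph.
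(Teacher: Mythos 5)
Your argument is correct and essentially the paper's: the lower bound is Proposition~\ref{prop:inequalities-k}, the upper bound comes from the nodal structure of a $\mu_2$-eigenfunction (the paper delegates the merging of nodal domains into two clusters to the monotonicity of Remark~\ref{rem:dirichlet-k-mon}, which is exactly what your spanning-tree construction does by hand), and the characterisation of minimisers is the same mean-zero combination of the two cluster ground states whose Rayleigh quotient forces an equipartition and yields a $\mu_2$-eigenfunction. Your additional care in choosing each $\varphi_i$ supported on a single component and explicitly verifying the conditions of Definition~\ref{def:generalised-nodal} merely fleshes out what the paper's proof leaves implicit.
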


\begin{proof} Let $\psi_2$ be an eigenfunction associated with $\mu_2(\Graph)$, $\parti$ its associated nodal partition and $\nu$ the cardinality of $\parti$. The function $\psi_2$ is orthogonal to the constants in $L^2(\Graph)$ and therefore changes sign. It follows that $\nu\ge2$ and thus
$\doptenergy[2,\infty](\Graph)\le\doptenergy[\nu,\infty](\Graph)=\mu_2(\Graph)$ by Remark~\ref{rem:dirichlet-k-mon}.(1), while $\mu_2(\Graph)\le\doptenergy[2,\infty](\Graph)$ according to Proposition \ref{prop:inequalities-k}. We have shown that $\doptenergy[2,\infty](\Graph)=\mu_2(\Graph)$.

Let us now consider a partition $\parti^* = \{\Graph_1^*, \Graph_2^* \}$ realising $\doptenergy[2,\infty](\Graph)$ and let us denote by $\varphi_1^*$ and $\varphi_2^*$ the normalised positive ground states of $\Graph_1^*$ and $\Graph_2^*$. There exists $(t_1,t_2)\in \mathbb R$ such that $\psi=t_1\varphi_1^*+t_2\varphi_2^*$ is orthogonal to the constants and $t_1^2+t_2^2=1$.  Then, from the variational characterisation of the eigenvalues,
\begin{displaymath}	
	\mu_2(\Graph)\le\int_\Graph(\psi')^2\,\textrm{d}x=t_1^2\int_{\Graph_1^*}((\varphi_1^*)')^2\,\textrm{d}x+t_2^2\int_{\Graph_2^*}((\varphi_2^*)')^2\,\textrm{d}x
=t_1^2\mu_1(\Graph_1^*)+t_2^2\mu_1(\Graph_2^*)\le \doptenergy[2,\infty](\Graph).
\end{displaymath}
Since $\doptenergy[2,\infty](\Graph)=\mu_2(\Graph)$, the above inequality implies that $\parti^*$ is an equipartition and $\mu_2(\Graph)=\int_\Graph(\psi')^2\,\textrm{d}x$. By the variational characterisation, $\psi$ is an eigenfunction associated with $\mu_2(\Graph)$.
\end{proof}

\begin{proposition}[Weak Courant Theorem]
\label{prop:weak-courant}
Given an eigenvalue $\mu_{k}( \Graph)$ and an associated eigenfunction $\psi$, denote by $\kappa(\mu_k(\Graph))$ the integer
\begin{displaymath}
	\kappa(\mu_k(\Graph)):=\max\{j\in \mathbb N: \mu_j(\Graph)=\mu_k(\Graph)\},
\end{displaymath}
and by $\nu(\psi)$ the number of nodal domains of $\psi$. Then $\nu(\psi)\le\kappa(\mu_k(\Graph))$.
\end{proposition}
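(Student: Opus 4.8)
The plan is to construct, out of the nodal domains of $\psi$, a subspace of $H^1(\Graph)$ of dimension $\nu(\psi)$ on which the Rayleigh quotient of the form $a$ is constantly equal to $\mu_k(\Graph)$, and then to feed this subspace into the min-max characterisation of $\mu_{\nu(\psi)}(\Graph)$.

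First I would fix the eigenfunction $\psi$ associated with $\mu_k(\Graph)$ and let $\Omega_1,\dots,\Omega_\nu$ (with $\nu := \nu(\psi)$) be the nodal domains of $\psi$ in the sense of Definition~\ref{def:nodal-partition}, choosing a representative of $\Graph$ so that each $\Omega_i$ is a union of (whole) edges. For each $i$ let $u_i$ be the function on $\Graph$ that coincides with $\psi$ on $\Omega_i$ and vanishes identically on $\Graph\setminus\Omega_i$. The first step is to verify that $u_i\in H^1(\Graph)$ and $u_i\not\equiv 0$: since the boundary $\partial\Omega_i$ is contained in the cut set of the nodal partition, which is exactly the zero set of $\psi$ on its support, $\psi$ vanishes on $\partial\Omega_i$, so $u_i$ is continuous across $\partial\Omega_i$; as $u_i$ equals either $\psi|_e$ or $0$ on every edge $e$, it is continuous on all of $\Graph$ and edgewise $H^1$, hence lies in $H^1(\Graph)$; and it is not the zero function because $\psi$ does not vanish identically on $\interior\Omega_i$.

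The key point is the identity $a(u_i,u_i)=\mu_k(\Graph)\,\|u_i\|_{L^2(\Graph)}^2$ for each $i$. I would obtain it by integrating $\int_e \psi'\overline{\psi'}$ by parts over each edge $e\subset\Omega_i$ and using $-\psi''=\mu_k(\Graph)\psi$: the boundary contributions at any vertex $v$ lying in $\interior\Omega_i$ cancel because every edge of $\Graph$ incident with $v$ lies in $\Omega_i$ and $\psi$ satisfies the Kirchhoff condition at $v$, while at the vertices of $\partial\Omega_i$ the boundary contributions vanish because $\psi=0$ there. (Equivalently, $u_i$ is a sign-definite, nontrivial solution on $\Omega_i$ of the eigenvalue equation with Dirichlet conditions at $\partial\Omega_i$ and natural conditions at the interior vertices, hence a ground state, so that $\lambda_1(\Omega_i)=\mu_k(\Graph)$.) Because the $\Omega_i$ have pairwise disjoint interiors, the $u_i$ have essentially disjoint supports, hence they are linearly independent and mutually orthogonal both in $L^2(\Graph)$ and with respect to $a$. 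Thus $W:=\operatorname{span}\{u_1,\dots,u_\nu\}$ has dimension $\nu$, and for any $\phi=\sum_i c_i u_i\in W\setminus\{0\}$ one computes $a(\phi,\phi)=\sum_i c_i^2\,a(u_i,u_i)=\mu_k(\Graph)\sum_i c_i^2\|u_i\|_{L^2}^2=\mu_k(\Graph)\|\phi\|_{L^2}^2$. The min-max characterisation of the eigenvalues then gives $\mu_\nu(\Graph)\le\max_{0\ne\phi\in W} a(\phi,\phi)/\|\phi\|_{L^2}^2=\mu_k(\Graph)$; since $\mu_{\kappa(\mu_k(\Graph))+1}(\Graph)>\mu_k(\Graph)$ by the very definition of $\kappa$, this forces $\nu\le\kappa(\mu_k(\Graph))$, which is the assertion.

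The part needing genuine care — rather than the variational argument itself, which is routine — is the structural bookkeeping around the nodal partition: using Definition~\ref{def:nodal-partition} together with the fact, already recorded in this section, that an eigenfunction of $\Graph$ has no isolated zero without a sign change, to justify that $\partial\Omega_i$ really lies in the zero set of $\psi$, that no vertex interior to $\Omega_i$ has an incident edge lying outside $\Omega_i$, and that a single representative of $\Graph$ can be chosen realising all the $\Omega_i$ as unions of whole edges simultaneously. Once these are in place, the computation of $a(u_i,u_i)$ and the min-max step are immediate.
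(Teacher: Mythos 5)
Your proof is correct, but it takes a more direct route than the paper's. The paper argues by contradiction entirely at the level of partition energies: the nodal partition $\parti$ of $\psi$ is a $\nu(\psi)$-partition each of whose clusters has lowest Dirichlet eigenvalue $\mu_k(\Graph)$, so $\doptenergy[\nu,\infty](\Graph)\le\denergy[\infty](\parti)=\mu_k(\Graph)$, while Proposition~\ref{prop:inequalities-k} gives $\mu_\nu(\Graph)\le\doptenergy[\nu,\infty](\Graph)$; combined with $\mu_{\kappa+1}(\Graph)>\mu_\kappa(\Graph)$ this is absurd if $\nu\ge\kappa+1$. You instead run the classical Courant argument in one step: the restrictions $u_i$ of $\psi$ to its nodal domains span a $\nu$-dimensional subspace on which the Rayleigh quotient is identically $\mu_k(\Graph)$, and the min--max characterisation gives $\mu_\nu(\Graph)\le\mu_k(\Graph)$ directly. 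The test functions are the same in substance (your $u_i$ are, up to normalisation, the cluster ground states used in the proof of Proposition~\ref{prop:inequalities-k}), but the arguments exploit them differently: for a general partition the quotient on the span is only bounded by $\max_i\lambda_1(\Graph_i)$, so Proposition~\ref{prop:inequalities-k} must select one linear combination orthogonal to the first $\nu-1$ eigenfunctions, whereas you use that for a nodal partition the quotient is constant on the whole span, so the plain min--max applies. The paper's route buys brevity and consistency with its partition framework (the chain $\mu_\nu\le\doptenergy[\nu,\infty]\le\denergy[\infty](\parti)$ recurs elsewhere); yours buys self-containedness and sidesteps a small point the paper leaves implicit, namely that the nodal partition may be non-exhaustive while $\doptenergy[\nu,\infty]$ is an infimum over exhaustive partitions (one either enlarges clusters, using monotonicity of $\lambda_1$, or observes that the proof of Proposition~\ref{prop:inequalities-k} works for arbitrary loose partitions). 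Your structural bookkeeping --- $\psi=0$ on each $\partial\Omega_i$, every edge at a vertex of $\interior\Omega_i$ lying in $\Omega_i$, Kirchhoff cancelling the interior boundary terms --- is exactly what is needed to make the identity $a(u_i,u_i)=\mu_k(\Graph)\,\|u_i\|_{L^2(\Graph)}^2$ rigorous, so no gap remains.
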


This theorem is well known for domains, where in fact the stronger statement $\nu(\psi) \leq k$ always holds, and has been the subject of much work since Pleijel's groundbreaking paper \cite{Ple56}; see, for example, \cite[Section~10.2.4]{BNHe17} or \cite{Len19} and the references therein. This weaker version also holds for discrete graph Laplacians; see \cite[Theorem~2]{DavGlaLey01}. For metric graphs for which all Laplacian eigenvalues are simple, $\kappa(\mu_k(\Graph))=k$, while $\nu(\psi)$ is the corresponding nodal count.Hence the above inequality reduces to the main result in~\cite{GnuSmiWeb04}, obtained for more general Schr\"odinger operators.

\begin{proof} To simplify notation, we set $\nu:=\nu(u)$ and $\kappa:=\kappa(\mu_k(\Graph))$. Let us assume for a contradiction that $\nu\ge \kappa+1$.  We denote by $\parti$ the nodal partition associated with $u$.  We have $\mu_\kappa(\Graph)=\denergy[\infty](\parti)\ge\doptenergy[\nu,\infty](\Graph)$. According to Proposition \ref{prop:inequalities-k}, $\mu_\nu(\Graph)\le\doptenergy[\nu,\infty](\Graph)$.  Since $\mu_{\kappa+1}(\Graph)\le\mu_\nu(\Graph)$ and $\mu_\kappa(\Graph)<\mu_{\kappa+1}(\Graph)$, by definition of $\kappa=\kappa(\mu_k(\Graph))$, we obtain $\mu_\kappa(\Graph)<\mu_\kappa(\Graph)$. 
\end{proof}

\subsection{Bipartite minimal partitions}
\label{subsec:bipartite}

As observed in \cite{BanBerRaz12}, the links between minimal and nodal partitions appear more clearly if  we restrict ourselves to partitions that are proper. 
The following theorem can be deduced immediately from results in \cite{BanBerRaz12}.
We state it using our notation for future reference.

\begin{definition}\label{def:proximity}
Let $\parti$ be a partition of $\Graph$.  We say that $\parti$ is \textit{bipartite} if each of its clusters can be marked with signs $+$ or $-$ in such a way that any two clusters have different signs if their supports are neighbours.
\end{definition}

\begin{theorem}
\label{thm:min-nodal}
Let $\parti$ be an exhaustive, proper Dirichlet minimal $k$-partition of $\Graph$, that is, such that $\denergy[\infty](\parti) = \doptenergy[k,\infty] (\Graph)$. Then $\parti$ is bipartite if and only if it is nodal.
\end{theorem}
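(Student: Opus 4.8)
The plan is to prove both implications by exploiting the fact that, for a \emph{proper} partition, every separating point is a dummy (degree-two) vertex of $\Graph$, so that cutting and gluing at separating points is particularly transparent: near a separating point the partition looks exactly like a cut through the interior of an edge, and there are precisely two neighbouring clusters at each such point.

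\textbf{($\Leftarrow$) Nodal $\Rightarrow$ bipartite.} Suppose $\parti$ is the nodal partition of some eigenfunction $\psi$ of $\Graph$ (for natural vertex conditions). By Proposition~\ref{prop:link-to-second-eigenvalue} and the discussion preceding it, $\parti$ being a Dirichlet minimal $k$-partition which is nodal forces its energy to equal the associated eigenvalue $\mu_j(\Graph)$. On each cluster support $\Omega_i$ the eigenfunction $\psi$ does not vanish identically and, since $\Omega_i$ is a nodal domain, $\psi$ has constant sign on $\interior\Omega_i$ (the zeros of $\psi$ on $\Omega$ being exactly the cut set, and $\psi$ satisfying the Kirchhoff condition everywhere so it cannot have isolated zeros without sign change). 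Mark $\Graph_i$ with the sign of $\psi$ on $\interior\Omega_i$. If $\Omega_i$ and $\Omega_j$ are neighbours, they share a separating point $v$, which is a degree-two vertex of $\Graph$; in a neighbourhood of $v$ the function $\psi$ is continuous and changes sign (again because its only zero there is $v$ and it is Kirchhoff, hence not an isolated non-sign-changing zero), so the signs on $\Omega_i$ and $\Omega_j$ differ. Thus $\parti$ is bipartite.

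\textbf{($\Rightarrow$) Bipartite $\Rightarrow$ nodal.} This is the substantive direction and where I expect the main difficulty. Assume $\parti=\{\Graph_1,\dots,\Graph_k\}$ is exhaustive, proper, bipartite, and Dirichlet minimal. Mark the clusters $\pm$ according to the bipartite structure; let $\varphi_i>0$ be the $L^2$-normalised ground state of $\Graph_i$ (with Dirichlet conditions at $\VertexSet_D(\Graph_i)$, i.e.\ at the separating points), and set $\psi := \sum_i \pm \alpha_i \varphi_i$ for suitable positive weights $\alpha_i$ to be chosen. Because the partition is proper, each $\varphi_i$ vanishes at every separating point of $\Omega_i$ and extends by zero to a function in $H^1(\Graph)$; since at any separating point exactly two clusters meet and they carry opposite signs, the one-sided derivatives of $\psi$ across that degree-two vertex can be matched by choosing the ratio $\alpha_i/\alpha_j$ so that the Kirchhoff (continuity of derivative) condition holds there. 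Running this matching consistently over the (finite) separation set -- using bipartiteness to ensure no sign obstruction and connectedness of the dual adjacency structure -- produces a single $\psi\in H^1(\Graph)$ which is piecewise an eigenfunction with eigenvalue $\Lambda := \denergy[\infty](\parti) = \lambda_1(\Graph_i)$ for every $i$ (note $\parti$ is automatically an equipartition: this follows because it is Dirichlet minimal, by the standard argument that if some $\lambda_1(\Graph_i)$ were strictly smaller one could enlarge $\Omega_i$ at the expense of a maximal-eigenvalue neighbour and strictly decrease the energy -- compare the monotonicity reasoning in Remark~\ref{rem:dirichlet-k-mon} and Proposition~\ref{prop:link-to-second-eigenvalue}). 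Then $\psi$ satisfies $-\psi''=\Lambda\psi$ edgewise, is continuous (it vanishes at the gluing points from both sides), and satisfies Kirchhoff conditions there by construction and at all original vertices of $\Graph$ because those lie in the interior of some $\Omega_i$ where $\psi=\pm\alpha_i\varphi_i$ already satisfies them. Hence $\psi$ is a genuine eigenfunction of $\Graph$, its nodal set is exactly the separation set $\partial\parti$, and its nodal domains are exactly the $\Omega_i$; that is, $\parti$ is the nodal partition of $\psi$, so $\parti$ is nodal.

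\textbf{Main obstacle.} The delicate point is the construction of $\psi$ in the ($\Rightarrow$) direction: one must verify that the weights $\alpha_i$ can be chosen \emph{globally consistently} so that the derivative-matching (Kirchhoff) condition holds at \emph{every} separating point simultaneously, and that the equipartition property $\lambda_1(\Graph_i)=\Lambda$ for all $i$ genuinely holds for a minimal proper partition. For the weights, the argument is: form the auxiliary graph whose nodes are the clusters and whose edges are the separating points; bipartiteness gives a $\pm$ two-colouring, and along each auxiliary edge the matching condition fixes a positive ratio of adjacent weights; one checks these ratios are consistent around any cycle (this is where one invokes that all $\lambda_1(\Graph_i)$ coincide, so the "transfer" across a cluster is by the same eigenvalue, and the product of ratios around a cycle is forced to be $1$ by the eigenvalue equation — essentially an argument from \cite{BanBerRaz12}), and then propagates a choice from one cluster through a spanning tree of the auxiliary graph. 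I would cite \cite[Theorems~2.7 and~2.10]{BanBerRaz12} for the equipartition fact and for the ratio-consistency computation, phrasing the whole proof as a translation of their results into the present notation, exactly as the statement of the theorem advertises ("can be deduced immediately from results in \cite{BanBerRaz12}").
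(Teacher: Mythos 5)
Your ($\Leftarrow$) direction is exactly the paper's argument (the sign of the eigenfunction plus properness), and your overall plan for ($\Rightarrow$) -- glue signed ground states, with the two key facts outsourced to \cite{BanBerRaz12} -- ends up on the same route the paper takes: the paper simply notes that a minimal proper partition is a critical point of the functional of \cite[Definition~2.6]{BanBerRaz12} and then invokes \cite[Theorem~2.10(1)]{BanBerRaz12} together with \cite[Theorem~5.2]{BanBerRaz12} (the latter to remove the restriction that $k$ be large).

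As a self-contained argument, however, your ($\Rightarrow$) sketch has a genuine gap: you use minimality only to obtain the equipartition property, and then assert that the consistency of the derivative-matching ratios around cycles of the cluster-adjacency graph is ``forced to be $1$ by the eigenvalue equation''. It is not. Properness, bipartiteness and the equipartition property together do not imply nodality once that adjacency structure contains a cycle. For instance, take a lasso graph and a proper $3$-partition with one cut point on the pendant edge and two on the loop, so the clusters are an interval $A$, a $3$-star $B$ around the degree-three vertex, and an arc $C$ meeting $B$ at both of the loop cut points; there is a one-parameter family of bipartite equipartitions of this shape, and for members in which the two loop-side edges of $B$ have different lengths the normalised ground state of $B$ has derivatives of different modulus at the two shared cut points, while that of $C$ has equal moduli at its two endpoints, so the two matching conditions fix incompatible ratios $\alpha_B/\alpha_C$ and no choice of weights yields an eigenfunction. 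Such partitions are proper, bipartite equipartitions but not nodal -- and of course not minimal. Hence minimality must be invoked a second time, exactly at the consistency step; this first-order (Hadamard-type) information at the cut points is the real content of \cite[Theorems~2.10(1) and~5.2]{BanBerRaz12}, not a routine verification, and it is why the theorem's hypotheses include minimality. Citing those results, as you propose at the end, does close the gap (and is what the paper does), but then the intermediate claim should be dropped. Two smaller points: your sliding argument for the equipartition property is only available because the partition is proper -- on metric graphs minimal partitions need not be equipartitions in general, see Example~\ref{ex:non-equi-3-star} -- and the paper's proof does not need the equipartition step at all.
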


Note that the assumptions that $\parti$ is proper and Dirichlet minimal imply that $\parti$ is necessarily exhaustive.

\begin{proof}If $\parti$ is nodal, we see that it is bipartite by using the sign of the corresponding eigenfunction and the assumption that $\parti$ is proper. Conversely, let us assume that $\parti$ is bipartite. Since $\parti$ is minimal, it is in particular critical for the functional $\Lambda$ introduced in Definition 2.6 in \cite{BanBerRaz12} and corresponding to $\Lambda_\infty^D$ in our notation. From \cite[Theorem~2.10(1)]{BanBerRaz12}, we conclude that $\parti$ is nodal, under the condition that $k$ is large enough. According to \cite[Theorem~5.2]{BanBerRaz12}, the conclusion actually holds without this last condition.
\end{proof}

The assumption that $\parti$ is proper is crucial in the previous theorem. However, if $\Graph$ is a \emph{tree}, then \emph{any} (exhaustive) equipartition is nodal, since we can recursively construct an eigenfunction out of the ground  states of the clusters.

\begin{theorem}
\label{thm:nodal-tree}
Let $\Graph$ be a compact tree and suppose $\parti = \{\Graph_1,\ldots,\Graph_k\}$ is an exhaustive Dirichlet $k$-equipartition of $\Graph$, $k\geq 2$. Then $\parti$ is nodal.
\end{theorem}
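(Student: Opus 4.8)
The plan is to construct an eigenfunction of the Laplacian with natural vertex conditions on $\Graph$ whose nodal partition is exactly $\parti$ (up to the identification of clusters with cluster supports), thereby exhibiting $\parti$ as a nodal partition. The key structural fact is that since $\Graph$ is a tree, for a rigid partition the cut set is a collection of vertices each of which, when removed, disconnects $\Graph$; moreover each cluster support $\Omega_i$ meets the separation set only in such cut points, and two neighbouring supports meet in exactly one point (cf.\ Example~\ref{ex:tree}). Consequently the ``adjacency graph'' of the partition (one node per cluster, an edge between neighbours) is itself a tree on $k$ nodes, which I will use to organise an inductive/recursive argument.

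First I would set up notation: let $\varphi_i$ be the positive $L^2$-normalised ground state of $\Graph_i$, so $-\varphi_i'' = \lambda \varphi_i$ on each edge of $\Graph_i$ with $\varphi_i = 0$ at the cut points $\VertexSet_D(\Graph_i)$ and Kirchhoff conditions at all interior vertices of $\Graph_i$; here $\lambda := \lambda_1(\Graph_1) = \cdots = \lambda_1(\Graph_k)$ is the common equipartition energy. The candidate global function is $\psi := \sum_{i=1}^k \varepsilon_i \varphi_i$ for a suitable choice of signs $\varepsilon_i \in \{+1,-1\}$, extended by $0$ off $\Omega = \Graph$ (but $\Omega = \Graph$ since $\parti$ is exhaustive). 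On each edge of $\Graph$ the function $\psi$ solves $-\psi'' = \lambda\psi$, and at every cut point $\psi$ vanishes; the only thing to check is that $\psi \in H^1(\Graph)$ (i.e.\ continuity) and that the Kirchhoff condition holds at every vertex of $\Graph$, including the cut points. Continuity at a cut point $v$ is automatic because $\psi(v) = 0$ from every side; continuity at a non-cut vertex holds because that vertex lies in the interior of a single cluster where $\varphi_i$ is already continuous. The Kirchhoff condition at a non-cut vertex is inherited from $\varphi_i$. At a cut point $v$, the Kirchhoff condition reads: the sum over all edges at $v$ of the inward derivatives of $\psi$ equals zero; grouping the edges by which cluster they belong to, this sum is $\sum_{i : v \in \partial\Omega_i} \varepsilon_i \cdot (\text{sum of inward derivatives of } \varphi_i \text{ at the copy of } v \text{ in } \Graph_i)$.

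The main obstacle — and the place where the tree hypothesis is essential — is arranging the signs $\varepsilon_i$ so that these cut-point Kirchhoff sums vanish. Here is where I would use the tree structure: because $\Graph$ is a tree, each cut point $v$ has exactly two neighbouring clusters, say $\Graph_i$ and $\Graph_j$ (if more than two clusters met at $v$, removing $v$ would disconnect $\Graph$ into $\geq 3$ pieces but the partition is into connected clusters, so in a tree each separating vertex used as a cut point with the clusters staying connected on each side forces exactly two sides at that point — more carefully, I should argue that a rigid exhaustive partition of a tree has each cut point shared by exactly two cluster supports, which follows since the images of $v$ in the clusters partition the edges at $v$ and each cluster is connected). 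Then the Kirchhoff condition at $v$ becomes $\varepsilon_i \partial_\nu\varphi_i(v) + \varepsilon_j \partial_\nu\varphi_j(v) = 0$, where $\partial_\nu\varphi_i(v) > 0$ and $\partial_\nu\varphi_j(v) > 0$ are the (strictly positive, by Hopf-type boundary behaviour for the positive ground state vanishing at $v$) inward derivatives — these are not equal in general, so a naive $\pm$ choice does not immediately give cancellation. The fix is to rescale: replace $\varphi_i$ by $c_i \varphi_i$ with positive constants $c_i$ chosen so that the derivatives match across each cut point; since the adjacency graph of the partition is a tree, such a consistent choice of the $c_i$ exists (propagate the scaling from a root cluster outward along the tree, each edge of the adjacency tree imposing one ratio $c_i/c_j$), and then taking $\varepsilon_i$ to be $+1$ or $-1$ according to a proper $2$-colouring of this adjacency tree makes every cut-point sum vanish. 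The resulting $\psi$ is then a nonzero $H^1(\Graph)$ function satisfying $-\psi'' = \lambda\psi$ with Kirchhoff conditions everywhere, hence an eigenfunction of the natural Laplacian with eigenvalue $\lambda$; its zero set on each cluster is precisely $\VertexSet_D(\Graph_i)$ (the ground state is positive in the interior), so its nodal domains are exactly the $\Omega_i$ and $\parti$ is the nodal partition associated with $\psi$. I would close by remarking that the sign/scaling propagation is well-defined precisely because the adjacency graph is a tree (no cycles to create an inconsistency), which is exactly why the hypothesis that $\Graph$ is a tree cannot be dropped, in contrast with Theorem~\ref{thm:min-nodal}.
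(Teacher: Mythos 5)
Your overall strategy is the same as the paper's: glue together the clusterwise ground states $\varphi_i$ with suitably chosen nonzero real coefficients so that the resulting function satisfies the Kirchhoff condition at every cut point, hence is an eigenfunction of the natural Laplacian whose nodal partition is $\parti$. However, the structural claim on which your sign/scaling scheme rests is false: it is \emph{not} true that in a tree each cut point of an exhaustive rigid partition is shared by exactly two clusters, nor that the adjacency graph of the clusters is a tree. The equilateral $3$-star with the $3$-equipartition $\{e_1,e_2,e_3\}$ (cut set equal to the central vertex, cf.\ Examples~\ref{ex:tree} and~\ref{ex:3-star}) is a counterexample: all three clusters meet at the single cut point, so that point has three neighbouring supports and the adjacency graph is a triangle. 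Your parenthetical argument (``removing $v$ would disconnect $\Graph$ into $\geq 3$ pieces but the partition is into connected clusters, so \dots exactly two sides'') does not exclude this, since each of the $\geq 3$ pieces can simply be its own cluster. Consequently the step where you reduce the Kirchhoff condition at a cut point to a two-term identity $\varepsilon_i\partial_\nu\varphi_i(v)+\varepsilon_j\partial_\nu\varphi_j(v)=0$, and then propagate ratios $c_i/c_j$ along a cycle-free adjacency tree with a proper $2$-colouring, breaks down: at a cut point met by three or more clusters the condition is a single linear relation among three or more strictly positive fluxes (note also that a cluster may contribute several edges at $v$, so its contribution is a sum of same-sign derivatives), and pairwise cancellation plus a $2$-colouring is neither available nor sufficient.

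The gap is repairable, and the repair is exactly what the paper's proof does: instead of the cluster adjacency graph one uses the hierarchy of Lemma~\ref{lem:tree-partition-structure} — some support has exactly one boundary vertex, any two supports share at most one separating point, and every support of level $m$ is attached to the lower levels through a single vertex $v_m$. One then chooses the real coefficients $t_i$ by induction on the level: at each cut point the (already fixed) parent contribution imposes one linear condition on the coefficients of the finitely many children meeting there, which can always be solved with all coefficients nonzero because each child's total flux at that point is nonzero (the ground state is strictly positive inside its cluster and vanishes at the cut point). So you should replace the ``exactly two clusters per cut point / $2$-colouring'' argument by this level-wise induction; as written, your proof fails already for the $3$-star example above.
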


We note explicitly that $\parti$ does not have to be a \emph{minimal} partition of $\Graph$ for the theorem to hold: important is merely the equipartition property.

While Theorem~\ref{thm:nodal-tree} is obvious for \emph{proper} partitions (and indeed the corresponding eigenfunction is the $k$-th eigenfunction of $\Graph$, see \cite{Ban14}), the proof in the general case requires more work. We first wish to establish some basic structural properties of partitions of trees, since the proof will consist of gluing together the eigenfunctions of the individual clusters recursively in the right way. We will use the terminology introduced in Section~\ref{sec:part}, in particular Definitions~\ref{def:cluster-support},~\ref{def:separation-set} and~\ref{def:neighbours}, without further comment; in particular, we assume without loss of generality that the cut set of a partition consists only of vertices of $\Graph$.

\begin{lemma}
\label{lem:tree-partition-structure}
Under the assumptions of Theorem~\ref{thm:nodal-tree} the following assertions hold.
\begin{enumerate}
\item Each $\Graph_i$ is itself a tree, $i=1,\ldots,k$.
\item Any two cluster supports share at most one separating point, that is, for all $i\neq j$, $\Omega_i \cap \Omega_j$ consists of at most one vertex.
\item There exists at least one $i=1,\ldots,k$ for which $\partial\Omega_i$ consists of exactly one vertex.
\end{enumerate}
\end{lemma}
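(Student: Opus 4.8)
The plan is to exploit the tree structure of $\Graph$ throughout. For part (1), I would argue that each cluster support $\Omega_i$ is a closed connected subset of the tree $\Graph$, and a connected subgraph of a tree is itself a tree; since the cluster $\Graph_i$ is obtained from $\Omega_i$ only by (possibly) cutting through vertices on $\partial\Omega_i$, and cutting can only destroy cycles rather than create them, $\Graph_i$ is again a tree. (Alternatively, invoke Example~\ref{ex:tree}, where it is observed that all partitions of trees are faithful, so $\Graph_i$ is literally the induced subgraph on $\Omega_i$.) For part (2), suppose for contradiction that $\Omega_i \cap \Omega_j$ contains two distinct vertices $v, w$. Since $\Omega_i$ and $\Omega_j$ are both connected, each contains a path from $v$ to $w$; because $\Graph$ is a tree, there is a \emph{unique} such path $P$ in $\Graph$, so $P \subseteq \Omega_i \cap \Omega_j$. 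But then $P$ lies in the interior overlap once we note $\interior\Omega_i \cap \interior\Omega_j = \emptyset$ is violated: indeed $P$ contains edges (as $v\neq w$ and they are joined by a nontrivial path), and any such edge $e\subset P$ satisfies $e\subset\Omega_i$ and $e\subset\Omega_j$, contradicting the fact (recorded after Definition~\ref{def:cluster-support}) that for each edge $e$ there is at most one index $i$ with $e\subset\Omega_i$.

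For part (3), the key is a counting/leaf argument on the \emph{quotient structure}. I would form an auxiliary finite graph $T$ whose vertices are the clusters $\Graph_1,\ldots,\Graph_k$ and whose edges record the neighbour relation of Definition~\ref{def:neighbours} (equivalently, the cut points of $\parti$): by part (2) each cut point $v\in\cutset(\parti)$ determines a well-defined set of clusters whose supports contain $v$, and because $\Graph$ is a tree, $T$ is itself a tree (any cycle in $T$ would lift, via the unique-path property, to a cycle in $\Graph$). A finite tree $T$ with $k\geq 2$ vertices has at least two leaves; a leaf of $T$ is a cluster $\Graph_i$ whose support meets the rest of $\Omega = \Graph$ at a single cut point, i.e.\ $\partial\Omega_i$ is a single vertex. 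This gives (3) — in fact it gives the stronger statement that there are at least two such "pendant" clusters, which is presumably what will be used when gluing eigenfunctions recursively in the proof of Theorem~\ref{thm:nodal-tree}.

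The main obstacle I anticipate is making the quotient graph $T$ precise and genuinely a tree: one must check that distinct cut points give distinct edges of $T$ (or handle multi-edges), that $T$ is connected (which follows from connectedness of $\Graph$ together with exhaustivity of $\parti$), and above all that $T$ is acyclic. The acyclicity is where part (2) does the real work: a cycle $\Graph_{i_1},\Graph_{i_2},\ldots,\Graph_{i_r},\Graph_{i_1}$ in $T$ corresponds to cut points $v_1,\ldots,v_r$ with $v_\ell\in\partial\Omega_{i_\ell}\cap\partial\Omega_{i_{\ell+1}}$, and concatenating paths through the supports $\Omega_{i_\ell}$ (each joining $v_{\ell-1}$ to $v_\ell$) would produce a closed walk in $\Graph$; one then has to argue this walk is nontrivial, i.e.\ not backtracking, using part (2) (the supports meet pairwise in at most one point, so consecutive path segments cannot cancel), contradicting that $\Graph$ is a tree. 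Once the graph $T$ is in hand, the leaf-count statement is elementary and parts (1)--(3) follow quickly.
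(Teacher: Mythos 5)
Your arguments for parts (1) and (2) are fine and essentially what the paper intends (the paper dismisses (1) as trivial and describes (2) exactly as a path-tracing argument; your observation that the unique arc between two shared vertices would force a whole edge to lie in two distinct cluster supports is a clean way to make it precise).

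Part (3), however, has a genuine gap. The auxiliary graph $T$ whose vertices are the clusters and whose edges record the neighbour relation is in general \emph{not} a tree, because three or more clusters may meet at a single cut point: take $\Graph$ an equilateral $3$-star cut at its centre into its three edges (Example~\ref{ex:tree}); then every pair of clusters is a pair of neighbours, so $T=K_3$ contains a cycle, yet $\Graph$ is a tree. Your proposed lifting argument does not rule this out, since the closed walk in $\Graph$ obtained by concatenating paths between the relevant cut points is constant (all the cut points coincide), so no contradiction with acyclicity of $\Graph$ arises; your appeal to part (2) prevents cancellation of \emph{nontrivial} consecutive segments, but not degeneration of the whole walk to a point. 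The same example also shows that ``leaf of $T$'' and ``$\partial\Omega_i$ is a single vertex'' are not equivalent: there every cluster has two neighbours, yet every $\partial\Omega_i$ is a single vertex, so the conclusion of (3) cannot be reached by counting leaves of $T$ as defined. Two repairs are available. One is to replace $T$ by the bipartite incidence graph whose vertex set consists of the clusters together with the cut points, joining $\Graph_i$ to $v$ iff $v\in\partial\Omega_i$; for a tree $\Graph$ this graph \emph{is} a tree (a cycle would alternate distinct cut points through single supports and hence lift, via part (2), to a non-backtracking closed path in $\Graph$), it has a leaf, and a leaf cannot be a cut point by exhaustivity (every separating point lies on the boundary of at least two supports), so some cluster is a leaf and has one-point boundary. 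The other repair is the paper's own route, which avoids any quotient construction: assume for contradiction that every $\partial\Omega_i$ contains at least two vertices, and starting from any cut point trace injective arcs through successive neighbouring supports, always exiting through a boundary vertex different from the one you entered; by finiteness some support repeats, and part (2) guarantees the concatenated closed path does not backtrack, contradicting that $\Graph$ is a tree.
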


\begin{proof}
(1) Trivial.

(2) Follows from a simple argument tracing paths between clusters.

(3) Suppose for a contradiction that for each $i=1,\ldots,k$, $\partial\Omega_i$ contains at least two vertices. We will construct a loop in $\Graph$ using (2). Start at any cut point, call it $v_0$, choose a neighbouring support $\Omega_1$ and a second vertex $v_0 \neq v_1 \in \partial\Omega_1$. Let $\gamma_1$ be (the unique image in $\Graph$ of) a continuous, injective mapping of $[0,1]$ into $\Omega_1$ which goes from $v_0$ at $0$ to $v_1$ at $1$. Now, at $v_1$, pick some other neighbour $\Omega_2 \neq \Omega_1$ with another vertex $v_1 \neq v_2 \in \partial\Omega_2$ and an injective path $\gamma_2$ from $v_1$ to $v_2$ within $\Omega_2$. Repeat this process inductively: then, for some $2 \leq m \leq k$, we must have that $\Omega_m = \Omega_i$ for some $1 \leq i \leq m$. At this point, on choosing $v_m = v_{i-1}$, we see that $\gamma_i \cup \ldots \gamma_m$ forms a closed path from $v_{i-1}$ to itself, which is injective except at at most a finite number of vertices. This contradicts the assumption that $\Graph$ was a tree.
\end{proof}

We may imagine any cluster support satisfying condition (3) to be at the ``end'' of the graph in some sense, and build a natural hierarchy of neighbours emanating from it. More precisely, suppose $\Omega_1$ is any such support. We will refer to the \emph{level} of any support within the same partition, with respect to $\Omega_1$, via the rules:
\begin{itemize}
\item $\Omega_1$ has level zero;
\item any neighbour of $\Omega_1$ has level one;
\item the level of any support other than $\Omega_1$ is the minimum of the levels of its neighbours, plus one.
\end{itemize}
Thus the level is, loosely speaking, the number of supports we need to traverse to reach $\Omega_1$ (excluding $\Omega_1$ itself). By Lemma~\ref{lem:tree-partition-structure}, this is well defined, and indeed any support $\Omega_{i_m}$ of level $m\geq 1$ can only have supports of level $m-1$, $m$ and/or $m+1$ as neighbours. It always shares a separating point $v_m$ with exactly one support $\Omega_{i_{m-1}}$ of level $m-1$: $\partial\Omega_{i_m} \cap \partial\Omega_{i_{m-1}} = \{v_m\}$; moreover, if $\Omega_{j_m}$ is any other neighbour of $\Omega_{i_m}$ of level $m$, then also $\partial\Omega_{i_m} \cap \partial\Omega_{j_m} = \{v_m\}$.

With this background and terminology, we can give the proof of Theorem~\ref{thm:nodal-tree}.

\begin{proof}[Proof of Theorem~\ref{thm:nodal-tree}]
Set $\mu := \lambda_1 (\Graph_1) = \ldots = \lambda_1 (\Graph_k)$, denote by $\Omega_1,\ldots,\Omega_k \subset \Graph$ the cluster supports corresponding to $\Graph_1,\ldots,\Graph_k$, and denote by $\psi_1,\ldots, \psi_k \in H^1 (\Graph)$ the functions supported on $\Omega_i$, $i=1,\ldots,k$, which correspond to the eigenfunctions of $\Graph_1,\ldots, \Graph_k$, respectively, normalised to have $L^2$-norms one; that is, $\psi_i$ satisfies $-\psi''(x)=\mu\psi(x)$ in the interior of each edge of $\Omega_i$, and $\psi(x)=0$ for all $x \in \Graph \setminus \Omega_i$.

We will define an eigenfunction $\psi$ on $\Graph$ of the form
\begin{equation}
\label{eq:def-tree-psi}
	\psi(x) = \sum\limits_{i=1}^k t_i \psi_i(x)
\end{equation}
for all $x \in \Graph$, for coefficients $t_i \in \R \setminus \{0\}$ to be chosen in such a way that $\psi$ satisfies the Kirchhoff condition at each vertex of $\Graph$. (Since the $\Omega_i$ are pairwise disjoint, it is clear from the outset that $-\psi''=\mu\psi$ on each edge and $\psi$ satisfies the continuity condition at each vertex of $\Graph$; hence, to check that $\psi$ is an eigenfunction on $\Graph$ for $\mu$, we only need to check the Kirchhoff condition.)

To this end, we first note that at every boundary vertex $v \in \VertexSet_D$, for any edge $e $ incident to $v$, if, say, $e \subset \Omega_i$, then by Lemma~\ref{lem:tree-partition-structure} and the fact that $\psi_i$ is strictly positive everywhere in $\Omega_i$ (see \cite[Theorem~3]{Kur19}), the outer normal derivative $\partial_\nu \psi_i|_e (v)$ of $\psi_i$ on $e$ at $v$ is different from zero.

Based on this observation and the level structure of the supports $\Omega_i$ as established in Lemma~\ref{lem:tree-partition-structure}, a routine induction on the level can be used to specify the $t_i$.
\end{proof}

We finish this subsection with an example of an exhaustive minimal partition which is not an equipartition, nor is it bipartite; but it still corresponds to a nodal partition. This illustrates the difficulties in extending Theorem~\ref{thm:min-nodal} to non-proper partitions; in particular, there does not seem to be a natural concept of ``bipartite'' partitions that would allow the theorem to hold.

\begin{example}
\label{ex:non-equi-3-star}
Suppose that $\mathcal{H}_\varepsilon$ is the $3$-star (cf.~Example~\ref{ex:3-star}) having edges $e_1$, $e_2$ and $e_3$ of length $1+\varepsilon$, $1$ and $1$, respectively, for some small $\varepsilon\geq 0$, joined at a common vertex $v_0$. Then the unique internally connected partition achieving $\doptenergy[3,\infty](\mathcal{H}_\varepsilon)$ has $\{v_0\}$ as its cut set, with $\Graph_i = e_i$, $i=1,2,3$ and thus energy $\pi^2/4$; this corresponds to the eigenvalue $\mu_3 (\mathcal{H}_\varepsilon)$, whose eigenfunction is supported on the edges of length $1$ and identically zero on $e_1$: in particular, this eigenfunction has a nodal pattern corresponding to the partition. Note that this is not an equipartition if $\varepsilon>0$, in opposition to domains where a Dirichlet minimal partition is always an equipartition (see \cite[Proposition~10.45]{BNHe17}). If $\varepsilon=0$, then this is a (Dirichlet) equipartition and corresponds to the nodal partition for $\mu_2 (\mathcal{H}_0) = \mu_3 (\mathcal{H}_0)$ (more precisely, it is nodal for $\mu_3 (\mathcal{H}_0)$, and generalised nodal with respect to $\mu_2 (\mathcal{H}_0)$). However, it is not bipartite.
\end{example}

\subsection{Courant-sharp eigenfunctions}
\label{subsec:courant-sharp}

The next theorem, like Theorem~\ref{thm:min-nodal}, was proved in \cite{BanBerRaz12}, but here we wish to give a fundamentally different proof, based on a continuity argument involving the introduction of Robin-type conditions at the cut set of the partition; we imagine that such ideas might also be adaptable to the corresponding problem on domains.

\begin{theorem} \label{thm:min-cs}
Given a positive integer $k$, suppose there exists a $k$-partition of $\Graph$ realising $\doptenergy[k,\infty](\Graph)$ which is proper and nodal. Then $\doptenergy[k,\infty](\Graph)=\mu_k(\Graph)$.
\end{theorem}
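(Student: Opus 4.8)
The plan is to combine the general lower bound $\mu_k(\Graph)\le\doptenergy[k,\infty](\Graph)$ from Proposition~\ref{prop:inequalities-k} with a matching upper bound $\doptenergy[k,\infty](\Graph)\le\mu_k(\Graph)$, which is where the proper-and-nodal hypothesis enters. So it suffices to show: if $\parti$ is a proper nodal $k$-partition realising $\doptenergy[k,\infty](\Graph)$, associated with an eigenfunction $\psi$ of $\mu_j(\Graph)$ for some $j$, then $j=k$ (equivalently $\mu_j(\Graph)=\mu_k(\Graph)$), i.e.\ $\psi$ is \emph{Courant-sharp}. Since $\parti$ is nodal it is a Dirichlet equipartition whose common energy $\lambda_1(\Graph_i)$ equals the eigenvalue $\mu_j(\Graph)$; thus $\doptenergy[k,\infty](\Graph)=\denergy[\infty](\parti)=\mu_j(\Graph)$, and by the Weak Courant Theorem (Proposition~\ref{prop:weak-courant}) the number of nodal domains $\nu(\psi)=k$ satisfies $k\le\kappa(\mu_j(\Graph))$, hence $\mu_j(\Graph)\le\mu_k(\Graph)$ already. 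Combined with $\mu_k(\Graph)\le\doptenergy[k,\infty](\Graph)=\mu_j(\Graph)$ this forces equality.

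So a short proof is essentially available from what precedes, but the point of the statement (as flagged in the preamble to Subsection~\ref{subsec:courant-sharp}) is to give a \emph{different} argument via Robin perturbation. Here is how I would carry that out. First, fix the proper nodal minimal $k$-partition $\parti=\{\Graph_1,\dots,\Graph_k\}$ with common ground-state energy $\mu=\doptenergy[k,\infty](\Graph)$; since it is proper, the cut set $\cutset(\parti)$ consists of degree-two vertices of $\Graph$, each lying on the boundary of exactly two clusters. Introduce, for $\sigma\ge 0$, the self-adjoint operator $-\Delta_\sigma$ on $L^2(\Graph)$ associated with the form $a(f,f)+\sigma\sum_{v\in\cutset(\parti)}|f(v)|^2$ on $H^1(\Graph)$; at $\sigma=0$ this is the natural Laplacian $-\Delta$ with eigenvalues $\mu_j(\Graph)$, and as $\sigma\to\infty$ each eigenvalue $\mu_j^\sigma$ increases monotonically and converges to the $j$-th eigenvalue of the decoupled Dirichlet problem on $\bigsqcup_i\Graph_i$, which is a reordering of the numbers $\{\lambda_1(\Graph_i)\}_{i=1}^k$ together with the higher eigenvalues $\lambda_2(\Graph_i),\dots$. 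Because $\parti$ is an equipartition realising the \emph{minimum}, the $k$ smallest of these limiting Dirichlet eigenvalues are precisely $\lambda_1(\Graph_1)=\dots=\lambda_1(\Graph_k)=\mu$, and (using a Faber--Krahn/Nicaise-type argument, or minimality of $\parti$) one checks $\lambda_2(\Graph_i)>\mu$ for each $i$, so the $k$-th decoupled eigenvalue equals $\mu$ and the $(k+1)$-st is strictly larger.

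Now I would run the continuity/monotonicity argument: $\sigma\mapsto\mu_j^\sigma$ is continuous and nondecreasing for each $j$. Because $\parti$ is \emph{nodal} for $\mu_j(\Graph)$, the glued eigenfunction $\psi$ restricts on each $\Graph_i$ to a positive Dirichlet ground state, so $\psi$ is simultaneously an eigenfunction of $-\Delta_\sigma$ for \emph{every} $\sigma\ge 0$ with the \emph{fixed} eigenvalue $\mu$ (the Robin term vanishes since $\psi(v)=0$ at each cut point) — this is the key structural observation that proper-ness makes available. Hence $\mu$ stays in the spectrum of $-\Delta_\sigma$ along the whole family; at $\sigma=0$ it is $\mu_j(\Graph)$ for some $j$ with $\mu_j(\Graph)=\mu$, and at $\sigma=\infty$ it is the decoupled eigenvalue sitting in position $k$ (the top of the block of $k$ equal smallest decoupled eigenvalues). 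Since no eigenvalue can cross $\mu$ from below as $\sigma$ increases past any value where $\mu$ has that multiplicity (by monotonicity and the fact that the decoupled limit has exactly $k$ eigenvalues $\le\mu$, with a gap above), the index of $\mu$ cannot change, giving $\mu_k(\Graph)=\mu=\doptenergy[k,\infty](\Graph)$; the reverse inequality $\mu_k(\Graph)\le\doptenergy[k,\infty](\Graph)$ is Proposition~\ref{prop:inequalities-k}.

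The main obstacle I anticipate is the bookkeeping of eigenvalue indices under the Robin-to-Dirichlet deformation: one must show that exactly $k$ eigenvalue branches $\mu_1^\sigma\le\dots\le\mu_k^\sigma$ reach the value $\mu$ in the limit $\sigma\to\infty$ (no fewer because the $k$ clusters each contribute a ground state, no more because of the strict gap $\lambda_2(\Graph_i)>\mu$), and that none of the branches $\mu_{k+1}^\sigma,\dots$ ever dips to $\mu$ — which follows from monotonicity in $\sigma$ once the limiting gap is established, but requires care about continuity of the spectrum at $\sigma=\infty$ (norm-resolvent convergence of Robin to Dirichlet Laplacians on the fixed graph, which is standard). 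Establishing the strict inequality $\lambda_2(\Graph_i)>\mu=\lambda_1(\Graph_i)$ is automatic from simplicity of the first Dirichlet eigenvalue on each connected cluster (see \eqref{eq:lambda-1}), so that piece is free; the only genuinely delicate point is arguing that minimality of $\parti$ forbids the $(k+1)$-st decoupled eigenvalue from equalling $\mu$, for which one would glue two clusters (as in Remark~\ref{rem:dirichlet-k-mon}) to produce a $(k-1)$-partition and compare, or invoke Proposition~\ref{prop:inequalities-k} directly.
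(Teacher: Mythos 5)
Your ``short proof'' via Proposition~\ref{prop:weak-courant} has the inequality backwards. The weak Courant theorem gives $k=\nu(\psi)\le\kappa(\mu_j(\Graph))$, and since $\kappa(\mu_j(\Graph))$ is the \emph{largest} index at which the eigenvalue $\mu_j(\Graph)$ occurs, this yields $\mu_k(\Graph)\le\mu_{\kappa(\mu_j(\Graph))}(\Graph)=\mu_j(\Graph)$ --- i.e.\ exactly the inequality already supplied by Proposition~\ref{prop:inequalities-k}, not the reverse bound $\mu_j(\Graph)\le\mu_k(\Graph)$ that you need. Nothing in that paragraph forces equality; the whole content of the theorem is to exclude that the \emph{minimal} index of $\mu$ exceeds $k$, and no Courant-type bound does that.

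Your Robin deformation is the paper's family $\Graph_\theta$ in a different parametrisation, and the facts you list (continuity and monotonicity of $\sigma\mapsto\mu_j^\sigma$, convergence to the decoupled Dirichlet spectrum, exactly $k$ decoupled eigenvalues equal to $\mu$ with a gap above because each cluster ground state is simple, and $\psi$ a $\mu$-eigenfunction for every $\sigma$) are all correct and correspond to Lemma~\ref{lem:family}. The gap is the bookkeeping step ``no eigenvalue can cross $\mu$ from below, so the index of $\mu$ cannot change'': monotonicity tells you that for large $\sigma$ one has $\mu_k^\sigma=\mu<\mu_{k+1}^\sigma$, but it gives \emph{no lower bound on $\mu_k^0=\mu_k(\Graph)$} --- branches lying strictly below $\mu$ at $\sigma=0$ are perfectly free to rise into $\mu$ as $\sigma$ grows, so the minimal index of $\mu$ can drop from some $\ell>k$ at $\sigma=0$ to a value $\le k$ along the deformation. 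Note that your argument never actually uses minimality of $\parti$ in this step (the equipartition property and the gap $\lambda_2(\Graph_i)>\mu$ hold for \emph{every} proper nodal partition, minimal or not; contrary to your last paragraph, that gap is not where minimality is needed). If the bookkeeping were valid it would therefore prove that every proper nodal $k$-partition has energy $\mu_k(\Graph)$, which is false: eigenfunctions with nonzero nodal deficiency --- the central objects of \cite{BanBerRaz12}, where such partitions are critical points of positive Morse index rather than minima --- produce proper nodal $k$-partitions with $\denergy[\infty](\parti)=\mu>\mu_k(\Graph)$. The paper closes exactly this hole: assuming $\mu_k(\Graph)<\mu$, it takes the \emph{first} parameter $\bar\theta$ with $\mu_k(\bar\theta)=\mu$ (Lemma~\ref{lem:contra}), extracts a $\mu$-eigenfunction $\varphi$ of $\Graph_{\bar\theta}$ that does \emph{not} vanish at all cut points, and perturbs to $\psi+\varepsilon\varphi$: its nodal partition is again a proper $k$-partition, each cluster has first eigenvalue $\le\mu$, and the cluster containing the cut point where $\varphi\neq0$ has first eigenvalue strictly below $\mu$, contradicting minimality of $\parti$ or the equipartition property of Theorem~\ref{thm:min-nodal}. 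That variational contradiction --- the place where properness and minimality genuinely enter --- is the missing idea in your outline; the deformation and index bookkeeping alone cannot pin down the position of $\mu$ in the spectrum at $\sigma=0$.
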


\begin{proof}
Let $\parti$ be a nodal, minimal and proper (thus necessarily exhaustive) $k$-partition, and let us denote the corresponding eigenvalue by $\mu$, so that $\mu=\doptenergy[k,\infty](\Graph)$. Let $\psi$ be an eigenfunction associated with $\mu$. We now define a family of vertex conditions in the following way; we will denote the corresponding (quantum) graphs by $(\Graph_\theta)_{\theta\in[0,\pi/2]}$: at each cut point $v$ of the partition $\parti$, which by assumption is a vertex of degree two, we impose the Robin-type vertex condition
\begin{equation}
\label{eq:theta-robin}
	\cos\theta\left(f'(v_-)+f'(v_+)\right)=(\sin\theta) f(v),
\end{equation}
with $f$ additionally required to be continuous. (In fact, the condition \eqref{eq:theta-robin} corresponds to putting a $\delta$-potential of strength $\tan\theta$ at $v$, for $\theta \in [0,\pi/2)$, and imposing a Dirichlet condition if $\theta = \pi/2$.)
For all integers $j\ge1$, we write $\mu_j(\theta):=\mu_j(\Graph_\theta)$. Note that when $\theta=0$, \eqref{eq:theta-robin} reduces to the usual Kirchhoff condition, so that $\mu_j (0)=\mu_j(\Graph)$ for all $j\geq 1$. We then have the following further basic properties.

\begin{lemma}
\label{lem:family}
The family of graphs $(\Graph_\theta)_{\theta\in[0,\pi/2]}$ has the following properties.
\begin{enumerate}
\item For all integers $j\ge1$, the function $\theta\mapsto \mu_j(\theta)$ is continuous and non-decreasing on $[0,\pi/2]$.
\item For all integers $1\le j\le k$, $\mu_j(\pi/2)=\mu$, and $\mu_{k+1}(\pi/2)>\mu$.
\item For all $\theta\in [0,\pi/2]$, $\mu$ is an eigenvalue of the graph $\Graph_\theta$, with $\psi$ an associated eigenfunction.
\end{enumerate}
\end{lemma}

We will not give a detailed proof: for part (1), cf.\ \cite[Section~3.17]{BerKuc13} or \cite[Theorem~3.4]{BeKeKuMu18}; for (2), note that if we treat Dirichlet points as cut points, then $\Graph_{\pi/2}$ has exactly $k$ connected components, corresponding to the clusters of $\parti$, each having the same eigenvalue $\mu$ with multiplicity one; for (3), we simply note that each $v$ is a zero of $\psi$, so that the vertex condition \eqref{eq:theta-robin} reduces to the usual Kirchhoff condition.

We denote by $U$ the subspace of the eigenspace of $\Graph = \Graph_0$ associated with $\mu$, whose functions vanish at each cut point of $\parti$. We have $\psi \in U$, but $U$ could contain functions which are not proportional to $\psi$, in particular if $\mu$ has multiplicity greater than one.\footnote{An example for such a $\Graph$ is the equilateral $3$-star with edges of length $1$ each, where we take $\mu = \pi^2/4$ corresponding to $\mu_2 (\Graph) = \mu_3 (\Graph)$ and we consider any partition whose cut set is the central vertex of $\Graph$.} For all $\theta\in[0,\pi/2]$, $U$ is contained in the eigenspace of the graph $\Graph_\theta$ associated with the eigenvalue $\mu$. Conversely, if $\varphi$ is an eigenfunction of the graph $\Graph_\theta$ for some $\theta\in[0,\pi/2]$ and if $\varphi$ vanishes at each cut point of $\parti$, then $\varphi\in U$. This means that $U$ can alternatively be described as the intersection of all the eigenspaces of $\Graph_\theta$, associated with $\mu$, for all $\theta\in[0,\pi/2]$. 

Let us denote by $\ell$ the smallest positive integer such that $\mu=\mu_\ell(\Graph)$; it follows from Proposition~\ref{prop:inequalities-k} and the assumption $\mu = \doptenergy[k,\infty](\Graph)$ that $\ell \geq k$. Let us assume for a contradiction that $\ell>k$, which is equivalent to $\mu_k(0)<\mu$. 

\begin{lemma}
\label{lem:contra}
Under this assumption, there exists a smallest $\bar\theta:=\theta\in(0,\pi/2)$ such that $\mu_k(\theta)=\mu$, and an eigenfunction $\varphi$ of $\Graph_{\bar\theta}$ associated with $\mu$, which does not belong to $U$.
\end{lemma}

\begin{proof}[Proof of Lemma \ref{lem:contra}]
According to property (1) of Lemma~\ref{lem:family}, $\mu_k(\theta)\to \mu_k(\pi/2)=\mu$ and $\mu_{k+1}(\theta) \to \mu_{k+1}(\pi/2)$ as $\theta \to \pi/2$. Suppose that $\mu_k(\theta)<\mu$ for all $\theta\in (0,\pi/2)$. Then since $\mu$ is also an eigenvalue for each $\theta$ by (2), that is, for each $\theta$ there exists some $\ell(\theta)$, necessarily no smaller than $k+1$, such that $\mu = \mu_{\ell(\theta)} (\theta)$, by continuity we would also have $\mu_{k+1}(\theta)\le \mu$, and thus, from (1), $\mu_{k+1}(\pi/2)\le \mu$. This contradicts (2), and we conclude that there exists $\theta\in(0,\pi/2)$ such that $\mu_k(\theta)=\mu$. We can then choose $\bar\theta$ to be the greatest lower bound of all such $\theta$.

For each $\theta\in[0,\bar\theta)$, we pick an eigenfunction $\varphi_\theta$ of $\Graph_\theta$ associated with $\mu_k(\theta)$, normalised to have $L^2$-norm one. We claim that $(\varphi_\theta)_\theta$ admits a convergent subsequence as $\theta \to \bar \theta$. Indeed, since each function satisfies $-\varphi_\theta'' = \mu_k(\theta)\varphi_\theta$ edgewise, and $(\mu_k(\theta))_\theta$ is bounded in $\R$ and $(\varphi_\theta)_\theta$ is bounded in $L^2(\Graph)$, there exists a subsequence converging weakly in $H^2$ and hence strongly in $H^1$ -- in fact in $C^1$ -- on each edge to a function $\varphi$. From the Rayleigh quotients, possibly up to a further subsequence (which we will still simply denote by $\varphi_\theta$) we also see that the convergence is weak in $H^1 (\Graph)$. In particular, by compactness of the injection $H^1(\Graph) \hookrightarrow C(\Graph)$, the limit function $\varphi$ is continuous. The strong $C^1$-convergence on each edge implies that $\varphi$ also satisfies the condition \eqref{eq:theta-robin} at each vertex for $\theta = \bar\theta$. From the fact that
\begin{displaymath}
	\frac{\int_\Graph |\varphi'|^2\,\textrm{d}x}{\int_\Graph |\varphi|^2\,\textrm{d}x} = \lim_{\theta \to \bar\theta} \mu_k (\theta)
\end{displaymath}
and an inductive argument involving convergence of the eigenfunctions of the lower eigenfunctions, we can finally conclude that $\varphi$ is in fact an eigenfunction of $\Graph_{\bar\theta}$ associated with $\mu_k (\bar\theta)$, and in particular also with $\mu = \mu_k (\bar\theta)$.

Finally, for each $\theta < \bar\theta$, since $\varphi_\theta$ is associated with $\mu_k (\theta) < \mu$ and $U$ is contained in the eigenspace associated with the eigenvalue $\mu$, we have that $\varphi_\theta \not\in U$, and in fact $\varphi_\theta$ is orthogonal to each function of $U$. Strong convergence $\varphi_\theta \to \varphi$ in $L^2(\Graph)$ now implies that $\varphi$ is orthogonal to every function in $U$.
\end{proof}

Now, since $\varphi\notin U$, there exists at least one cut point of $\parti$ where $\varphi$ does not vanish, which we denote by $v_0$. For $\varepsilon>0$ small enough,  the function $\psi_\varepsilon:=\psi+\varepsilon \varphi$ has exactly one zero close to each cut point of $\parti$, and does not vanish elsewhere. Therefore, the nodal partition associated with $\psi_\varepsilon$, which we denote by $\parti_\varepsilon$, is a proper $k$-partition. Let us denote the clusters of $\parti_\varepsilon$ by $\Graph_i^{(\varepsilon)}$, with $i\in\{1,\dots,k\}$, and the restriction of $\psi_\varepsilon$ to $\Graph_i^{(\varepsilon)}$ by $\psi_i^{(\varepsilon)}$. There is one cluster $\Graph_{i_0}^{(\varepsilon)}$ whose interior contains the cut point $v_0$.

Let us now consider a general cluster $\Graph_i^{(\varepsilon)}$ and denote by $\{v_1,\dots,v_\ell\}$ the cut points contained in its interior (it is possible that there is no such cut point, in which case $\ell=0$ and this set is empty). We have
\[
\begin{split}
	\int_{\Graph_i^{(\varepsilon)}}\left\vert \left(\psi_i^{(\varepsilon)}\right)'\right\vert^2\,dx&=\mu\int_{\Graph_i^{(\varepsilon)}}\left\vert \psi_i^{(\varepsilon)}\right\vert^2\,dx-\tan\theta\sum\limits_{j=1}^\ell \left(\psi_i^{(\varepsilon)}(v_j)\right)^2\\
	&=\mu\int_{\Graph_i^{(\varepsilon)}}\left\vert \psi_i^{(\varepsilon)}\right\vert^2\,dx-\varepsilon\tan\theta\sum\limits_{j=1}^\ell \varphi(v_j)^2\le\mu\int_{\Graph_i^{(\varepsilon)}}\left\vert \psi_i^{(\varepsilon)}\right\vert^2\,dx. 
\end{split} 
\]
By the variational characterisation of the eigenvalues, $\lambda_1(\Graph_i^{(\varepsilon)})\le\mu$. In the particular case of the cluster $\Graph^{(\varepsilon)}_{i_0}$, the sum on the right-hand side is strictly positive, and therefore $\lambda_1(\Graph_{i_0}^{(\varepsilon)})<\mu$. We obtain $\denergy[\infty] (\parti_\varepsilon)\le \denergy[\infty] (\parti)$, with $\lambda_1(\Graph_{i_0}^{(\varepsilon)})<\denergy[\infty] (\parti)$. We have reached a contradiction: either $\lambda_1(\Graph_{i}^{(\varepsilon)})<\denergy[\infty] (\parti)$ for all $1\le i \le k$, in which case $\parti$ is not minimal, or  $\lambda_1(\Graph_{i}^{(\varepsilon)})=\denergy[\infty] (\parti)$ for some $1\le i \le k$, in which case $\parti_\varepsilon$ is minimal without being an equipartition, contradicting Theorem \ref{thm:min-nodal}.
\end{proof}

\subsection{Minimal partitions for non-Courant-sharp eigenfunctions}\label{sec:knok}
We now consider an example showing that for $k\ge3$, a nodal partition which achieves $\doptenergy[k,\infty](\Graph)$ is not necessarily given by an eigenfunction associated with $\mu_k(\Graph)$. This is in contrast to the situation for domains in $\mathbb R^2$ (see \cite[Theorem~1.17]{HelHofTer09}).

We will consider a \emph{pumpkin graph} on three edges, of length $\pi$, $2\pi$ and $2\pi$, respectively; in other words, we consider a graph $\mathcal{H}$ with two vertices $\{v,w\}$ and three edges $\{e_1,e_2,e_3\}$. We set $e_1=[x_1,x_1+\pi]$, $e_2=[x_2,x_2+2\pi]$ and $e_3=[x_3,x_3+2\pi]$ (see Figure \ref{fig:graph-d3}).

\begin{figure}[htb]
\begin{minipage}[c]{4cm}
\begin{tikzpicture}
\coordinate (g) at (0,0);
\coordinate (h) at (2,0);
\coordinate (i) at (1,1.3);
\coordinate (j) at (1,-1.3);

\draw[fill] (g) circle (2pt);
\draw[fill] (h) circle (2pt);
\draw[fill] (i)  circle (2pt);
\draw[fill] (j) circle (2pt);

\draw[thick] (g) [bend left=45] to (i);
\draw[thick] (g) [bend left=-45] to (j);
\draw[thick] (g) to (h);
\draw[thick] (i) [bend left=45] to (h);
\draw[thick] (j) [bend left=-45] to (h);

\node at (g) [anchor=east] {$v$};
\node at (h) [anchor=west] {$w$};
\node at (1,0) [anchor=south] {$e_1$};
\node at (1.8,0.8) [anchor=west] {$e_2$};
\node at (1.8,-0.8) [anchor=west] {$e_3$};
\end{tikzpicture}
\end{minipage}
\begin{minipage}[c]{1.5cm}
$ $
\end{minipage}
\begin{minipage}[c]{4cm}
\begin{tikzpicture}
\coordinate (k) at (0,0);
\coordinate (l) at (2,0);
\coordinate (m) at (1,1.3);
\coordinate (n) at (1,-1.3);

\draw[fill] (1,0) circle (2pt);
\draw[fill] (0.28,0.9) circle (2pt);
\draw[fill] (1.72,0.9) circle (2pt);
\draw[fill] (0.28,-0.9) circle (2pt);
\draw[fill] (1.72,-0.9) circle (2pt);

\draw[thick] (k) [bend left=45] to (m);
\draw[thick] (k) [bend left=-45] to (n);
\draw[thick] (k) to (l);
\draw[thick] (m) [bend left=45] to (l);
\draw[thick] (n) [bend left=-45] to (l);
\end{tikzpicture}
\end{minipage}
\caption{The pumpkin graph $\mathcal{H}$ after insertion of two dummy vertices (top and bottom, corresponding to the points $x_2+\pi$ and $x_3+\pi$, respectively) to make it equilateral (left); an optimal $4$-partition of $\mathcal{H}$, where the thick black dots denote the partition cut set, which divides $\mathcal D_3$ into two equilateral $3$-stars whose edges all have length $\pi/2$, and two intervals of length $\pi$ each (right).}
\label{fig:graph-d3}
\end{figure}
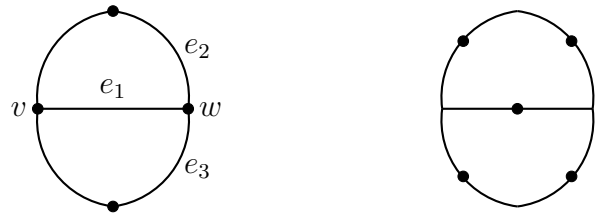

Because the edges of $\mathcal{H}$ have rationally dependent lengths, we can easily exploit von Below's formula to compute the eigenvalues of the Laplacian on $\mathcal{H}$. More precisely, insertion of two dummy vertices turns $\mathcal{H}$ into an equilateral metric graph on four vertices and five edges, each of length $\pi$. The underlying discrete graph has transition matrix
\begin{displaymath}
{\mathcal T}=
\begin{pmatrix}
0 & -\frac13 & -\frac13 & -\frac13\\
-\frac12 & 0 & 0 & -\frac12\\
-\frac12 & 0 & 1 & -\frac12\\
-\frac13 & -\frac13 & -\frac13 & 1
\end{pmatrix}
\end{displaymath}
whose eigenvalues are $\mu_j = 1, 0,-\frac{1}{3},-\frac{2}{3}$. In view of~\cite[Theorem in \S~5]{Bel85} and by rescaling, the eigenvalues of $\mathcal{H}$ are 
\begin{itemize}
\item 0 (with multiplicity 1, since $\mathcal D_3$ is connected);
\item the infinitely many values attained by $ \left( \frac{1}{\pi}\arccos \mu_j \right)^2$,  $\mu_j \neq 1$, from
 the three non-trivial eigenvalues of $\mathcal T$;
\item $k^2$ with multiplicity $3$ for even $k$;
\item $k^2$ with multiplicity $1$ for odd $k$.
\end{itemize}
In particular, the fifth-lowest eigenvalue of the Laplacian on $\mathcal{H}$ with natural vertex conditions is 1 and is simple.
Moreover, one can check directly that its eigenfunction vanishes identically on the edge $e_1$, and at the four vertices marked in Figure~\ref{fig:graph-d3}-left, for a total of four nodal domains. The following lemma summarises these statements.

\begin{lemma}
\label{lem:ev-3M}
The eigenvalue $\mu_5(\mathcal{H})$ is simple and equal to $1$. In addition, if $\psi$ is an associated eigenfunction, the nodal (non-exhaustive) $4$-partition associated with $\psi$ is, in the notation introduced above (see Figure~\ref{fig:graph-d3}):
\begin{displaymath}
\mathcal N_4:=\{[x_2,x_2+\pi],[x_2+\pi,x_2+2\pi],[x_3,x_3+\pi],[x_3+\pi,x_3+2\pi]\}.
\end{displaymath}
\end{lemma}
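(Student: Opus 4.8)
The plan is to verify the three assertions in turn, relying on the von Below computation sketched just above for the eigenvalue count and for simplicity, and then exhibiting the eigenfunction of $\mu_5(\mathcal H)$ explicitly in order to read off its nodal domains. Sorting the eigenvalues produced by von Below's formula \cite{Bel85} applied to the equilateral model of $\mathcal H$ (five edges of length $\pi$, transition matrix $\mathcal T$ with eigenvalues $1,0,-\tfrac13,-\tfrac23$), one finds that apart from $\mu_1=0$ the only eigenvalues not exceeding $1$ are $\tfrac14$ (the smallest root of $\cos(\pi\sqrt\lambda)=0$, i.e.\ $\sqrt\lambda=\tfrac12$), the values $\bigl(\tfrac1\pi\arccos(-\tfrac13)\bigr)^2\approx 0.37$ and $\bigl(\tfrac1\pi\arccos(-\tfrac23)\bigr)^2\approx 0.54$ coming from the eigenvalues $-\tfrac13,-\tfrac23$ of $\mathcal T$, and $1=1^2$. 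The decisive check is that the open interval between $\bigl(\tfrac1\pi\arccos(-\tfrac23)\bigr)^2$ and $1$ contains no eigenvalue: the next roots of $\cos(\pi\sqrt\lambda)\in\{-\tfrac13,-\tfrac23,0\}$ are $\bigl(2-\tfrac1\pi\arccos(-\tfrac13)\bigr)^2\approx 1.94$, $\bigl(2-\tfrac1\pi\arccos(-\tfrac23)\bigr)^2\approx 1.61$ and $\tfrac94$, whereas the eigenvalues $k^2$ (of multiplicity $3$ for even $k$ and $1$ for odd $k$) contribute only $1,4,9,\dots$. Hence $\mu_2=\tfrac14$, $\mu_3=\bigl(\tfrac1\pi\arccos(-\tfrac13)\bigr)^2$, $\mu_4=\bigl(\tfrac1\pi\arccos(-\tfrac23)\bigr)^2$ and $\mu_5=1$; and since $\cos(\pi)=-1$ is not an eigenvalue of $\mathcal T$, the only branch of von Below's formula producing $\lambda=1$ is the one with odd $k$, which has multiplicity $1$, so $\mu_5(\mathcal H)$ is simple.

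Next, to pin down the eigenfunction $\psi$ of $\mu_5=1$ I would use the order-two symmetry $\sigma$ of $\mathcal H$ that fixes $e_1$ pointwise and interchanges $e_2$ and $e_3$. Any eigenfunction antisymmetric under $\sigma$ vanishes identically on $e_1$ (which $\sigma$ fixes) and vanishes at $v$ and $w$ (by continuity at those vertices), so its restriction to each of $e_2\cong[0,2\pi]$ and $e_3\cong[0,2\pi]$ is a Dirichlet eigenfunction; for eigenvalue $1$ this forces $\psi$ to be $\pm\sin$ of the arclength measured from $v$ along $e_2$ and $e_3$, and $\psi\equiv 0$ on $e_1$. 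A direct check shows that this $\psi$ satisfies continuity and the Kirchhoff condition at $v$ and $w$ (both reduce to the cancellation $1-1=0$ of the two incident derivatives coming from $e_2$ and $e_3$), so it is a genuine eigenfunction for $1$; by simplicity it is, up to a scalar, the eigenfunction. Its zero set is $\{v,w,x_2+\pi,x_3+\pi\}$ and its support is $e_2\cup e_3$, so by Definition~\ref{def:nodal-partition} the associated nodal partition has that set as cut set and has the four intervals listed in $\mathcal N_4$ as its cluster supports; it is non-exhaustive because $e_1$ is not in the support.

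I do not anticipate a genuine obstacle: the only step requiring care is the bookkeeping in the eigenvalue count, i.e.\ confirming that von Below's enumeration is complete (in particular not overlooking the $\lambda=k^2$ family with its Betti-number-dependent multiplicities) and that nothing lies strictly between $\approx 0.54$ and $1$. It is therefore worth recording the elementary alternative that avoids \cite{Bel85} altogether: imposing continuity and Kirchhoff at $v$ and $w$ on the $\sigma$-symmetric functions, which share a common profile on $e_2$ and $e_3$, yields after elimination the factorised secular condition $(c-1)(3c+1)(3c+2)=0$ with $c=\cos(\pi\sqrt\lambda)$, while the $\sigma$-antisymmetric functions are Dirichlet eigenfunctions of an interval of length $2\pi$; together these reproduce precisely the list above.
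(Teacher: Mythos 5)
Your proposal is correct and follows essentially the same route as the paper: the paper's (very terse) argument is exactly the von Below enumeration of the spectrum of $\mathcal{H}$ followed by a direct check that the eigenfunction for $\mu_5=1$ vanishes on $e_1$ and at the four marked points, and you simply fill in the details -- the explicit sorting showing nothing lies between $\bigl(\tfrac1\pi\arccos(-\tfrac23)\bigr)^2$ and $1$, and the $\sigma$-antisymmetry argument pinning down $\psi$ as $\pm\sin$ on $e_2\cup e_3$. The symmetry-decomposition secular equation $(c-1)(3c+1)(3c+2)=0$ you record as a fallback is a correct and welcome independent check (it also yields simplicity of $\mu_5$ directly, since $c=-1$ is not a root and the antisymmetric spectrum is the simple Dirichlet spectrum of an interval of length $2\pi$), but it is not needed beyond what the paper's stated enumeration already provides.
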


\begin{proposition}
\label{prop:min-part-3M}
We have $\doptenergy[4,\infty](\mathcal{H})=\mu_5(\mathcal{H})=1$.
\end{proposition}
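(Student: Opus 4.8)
The plan is to prove the two inequalities $\doptenergy[4,\infty](\mathcal{H})\le 1$ and $\doptenergy[4,\infty](\mathcal{H})\ge 1$ separately; the identity $\mu_5(\mathcal{H})=1$ is exactly Lemma~\ref{lem:ev-3M}. Throughout I use Remark~\ref{rem:dirichlet-subset-graph-distinction} and work with cluster supports, and I recall from Nicaise' theorem that an interval of length $\ell$ carrying Dirichlet conditions at both endpoints has $\lambda_1=\pi^2/\ell^2$.

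For the upper bound I would just exhibit a competitor: cut $e_1$ at its midpoint, cut $e_2$ at the two points at distance $\pi/2$ from $v$ and from $w$, cut $e_3$ likewise, and leave $v$ and $w$ uncut. This is an exhaustive rigid $4$-partition $\parti^\star$ whose clusters are two equilateral $3$-stars with arms of length $\pi/2$ (centred at $v$ and at $w$) and the two middle intervals of length $\pi$ in $e_2$ and $e_3$; it is exactly the partition depicted on the right of Figure~\ref{fig:graph-d3}. Solving $3\cot(\sqrt\lambda\,\pi/2)=0$ for the stars, and using $\lambda_1=\pi^2/\pi^2$ for the two intervals, gives $\lambda_1=1$ for every cluster, so $\denergy[\infty](\parti^\star)=1$ and $\doptenergy[4,\infty](\mathcal{H})\le 1$.

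For the lower bound, suppose for contradiction that $\parti=\{\Graph_1,\dots,\Graph_4\}$ is an exhaustive rigid $4$-partition with $\denergy[\infty](\parti)<1$, so $\lambda_1(\Graph_i)<1$ for all $i$. Since $\parti$ is rigid, Dirichlet conditions are imposed at every cut point (Definition~\ref{def:separation-set}); as $v,w$ are the only vertices of $\mathcal{H}$, every cut point has degree $\le 2$ and a cluster containing no uncut vertex is, spectrally, a union of Dirichlet intervals decoupled at its cut points, each such interval being a cut‑point‑free sub‑arc contained in a \emph{single} edge of $\mathcal{H}$. Hence such a cluster has $\lambda_1<1$ only if it contains a sub‑arc of length $>\pi$; these ``long'' sub‑arcs, taken over distinct clusters, are pairwise disjoint. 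But $|e_1|=\pi$ forces any long sub‑arc to lie in $e_2$ or $e_3$, and each of these edges, having length $2\pi$, contains at most one disjoint sub‑arc of length $>\pi$: so at most two long sub‑arcs exist in $\mathcal{H}$ in total. On the other hand, each of $v,w$ lies in at most one cluster, so the number of clusters containing an uncut vertex is at most $2$, and the remaining $\ge 4-2=2$ clusters each require a long sub‑arc; a contradiction results unless \emph{exactly} two clusters contain an uncut vertex, i.e.\ unless $v$ and $w$ are both uncut and lie in different clusters, say $v\in\Graph_1$, $w\in\Graph_2$.

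This last case is the main obstacle, since the sub‑arc count is now consistent: it forces $\Graph_1$ to be a $3$-star at $v$ with arm‑lengths $\alpha_1,\alpha_2,\alpha_3$, $\Graph_2$ a $3$-star at $w$ with arm‑lengths $\beta_1,\beta_2,\beta_3$, and $\Graph_3,\Graph_4$ to be single long Dirichlet sub‑intervals of $e_2$ and of $e_3$, with $\alpha_1+\beta_1=|e_1|=\pi$ and $\alpha_j+\beta_j<|e_j|-\pi<\pi$ for $j=2,3$; in particular all six arms have length $<\pi$. For a $3$-star with arms $a,b,c\in(0,\pi)$ and Dirichlet tips, the Kirchhoff and Dirichlet conditions show that $\lambda_1$ is the unique zero in $(0,1)$ of the (strictly decreasing) function $\lambda\mapsto\sum_j\cot(\sqrt\lambda\,a_j)$, so $\lambda_1<1$ iff $\cot a+\cot b+\cot c<0$. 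Now $\cot\alpha_1+\cot\beta_1=\cot\alpha_1+\cot(\pi-\alpha_1)=0$, whereas for $j=2,3$
\[
\cot\alpha_j+\cot\beta_j=\frac{\sin(\alpha_j+\beta_j)}{\sin\alpha_j\,\sin\beta_j}>0,
\]
since $0<\alpha_j+\beta_j<\pi$. Adding, $\bigl(\sum_j\cot\alpha_j\bigr)+\bigl(\sum_j\cot\beta_j\bigr)>0$, so $\lambda_1(\Graph_1)<1$ and $\lambda_1(\Graph_2)<1$ cannot both hold — a contradiction. (The variant in which $w$ is split so that one of $\Graph_3,\Graph_4$ bends through a cut image of $w$ and spans two arms is killed by the same sub‑arc count, which only becomes more restrictive.) This rules out the remaining case, so $\doptenergy[4,\infty](\mathcal{H})\ge 1$, and with the upper bound we conclude $\doptenergy[4,\infty](\mathcal{H})=\mu_5(\mathcal{H})=1$.
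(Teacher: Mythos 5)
Your proposal is correct, but your lower bound follows a genuinely different route from the paper's. The paper starts from a minimiser, gets $\Lambda^D_\infty(\mathcal P)\le 1$ from Lemma~\ref{lem:ev-3M} and Remark~\ref{rem:dirichlet-k-mon}, and then runs a case analysis on how clusters sit inside the three edges; in the decisive case ($v$ interior to one cluster, $w$ to another) it removes the two long intervals from $e_2$ and $e_3$, glues the resulting endpoints to form a smaller $3$-pumpkin $\widetilde{\mathcal H}$ with edges of length at most $\pi$, and concludes via Proposition~\ref{prop:link-to-second-eigenvalue} ($\mathcal L^D_{2,\infty}=\mu_2$) together with monotonicity of eigenvalues in the edge lengths against the equilateral $3$-pumpkin. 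You instead argue by contradiction from $\Lambda^D_\infty<1$: the observation that a cluster whose interior avoids $v,w$ decouples (by Remark~\ref{rem:dirichlet-subset-graph-distinction} and the Dirichlet conditions at cut points) into Dirichlet--Dirichlet sub-intervals of single edges, hence needs a sub-arc of length $>\pi$, combined with the count that at most two disjoint such arcs fit in $\mathcal H$, pins down the same decisive configuration; you then finish with the secular equation of the two $3$-star clusters, using $\cot\alpha_1+\cot(\pi-\alpha_1)=0$ and $\cot\alpha_j+\cot\beta_j=\sin(\alpha_j+\beta_j)/(\sin\alpha_j\sin\beta_j)>0$. Your route is more elementary and self-contained: it needs neither the existence theory, nor Proposition~\ref{prop:link-to-second-eigenvalue}, nor the surgery and edge-length monotonicity results quoted from \cite{BeKeKuMu18}, and your explicit competitor makes the upper bound immediate; what the paper's argument buys is the avoidance of explicit secular equations and a structural viewpoint (reduction to a $2$-partition problem on a smaller pumpkin) that adapts more readily to perturbations of the edge lengths and yields a description of the minimisers along the way.

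Two cosmetic points to tidy up. First, the chain $\alpha_j+\beta_j<|e_j|-\pi<\pi$ contains a slip, since $|e_j|-\pi=\pi$ for $j=2,3$; what you need (and have) is $\alpha_j+\beta_j=2\pi-\ell_j<\pi$, where $\ell_j>\pi$ is the length of the long interval in $e_j$. Second, the zero of $\lambda\mapsto\sum_j\cot\bigl(\sqrt\lambda\,a_j\bigr)$ need not lie in $(0,1)$; the correct statement, which is what your argument actually uses, is that this function is strictly decreasing on $(0,\min_j\pi^2/a_j^2)$, an interval containing $\lambda=1$ because all arms are shorter than $\pi$, so that $\lambda_1<1$ if and only if the cotangent sum at $\lambda=1$ is negative. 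Finally, your parenthetical about a cluster ``bending through a cut image of $w$'' is moot in the configuration you have reached: since $w$ lies in the interior of the second star's support, no other cluster support can contain $w$ at all, so $\Graph_3,\Graph_4$ are automatically confined to single open edges.
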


An optimal (exhaustive, internally connected, equilateral) partition corresponding to $\doptenergy[4,\infty](\mathcal{H})$ is depicted in Figure~\ref{fig:graph-d3} (right). Note that this also equals $\doptenergy[5,\infty](\mathcal{H})$, which is realised by the cut set indicated in Figure~\ref{fig:graph-d3} (left). Our proof will show that any minimal partition must have energy $1$. In fact, with a little more effort, one could show that even when minimising among all not necessarily exhaustive partitions, the minimal energy is still $1$, even though the set of minimal partitions is larger (as Figure~\ref{fig:graph-d3} demonstrates). In Section~\ref{sec:exhaustive-issue} we compare exhaustive and non-exhaustive Dirichlet minimal partitions, and refer in particular to Conjecture~\ref{conj:non-admissible-exhaustion}.

\begin{proof}[Proof of Proposition~\ref{prop:min-part-3M}]
Let $\parti=\{\Graph_1,\Graph_2,\Graph_3,\Graph_4\}$ be a partition achieving $\doptenergy[4,\infty] (\mathcal{H})$. It follows from Lemma \ref{lem:ev-3M} and Remark~\ref{rem:dirichlet-k-mon} that $\denergy[\infty](\parti)=\doptenergy[4,\infty] (\mathcal{H})\le 1$. In the rest of the proof, for $i\in\{1,2,3,4\}$ and $j\in\{1,2,3\}$, we say that the cluster $\Graph_i$ \emph{is contained} (resp.\ \emph{strictly contained}) in $e_j$ when $\Graph_i\subset e_j$ (resp.\ $\Graph_i\subsetneq e_j$), and we say that $\Graph_i$ \emph{meets} $e_j$ when $\interior \Graph_i \cap\interior e_j \neq\emptyset$. (Here we emphasise that we are identifying $\Graph_i$ with $\Omega_i \subset \Graph$.) Since the energy of $\parti$ is at most $1$, we see easily that $e_1$ cannot strictly contain a cluster of $\parti$; in particular any cluster contained in $e_1$ has to be equal to $e_1$. Similarly, if  $e_2$ contains two clusters, they are both intervals of length $\pi$ and their union is equal to $e_2$. The same holds for $e_3$. 

Let us now discuss the possible cases. Since our partition is by assumption exhaustive, at least one cluster of $\parti$ meets $e_1$. Up to relabelling of the supports and without loss of generality, we can assume that $\Graph_1$ meets $e_1$. Let us first consider the case where $\Graph_1$ is contained in $e_1$. Then $\Graph_1=e_1$, and $\{v,w\}$ is contained in the cut set of $\parti$. Each of the other clusters is thus contained either in $e_2$ or in $e_3$. From the preliminary remarks, there is, up to relabelling of the edges and the support, only one possiblity: $e_2$ is the union of $\Graph_2$ and $\Graph_3$, which are both intervals of length $\pi$, and $\Graph_4 = e_3$. This gives energy exactly $1$.

Up to relabelling of the clusters, there thus remains only one case: $\Graph_1$ meets $e_1$ without being contained in $e_1$. Then one of the vertices $\{v,w\}$ is contained in $\interior \Graph_1$. Without loss of generality, we can assume $v\in \interior \Graph_1$. If no other cluster meets $e_1$, then either $e_2$ or $e_3$ must strictly contain two clusters, in contradiction to the preliminary remarks. Without loss of generality we can assume that $\Graph_2$ meets $e_1$, and therefore that $w\in \interior \Graph_2$. Since $\parti$ is rigid, it follows that $\Graph_3$ and $\Graph_4$ are intervals of length at least $\pi$ contained in $\interior e_3$ and $\interior e_4$, respectively, and $\{\Graph_1,\Graph_2\}$ is a partition of $\mathcal{H}\setminus \Graph_3\cup\Graph_4$,  whose cut set consists of a single point in $e_1$. We construct a new graph $\widetilde {\mathcal{H}}$  from $\mathcal{H}\setminus \Graph_3\cup\Graph_4$ by gluing together the two degree one vertices corresponding to the two extremities of $\Graph_3$, and likewise gluing together the two vertices corresponding to the extremities of $\Graph_4$. To be more explicit, let us set $\Graph_3=[y_3,z_3]$ and $\Graph_4=[y_4,z_4]$, with $x_2<y_3<y_3+\pi\le z_3<x_2+2\pi$ and $x_3<y_4<y_4+\pi\le z_4<x_3+2\pi$. The graph $\widetilde {\mathcal{H}}$ then has three edges $\widetilde e_1:=e_1$, $\widetilde e_2=[x_2,x_2+2\pi-z_3+y_3]$ and $\widetilde e_2=[x_3,x_3+2\pi-z_4+y_4]$; and two vertices, $\widetilde v:=v$ and $\widetilde w=\{x_1+\pi,x_2+2\pi-z_3+y_3,x_2+2\pi-z_3+y_3,2\pi-z_4+y_4\}$. It is a $3$-pumpkin graph with edges of length at most $\pi$ each and $\widetilde\parti:=\{\Graph_1,\Graph_2\}$ is a (internally connected) $2$-partition of $\widetilde {\mathcal{H}}$. From Proposition \ref{prop:link-to-second-eigenvalue}, 
\begin{displaymath}
	\mu_2(\widetilde {\mathcal{H}})= \doptenergy[2,\infty](\mathcal{H})\le \denergy[\infty](\widetilde\parti)\le1.
\end{displaymath}
By monotonicity of the eigenvalues with respect to edge length, see \cite[Corollary~3.12(1)]{BeKeKuMu18}, we have, for any integer $j\ge1$, $1 = \mu_j(\mathcal{H}^*)\le \mu_j(\widetilde{\mathcal{H}})$, where $\mathcal{H}^*$ is the $3$-pumpkin graph with edges of length $\pi$. In particular, $\doptenergy[2,\infty](\widetilde{\mathcal{H}})=\mu_2(\widetilde{\mathcal{H}})\geq 1$. From this we conclude that $\denergy[\infty](\parti)=1$ in the case where $v \in \interior \Graph_1$ and $w \in \interior \Graph_2$. We have seen that, in all cases, we necessarily have $\denergy[\infty](\parti)=1$.
\end{proof}

\subsection{Non-bipartite minimal partitions}
\label{subsec:non-bipartite}

The goal of this section is to show that any proper minimal partition of a metric graph is the projection of a nodal partition on a double covering. To reach it, we have to give some additional definitions. We first recall the (standard) definition: given a metric graph $\Graph$ (the \emph{base graph}), a double covering is another metric graph $\widehat\Graph$ (the \emph{covering graph}) equipped with a surjective map $\Pi:\widehat\Graph\to\Graph$ (the \emph{covering map}). We require that
\begin{enumerate}
	\item $\Pi$ is locally an isometry; 
	\item for all $x\in \Graph$, $\Pi^{-1}(\{x\})$ contains two elements.
\end{enumerate}
Furthermore, given such a covering, we define the \emph{deck map} $\sigma:\widehat\Graph\to\widehat\Graph$ by $\sigma(y)\neq y$ and $\Pi(\sigma(y))=\Pi(y)$ for all $y\in\widehat\Graph$. The map $\sigma$ clearly satisfy $\sigma\circ\sigma=Id$ and is locally an isometry, and therefore is globally an isometry of $\widehat\Graph$.

We have the orthogonal decomposition 
\[L^2(\widehat\Graph)=\mathcal S(\widehat\Graph)\oplus\mathcal A(\widehat\Graph),\]
where $\mathcal S(\widehat\Graph)$ and $\mathcal A(\widehat\Graph)$ are the subspaces of functions $f\in L^2(\widehat\Graph)$ satisfying respectively $f\circ\sigma=f$ or $f\circ\sigma=-f$. This decomposition is preserved by $\widehat L$, the natural Laplacian on $\widehat\Graph$, in the following sense. For any sufficiently regular function $f$, $\widehat L(f\circ\sigma)=\widehat L(f)\circ\sigma$. It follows that the domain $\widehat{\mathcal D}$ of $\widehat L$ also has an orthogonal decomposition:
\[\widehat{\mathcal D}=(\widehat{\mathcal D}\cap\mathcal S(\widehat\Graph))\oplus(\widehat{\mathcal D}\cap\mathcal A(\widehat\Graph)).\]
Accordingly, we define the operator $\widehat L_a$ as the restrictions of $\widehat L$ to  $\widehat{\mathcal D}\cap\mathcal A(\widehat\Graph)$. It is self-adjoint with compact resolvent in the Hilbert space $\mathcal A(\widehat\Graph)$. Its spectrum therefore consists of a sequence of eigenvalues with finite multiplicity which we denote by $(\mu_j^a(\widehat\Graph))_{j\ge1}$ (counting multiplicities).

Let us now consider a partition $\parti$ of $\Graph$. For each cluster $\Graph_i$, $\Pi^{-1}(\Graph_i)$ is a closed set having at most two connected components. The collection of all these connected components when $i$ runs over all the possible values is a partition of $\widehat\Graph$, once we have given it an (arbitrary) indexation. We denote it by $\widehat \parti$ and call it the partition of $\widehat\Graph$ \emph{lifted from $\parti$}.

\begin{theorem} \label{thm:covering} Let $\parti $ be a proper Dirichlet minimal non-bipartite $k$-partition of $\Graph$. 
\begin{enumerate}
\item There exists a double covering $\widehat\Graph$ such that the lifted partition  $\widehat \parti$ is the nodal partition of an  eigenfunction of $\widehat L_a$.  
\item For \emph{any} such double covering, $\Lambda_\infty^D (\parti)=\mu^a_k(\widehat\Graph)$.
\end{enumerate}
\end{theorem}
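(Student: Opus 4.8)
Theorem~\ref{thm:covering} — proof plan.

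The plan is to mimic the classical argument for domains (Helffer–Hoffmann-Ostenhof–Terracini and the subsequent covering-space formalism), translated to the metric-graph setting using the machinery already developed in this section; the proof of Theorem~\ref{thm:min-cs} via Robin-type conditions at the cut set will be a useful template.

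\emph{Construction of the covering (part (1)).} Since $\parti$ is proper, every cut point is a vertex of degree two, and the separation set $\partial\parti$ together with the sign structure is encoded in the failure of bipartiteness. First I would build an auxiliary discrete graph $X(\parti)$ whose vertices are the clusters $\Graph_1,\ldots,\Graph_k$ and whose edges are the separating points (a separating point lies on $\partial\Omega_i\cap\partial\Omega_j$ and joins the corresponding vertices); non-bipartiteness of $\parti$ is exactly non-bipartiteness of $X(\parti)$, i.e. the existence of an odd cycle. This gives a nontrivial homomorphism $\phi\colon H_1(X(\parti);\Z_2)\to\Z_2$, equivalently a nontrivial element of $H^1(X(\parti);\Z_2)$, which one pulls back to a nontrivial class in $H^1(\Omega;\Z_2)$ supported away from the cut points (the classical "cutting along the cut set" trick). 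That class defines a connected double cover $\Pi\colon\widehat\Graph\to\Graph$, with deck map $\sigma$, in which the preimage of each cluster $\Graph_i$ is \emph{connected} (because the monodromy is nontrivial on every loop of $X(\parti)$) while the preimages of the two "sides" at each cut point get swapped by $\sigma$. Then I would glue together the ground states $\varphi_i>0$ of the clusters, pulled back to the two sheets, with alternating signs dictated by $\sigma$: because a Dirichlet condition holds at each cut point, the resulting function $\widehat\psi$ on $\widehat\Graph$ is continuous, vanishes precisely on $\Pi^{-1}(\partial\parti)$, satisfies $\widehat\psi\circ\sigma=-\widehat\psi$ (hence lies in $\mathcal A(\widehat\Graph)$), and at a degree-two cut point the Robin/Kirchhoff condition of $\widehat L$ reduces to the usual Kirchhoff condition since $\widehat\psi$ vanishes there — so $\widehat\psi\in\widehat{\mathcal D}\cap\mathcal A(\widehat\Graph)$ and $\widehat L_a\widehat\psi=\Lambda^D_\infty(\parti)\,\widehat\psi$, with nodal partition exactly $\widehat\parti$ (using that nodal partitions are equipartitions, so $\lambda_1(\Graph_i)=\Lambda^D_\infty(\parti)$ for all $i$).

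\emph{Index identity (part (2)).} Here I would argue the two inequalities separately. For "$\mu^a_k(\widehat\Graph)\le\Lambda^D_\infty(\parti)$": the antisymmetric eigenfunctions of $\widehat L_a$ are in bijective, eigenvalue-preserving correspondence with functions on $\Graph$ vanishing on $\partial\parti$ that change sign across each cut point, so one can run the same min–max / test-function argument as in Proposition~\ref{prop:inequalities-k}: take the $k$ lifted, sign-corrected cluster ground states $\widehat\varphi_1,\ldots,\widehat\varphi_k\in\mathcal A(\widehat\Graph)$, which are $L^2(\widehat\Graph)$-orthogonal (disjoint supports) and each have Rayleigh quotient $\Lambda^D_\infty(\parti)$, giving a $k$-dimensional test space and hence $\mu^a_k(\widehat\Graph)\le\Lambda^D_\infty(\parti)$. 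For the reverse "$\mu^a_k(\widehat\Graph)\ge\Lambda^D_\infty(\parti)$": suppose not, so $\mu^a_k(\widehat\Graph)<\Lambda^D_\infty(\parti)=\mu^a_\ell(\widehat\Graph)$ for some $\ell>k$ (note $\widehat\psi$ from part (1) is \emph{some} antisymmetric eigenfunction at level $\Lambda^D_\infty(\parti)$, so $\ell\ge k$ follows from the first inequality). Then an eigenfunction $\widehat\varphi$ of $\widehat L_a$ at the eigenvalue $\mu^a_k(\widehat\Graph)$, pushed down appropriately, is antisymmetric and \emph{not} proportional to $\widehat\psi$, and — exactly as in the proof of Theorem~\ref{thm:min-cs} (the perturbation $\widehat\psi+\varepsilon\widehat\varphi$, which remains antisymmetric, has one simple zero near each point of $\Pi^{-1}(\partial\parti)$ and none elsewhere) — projecting its nodal partition down to $\Graph$ produces a proper $k$-partition with strictly smaller $\Lambda^D_\infty$, contradicting minimality, or a minimal non-equipartition, contradicting the fact (Theorem~\ref{thm:min-nodal} together with the bipartite/nodal dichotomy for proper partitions) that a proper minimal partition is an equipartition. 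This forces $\ell=k$, i.e. $\Lambda^D_\infty(\parti)=\mu^a_k(\widehat\Graph)$, and the covering-independence is automatic since the argument used only that $\widehat\Graph$ is a double cover with the stated lifting property.

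\emph{Main obstacle.} The delicate point is the second inequality in part (2): one needs that $\widehat\psi$ is precisely the $k$-th (rather than a higher) eigenfunction of $\widehat L_a$, and the cleanest route is the continuity-in-$\theta$ deformation of Theorem~\ref{thm:min-cs}, now carried out on the covering graph with Robin parameters at $\Pi^{-1}(\partial\parti)$ and restricted to the antisymmetric sector — I would need to check that $\widehat L_{a,\theta}$ has the analogues of Lemma~\ref{lem:family} (continuity and monotonicity of $\theta\mapsto\mu^a_j(\theta)$, the count $\mu^a_j(\pi/2)=\Lambda^D_\infty(\parti)$ for $1\le j\le k$, and persistence of the eigenvalue along the family), together with the subsequential $C^1$-convergence of eigenfunctions as in Lemma~\ref{lem:contra}. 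Everything else (the homological construction of $\widehat\Graph$, the gluing of ground states, the test-function inequality) is routine once the bookkeeping of signs and sheets is set up carefully.
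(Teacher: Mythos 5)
There is a genuine gap in your part (1), and it sits exactly where you declare the work ``routine''. Gluing the cluster ground states $\varphi_i>0$, lifted to the two sheets with alternating signs, does \emph{not} automatically produce an eigenfunction of $\widehat L_a$: continuity at the lifted cut points is free (both sides vanish), but the Kirchhoff condition there demands that the outgoing derivatives cancel, i.e.\ one must replace $\varphi_i$ by $t_i\varphi_i$ with coefficients chosen so that at every cut point the two neighbouring derivative values match. On a tree such a choice always exists (this is the induction in Theorem~\ref{thm:nodal-tree}), but on a graph with cycles the ratios $t_i/t_j$ must be consistent around every cycle of the covering graph, and this compatibility can fail for a perfectly good non-bipartite proper \emph{equi}partition. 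Your construction never uses minimality in part (1), so it cannot rule this out; minimality is precisely what is needed here. The paper's proof gets the compatibility from the criticality characterisation of \cite{BanBerRaz12} (Theorem~2.8 and Section~4.1 there): one cuts $\Graph$ along a spanning-tree complement at section points $v_j^\pm$, obtains an eigenfunction $\psi$ of a one-parameter family of operators on the resulting tree with $|\psi(v_j^+)|=|\psi(v_j^-)|$ and $|\psi'(v_j^+)|=|\psi'(v_j^-)|$, and then the parity of the zeros of $\psi$ along the path joining $v_j^-$ to $v_j^+$ dictates how the two copies of the tree are glued to form $\widehat\Graph$ and how $\widehat\psi$ is assembled. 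Without an input of this type your $\widehat\psi$ is merely a continuous antisymmetric function built from ground states, not an eigenfunction.

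Two secondary points in the same construction: the relevant double cover is not ``any nontrivial class pulled back from $X(\parti)$'' but the specific class recording the parity of cut-point crossings (the all-ones cochain on the cluster-adjacency graph), which is nontrivial exactly because $\parti$ is non-bipartite; and the cover must be \emph{trivial} over each cluster, i.e.\ $\Pi^{-1}(\Graph_i)$ should consist of two disjoint copies -- if it were connected, as you claim, antisymmetry $\widehat\psi\circ\sigma=-\widehat\psi$ would force a sign change inside a lifted cluster, contradicting the nodal-partition property. Your part (2), by contrast, is essentially the paper's argument: the paper proves it by rerunning the proof of Theorem~\ref{thm:min-cs} with $L^2(\Graph)$ replaced by $\mathcal A(\widehat\Graph)$, which matches your plan (antisymmetric analogues of Lemmata~\ref{lem:family} and~\ref{lem:contra}, plus the test-function bound in the spirit of Proposition~\ref{prop:inequalities-k}); but it relies on the eigenfunction $\widehat\psi$ from part (1), so the gap above propagates.
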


\begin{proof}
Most of the work goes into proving (1). We follow closely the argument in the proof of \cite[Theorem~2.10(1)]{BanBerRaz12} (see Section 4.1 of that reference). Since $\parti$ is Dirichlet minimal, it is a critical point for the functional $\Lambda$, with the parametrisation described by \cite[Theorem~2.8]{BanBerRaz12}.   More precisely, this means that the following holds, according to \cite[Section~4.1]{BanBerRaz12}.

We choose a set  of edges $\{e_j\,;\,1\le j\le \beta\}$ whose removal turns $\Graph$ into a tree. We choose (arbitrarily) a point $v_j$ in the interior of each $e_j$. These are called \emph{section points} in \cite{BanBerRaz12}. We split each $v_j$ into two vertices $v_j^-$ and $v_j^+$ according to the orientation of $e_j$,   $v_j^-$ and $v_j^+$ being the end of the left and right part of $e_j$ respectively. We obtain a metric tree  $\Tree$.
For any $(\varphi_1,\dots,\varphi_\beta)\in (-\pi,\pi]^\beta$, we define the self-adjoint operator on $\Tree$ acting as the (opposite of) the second derivative, and whose domain consistes of the functions $f\in \oplus_{e\in\EdgeSet(\Tree)}H^2(e)$ which satisfy the standard boundary conditions at the vertices of $\Graph$ and, for all $j\in\{1,\dots,\beta\}$,
\begin{equation*}
	\begin{cases}
			\cos\left(\varphi_j/2\right) f'(v_j^-)&=-\sin\left(\varphi_j/2\right) f(v_j^-),\\
			\cos\left(\varphi_j/2\right) f'(v_j^+)&=\sin\left(\varphi_j/2\right) f(v_j^+).
	\end{cases}
\end{equation*}
In this way, we have defined a quantum graph (i.e. triple of metric graph, differential expression and vertex conditions) that we denote by $\Tree_{(\varphi_1,\dots,\varphi_\beta)}$. According to \cite[Theorem 2.8]{BanBerRaz12}, there exists $(\overline\varphi_1,\dots,\overline\varphi_\beta)\in (-\pi,\pi]^\beta$ such that $\parti$ is the nodal partition of an eigenfunction $\psi$ of $\Tree_{(\overline\varphi_1,\dots,\overline\varphi_\beta)}$ associated with a simple eigenvalue. Furthermore, according to \cite[Section 4.1]{BanBerRaz12}, $|\psi(v_j^+)|=|\psi(v_j^-)|$ and  $|\psi'(v_j^+)|=|\psi'(v_j^-)|$ for all $j\in\{1,\dots,\beta\}$.

For each $j\in\{1,\dots,\beta\}$, we denote by $C_j$ the shortest path in the metric graph $\Tree$ which connects the vertices $v_j^-$ and $v_j^+$.  We have the following alternative.
\begin{enumerate}[(A)]
 \item $\psi(v_j^\pm)\neq0$ and $C_j$ contains an even number of zeros of $\psi$ \emph{or}  $\psi(v_j^\pm)=0$ and $C_j$ contains an odd number of zeros of $\psi$. In that case, $\psi(v_j^+)=\psi(v_j^-)$ and  $\psi'(v_j^+)=-\psi'(v_j^-)$. 
 \item $\psi(v_j^\pm)\neq0$ and $C_j$ contains an odd number of zeros of $\psi$ \emph{or}  $\psi(v_j^\pm)=0$ and $C_j$ contains an even number of zeros of $\psi$. In that case, $\psi(v_j^+)=-\psi(v_j^-)$ and  $\psi'(v_j^+)=\psi'(v_j^-)$.
 \end{enumerate}
We construct the double cover $\widehat\Graph$ by gluing two copies of $\widetilde\Graph$, denoted by $\Tree_u$ and $\Tree_d$, according to the following rules. For each $j\in\{1,\dots,\beta\}$, we denote by $v^-_{j,u}$, $v^+_{j,u}$, $v^-_{j,d}$ and $v^+_{j,d}$ the vertices corresponding to $v^-_j$ and  $v^+_j$ in  $\Tree_u$ and $\Tree_d$ respectively. 
\begin{enumerate}
\item In Case (A), we identify back $v^-_1$ with $v^+_1$ and $v^-_2$ with $v^+_2$, that is we glue back the cut points in each copy separately.
\item In Case (B), we identify $v^-_1$ with $v^+_2$ and $v^-_2$ with $v^+_1$.
\end{enumerate}
The covering map $\Pi:\widehat\Graph\to\Graph$ is defined as the unique continuous extension of the map sending each point in $\Tree_u$ or $\Tree_d$, distinct from the selected section points, to the corresponding point in $\Graph$. This map is a local isometry and each of its fibres has two elements.  
 
We then define the function $\widehat\psi$ as the continuous extension to $\widehat\Graph$ of the function which is equal to $\psi$ on $\Tree_u$ and $-\psi$ on $\Tree_d$ (this continuous extension exists by construction of $\widehat\Graph$). By construction, $\widehat\psi$ is an antisymmetric eigenfunction of standard Laplacian on $\widehat\Graph$, associated with the eigenvalue $\Lambda_\infty^D (\parti)$. Furthermore, its nodal partition is $\widehat\parti$. This conclude the proof of Part (1).

In order to prove Part (2), we repeat the proof of Theorem \ref{thm:min-cs}, replacing the Hilbert space $L^2(\Graph)$ with $\mathcal A(\widehat\Graph)$, and more generally all the function spaces on $\Graph$ by the corresponding antisymmetric function spaces on $\widehat\Graph$.
\end{proof}

For readers familiar with magnetic Schr\"odinger operators on metric graphs, we mention that the previous construction has a magnetic interpretation. Indeed, $\widehat L_a$ is unitarily equivalent to a magnetic Laplacian, by which we mean a magnetic Schr\"odinger operator with zero potential (see \cite[Section~2.6]{BerKuc13} for the relevant definitions). According to \cite[Corollary~2.6.3]{BerKuc13}, such an operator is defined, up to unitary equivalence, by specifying the magnetic flux, modulo $2\pi$, through each cycle of $\Graph$. For this, we use the following rule. We count the number of times the cycle crosses a cut-point of $\parti$. The flux is $\pi$ if this number is odd and $0$ if it is even. Although we will not go further into details, let us also mention that for such a magnetic Laplacian, $\parti$ is the nodal partition of an eigenfunction associated with the $k$-th eigenvalue.

\subsection{Comparison with two-dimensional domains}\label{sec:domgra}

In order to put the results of this section into perspective, let us compare them with those previously obtained for minimal partitions of two-dimensional domains. We first briefly review this last theory, as developed in \cite{HelHofTer09}. The interested reader can find a more extensive survey (including numerical results) and a much more complete bibliography in \cite{BNHe17}. 

Following \cite{HelHofTer09}, we take $\Omega\subset \R^2$ to be open, bounded and connected, with a piecewise $C^{1,\alpha}$ boundary for some $\alpha>0$ and call such a set a \emph{domain}. In this subsection only, we call a \emph{$k$-partition} of $\Omega$ a family $\parti=\{\Omega_1,\dots,\Omega_k\}$ of $k$ subsets of $\Omega$ which are mutually disjoint, connected and open. Comparing this definition with Section \ref{sec:partitions}, we see that partitions are defined in \cite{HelHofTer09} by their cluster supports, taken to be open sets. The following example is particularly important. Let $\psi$ be an eigenfunction of $-\Delta_\Omega$, the Dirichlet realisation of the Laplacian in $\Omega$. The \emph{nodal partition} associated with $\psi$ is the family of all the nodal domains of $\psi$, that is the connected components of the complement of its zero set. This is analogous to Definition~\ref{def:nodal-partition}. 

We denote by $\mathfrak{P}_k(\Omega)$ the set of all $k$-partitions of $\Omega$ in the above sense. The following definitions are in complete analogy with Section \ref{sec:ex-min}. We first set, for a given partition $\parti$,
\begin{equation}
\label{eq:dom-energy}
	\denergy[p] (\parti) := \begin{cases} \left(\frac{1}{k}\sum\limits_{i=1}^k \lambda_1(\Omega_i)^p\right)^{1/p}
	\qquad &\text{if } p \in  (0,\infty),\\ \max\limits_{i=1,\ldots,k} \lambda_1(\Omega_i) 
	\qquad &\text{if } p = \infty, \end{cases}
\end{equation}
where $\lambda_1(\Omega_i)$ denotes the first eigenvalue of $-\Delta_{\Omega_i}$. We then define
\begin{equation}
 \label{eq:dom-minimal-energy}
    \doptenergy[k,p](\Omega) :=\inf_{\parti \in \mathfrak{P}_k(\Omega)} \denergy[p] (\parti)
\end{equation}
and call \emph{minimal} any $k$-partition $\parti$ satisfying $\denergy[p] (\parti)=\doptenergy[k,p](\Omega)$.
It was proved in \cite{HelHofTer09}, building on results from \cite{BuBuHe98,ConTerVer05,CaLi07}, that minimal partitions exist for any positive integer $k$ and any $p\in[1,\infty]$, and are very regular. In the terminology of \cite{HelHofTer09}, they are \emph{strong}, meaning that 
\begin{equation*}
 \overline{\Omega}=\bigcup\limits_{i=1}^k\overline{\Omega_i};
\end{equation*}
in our terminology such partitions would be called exhaustive. It is natural to define the \emph{boundary} of a strong (i.e., exhaustive) partition by 
\begin{equation*}
 \mathcal N(\parti):=\overline{\bigcup\limits_{i=1}^k\partial \Omega_i\cap\Omega}.
\end{equation*}
When $\parti$ is minimal, the set $\mathcal N(\parti)$ enjoys regularity properties which make it analogous to the nodal set of an eigenfunction of $-\Delta_\Omega$. In particular, it consists of a finite number of regular curves. More details can be found in \cite[Theorem 1.12]{HelHofTer09} or \cite[Theorem 10.43]{BNHe17}. We additionally define what it means for two distinct cluster supports $\Omega_i$ and $\Omega_j$ to be neighbours: the interior of the set $\overline{\Omega_i\cup \Omega_j}\cap\Omega$ is connected. This is the analogue of Definition \ref{def:neighbours} in the case of domains. We then say that the partition $\parti$ is \emph{bipartite} if we can colour its cluster supports, using only two colours, in such a way that two neighbours have  different colours. This is the analogue of Definition \ref{def:proximity}, although in the case of domains we do not have to restrict ourselves to a special class of partitions, such as proper partitions for graphs. As pointed out in \cite{HelHofTer09}, a nodal partition is strong and bipartite.

In the case $p=\infty$, which we will assume for the rest of this section, the results in \cite{HelHofTer09} point to a clearer connection between minimal and nodal partitions for domains than for graphs. Indeed, \cite{HelHofTer09} establishes the following result.

\begin{theorem}[Theorems 1.14 and 1.17 in \cite{HelHofTer09}] 
\label{thm:dom-minimal-nodal}
 Let $\parti$ be a minimal $k$-partition (for some positive integer $k$) realising $\doptenergy[k,\infty](\Omega)$. If $\parti$ is bipartite, then it is nodal; more precisely, it is the nodal partition for an eigenfunction associated with $\lambda_k(\Omega)$, the $k$-th eigenvalue of $-\Delta_\Omega$.
\end{theorem}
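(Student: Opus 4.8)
The plan is to follow the strategy of \cite{HelHofTer09} (which builds on \cite{BuBuHe98,ConTerVer05,CaLi07}), setting $\mu:=\doptenergy[k,\infty](\Omega)=\denergy[\infty](\parti)$ and proceeding in three stages. First I would bring in the regularity theory of minimal partitions: a minimal $k$-partition $\parti=\{\Omega_1,\dots,\Omega_k\}$ of $\Omega$ is necessarily exhaustive (``strong''), its boundary $\mathcal N(\parti)$ is a finite union of $C^{1,\alpha}$ arcs meeting at finitely many interior points with the equal-angle property and hitting $\partial\Omega$ at finitely many points, it is an \emph{equipartition} (so $\lambda_1(\Omega_i)=\mu$ for all $i$, cf.\ \cite[Proposition~10.45]{BNHe17}), and --- crucially --- the criticality (Hadamard-type) conditions it satisfies yield the \emph{pair-compatibility} property: for neighbouring clusters $\Omega_i,\Omega_j$, with $\varphi_i,\varphi_j$ their positive $L^2$-normalised ground states extended by $0$ to $H^1_0(\Omega)$, one has $|\partial_\nu\varphi_i|=|\partial_\nu\varphi_j|$ along the regular part of the common boundary.

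Next I would use bipartiteness to glue these ground states into a genuine eigenfunction. Choosing a two-colouring $i\mapsto\varepsilon_i\in\{+1,-1\}$ with $\varepsilon_i=-\varepsilon_j$ on neighbours (possible exactly because $\parti$ is bipartite), set
\[
	\psi:=\sum_{i=1}^k\varepsilon_i\varphi_i\ \in H^1_0(\Omega).
\]
On each $\Omega_i$ it solves $-\Delta(\varepsilon_i\varphi_i)=\mu\,\varepsilon_i\varphi_i$; across each regular arc of $\mathcal N(\parti)$ separating neighbours $\Omega_i,\Omega_j$ the function vanishes, and the sign flip $\varepsilon_i=-\varepsilon_j$ cancels exactly the relation $\partial_\nu\varphi_i=\partial_\nu\varphi_j$ furnished by pair compatibility, so the normal-derivative jump of $\psi$ vanishes there as well; the finitely many singular points of $\mathcal N(\parti)$ and the finitely many points of $\mathcal N(\parti)\cap\partial\Omega$ form a set of zero capacity and are removable. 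Hence $\psi$ is a classical eigenfunction of $-\Delta_\Omega$ for the eigenvalue $\mu$, and since $\varphi_i>0$ on $\Omega_i$ its zero set is exactly $\mathcal N(\parti)$ and its nodal domains are exactly $\Omega_1,\dots,\Omega_k$: thus $\parti$ is the nodal partition of $\psi$.

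Finally I would identify $\mu$ with $\lambda_k(\Omega)$. The bound $\lambda_k(\Omega)\le\mu$ is the easy half: feed the $k$-dimensional space $\mathrm{span}(\varphi_1,\dots,\varphi_k)$ --- on which, by the equipartition property, the Rayleigh quotient is identically $\mu$ --- into the min-max characterisation of $\lambda_k(\Omega)$, exactly as in the metric-graph analogue Proposition~\ref{prop:inequalities-k}. The reverse inequality $\mu\le\lambda_k(\Omega)$, i.e.\ the \emph{Courant-sharpness} of $\psi$, is where I expect the main obstacle to lie: it cannot be read off from $\psi$ alone --- for a general eigenfunction an exhaustive nodal partition with $k$ domains only forces $\mu\ge\lambda_k$ up to the multiplicity block --- and one must exploit both the minimality of $\parti$ and the exhaustivity established in the first stage. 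This is the delicate counting argument of \cite[proofs of Theorems~1.14 and~1.17]{HelHofTer09} (see also \cite[\S~10.8]{BNHe17}), which I would quote rather than reprove. Granting it, $\psi$ is an eigenfunction associated with $\lambda_k(\Omega)$ whose nodal partition is $\parti$, which is the assertion. (The same construction also makes transparent why bipartiteness is indispensable: at an interface between equally-coloured clusters the glued function acquires a genuine normal-derivative jump, so a non-bipartite minimal partition can be nodal only after passing to a double cover, in the spirit of Theorem~\ref{thm:covering}.)
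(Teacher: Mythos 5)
You should be aware that the paper does not prove this statement at all: Theorem~\ref{thm:dom-minimal-nodal} is quoted, with attribution, from \cite{HelHofTer09} (Theorems~1.14 and~1.17 there) purely as background for the domain--graph comparison in Section~\ref{sec:domgra}, so there is no internal argument to measure your proposal against -- the comparison is with the cited source. Judged on that basis, your outline is a faithful reconstruction of the Helffer--Hoffmann-Ostenhof--Terracini strategy: regularity and the equipartition property of minimal partitions, matching of normal derivatives across regular interfaces, the bipartite two-colouring to glue the signed ground states into an eigenfunction $\psi$ with nodal set $\mathcal N(\parti)$, and the easy inequality $\lambda_k(\Omega)\le\doptenergy[k,\infty](\Omega)$ via the $k$-dimensional test space (the domain analogue of Proposition~\ref{prop:inequalities-k}). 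Two remarks. First, in \cite{HelHofTer09} the normal-derivative matching is usually extracted from the \emph{pair compatibility} property (for neighbours, $\lambda_2$ of the interior of $\overline{\Omega_i\cup\Omega_j}$ equals the common energy, so the glued pair function is a second eigenfunction of the union), rather than from a Hadamard criticality identity as you phrase it; the two routes are equivalent in substance, but the pair-compatibility formulation is what the regularity theory directly delivers. Second, the genuine content of the theorem is precisely the step you defer, namely the Courant-sharp direction $\mu\le\lambda_k(\Omega)$: quoting it is legitimate here (the paper quotes the whole theorem), but it leaves your argument not self-contained, and it is worth noting that the metric-graph counterpart of exactly this step is Theorem~\ref{thm:min-cs} of the paper, which is proved there by a new deformation argument (Robin-type $\delta$-conditions of strength $\tan\theta$ at the cut points, $\theta\in[0,\pi/2]$); adapting that mechanism back to domains is the natural way to make your third stage explicit, and is in the spirit the authors themselves suggest.
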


Let us recall that if $\psi$ is an eigenfunction associated with $\lambda_k(\Omega)$ and $\nu(\psi)$ its number of nodal domains, Courant's Nodal Theorem holds in its strong form and states that $\nu(\psi)\le k$. Following \cite{HelHofTer09}, we say that $\psi$ is \emph{Courant sharp} if $\nu(\psi)=\kappa(\lambda_k(\Omega))$, where
\begin{equation*}
 \kappa(\lambda_k(\Omega))=\min\{\ell\in \N^*\,;\,\lambda_\ell(\Omega)=\lambda_k(\Omega)\}.
\end{equation*}
The second statement in Theorem \ref{thm:dom-minimal-nodal} then tells us that if a minimal $k$-partition $\parti$ is nodal, $\lambda_{k-1}(\Omega)<\lambda_k(\Omega)$ and $\parti$ is given by a Courant-sharp eigenfunction associated with $\lambda_k(\Omega)$.

As seen in this section, the results connecting nodal and spectral minimal partitions are less sharp for metric graphs than for domains, except when the partitions considered are proper (see point (5) of Definition \ref{def:classification}). Indeed, even when we restrict ourselves to nodal partitions, only the weak form of Courant's Nodal Domain Theorem (Proposition \ref{prop:weak-courant}) holds on graphs in general. However (using the notation of this Proposition \ref{prop:weak-courant}) the stronger inequality $\nu(\psi)\le k$ holds when the eigenvalue is simple or the nodal partition associated with $\psi$ is proper.

Similarly, it seems unclear how to give an appropriate definition for a bipartite partition of a metric graph that would allow us to transpose the first statement in Theorem \ref{thm:dom-minimal-nodal}. Nevertheless, this statement makes sense and is correct for proper minimal partitions. It is indeed formulated as Theorem \ref{thm:min-nodal} in this section. Theorem \ref{thm:nodal-tree} shows that considering only proper partitions is unnecessarily restrictive, but we do not have a comprehensive theory yet.

The analogue of the second statement in Theorem \ref{thm:dom-minimal-nodal} is given by Theorem \ref{thm:min-cs} for proper minimal partitions. However, there seems to be no natural way of extending the notion of bipartiteness, and hence the scope of Theorem~\ref{thm:min-nodal}, to non-proper partitions, as Proposition~\ref{prop:min-part-3M} exemplifies. Furthermore, as seen on Figure~\ref{fig:graph-d3}, one can also find proper minimal partitions of $\mathcal{H}$, although they are not nodal. The existence of such partitions therefore does not guarantee that $\doptenergy[k,\infty]=\mu_k$.

Finally, we point out that on domains there is a construction corresponding to the realisation of proper minimal partitions as  projection of nodal partitions in a double covering, described in Theorem \ref{thm:covering}. This correspondence appears more clearly when we consider the magnetic interpretation of this realisation, given at the end of Subsection \ref{subsec:non-bipartite}. As shown in \cite{HeHO13}, any minimal $k$-partition of a domain is a nodal partition associated with the $k$-th eigenvalue of a magnetic Schr\"odinger operator having a finite number (possibly zero) of Aharonov--Bohm singularities with magnetic fluxes equal to $\pi$, where the number of fluxes depends on $k$ in a rather complicated way.

\section{Existence of spectral maximal partitions}
\label{sec:ex-max}

It turns out that for some classes of partitions the problem of \emph{maximising} spectral quantities is also well defined: we define the energies
\begin{equation}
\label{eq:nminenergy}
	\nminenergy (\parti) := \min_{i=1,\ldots,k} \mu_2 (\Graph_i)
\end{equation}
and
\begin{equation}
\label{eq:dminenergy}
	\dminenergy (\parti) := \min_{i=1,\ldots,k} \lambda_1 (\Graph_i)
\end{equation}
for any exhaustive rigid $k$-partition $\parti \in \mathfrak{R}_k$, and thus the maximal natural and Dirichlet energies, respectively:
\begin{equation}
\label{max-min-energies}
\begin{aligned}
	\nmaxmin[k] = \nmaxmin[k] (\Graph) &:=\sup_{\parti \in \mathfrak{R}_k} \,\nminenergy (\parti),\\
	\dmaxmin[k] = \dmaxmin[k] (\Graph) &:=\sup_{\parti \in \mathfrak{R}_k} \,\dminenergy (\parti).
\end{aligned}
\end{equation}
Here it is important to restrict to exhaustive partitions, see Remark~\ref{rem:general-max-min}.
In the sequel we will prove similar properties of these to $\noptenergy[k,p]$ and $\doptenergy[k,p]$, in particular the existence of maximisers.

In Section~\ref{sec:examples} we will give examples comparing both the behaviour of the optimal partitions with respect to $p$, and comparing these notions of spectral extremal partition with the minimal partitions introduced in Section~\ref{sec:ex-min}; it should be profitable to have a more systematic understanding of the relations between them. In this section we treat the existence of partitions having the optimal energies for the max-min problems $\nmaxmin[k]$ and $\dmaxmin[k]$.

\begin{remark}
\label{rem:general-max-min}
The more general max-min problems
\begin{displaymath}
	\sup_{\parti \in A} \nminenergy (\parti), \qquad \sup_{\parti \in A} \dminenergy (\parti),
\end{displaymath}
unlike their min-max counterparts in Section~\ref{sec:ex-min}, are only well posed for rather particular choices of sets $A$ of partitions. For example, if we seek the optimum among non-exhaustive $k$-partitions, even among rigid partitions, then both suprema are clearly infinite: any sequence of partitions $\parti_n$ each of whose clusters has total length at most $1/n$, say, satisfies $\nminenergy (\parti_n), \dminenergy (\parti_n) \to \infty$ (just use Nicaise' inequalities).
\end{remark}

There is an analogue of Lemma~\ref{lem:dirichlet-min-max-rigid-loose}, but this time for the natural problem.

\begin{lemma}
\label{lem:neumann-max-min-rigid-loose}
For any graph $\Graph$ and any $k\geq 1$, we have
\begin{displaymath}
	\sup_{\parti \in \mathfrak{R}_k} \,\nminenergy (\parti) = \sup_{\parti \in \mathfrak{P}_k} \,\nminenergy (\parti).
\end{displaymath}
\end{lemma}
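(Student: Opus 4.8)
Since $\mathfrak{R}_k \subseteq \mathfrak{P}_k$, the inequality ``$\leq$'' is immediate, so only ``$\geq$'' needs an argument. The plan is to mirror the proof of Lemma~\ref{lem:dirichlet-min-max-rigid-loose}, but run in the opposite direction: for the Dirichlet problem one replaces a loose cluster by a \emph{less} connected rigid cluster, since imposing more Dirichlet points only raises $\lambda_1$; here, because cutting through a vertex can only \emph{lower} $\mu_2$, I would instead replace each loose cluster by the \emph{most} connected rigid cluster carried by the same support.

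Concretely, first I would fix an arbitrary $\widetilde{\parti} = \{\NewGraph_1,\ldots,\NewGraph_k\} \in \mathfrak{P}_k$ with cluster supports $\Omega_1,\ldots,\Omega_k$, and let $\parti = \{\Graph_1,\ldots,\Graph_k\}$ be the faithful partition carried by the \emph{same} supports, i.e.\ for each $i$ take $\Graph_i \in \rho_{\Omega_i}$ to be the (unique) faithful cluster on $\Omega_i$ --- the one obtained by making no cuts in $\interior \Omega_i$ and keeping each separating point of $\partial\Omega_i$ as a single vertex incident with all edges of $\Omega_i$ meeting it. Since $\widetilde{\parti}$ is exhaustive, the $\Omega_i$ are closed subsets of $\Graph$ with pairwise disjoint interiors whose union is $\Graph$; hence $\parti$ is again an exhaustive partition, and it is rigid (indeed faithful), so $\parti \in \mathfrak{R}_k$.

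The key observation is then that, for each $i$, the cluster $\NewGraph_i$ of $\widetilde{\parti}$ is a cut of $\Graph_i$: both carry the support $\Omega_i$, while $\NewGraph_i$ arises from $\Graph_i$ by (possibly) cutting through vertices lying in $\interior \Omega_i$ and/or further subdividing vertices of $\partial\Omega_i$. Consequently, by the monotonicity of the first nontrivial natural eigenvalue under cutting through vertices (Remark~\ref{rem:noptenergyloose}, or \cite[Theorem~3.10(1)]{BeKeKuMu18}), we get $\mu_2(\NewGraph_i) \leq \mu_2(\Graph_i)$ for all $i = 1,\ldots,k$. Taking minima over $i$ yields
\begin{displaymath}
	\nminenergy(\widetilde{\parti}) = \min_{i} \mu_2(\NewGraph_i) \leq \min_{i} \mu_2(\Graph_i) = \nminenergy(\parti) \leq \sup_{\parti' \in \mathfrak{R}_k} \nminenergy(\parti'),
\end{displaymath}
and since $\widetilde{\parti} \in \mathfrak{P}_k$ was arbitrary, $\sup_{\mathfrak{P}_k} \nminenergy \leq \sup_{\mathfrak{R}_k} \nminenergy$, as required.

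The only point needing a little care --- and where I expect the main (though minor) obstacle to lie --- is the claim that $\NewGraph_i$ really is a cut of $\Graph_i$; this rests on unwinding Definitions~\ref{def:classification} and~\ref{def:separation-set} (the cut set of a loose partition may strictly contain its separation set, the latter being exactly the vertex set at which a rigid, and in particular a faithful, cluster is cut) together with the fact that the faithful cluster on a connected support is itself connected. Once this bookkeeping is in place, the statement follows at once from the monotonicity of $\mu_2$ under cutting, which is precisely the feature that makes the natural problem behave oppositely to the Dirichlet one.
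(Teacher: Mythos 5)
Your proposal is correct and follows essentially the same route as the paper's own proof: replace each loose cluster by the unique faithful cluster in $\rho_{\Omega_i}$ on the same support, note that the original cluster is a cut of this faithful one, and invoke the surgery fact that cutting through vertices cannot increase $\mu_2$ (the paper cites \cite[Theorem~3.4]{BeKeKuMu18} for this) to get $\mu_2(\NewGraph_i)\le\mu_2(\Graph_i)$ and hence the desired inequality between the suprema. The bookkeeping point you flag — that each $\NewGraph_i$ is indeed a cut of the faithful cluster — is exactly the "by construction" step in the paper and holds for the reason you give.
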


\begin{proof}
The proof is, to an extent, analogous to the proof of Lemma~\ref{lem:dirichlet-min-max-rigid-loose}, but here we have to prove ``$\geq$'', since the latter supremum is over a larger set. It suffices to prove that for an arbitrary exhaustive $\widetilde\parti = \{\NewGraph_1,\ldots,\NewGraph_k\} \in \mathfrak{P}_k$, for each $i$ there exists some $\Graph_i \in \rho_{\widetilde\Omega_i}$ (where $\widetilde\Omega_i \subset \Graph$ is the cluster support of $\NewGraph_i$) such that $\mu_2 (\Graph_i) \geq \mu_2 (\NewGraph_i)$, since then the exhaustive rigid partition $\parti := \{\Graph_1,\ldots,\Graph_k\} \in \mathfrak{R}_k$ satisfies $\nminenergy (\parti) \geq \nminenergy (\widetilde{\parti})$. To this end, we simply take $\Graph_i$ to be the unique faithful cluster in $\rho_{\widetilde{\Omega}_i}$; then by construction $\NewGraph_i$ may be obtained as a cut of $\Graph_i$. Standard surgery results (e.g., \cite[Theorem~3.4]{BeKeKuMu18}) now imply that $\mu_2 (\Graph_i) \geq \mu_2 (\NewGraph_i)$, as required.
\end{proof}

On the other hand, the conclusion of Remark~\ref{rem:dirichlet-subset-graph-distinction} also holds for $\dminenergy (\parti)$: for the Dirichlet problem, there is no difference between different rigid clusters associated with the same supports: in particular, maximising over all rigid partitions is equivalent to maximising over all faithful ones.

Before turning to the existence of maximising partitions, we first observe that $\nminenergy$ and $\dminenergy$ are continuous with respect to partition convergence, even in the degenerate cases.

\begin{lemma}
\label{lem:min-energy-convergence}
Suppose $\parti_n \in \mathfrak{R}_k$ are rigid $k$-partitions of $\Graph$, all similar to each other, and $\parti_n \to \parti_\infty$ as $n \to \infty$ in the sense of \eqref{eq:dist-pk}. Then also
\begin{displaymath}
	\nminenergy (\parti_n) \to \nminenergy (\parti_\infty) \quad \text{and} \quad \dminenergy (\parti_n) \to \dminenergy (\parti_\infty).
\end{displaymath}
\end{lemma}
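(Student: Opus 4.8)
The plan is to reduce the statement about partitions to the already-established convergence of eigenvalues on sequences of metric graphs with a fixed discrete topology, namely Lemma~\ref{lem:eig-convergence}, and then take a minimum over finitely many clusters. First I would fix a primitive $k$-partition $\colourset$ containing all the $\parti_n$ (this exists by hypothesis, since the $\parti_n$ are all similar), and write $\parti_n=\{\Graph_1^{(n)},\ldots,\Graph_k^{(n)}\}$ with $\parti_\infty=\{\Graph_1^{(\infty)},\ldots,\Graph_m^{(\infty)}\}$ for some $1\le m\le k$. Since $\parti_n\to\parti_\infty$ in the metric \eqref{eq:dist-pk}, by definition \eqref{eq:dist-pk} each sequence of clusters $\Graph_i^{(n)}$ converges in $\overline{\Gamma_{\DG_i}}$ (the closure of the space of metric graphs with discrete topology $\DG_i$) to a limit $\Graph_i^{(\infty)}$, which is either a genuine cluster of $\parti_\infty$ (if the total length stays bounded away from $0$) or the empty graph (if $|\Graph_i^{(n)}|\to 0$).

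Next I would apply Lemma~\ref{lem:eig-convergence} clusterwise. For each $i$ such that $\Graph_i^{(\infty)}\ne\emptyset$, part~(1) of that lemma gives $\mu_2(\Graph_i^{(n)})\to\mu_2(\Graph_i^{(\infty)})$, and part~(2) gives $\lambda_1(\Graph_i^{(n)})\to\lambda_1(\Graph_i^{(\infty)})$; here the bookkeeping of Dirichlet vertices is handled exactly as in Lemma~\ref{lem:eig-convergence}(2), using the fact that for rigid partitions the cut set equals the separation set so that the cut points of $\Graph_i^{(n)}$ track the correct limiting vertices (and no cut point is lost in the limit because, by the argument in the proof of Proposition~\ref{prop:rigid-class-closed}, the incidence relations at interior vertices are preserved). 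For each $i$ with $\Graph_i^{(n)}\to\emptyset$, the last sentence of Lemma~\ref{lem:eig-convergence} gives $\mu_2(\Graph_i^{(n)})\to\infty$ and $\lambda_1(\Graph_i^{(n)})\to\infty$.

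Finally I would pass to the minimum. Since $\nminenergy(\parti_n)=\min_{i=1,\ldots,k}\mu_2(\Graph_i^{(n)})$ is the minimum of finitely many convergent (or divergent-to-$+\infty$) sequences, and at least the indices $i\le m$ (after relabelling) have $\mu_2(\Graph_i^{(n)})\to\mu_2(\Graph_i^{(\infty)})<\infty$, the minimum converges to $\min_{i\le m}\mu_2(\Graph_i^{(\infty)})=\nminenergy(\parti_\infty)$ (the vanishing clusters, having eigenvalue tending to $+\infty$, eventually do not attain the minimum). The identical argument with $\lambda_1$ in place of $\mu_2$ gives $\dminenergy(\parti_n)\to\dminenergy(\parti_\infty)$. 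The only mild subtlety — the part I would be most careful about — is the degenerate case where some clusters collapse: one must check that the divergence of those clusters' ground-state eigenvalues really does push them out of the $\min$ for large $n$, and that the limiting object $\parti_\infty$ is the $m$-partition with exactly the non-collapsing clusters, so that $\nminenergy(\parti_\infty)$ is computed over precisely those indices; both points follow from the explicit lower bounds of Nicaise' inequalities (Theorem~\ref{thm:nicaise}) already invoked in the proof of Lemma~\ref{lem:eig-convergence}.
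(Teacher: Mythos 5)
Your proposal is correct and follows essentially the same route as the paper's proof: apply Lemma~\ref{lem:eig-convergence} clusterwise (with divergence to $+\infty$ for collapsing clusters) and then pass to the minimum over the finitely many clusters, noting that the collapsing ones eventually drop out of the $\min$. The paper merely organises this as two cases (limit is a $k$-partition versus an $m$-partition with $m<k$) and singles out the clusters attaining $\dminenergy(\parti_\infty)$, but the content of the argument is identical to yours.
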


\begin{proof}
If $\parti_\infty$ is itself a $k$-partition, then this follows immediately from Lemma~\ref{lem:eig-convergence}. So suppose it is not; then at least one cluster has total length converging to zero, and thus eigenvalues diverging to $\infty$, see again Lemma~\ref{lem:eig-convergence}. Consider the Dirichlet problem (the natural case is entirely analogous). Suppose without loss of generality that the clusters $\Graph_1^{(\infty)},\ldots,\Graph_j^{(\infty)}$, $1 \leq j<n$, give the minimum in $\dminenergy (\parti_\infty)$:
\begin{displaymath}
	\lambda_1 (\Graph_1^{(\infty)}) = \ldots = \lambda_1 (\Graph_j^{(\infty)}) = \dminenergy (\parti_\infty);
\end{displaymath}
note that this energy is finite since at least one cluster of $\parti_\infty$ has positive total length and thus a finite eigenvalue. But now each corresponding cluster $\Graph_i^{(n)}$ of $\parti_n$ converges to $\Graph_i^{(n)}$; in particular, $\lambda_1 (\Graph_i^{(n)}) \to \lambda_1 (\Graph_i^{(\infty)})$ for all $i=1,\ldots,j$, while $\liminf_{n\to\infty} \lambda_1 (\Graph_i^{(n)}) > \dminenergy (\parti_\infty)$ for all $i=j+1,\ldots,k$. It now follows from the definition of $\dminenergy$ as a minimum that indeed $\dminenergy (\parti_n) \to \dminenergy (\parti_\infty)$.
\end{proof}

\begin{theorem}
\label{thm:max-existence}
Fix a graph $\Graph$ and $k\geq 1$. Then there exist exhaustive rigid partitions $\parti^N = \parti^N (k)$ and $\parti^D = \parti^D (k)$ of $\Graph$ such that
\begin{displaymath}
	\nminenergy (\parti^N) = \nmaxmin[k] \quad \text{and} \quad \dminenergy (\parti^D) = \dmaxmin[k]
\end{displaymath}
and such that there exist $k$-partitions $\parti_n^N, \parti_n^D \in \mathfrak{R}_k$ with $\parti_n^N \to \parti^N$ and $\parti_n^D \to \parti^D$. For all $k\geq 1$, $\parti^D$ may be taken as a $k$-partition itself; moreover, there exists a constant $k_0 \geq 1$ possibly depending on $\Graph$ such that $\parti^N$ may be taken as a $k$-partition for all $k\geq k_0$. In particular, $\nmaxmin[k]$ and $\dmaxmin[k]$ are monotonically increasing functions of $k$ for all $k\geq 1$ and $k\geq k_0$, respectively.
\end{theorem}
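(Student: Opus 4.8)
The plan is to dualise both maximisation problems and feed them into the machinery of Section~\ref{sec:abstract-existence}, applied to the functionals $J_N := -\nminenergy$ and $J_D := -\dminenergy$ on $A = \mathfrak{R}_k$. First I would record that, by Lemma~\ref{lem:min-energy-convergence}, both $\nminenergy$ and $\dminenergy$ are continuous with respect to partition convergence within every primitive partition \emph{and} in the degenerate limits where clusters vanish, so $J_N$ and $J_D$ are strongly lower semi-continuous in the sense of Definition~\ref{def:lsc}(2). I would then run the compactness step of the proof of Theorem~\ref{thm:abstract-min-existence}: from a maximising sequence $(\parti_n)_{n\in\N}\subset\mathfrak{R}_k$ for $\nminenergy$ (resp.\ $\dminenergy$), pass to a subsequence lying in a single primitive $k$-partition $\colourset$ (there being only finitely many by Proposition~\ref{prop:colour-equivalence}), use compactness of $\overline{\colourset}$ (Lemma~\ref{lem:colour-set-compact}) to extract a limit $\parti^N$ (resp.\ $\parti^D$), which is an $m$-partition for some $1\le m\le k$ and is rigid by Proposition~\ref{prop:rigid-class-closed}. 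Continuity of $\nminenergy$ (resp.\ $\dminenergy$) then forces $\nminenergy(\parti^N)=\nmaxmin[k]$ (resp.\ $\dminenergy(\parti^D)=\dmaxmin[k]$), and in particular $\nmaxmin[k],\dmaxmin[k]<\infty$, since the limit partition retains at least one cluster of positive length. The original $\parti_n\in\mathfrak{R}_k$ are the required approximating sequence. This already establishes the first two sentences of the theorem for every $k\ge1$; what remains is genuineness of the maximisers and monotonicity.

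For the Dirichlet problem this is clean, since condition (2) of Theorem~\ref{thm:abstract-min-existence} holds for $J_D$. Indeed, given any $\ell$-partition $\parti^{(\ell)}\in\overline{\mathfrak{R}_k}$, $\ell<k$, one refines one cluster support into two connected pieces by a single further cut (e.g.\ a proper cut through an interior point of an edge, or a faithful cut at a vertex; such a refinement always exists and keeps the partition exhaustive and rigid). By monotonicity of $\lambda_1$ under the imposition of an additional Dirichlet vertex --- immediate from the variational characterisation \eqref{eq:lambda-1} --- both new clusters have $\lambda_1$ no smaller than that of the parent cluster, so the resulting rigid exhaustive $(\ell+1)$-partition has $\dminenergy$ at least $\dminenergy(\parti^{(\ell)})$, i.e.\ $J_D$ does not increase. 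Hence Theorem~\ref{thm:abstract-min-existence}(2) yields a genuine rigid $k$-partition $\parti^D$ with $\dminenergy(\parti^D)=\dmaxmin[k]$ for every $k\ge1$; applying the same one-step refinement to such an optimiser shows $\dmaxmin[k+1]\ge\dmaxmin[k]$ for all $k\ge1$. (Here one may use the analogue of Remark~\ref{rem:dirichlet-subset-graph-distinction} to ignore the distinction between clusters and supports.)

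For the natural problem the analogous condition (2) \emph{fails}, and this is the main obstacle: cutting a cluster through an interior vertex can strictly decrease its spectral gap $\mu_2$ --- for instance, splitting a cycle of length $L$ into two arcs leaves the longer arc with $\mu_2<4\pi^2/L^2$. The remedy, mirroring the proof of Proposition~\ref{prop:natural-monotonicity}, is to refine only at an interior \emph{sub-interval} of an edge of $\Graph$, where $\mu_2$ is monotone under splitting: if $[0,L]$ is split into $[0,a]$ and $[a,L]$, both pieces have $\mu_2\ge\pi^2/L^2\ge\mu_2([0,L])$. Setting $k_0:=4\cardE$, a pigeonhole argument on how many distinct cluster supports a single edge can meet shows that for $k\ge k_0$ any rigid exhaustive partition with at most $k$ clusters has (possibly after an innocuous subdivision) a cluster which is an interior sub-interval of some edge; refining there repeatedly promotes the $m$-partition maximiser $\parti^N$ found above to a genuine rigid $k$-partition without decreasing $\nminenergy$, and applied to an optimal genuine $k$-partition gives $\nmaxmin[k+1]\ge\nmaxmin[k]$ for $k\ge k_0$. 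The delicate points to be pinned down are exactly those handled in Proposition~\ref{prop:natural-monotonicity}: the pigeonhole bookkeeping (clusters can span several edges) and checking that the refinement step preserves exhaustivity and rigidity. Lemma~\ref{lem:neumann-max-min-rigid-loose} lets us move freely between rigid and loose (resp.\ faithful) representatives along the way where convenient.
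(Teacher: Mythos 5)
Your compactness step and your Dirichlet argument essentially reproduce the paper's: pass to a single primitive partition, use compactness of $\overline{\colourset}$ and Proposition~\ref{prop:rigid-class-closed} to get a rigid limit, use Lemma~\ref{lem:min-energy-convergence} for (strong) continuity, and verify condition (2) of Theorem~\ref{thm:abstract-min-existence} for $-\dminenergy$ by splitting a cluster and using monotonicity of $\lambda_1$. One small caveat there: a proper cut through an interior point of an edge, or a cut through a vertex, need \emph{not} disconnect a cluster (think of a loop cluster), so ``such a refinement always exists'' requires the explicit construction the paper gives (split off a connected component of $\Omega_1\setminus\{v\}$, or a single edge at $v$, or half an edge); this is easily repaired.

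The natural case, however, has a genuine gap. Your promotion of the limit $m$-partition $\parti^N$ to a $k$-partition rests on the claim that, for $k\ge 4\cardE$, any rigid exhaustive partition \emph{with at most $k$ clusters} has a cluster which is an interior sub-interval of some edge. That is false: the pigeonhole needs \emph{many} clusters, not few, and the limit of a maximising sequence may have lost most of them (clusters of vanishing length have $\mu_2\to\infty$ and never constrain $\nminenergy$, so nothing prevents $m$ from being small). When $m$ is small, the surviving clusters can span several edges and contain cycles, and then you have no admissible splitting: cutting a cluster with a cycle can strictly decrease its $\mu_2$, and the assertion that some energy-non-decreasing $2$-cut always exists is precisely the open Conjecture~\ref{conj:kirchhoff-two-cut}. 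The paper closes exactly this hole with a quantitative step absent from your proposal: using the auxiliary function $\mathfrak{b}$ of \eqref{eq:auxiliary-b} and Lemma~\ref{lem:auxiliary-b} (i.e.\ the bound $\mu_2(\HGraph)\le 4\pi^2\cardE(\HGraph)^2/|\HGraph|^2$), it chooses $k_0$ so large that the test partition cutting every edge into $\lfloor k/\cardE\rfloor$ equal intervals forces $\nmaxmin[k]\ge \pi^2m^2/\ell_{\max}^2$, which in turn forces every cluster of an optimal limit partition to be shorter than the shortest cycle $c_{\min}$ and hence a tree; tree clusters can then be subdivided through bridges without decreasing the energy by Lemma~\ref{lem:kirchhoff-two-cut}. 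Note that your $k_0=4\cardE$ (borrowed from Proposition~\ref{prop:natural-monotonicity}, which \emph{merges} clusters for the min-max problem) is in general too small to force this tree structure, e.g.\ if $\Graph$ has a very short cycle. Your monotonicity claim $\nmaxmin[k+1]\ge\nmaxmin[k]$ for large $k$ is fine once a genuine optimal $k$-partition is known to exist, but that existence is exactly what the missing argument must supply.
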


Here the idea is to apply Theorem~\ref{thm:abstract-min-existence} to the functionals $\Lambda = -\nminenergy, -\dminenergy$ via condition (2). In the case of natural conditions, however, there is an additional difficulty with this condition; namely, it holds for all $1 \leq \ell \leq k$ if and only if Conjecture~\ref{conj:kirchhoff-two-cut} is true. If it is, then as we shall see we may choose $k_0=1$ in Theorem~\ref{thm:max-existence}.

\begin{proof}[Proof of Theorem~\ref{thm:max-existence}]
The strong lower semi-continuity condition of Theorem~\ref{thm:abstract-min-existence} follows from the (strong) continuity property established in Lemma~\ref{lem:min-energy-convergence}. Hence, if $\parti_n^N$ and $\parti_n^D$ are maximising sequences of exhaustive rigid $k$-partitions for $\nminenergy$ and $\dminenergy$, respectively, in both cases we obtain the existence of exhaustive rigid limit partitions $\parti^N$ and $\parti^D$. Although $\parti^N$ and $\parti^D$ may not be $k$-partitions, Lemma~\ref{lem:min-energy-convergence} ensures that their energies are equal to $\nmaxmin[k]$ and $\dmaxmin[k]$, respectively.

In the Dirichlet case, we verify (2) of Theorem~\ref{thm:abstract-min-existence} to establish that either $\parti^D$ is already a $k$-partition, or it may be replaced with a rigid $k$-partition whose energy is no smaller. In fact, suppose $\parti = \{\Graph_1,\ldots,\Graph_\ell\}$ is any $\ell$-partition, $\ell \geq 1$, and suppose the minimum in $\min_{i=1,\ldots,k} \lambda_1 (\Graph_i)$ is achieved by $\Graph_1$. We now modify $\Omega_1$, creating an $(\ell+1)$-st cluster support in such a way that $\lambda_1 (\Graph_1)$ is not decreased. Take any vertex $v \in \partial \Omega_1$.

If $\Omega_1 \setminus \{v\}$ is disconnected, then define $\Omega_{\ell+1}$ to be any one of (the closures of) these connected components and $\widetilde{\Omega}_1$ to be (the closure of) $\Omega_1 \setminus \Omega_{\ell+1}$. The corresponding graphs $\widetilde{\Graph}_1$, $\Graph_{\ell+1}$ may be taken to be any graphs in the non-empty sets $\rho_{\widetilde{\Omega}_1}$ and $\rho_{\Omega_{\ell+1}}$, respectively.

If the removal of $v$ does not disconnect $\Omega_1$, define $\Omega_{\ell+1}$ to consist of exactly one edge of $\Omega_1$ adjacent to $v$ and $\widetilde{\Omega}_1$ to be the rest of $\Omega_1$, unless $\Omega_1$ consists of just one edge, in which case take $\Omega_{\ell+1}$ to consist of the half of this edge adjacent to $v$. The graphs are defined accordingly. In any case, the monotonicity of the Dirichlet eigenvalues with respect to domain inclusion implies $\lambda_1 (\widetilde{\Graph}_1), \lambda_1 (\Graph_{\ell+1}) \geq \lambda_1 (\Graph_1)$. This establishes (2) and completes the proof of the theorem in the Dirichlet case.

In the natural case, it remains to establish the existence of some $k_0 \geq 1$ with the claimed properties. Here the proof is somewhat different from the corresponding proof of Proposition~\ref{prop:natural-monotonicity}: we will show that for $k$ sufficiently large, if a partition realises $\nmaxmin[k]$ then none of its cluster supports can wholly contain any cycle in $\Graph$, and thus each cluster is a tree. We may then apply Lemma~\ref{lem:kirchhoff-two-cut} to subdivide these if necessary, without decreasing the energy. To this end, we will need the following function. By way of analogy with \eqref{eq:rigid-cluster-set}, for any closed subset $\Omega \subset \Graph$ we define $\rho_\Omega$ to be the set of all possible rigid clusters associated with $\Omega$. We then define a function $\mathfrak{b} : [0,\infty) \to [0,|\Graph|]$ by
\begin{equation}
\label{eq:auxiliary-b}
	\mathfrak{b} (\lambda) := \sup \{ |\Omega|: \Omega \subset \Graph \text{ closed and connected and }
	\max_{\HGraph \in \rho_\Omega} \mu_2 (\HGraph) \geq \lambda \}.
\end{equation}
Now set $\ell_{\max}$ to be the length of the longest edge of $\Graph$ and $c_{\min}$ to be the length of its shortest cycle, and choose an integer $m\geq 1$ such that
\begin{displaymath}
	\mathfrak{b} \left(\frac{\pi^2m^2}{\ell_{\max}^2}\right) < c_{\min}.
\end{displaymath}
This is possible because, by Lemma~\ref{lem:auxiliary-b} below, $\mathfrak{b}(\cdot) \to 0$ as $m\to\infty$. We next choose $k_0$ to satisfy $m = \lfloor k_0/{\cardE} \rfloor$, where we recall ${\cardE}$ is the number of edges of $\Graph$.

Now fix $k\geq k_0$. We take any (rigid) $k$-partition $\parti$ of $\Graph$ in which each edge is partitioned equally into at least $m$ clusters; there exists such a partition since $m \leq \lfloor k/{\cardE} \rfloor$. Then each cluster support $\Omega_i$ of $\parti$ is identifiable with an interval; and thus the same is true of $\Graph_i$. The longest of these has length no greater than $\ell_{\max}/m$, and so
\begin{displaymath}
	\nmaxmin[k] \geq \min_{i=1,\ldots,k} \mu_2 (\Graph_i) \geq \frac{\pi^2m^2}{\ell_{\max}^2}.
\end{displaymath}
Now let $\parti^N = \{\Graph_1,\ldots, \Graph_{j_k} \}$, $j_k \leq k$, be an optimal partition for $\nmaxmin[k]$. Then we must have $\mu_2 (\Graph_i) \geq \pi^2m^2/\ell_{\max}^2$ for all $i=1,\ldots,j_k$. Hence, by choice of $m$ and definition of $\mathfrak{b}$, we have $|\Graph_i| < c_{\min}$ for all $i$: in particular, every cluster (and every cluster support) of $\parti^N$ is a tree. If $\parti^N$ has fewer than $k$ of them, then we may use Lemma~\ref{lem:kirchhoff-two-cut} to subdivide as many of the clusters of $\parti^N$ as necessary to create a $k$-partition whose energy is at least as large as $\nminenergy (\parti^N)$.
\end{proof}

We finish by proving the properties of the function $\mathfrak{b}$ claimed in the above proof.

\begin{lemma}
\label{lem:auxiliary-b}
The function $\mathfrak{b} : [0,\infty) \to [0,|\Graph|]$ defined by \eqref{eq:auxiliary-b} is well defined and monotonically decreasing, with $\mathfrak{b} (\lambda) \to 0$ as $\lambda \to \infty$.
\end{lemma}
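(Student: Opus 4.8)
The plan is to check, in turn, that $\mathfrak{b}$ is well defined with values in $[0,|\Graph|]$, that it is non-increasing, and — the substantive point — that $\mathfrak{b}(\lambda)\to 0$ as $\lambda\to\infty$.

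\emph{Well-definedness and monotonicity.} For $\lambda\ge 0$ write $S_\lambda$ for the set appearing in \eqref{eq:auxiliary-b}, so that $\mathfrak{b}(\lambda)=\sup S_\lambda$. Every element of $S_\lambda$ has the form $|\Omega|$ with $\Omega\subset\Graph$ and hence lies in $[0,|\Graph|]$; thus it remains only to see that $S_\lambda\neq\emptyset$. If $\lambda=0$, any non-degenerate closed connected $\Omega$ is admissible. If $\lambda>0$, let $\ell_{\min}>0$ be the length of the shortest edge of $\Graph$ and take $\Omega$ to be a closed subinterval of length $L:=\min\{\ell_{\min}/2,\,\pi/\sqrt{\lambda}\}$ contained in the interior of some edge. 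Then $\partial\Omega$ consists of two vertices of degree two, which cannot be cut nontrivially without destroying the connectedness required of a cluster; hence $\rho_\Omega$ consists of the single (faithful) cluster, an interval of length $L$, whose first nontrivial natural eigenvalue equals $\pi^2/L^2\ge\lambda$. So $|\Omega|=L\in S_\lambda$. Monotonicity is immediate: if $\lambda\le\lambda'$ then $S_{\lambda'}\subseteq S_\lambda$, whence $\mathfrak{b}(\lambda')\le\mathfrak{b}(\lambda)$.

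\emph{The limit.} Since $\mathfrak{b}$ is non-negative and non-increasing, $c:=\lim_{\lambda\to\infty}\mathfrak{b}(\lambda)$ exists, and I claim $c=0$. If not, then for every $\lambda$ there is a closed connected $\Omega_\lambda\subset\Graph$ with $|\Omega_\lambda|>c/2$ and a cluster $\HGraph_\lambda\in\rho_{\Omega_\lambda}$ with $\mu_2(\HGraph_\lambda)\ge\lambda$. Choosing $\lambda=\lambda_n\to\infty$ produces connected metric graphs $\HGraph_n:=\HGraph_{\lambda_n}$ with $|\HGraph_n|=|\Omega_n|\in(c/2,|\Graph|]$ and $\mu_2(\HGraph_n)\to\infty$. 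The key observation is that the $\HGraph_n$ have uniformly bounded combinatorial complexity: each $\HGraph\in\rho_\Omega$ is a cut of the faithful cluster over $\Omega$, and the latter — a connected closed subset of the fixed graph $\Graph$, realisable as a union of edges after inserting at most two dummy vertices in each edge of $\Graph$ (at the points where $\partial\Omega$ meets that edge's interior) — has at most $3\cardE$ edges; hence $\HGraph$ has at most $3\cardE$ edges and $6\cardE$ vertices, where $\cardE$ is the number of edges of $\Graph$. So the underlying discrete graph of $\HGraph_n$ ranges over a finite set $\mathcal{F}$ depending only on $\Graph$. Passing to a subsequence, we may assume all $\HGraph_n$ lie in a single $\Gamma_\DG$ with $\DG\in\mathcal{F}$; their edge-length vectors form a bounded sequence in $\R^{|\DE(\DG)|}$ whose coordinate sums lie in $[c/2,|\Graph|]$, so a further subsequence converges in $\overline{\Gamma_\DG}$ to some $\HGraph_\infty$ with $|\HGraph_\infty|\ge c/2>0$; in particular $\HGraph_\infty\neq\emptyset$ (and is connected, by Lemma~\ref{lem:part-convergence}). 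Lemma~\ref{lem:eig-convergence}(1) now gives $\mu_2(\HGraph_n)\to\mu_2(\HGraph_\infty)<\infty$, contradicting $\mu_2(\HGraph_n)\to\infty$. Hence $c=0$, which is the assertion.

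\emph{Main obstacle.} The only delicate point is the structural observation in the previous paragraph: one must be careful in bounding how a connected closed subset $\Omega$ of $\Graph$ (which, once its boundary vertices are cut, may sprout partial edges, loops and parallel edges) gives rise to clusters of complexity controlled purely by $\Graph$, so that the relevant length-vector space is finite-dimensional and the closed, bounded set of admissible length vectors is compact; once this is in place the argument is a routine compactness argument built on Lemma~\ref{lem:eig-convergence}. (One could alternatively bypass the compactness step by invoking an \emph{a priori} upper bound of the form $\mu_2(\HGraph)\le C\,\cardE^2/|\HGraph|^2$ for clusters $\HGraph$ of a fixed graph, but this requires an external surgery estimate; the route above uses only results from the present paper.)
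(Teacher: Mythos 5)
Your proposal is correct, but for the substantive point (the limit $\mathfrak{b}(\lambda)\to 0$) it takes a genuinely different route from the paper. You argue by compactness: after bounding the combinatorial complexity of any rigid cluster over a connected closed $\Omega\subset\Graph$ (your observation that each component of $\Omega\cap e$ not filling the whole edge must contain an endpoint of $e$ is the key, and it does give a bound depending only on $\cardE$ -- in fact at most $2\cardE$ edges per cluster rather than your $3\cardE$, though any finite bound suffices), you extract a subsequence converging in some $\overline{\Gamma_\DG}$ to a nonempty connected limit graph of total length at least $c/2$, and then Lemma~\ref{lem:eig-convergence}(1) forces $\mu_2(\HGraph_n)$ to stay bounded, a contradiction. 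The paper instead goes exactly the way you sketch in your closing remark: it uses the same edge-count bound ($\cardE(\Graph^{(n)})\le 2\cardE$) together with the external a priori estimate $\mu_2(\HGraph)\le 4\pi^2\cardE(\HGraph)^2/|\HGraph|^2$ from \cite[Theorem~4.2]{KeKuMaMu16}, which immediately contradicts $\mu_2(\Graph^{(n)})\to\infty$. The trade-off is as you describe: the paper's argument is a two-line contradiction and implicitly yields quantitative decay of $\mathfrak{b}$, but imports a surgery-type estimate; yours is purely qualitative and needs the structural complexity bound, but is self-contained, resting only on Lemmas~\ref{lem:part-convergence} and~\ref{lem:eig-convergence}. (One cosmetic slip: the endpoints of your small subinterval have degree one \emph{within} the cluster, which is why they cannot be cut nontrivially; calling them degree-two vertices refers to their degree in $\Graph$ after subdivision, not in $\Omega$.)
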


\begin{proof}
The function is well defined on $[0,\infty)$ since for any $\lambda \geq 0$ the corresponding set is non-empty; that it is monotonically decreasing follows directly from the definition. Now suppose there exists a sequence of subsets $\Omega^{(n)} \subset \Graph$, with associated graphs $\Graph^{(n)} \in \rho_{\Omega^{(n)}}$ satisfying $\mu_2 (\Graph^{(n)}) \to \infty$ but $|\Graph^{(n)}| = |\Omega^{(n)}| \geq c > 0$ for all $n\geq 1$. Since $\Omega^{(n)}$ and $\Graph^{(n)}$ are connected, the number of edges ${\cardE}(\Graph^{(n)})$ of the latter is certainly not greater than $2{\cardE}$ (where ${\cardE}$ is the number of edges of the fixed graph $\Graph$), and hence, by \cite[Theorem~4.2]{KeKuMaMu16},\footnote{Note that the eigenvalue numbering convention in \cite{KeKuMaMu16} is different. Also, as observed in \cite[\S~2.4]{BaLe17}, there is an error in part of \cite[Theorem~4.2]{KeKuMaMu16}: the uniqueness statement in the case ${\cardE}=2$ is not correct, as any $2$-flower (among certain other graphs called \emph{symmetric necklaces} in \cite{BaLe17}) is a maximiser in this case.}
\begin{displaymath}
	\mu_2 (\Graph^{(n)}) \leq \frac{4\pi^2 {\cardE}(\Graph^{(n)})^2}{|\Graph^{(n)}|^2} \leq \frac{16\pi^2{\cardE}^2}{|\Graph|^2}
\end{displaymath}
for all $n\geq 1$, a contradiction to $\mu_2 (\Graph^{(n)}) \to \infty$.
\end{proof}

Observe that it is easy to find a sequence of graphs $\Graph^{(n)}$ for which $\mu_2 (\Graph^{(n)}) \to \infty$, even as $|\Graph^{(n)}|$ remains bounded from below, if the $\Graph^{(n)}$ are not embedded in a larger finite graph $\Graph$.

\begin{remark}
Further spectral maximal partitioning problems are conceivable, in analogy to the ones we investigated in Section~\ref{sec:ex-min}. In particular, we may look for \textit{loose} maximisers of the functional $\nminenergy (\parti),\dminenergy (\parti)$: we strongly expect that this problem always admits a solution, since each partition is only defined by a finite number of cuts.

Also, we may well introduce further functionals based on the $p$-means of $\mu_2(\Graph_i)^{-1}$, rather then on their maximum. Again, we are confident these generalisations can be handled by the theory developed here, but do not go into details.
\end{remark}

\section{Dependence of the optimal partitions on the parameters}
\label{sec:examples}

In the final two sections we will collect a number of miscellaneous properties of, and illustrative examples for, the minimisation and maximisation problems from the previous sections. In this section we will consider the dependence of the two quantities $\noptenergy[k,p]$ and $\doptenergy[k,p]$, which we consider to be the most natural, and the partitions realising them, on $p$ (for fixed $k$), and also on the edge lengths of the graph $\Graph$ being partitioned for a fixed topology.

\subsection{Dependence on $p$}
\label{sec:p-dependence}

Let us remark that the quantities $\noptenergy[k,p]$ and $\doptenergy[k,p]$ are, for fixed $\Graph$ and $k\geq 1$, continuous and monotonically increasing in $p \in [1,\infty]$. This is a general result that follows in exactly the same way as on domains, cf.~\cite[Proposition~10.53]{BNHe17}. We include the short proof for the sake of completeness. 
Note that here and throughout this section, in keeping with the convention on domains we will restrict ourselves to considering $p\in [1,\infty]$.

\begin{proposition}\label{prop:lknp}
Fix $k\geq 1$. For all $1 \leq q \leq p \leq \infty$, we have
\begin{displaymath}
	\noptenergy[k,q] (\Graph) \leq \noptenergy[k,p] (\Graph) \leq k^{\frac{1}{q}-\frac{1}{p}}\noptenergy[k,q] (\Graph) \quad \text{and} \quad
	\doptenergy[k,q] (\Graph) \leq \doptenergy[k,p] (\Graph) \leq k^{\frac{1}{q}-\frac{1}{p}}\doptenergy[k,q] (\Graph)
\end{displaymath}
(where $1/p=0$ if $p=\infty$). Consequently, the mappings $p \mapsto \noptenergy[k,p] (\Graph)$ and $p \mapsto \doptenergy[k,p] (\Graph)$ are continuous and monotonically increasing in $p \in [1,\infty]$.
\end{proposition}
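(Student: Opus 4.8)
The plan is to establish the two chains of inequalities pointwise on partitions, then take infima, and finally deduce continuity and monotonicity from the resulting two-sided bound. I will treat only the Neumann case $\noptenergy[k,p]$ in detail; the Dirichlet case is word-for-word identical upon replacing $\mu_2(\Graph_i)$ by $\lambda_1(\Graph_i)$ and $\nenergy[p]$ by $\denergy[p]$.

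First I would fix a $k$-partition $\parti = \{\Graph_1,\ldots,\Graph_k\}$ and, writing $a_i := \mu_2(\Graph_i) \geq 0$, recall the elementary comparison between normalised $\ell^q$ and $\ell^p$ means of a vector in $\R^k$: for $1 \leq q \leq p < \infty$,
\begin{equation*}
	\left(\frac{1}{k}\sum_{i=1}^k a_i^q\right)^{1/q} \leq \left(\frac{1}{k}\sum_{i=1}^k a_i^p\right)^{1/p} \leq k^{\frac{1}{q}-\frac{1}{p}}\left(\frac{1}{k}\sum_{i=1}^k a_i^q\right)^{1/q},
\end{equation*}
with the analogous statement for $p = \infty$ (where the middle term is $\max_i a_i$ and the left inequality is just the fact that the max dominates any average, while the right inequality reads $\max_i a_i \leq k^{1/q}\,(\frac1k\sum a_i^q)^{1/q}$, i.e.\ $\max_i a_i^q \leq \sum_i a_i^q$). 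The left inequality is the standard power-mean (Jensen) inequality applied to the convex function $t \mapsto t^{p/q}$; the right inequality follows from $\sum_i a_i^p \leq (\max_i a_i^q)^{(p-q)/q}\sum_i a_i^q \leq (\sum_i a_i^q)^{(p-q)/q}\sum_i a_i^q = (\sum_i a_i^q)^{p/q}$, so that $(\frac1k\sum a_i^p)^{1/p} \leq k^{-1/p}(\sum a_i^q)^{1/q} = k^{\frac1q - \frac1p}(\frac1k\sum a_i^q)^{1/q}$. In our notation this is precisely $\nenergy[q](\parti) \leq \nenergy[p](\parti) \leq k^{\frac1q-\frac1p}\nenergy[q](\parti)$ for every $\parti$.

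Next I would take the infimum over $\parti \in \mathfrak{R}_k$ (recalling $\noptenergy[k,p](\Graph) = \inf_{\parti\in\mathfrak{R}_k}\nenergy[p](\parti)$, which is attained by Corollary~\ref{cor:smp-existence}): from the left inequality, $\noptenergy[k,q](\Graph) \leq \nenergy[q](\parti) \leq \nenergy[p](\parti)$ for all $\parti$, hence $\noptenergy[k,q](\Graph) \leq \noptenergy[k,p](\Graph)$; from the right inequality, for any $\parti$ we have $\nenergy[p](\parti) \leq k^{\frac1q-\frac1p}\nenergy[q](\parti)$, so taking the infimum over $\parti$ on both sides gives $\noptenergy[k,p](\Graph) \leq k^{\frac1q-\frac1p}\noptenergy[k,q](\Graph)$. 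This yields the displayed two-sided bound. For the final assertion: monotonicity in $p$ is the left inequality (with $q$ replaced by any smaller exponent), and continuity follows because the factor $k^{\frac1q-\frac1p}$ tends to $1$ as $q \to p$ (both one-sided limits, using $k \geq 1$ so $k^{s} \to 1$ as $s \to 0$); concretely, fixing $p$ and letting $q \uparrow p$ we get $\noptenergy[k,q] \leq \noptenergy[k,p] \leq k^{\frac1q-\frac1p}\noptenergy[k,q]$, whence $\noptenergy[k,q] \to \noptenergy[k,p]$, and similarly letting $q \downarrow p$ (i.e.\ using the bound with the roles of $p,q$ suitably arranged) gives right-continuity, and the same handles $p = \infty$ as a one-sided limit.

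There is essentially no serious obstacle here: the only point requiring a modicum of care is the passage to the infimum in the upper bound — one must apply $\inf$ to both sides of an inequality valid for \emph{every} $\parti$, which is legitimate, rather than trying to compare the minimisers for different exponents (which need not coincide, as the paper notes will be shown in Section~\ref{sec:p-dependence}). I would also remark that attainment of the infima, while guaranteed by Corollary~\ref{cor:smp-existence}, is not actually needed for this proposition: the inequalities between infima hold regardless. The Dirichlet statement is proved identically, and one could alternatively phrase the whole argument once for an abstract functional of the form $\parti \mapsto \|(\theta_i(\parti))_i\|_{\ell^p_{\mathrm{norm}}}$ to avoid repetition.
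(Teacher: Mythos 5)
Your proposal is correct and follows essentially the same route as the paper's own proof: establish the pointwise two-sided bound $\nenergy[q](\parti)\leq\nenergy[p](\parti)\leq k^{\frac{1}{q}-\frac{1}{p}}\nenergy[q](\parti)$ for every rigid $k$-partition $\parti$ (the paper attributes this to H\"older's inequality, which is what your power-mean/Jensen computation amounts to) and then pass to the infima over $\mathfrak{R}_k$, with continuity and monotonicity following from the resulting squeeze as $q\to p$. Your added remarks -- that attainment of the infima is not needed and that one must not compare minimisers for different exponents -- are accurate but not points where the paper's argument differs.
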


\begin{proof}
We give the proof for $\noptenergy[k,p]$; the proof for $\doptenergy[k,p]$ is identical. In fact, it suffices to show that
\begin{equation}
\label{eq:cont-mon-part}
	\nenergy[q] (\parti) \leq \nenergy[p] (\parti) \leq k^{\frac{1}{q}-\frac{1}{p}} \nenergy[q] (\parti)
\end{equation}
for any rigid $k$-partition $\parti$, since then the same is true for the corresponding infima over all such partitions. But \eqref{eq:cont-mon-part} is a direct consequence of the H\"older inequality, using the definition \eqref{eq:nenergy} of $\nenergy[p] (\parti)$.
\end{proof}

We continue discussing the dependence of optimal partitions and energies on $p$. To begin with, let us present a concrete example illustrating how the optimal partition, say in the simplest case for $\doptenergy[2,p]$, can depend nontrivially on $p$. On domains, relatively little seems to be known, and most of the work to date seems to have been of (largely) numerical nature; see in particular \cite{BNBo18}. Our example, in addition to establishing that $\doptenergy[2,p]$ and the corresponding optimal partitions can, in fact, depend on $p$, should also demonstrate how in the case of metric graphs it seems possible to prove more properties (such as monotonicity of the deformation in $p$) analytically. On the other hand, since $\doptenergy[k,\infty]$ need not be realised by an equipartition, it follows that known criteria for establishing the inequality $\doptenergy[k,1] < \doptenergy[k,\infty]$ (see \cite[Proposition~10.54]{BNHe17} or \cite{HeHO10}) have no direct equivalent; see Proposition~\ref{prop:lk1lkinfty} and the discussion around it. Moreover, strict inequality here is possible even if the optimal partition is independent of $p$; see Example~\ref{ex:p-dep-part-indep}.

\begin{example}
\label{ex:pavels-task}
We return to the equilateral star graph $\Graph$ on three edges of length $1$ each, considered in Example~\ref{ex:3-star}, and ask for the partitions achieving $\doptenergy[2,p]$ for $p\in [1,\infty)$ (we recall that when $p=\infty$, up to isometry there is one optimal partition, whose cut set consists of (only) the central vertex of degree $3$). Any 2-partition $\parti = \{\Graph_1,\Graph_2\}$ of $\Graph$ may, up to symmetries, be identified uniquely by the location of its cut set $v_0$ along a given, fixed edge (see Figure~\ref{fig:what-a-star}).
\begin{figure}[H]
\begin{tikzpicture}[scale=1.2]
\coordinate (a) at (0,0);
\coordinate (b) at (0,1.5);
\coordinate (c) at (-1.3,-0.75);
\coordinate (d) at (1.3,-0.75);
\draw[thick] (a) -- (b);
\draw[thick] (a) -- (c);
\draw[thick] (a) -- (d);
\draw[fill] (a) circle (1.75pt);
\draw[fill] (b) circle (1.75pt);
\draw[fill] (c) circle (1.75pt);
\draw[fill] (d) circle (1.75pt);
\filldraw[thick, draw=black, fill=white] (-0.78,-0.45) circle (1.75pt);
\node at (-0.75,-0.45) [anchor=south east] {$v_0$};
\node at (-0.47,-0.2) [anchor=north west] {$a$};
\node at (0.5,0.5) [anchor=west] {$\Graph_1$};
\node at (-1.4,-0.45) [anchor=east] {$\Graph_2$};
\end{tikzpicture}\vspace{5pt}
\caption{The equilateral 3-star $\Graph$. The white circle denotes the cut set $\{v_0\}$ of the two-partition $\parti = \{\Graph_1,\Graph_2\}$; the corresponding edge is divided into pieces of length $a$ in $\Graph_1$ and $1-a$ in $\Graph_2$.}
\label{fig:what-a-star}
\end{figure}
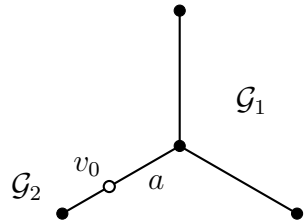
Since the edge has length $1$, the partition may uniquely be described by a single parameter $a \in [0,1)$, with $a=0$ corresponding to a cut in the central vertex and thus the partition realising $\doptenergy[2,\infty]$, and $a=1$ formally giving $\Graph_1 = \Graph$ and $\Graph_2 = \{v_0\}$. The following proposition describes how the optimal cut point depends on $p$. We will give its proof below.
\end{example}

\begin{proposition}
\label{prop:pavels-task}
For the equilateral 3-star $\Graph$, in the notation and setup of Example~\ref{ex:pavels-task}, for each $p \in [1,\infty]$, there is a unique value of $a \in [0,1)$ whose corresponding partition achieves $\doptenergy[2,p] (\Graph)$, which we denote by $a_p$. Then $a_p$ is a smooth function of $p$, with $a_p>0$ and $\frac{d}{dp} a_p < 0$ for all $p \in [1,\infty)$, and $\lim_{p\to\infty}a_p=0$.
\end{proposition}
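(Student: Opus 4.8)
The strategy is to reduce the minimisation of $\denergy[p]$ over $2$-partitions of the equilateral $3$-star to an explicit one-parameter calculus problem in the variable $a \in [0,1)$, compute the two Dirichlet ground-state eigenvalues $\lambda_1(\Graph_1), \lambda_1(\Graph_2)$ as functions of $a$, and then analyse the resulting scalar function $f_p(a) := \tfrac12\big(\lambda_1(\Graph_1)^p + \lambda_1(\Graph_2)^p\big)$ (or $\max$ when $p = \infty$). First I would record that any exhaustive rigid $2$-partition of $\Graph$ has, up to the symmetries of the star, cut set a single point at distance $a$ along one ray, so that $\Graph_1$ is a $3$-star with two rays of length $1$ and one ray of length $a$, with a Dirichlet condition at the tip of the short ray, while $\Graph_2$ is a single interval of length $1-a$ with a Dirichlet condition at one end and a natural condition at the other. (That no other cut patterns arise for $k=2$ on a star follows from Example~\ref{ex:tree}, and that a minimiser exists and is rigid follows from Theorem~\ref{thm:smp-existence}/Corollary~\ref{cor:smp-existence}.) Hence $\lambda_1(\Graph_2) = \big(\tfrac{\pi}{2(1-a)}\big)^2$, which is smooth and strictly increasing in $a$ on $[0,1)$.

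Next I would compute $\lambda_1(\Graph_1)$ via a secular equation. On the $3$-star $\Graph_1$, the positive ground state restricted to each of the two long rays (which carry natural conditions at their tips) is a cosine $\cos(\sqrt\lambda\,(1-t))$, and on the short ray (Dirichlet at the tip) it is $\propto \sin(\sqrt\lambda\, s)$; continuity at the centre and the Kirchhoff condition there yield the secular equation, which after simplification reads something like $2\tan(\sqrt\lambda) = \cot(\sqrt\lambda\, a)$, i.e. $2\sin\sqrt\lambda\,\sin(\sqrt\lambda\,a) = \cos\sqrt\lambda\,\cos(\sqrt\lambda\,a)$. From this one reads off: $\lambda_1(\Graph_1) = \lambda_1(a)$ is a smooth, strictly \emph{decreasing} function of $a$ on $[0,1)$, with $\lambda_1(0) = \pi^2/4$ (the case where the cut is exactly at the central vertex, so $\Graph_1$ degenerates to an interval of length $1$ with one Dirichlet and one natural end), and $\lambda_1(1^-) = $ the first Dirichlet eigenvalue of $\Graph$ itself. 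Smoothness and strict monotonicity come from the implicit function theorem applied to the secular equation, together with standard monotonicity of $\lambda_1$ under lengthening of the short pendant edge (cf.\ \cite[Corollary~3.12]{BeKeKuMu18}); I would also note $\lambda_1(a) > \lambda_1(\Graph_2)(a)$ precisely for $a$ below some threshold, so the two curves cross exactly once.

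With these two explicit monotone functions in hand --- $\lambda_1(\Graph_1)$ strictly decreasing, $\lambda_1(\Graph_2)$ strictly increasing, each smooth --- the rest is a calculus exercise. For $p = \infty$ the minimiser of $\max\{\lambda_1(\Graph_1),\lambda_1(\Graph_2)\}$ is at the crossing point; one checks this crossing occurs at $a = 0$ (both equal $\pi^2/4$ there, and for $a > 0$ we have $\lambda_1(\Graph_2) > \pi^2/4 > \lambda_1(\Graph_1)$ would need to be ruled out --- actually the correct statement from Example~\ref{ex:3-star} is that $a = 0$ realises $\doptenergy[2,\infty]$, which pins $a_\infty = 0$). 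For finite $p \in [1,\infty)$ the function $f_p(a) = \tfrac12(\lambda_1(\Graph_1)^p + \lambda_1(\Graph_2)^p)$ is smooth on $[0,1)$; I would compute $f_p'(0) = \tfrac{p}{2}\big(\pi^2/4\big)^{p-1}\big(\lambda_1'(0) + (\lambda_1(\Graph_2))'(0)\big)$ and show it is strictly \emph{negative} (because $\lambda_1'(0) < 0$ dominates the bounded positive term $(\lambda_1(\Graph_2))'(0)$ --- this is the one genuinely quantitative inequality to nail down, comparing the two derivative values at $a=0$), so the minimum is attained at some $a_p > 0$; uniqueness and smoothness of $a_p$ in $p$ then follow from showing $f_p$ is strictly convex near its minimiser, or at least that $f_p'$ has a unique zero with $f_p'' > 0$ there, and applying the implicit function theorem to $\partial_a f_p(a_p, p) = 0$. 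Finally $\tfrac{d}{dp} a_p < 0$ and $a_p \to 0$ as $p\to\infty$ I would extract by differentiating the relation $\partial_a f_p(a_p,p)=0$ implicitly in $p$ and signing $\partial_p \partial_a f_p$ (a log-weighted combination of the two eigenvalue powers --- here the fact that at the minimiser $\lambda_1(\Graph_1) < \lambda_1(\Graph_2)$, shown separately, makes the sign definite), respectively by a limiting argument matching the $p=\infty$ minimiser.

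\textbf{Main obstacle.} The technical heart --- and the step most likely to be fiddly --- is the quantitative comparison of the derivatives $\lambda_1'(a)$ and $(\lambda_1(\Graph_2))'(a)$, both at $a = 0$ (to get $a_p > 0$) and, more delicately, the sign of $\partial_p\partial_a f_p$ at the minimiser (to get $\tfrac{d}{dp}a_p < 0$); these require differentiating the implicitly-defined $\lambda_1(a)$ through its secular equation and controlling the resulting trigonometric expressions on the relevant range of $a$, together with establishing the strict inequality $\lambda_1(\Graph_1) < \lambda_1(\Graph_2)$ at the optimum for every finite $p$. Everything else (existence, rigidity, smoothness, the structure of $2$-partitions of a star) is either already available in the paper or a routine application of the implicit function theorem.
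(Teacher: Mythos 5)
Your outline follows essentially the same route as the paper's proof: parametrise the rigid exhaustive $2$-partitions by the cut position $a\in[0,1)$, use $\lambda_1(\Graph_2)=\pi^2/(4(1-a)^2)$, characterise $\lambda_1(\Graph_1)=\omega(a)^2$ through a secular equation (your $2\tan\omega=\cot(a\omega)$ is equivalent to the paper's $2\tan(a\omega)=\cot\omega$), show the $a$-derivative of the $p$-energy is negative at $a=0$, get smoothness of $p\mapsto a_p$ from the implicit function theorem, obtain $\frac{d}{dp}a_p<0$ from the sign of the mixed partial, and conclude $a_p\to0$ by a limiting argument. The one genuine gap is the uniqueness of the minimiser (equivalently, of the critical point) for each fixed $p$. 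You write that uniqueness and smoothness ``follow from showing $f_p$ is strictly convex near its minimiser, or at least that $f_p'$ has a unique zero with $f_p''>0$ there'', but convexity \emph{near a minimiser} implies neither that the minimiser is unique nor that $f_p'$ vanishes only once, and you offer no argument for the latter; yet this is the step that carries the technical weight of the whole proposition. The paper's mechanism is global: differentiating the identity $\omega'(a)=-\omega/\bigl(a+\cos^2(a\omega)/(2\sin^2\omega)\bigr)$ once more and using the secular equation to establish $4\sin^2\omega-\cos^2(a\omega)>0$, one gets $\omega''(a)>0$ on $(0,1)$; since $t\mapsto t^{2p}$ is increasing and convex and $a\mapsto(\pi/(2(1-a)))^{2p}$ is convex, this yields $\partial^2_{aa}F(a,p)>0$ for every $p$, so $\partial_aF(\cdot,p)$ is strictly increasing and has exactly one zero. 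That single global fact simultaneously delivers uniqueness of $a_p$, the nondegeneracy hypothesis of the implicit function theorem, and the positive denominator in $\frac{da_p}{dp}=-\partial^2_{pa}F/\partial^2_{aa}F$. Without it, $a_p$ is not even well defined and everything after it is suspended.

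Two smaller points. The ``main obstacle'' you single out is not where the difficulty lies: once the first-order condition is inserted, the mixed partial at the minimiser reduces to a positive multiple of $\log\bigl(\pi/(2(1-a_p)\omega(a_p))\bigr)$, positive simply because $\omega(a_p)<\pi/2$ and $1-a_p<1$; likewise $f_p'(0)<0$ is a two-line computation from $\omega(0)=\pi/2$ and $\omega'(0)=-\pi$. Also, the two eigenvalue curves do not cross at an interior threshold: they meet only at $a=0$, and $\lambda_1(\Graph_1)<\pi^2/4<\lambda_1(\Graph_2)$ for every $a\in(0,1)$ --- fortunately this is exactly the inequality you need at the minimiser, so nothing downstream breaks. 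Finally, for $a_p\to0$, ``matching the $p=\infty$ minimiser'' needs an actual argument, since convergence of the energies does not by itself transfer minimisers; the paper passes to the limit in the first-order condition to reach the contradiction $\pi/(2(1-a^*))=\omega(a^*)$ with $\omega<\pi/2$, and a direct sandwich comparing $\Lambda^D_p$ at $a_p$ and at $0$ together with the lower bound $\lambda_1(\Graph_2(a_p))\geq\pi^2/(4(1-a_p)^2)$ would also do.
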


In particular, the optimal partition is never the equipartition except for $p=\infty$ (which we recall corresponds to $a_\infty = 0$), and the cut point $v_0$, as a function of $p$, moves smoothly and monotonically from its location at $p=1$ towards the central vertex as $p \to \infty$. This mirrors very much the numerically observed behaviour of the (conjectured) optimal partitions on domains in \cite{BNBo18}.

\begin{remark}
As an immediate consequence of Proposition~\ref{prop:pavels-task}, we obtain the inequalities
\begin{displaymath}
	\mu_2 (\Graph) = \doptenergy[2,\infty] (\Graph) > \doptenergy[2,p] (\Graph)
\end{displaymath}
for all $p \in [1,\infty)$, for the example where $\Graph$ is an equilateral 3-star. In particular, there is no generalisation of Proposition~\ref{prop:inequalities-k} to $p \neq \infty$.
\end{remark}

The above example suggests that the optimal partition for $\doptenergy[k,p](\Graph)$ should depend on $p$ whenever there is an optimal partition for $p=\infty$ which is \emph{not internally connected}.  It would take us too far afield to consider this question here, so we formulate it as an open problem.

\begin{conjecture}
Let $\Graph$ be given and let $k\geq 1$.
\begin{enumerate}
\item Suppose there exists an exhaustive rigid $k$-partition $\parti$ achieving $\doptenergy[k,\infty](\Graph)$ which is not internally connected. Then $\parti$ does \emph{not} achieve $\doptenergy[k,p](\Graph)$ for any $p < \infty$, that is, $\denergy[p](\parti) > \doptenergy[k,p](\Graph)$ for all $p < \infty$.
\item Whenever there exists a rigid $k$-partition $\parti$ achieving $\doptenergy[k,\infty](\Graph)$ but which, for some $p < \infty$, does \emph{not} achieve $\doptenergy[k,p](\Graph)$, then we have that $\doptenergy[k,p](\Graph)$ is a \emph{strictly} monotonic function of $p$.
\end{enumerate}
\end{conjecture}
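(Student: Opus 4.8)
The plan is to reduce both parts to the interplay between the soft monotonicity of $p\mapsto\doptenergy[k,p](\Graph)$ from Proposition~\ref{prop:lknp} and first-order (Hadamard-type) perturbation formulae for $\lambda_1(\Graph_i)$ under sliding a cut point along an edge, in the spirit of the criticality analysis of \cite{BanBerRaz12}. The basic soft ingredient is the following \emph{propagation lemma}: if $\doptenergy[k,q](\Graph)=\doptenergy[k,p](\Graph)$ for some $1\le q<p<\infty$ and $\parti^\ast$ attains $\doptenergy[k,p](\Graph)$ (Theorem~\ref{thm:smp-existence}), then the chain $\doptenergy[k,q](\Graph)\le\denergy[q](\parti^\ast)\le\denergy[p](\parti^\ast)=\doptenergy[k,p](\Graph)$ forces equality in the H\"older step of Proposition~\ref{prop:lknp}, so $\parti^\ast$ is a Dirichlet equipartition; since then $\denergy[r](\parti^\ast)$ is independent of $r$, we get $\doptenergy[k,r](\Graph)=\doptenergy[k,q](\Graph)$ for all $r\in[q,\infty]$. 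Hence $p\mapsto\doptenergy[k,p](\Graph)$ is strictly increasing on $[1,\infty]$ if and only if $\doptenergy[k,p](\Graph)<\doptenergy[k,\infty](\Graph)$ for every finite $p$; this is the form of statement (2) I would aim to prove.

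For (1), assume $\parti=\{\Graph_1,\dots,\Graph_k\}$ attains $\doptenergy[k,\infty](\Graph)$, is not internally connected (Definition~\ref{def:classification}), and --- for contradiction --- also attains $\doptenergy[k,p](\Graph)$ for some $p<\infty$. First I would localise the defect: some cluster, say $\Graph_1$, is such that $\Graph_1\setminus\VertexSet_D(\Graph_1)$ splits into components $C_1,\dots,C_r$ with $r\ge2$, where $\lambda_1(\Graph_1)=\min_s\lambda_1(C_s)$ is attained on $C_1$ and the positive ground state of $\Graph_1$ vanishes identically on at least one ``dead'' component $C_2$. Picking a vertex $v\in\partial\Omega_1$ incident to an edge of $C_2$, exhaustiveness gives a neighbour $\Graph_j$ with $v\in\partial\Omega_j$. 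The key move is a \emph{dead-edge transfer}: slide $v$ by $\varepsilon>0$, removing a short sub-edge of $C_2$ from $\Omega_1$ and adjoining it to $\Omega_j$. Since $C_1$ is untouched, the first cluster keeps the eigenvalue $\lambda_1(\Graph_1)$ (the shrunk $C_2$ only raises its own, non-minimal, eigenvalue), while $\lambda_1(\Graph_j)$ is non-increasing, and strictly decreasing by the Hadamard formula if the ground state of $\Graph_j$ does not vanish at $v$; then one eigenvalue strictly drops, none rises, so $\denergy[p]$ strictly decreases for every finite $p$, a contradiction. The main obstacle is the degenerate configuration where $\Graph_j$ is \emph{also} dead at $v$: one must then iterate, pushing the dead sub-edge along a chain $\Graph_1,\Graph_j,\dots$ of clusters mutually dead at the connecting vertices, and prove that such a chain must eventually reach a cluster that is live at its incoming vertex --- or a cluster of strictly sub-maximal eigenvalue able to absorb a little length --- rather than close into a cycle. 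Ruling out (or circumventing) the ``cycle of mutually dead clusters'' is, I expect, the crux of (1).

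For (2), by the propagation lemma it suffices to show $\doptenergy[k,p](\Graph)<\doptenergy[k,\infty](\Graph)$ for every finite $p$. Fix, as in the hypothesis, a partition $\parti$ attaining $\doptenergy[k,\infty](\Graph)$ with $\denergy[p_0](\parti)>\doptenergy[k,p_0](\Graph)$ for some $p_0<\infty$. Since $\denergy[p_0](\parti)\le\denergy[\infty](\parti)=\doptenergy[k,\infty](\Graph)$, we read off at once $\doptenergy[k,p_0](\Graph)<\doptenergy[k,\infty](\Graph)$, hence $\doptenergy[k,p](\Graph)<\doptenergy[k,\infty](\Graph)$ for all $p\le p_0$ by monotonicity. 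The decisive remaining step is to exclude a ``plateau at infinity'', i.e.\ $\doptenergy[k,p](\Graph)\equiv\doptenergy[k,\infty](\Graph)$ on some $[p_1,\infty]$ with $p_0<p_1<\infty$. If the partition $\parti$ is \emph{not} a Dirichlet equipartition this is immediate, since then $\doptenergy[k,p](\Graph)\le\denergy[p](\parti)<\denergy[\infty](\parti)=\doptenergy[k,\infty](\Graph)$ for all $p<\infty$; the genuine difficulty is the case in which \emph{every} minimiser for $p=\infty$ is an equipartition while one of them, namely $\parti$, fails to be optimal at $p_0$. Here I would attempt a criticality argument: at an equipartition the first-order optimality conditions at the cut points are \emph{$p$-independent} (the weights $\lambda_1(\Graph_i)^{p-1}$ all coincide and factor out), which suggests that an equipartition minimising $\denergy[p]$ for all $p$ near $\infty$ should minimise it for all $p\in[1,\infty]$, forcing $\doptenergy[k,1](\Graph)=\doptenergy[k,\infty](\Graph)$ and contradicting the inequality just obtained. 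Converting this heuristic into a theorem --- controlling the behaviour of the \emph{global} minimum (not merely of critical values) as $p$ varies, and handling the possible multiplicity of the optimal energy at $p=\infty$ and of the ground states on non-internally-connected clusters --- is, I expect, precisely what keeps the statement at the level of a conjecture.
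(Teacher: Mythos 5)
First, a point of comparison: the paper does not prove this statement at all --- it is deliberately left as an open conjecture (``It would take us too far afield to consider this question here, so we formulate it as an open problem''), so there is no proof of record to measure your sketch against. The question is therefore only whether your proposal closes the problem, and it does not: by your own account both parts are left open at their decisive steps. That said, several of your soft reductions are correct and worth keeping. The ``propagation lemma'' is sound: if $\doptenergy[k,q](\Graph)=\doptenergy[k,p](\Graph)$ for finite $q<p$, then any minimiser $\parti^\ast$ for $p$ satisfies $\denergy[q](\parti^\ast)=\denergy[p](\parti^\ast)$, equality in the power-mean inequality forces $\parti^\ast$ to be a Dirichlet equipartition, and constancy of $r\mapsto\denergy[r](\parti^\ast)$ then propagates the plateau to all of $[q,\infty]$. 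Combined with the observation that the hypothesis of (2) already yields $\doptenergy[k,p_0](\Graph)<\doptenergy[k,\infty](\Graph)$, this correctly reduces (2) to excluding a plateau $\doptenergy[k,q](\Graph)=\doptenergy[k,\infty](\Graph)$ on some $[q,\infty]$. Likewise, in (1) the ``dead-edge transfer'' works in the nondegenerate case (and does reproduce, e.g., the $3$-star behaviour of Proposition~\ref{prop:pavels-task}), since the component realising $\min_s\lambda_1(C_s)$ is untouched, so the donor cluster's eigenvalue cannot rise while the receiving cluster's eigenvalue strictly drops when its ground state has nonvanishing normal derivative at the transferred vertex.

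The genuine gaps are exactly the two you flag, and they are not minor. In (1), if the receiving cluster is itself ``dead'' at the contact vertex the transfer gives only a non-strict decrease, which yields no contradiction with optimality at finite $p$; iterating the transfer requires ruling out a closed chain of clusters mutually dead at all their separating points, and nothing in your sketch (nor any surgery result cited from \cite{BeKeKuMu18}) does this --- one would at least need an argument that such a configuration forces extra structure (e.g.\ an eigenfunction vanishing on a positive-length subgraph of every cluster along the cycle) incompatible with minimality at $p=\infty$. In (2), the ``criticality is $p$-independent at an equipartition'' heuristic confuses critical points with global minimisers: the global minimiser may jump between different primitive partitions as $p$ varies, the minimiser at $p=\infty$ need not be unique, and first-order conditions at vertex cuts of degree $\geq 3$ are inequality-type conditions rather than smooth stationarity, so $p$-independence of the first-order conditions does not transfer optimality from a neighbourhood of $\infty$ down to $p_0$. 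Until these two points are addressed, the statement remains, as in the paper, a conjecture rather than a theorem.
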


\begin{proof}[Proof of Proposition~\ref{prop:pavels-task}]
First we compute the energy as a function of $a \in [0,1)$:
\begin{displaymath}
	\denergy[p] (a) := \denergy[p] \left(\left\{\Graph_1 (a),\Graph_2 (a) \right\}\right)
\end{displaymath}
We clearly have $\lambda_1 (\Graph_2(a)) = {\pi^2}/{4(1-a)^2}$. Noting that the eigenfunction is identical on the two identical edges, we can obtain that $\lambda_1 (\Graph_1) =: \omega(a)^2$, with $ \omega(a)^2$ the smallest positive solution of the secular equation
\begin{equation}
\label{eq:secular-fork}
	2\tan (a\omega) = \cot (\omega).
\end{equation}
We note that $\omega(a)<\pi/2$ for $a\in(0,1)$ and, by implicit differentiation,
\begin{equation}
\label{eq:fork-out}
	\frac{d\omega}{da} = - \frac{\omega}{a+\frac{\cos^2(a\omega)}{2\sin^2(\omega)}}.
\end{equation}

Let us now show that, for all $a\in(0,1)$, 
\begin{equation}
\label{eq:D2omega}
 \frac{d^2\omega}{da^2}(a)>0 .
\end{equation}
Differentiating equation \eqref{eq:fork-out}, we find
\begin{displaymath}
	\frac{d^2\omega}{da^2}(a)=\frac{\omega}{\left(a+\frac{\cos^2(a\omega)}{2\sin^2(\omega)}\right)^2}+\frac{\omega}{\left(a+\frac{\cos^2(a\omega)}{2\sin^2(\omega)}\right)^2}\left(1+\frac{\cos(a\omega)}{\sin(\omega)}\frac{d}{da}\left(\frac{\cos(a\omega)}{\sin(\omega)}\right)\right).
\end{displaymath}
Using equation \eqref{eq:fork-out} again, we obtain
\[
\begin{split}
\frac{d}{da}\left(\frac{\cos(a\omega)}{\sin(\omega)}\right)&=\frac1{\sin^2(\omega)}\left(-\left(\omega+a\frac{d\omega}{da}\right)\sin(a\omega)\sin(\omega)-\frac{d\omega}{da}\cos(a\omega)\cos(\omega)\right)\\
&=
\frac\omega{\sin^2(\omega)\left(a+\frac{\cos^2(a\omega)}{2\sin^2(\omega)}\right)}\bigg(a\sin(a\omega)\sin(\omega)+\cos(a\omega)\cos(\omega)\\
&\qquad -\left(a+\frac{\cos^2(a\omega)}{2\sin^2(\omega)}\right)\sin(a\omega)\sin(\omega)\bigg)\\
&=
\frac{\omega\cos(a\omega)}{2\sin^3(\omega)\left(a+\frac{\cos^2(a\omega)}{2\sin^2(\omega)}\right)}\left(2\cos(\omega)\sin(\omega)-\cos(a\omega)\sin(a\omega)\right).
\end{split}
\]
We have, using the secular equation \eqref{eq:secular-fork},
\[
\begin{split}
	2\cos(\omega)\sin(\omega)-\cos(a\omega)\sin(a\omega)&=2\cot(\omega)\sin^2(\omega)-\tan(a\omega)\cos^2(a\omega)\\&=(4\sin^2(\omega)-\cos^2(a\omega))\tan(a\omega).
\end{split}
\]
Using again the secular equation, we find successively
\[
\begin{split}
	4\tan^2(a\omega)&=\cot^2(\omega);\\
	4\left(\frac1{\cos^2(a\omega)}-1\right)&=\frac1{\sin^2(\omega)}-1;\\
	\frac4{\cos^2(a\omega)}-\frac1{\sin^2(\omega)}&=3.
\end{split}
\]
It follows that $4\sin^2(\omega)-\cos^2(a\omega)$, and therefore also $\frac{d}{da}\left(\frac{\cos(a\omega)}{\sin(\omega)}\right)$ and $\frac{d^2\omega}{da^2}(a)$, are positive.

 For convenience, we define the function
\begin{displaymath}
\label{eq:energy-of-a-star}
	F(a,p):=2\denergy[p] (a)^p = \left(\frac{\pi}{2(1-a)}\right)^{2p} + \omega(a)^{2p}.
\end{displaymath}
We have immediately
\begin{equation}
\label{eq:DF}
	\frac{\partial F}{\partial a} (a,p) = 2p\left(\left(\frac{\pi}{2}\right)^{2p}\frac{1}{(1-a)^{2p+1}}+\omega^{2p-1}\frac{d\omega}{da}\right).
\end{equation}
We have  $\omega(0)=\frac{\pi}{2}$ and so, using equation \eqref{eq:fork-out},
\begin{displaymath}
	\frac{\partial F}{\partial a}(0,p) = 2p \left(\left(\frac{\pi}{2}\right)^{2p} - 2\left(\frac{\pi}{2}\right)^{2p}\right)=-2p \left(\frac{\pi}{2}\right)^{2p}< 0,
\end{displaymath}
so that $a=0$ is not even a local minimum of $a\mapsto F(a,p)$; while, as $a \to 1$,
\begin{displaymath}
	\left(\frac{\pi}{2}\right)^{2p}\frac{2p}{(1-a)^{2p+1}} \longrightarrow +\infty
\end{displaymath}
and the term
\begin{displaymath}
	\omega^{2p-1}\frac{d\omega}{da}= - \frac{2p\omega^{2p}}{a+\frac{\cos^2(a\omega)}{2\sin^2(\omega)}}
\end{displaymath} 
is bounded. We conclude $\frac{\partial}{\partial a} F(a,p) \to +\infty$ as $a \to 1$.
On the other hand,
\begin{displaymath}
	\frac{\partial^2 F}{\partial a^2} (a,p) = 2p\left(\left(\frac{\pi}{2}\right)^{2p}\frac{2p+1}{(1-a)^{2p+2}}+(2p-1)\omega^{2p-2}\left(\frac{d\omega}{da}\right)^2+\omega^{2p-1}\frac{d^2\omega}{da^2}\right),
\end{displaymath}
which is clearly positive as a consequence of inequality \eqref{eq:D2omega}. The function $a\mapsto \frac\partial{\partial a}F(a,p)$ is therefore increasing, and has a unique zero in $[0,1)$, which is positive, and corresponds to a global minimum of $a\mapsto F(a,p)$. We have proved the first part of Proposition  \ref{prop:pavels-task}.

Since $\frac{\partial^2}{\partial a^2}F(a_p,p)>0$, it follows from the Implicit Function Theorem that $p\mapsto a_p$ is continuously differentiable (indeed, even real analytic) and that
\begin{displaymath}
	\frac{da_p}{dp}(p) = -\frac{\frac{\partial^2}{\partial p \partial a}F(a_p,p)}{\frac{\partial^2}{\partial a^2}F(a_p,p)}.
\end{displaymath} 
Differentiating equation \eqref{eq:DF} with respect to $p$, we find
\begin{displaymath}
	\frac{\partial F}{\partial p\partial a} (a,p) = \frac1p\frac{\partial F}{\partial a}(a,p)+2p\left(2\log\left(\frac{\pi}{2(1-a)}\right)\left(\frac{\pi}{2}\right)^{2p}\frac{1}{(1-a)^{2p+1}}+2\log(\omega)\omega^{2p-1}\frac{d\omega}{da}\right).
\end{displaymath}
Using $\frac{\partial}{\partial a}F(a_p,p)=0$ and equation \eqref{eq:DF}, we obtain in particular
\begin{displaymath}
	\frac{\partial^2 F}{\partial p\partial a} (a_p,p) =8p\left(\frac{\pi}{2}\right)^{2p}\frac{1}{(1-a_p)^{2p+1}}\log\left(\frac{\pi}{2(1-a_p)\omega(a_p)}\right).
\end{displaymath} 
Since $a_p\in (0,1)$ and $\omega(a_p)\in(0,\pi/2)$, the above derivative is positive and  $p\mapsto a_p$ decreasing.

As a positive and decreasing function defined on $[1,\infty)$, $p\mapsto a_p$ has a non-negative limit at $\infty$, which we denote by $a^*$.  Let us assume by contradiction that $a^*>0$. Using equations \eqref{eq:fork-out} and \eqref{eq:DF}, the condition $\frac{\partial}{\partial a}F(a_p,p)=0$ can be written 
\begin{displaymath}
	\left(\frac{\pi}{2}\right)^{2p}\frac{1}{(1-a_p)^{2p+1}}=\frac{\omega(a_p)^{2p}}{a_p+\frac{\cos^2(a_p\omega(a_p))}{2\sin^2(\omega(a_p))}}.
\end{displaymath}
It follows that 
\begin{displaymath}
	\frac{\pi}{2(1-a_p)}(1-a_p)^{-\frac1{2p}}=\omega(a_p)\left(a_p+\frac{\cos^2(a _p\omega(a_p))}{2\sin^2(\omega(a_p))}\right)^{-\frac1{2p}}.
\end{displaymath}
Passing to the limit $p\to\infty$, we obtain 
\begin{displaymath}
	\frac{\pi}{2(1-a^*)}=\omega(a^*),
\end{displaymath}
in contradiction to $\omega(a^*)<\pi/2$. We conclude that $a^*=0=a_\infty$.
\end{proof}

We now return to the meaning of the inequality $\doptenergy[k,1](\Graph) < \doptenergy[k,\infty](\Graph)$ for a metric graph $\Graph$.

\begin{example}
\label{ex:p-dep-part-indep}
We give a simple example where the optimal partition for $\doptenergy[k,p](\Graph)$ is independent of $p \in [1,\infty]$ but the optimal energy $\doptenergy[k,p](\Graph)$ itself is not; this is a direct consequence of the existence of certain minimal partitions which are not equipartitions.  Indeed, take $\Graph$ to be a not quite equilateral star on three edges, say of length $|e_1| = 1+\varepsilon$, $|e_2| = |e_3| = 1$. We denote by $v$ the central vertex of $\Graph$ and by $v_1$, $v_2$ and $v_3$ the pendant vertices of $e_1$, $e_2$ and $e_3$ respectively. Let us denote by $\parti_3^0$ the $3$-partition of $\Graph$ whose cut set is $v$. Then
\begin{equation*}
	\denergy[p] (\parti_3^0)= \left( \frac{1}{3}\left(\frac{\pi^{2p}}{(2+2\varepsilon)^{2p}} + 2\cdot \frac{\pi^{2p}}{2^{2p}}\right)\right)^{1/p}.
\end{equation*}
This energy clearly depends on $p$. The following proposition establishes that our example has the desired properties.
\end{example}

\begin{proposition}
\label{prop:3-star-partitions} There exists $\varepsilon_0>0$ such that, for all $\varepsilon\in[0,\varepsilon_0]$ and all $p\in[1,\infty]$, $\parti_3^0$ is the unique $3$-partition realising $\doptenergy[3,p](\Graph)$.
\end{proposition}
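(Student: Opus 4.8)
The plan is to prove, for $p\in[1,\infty)$, that $\denergy[p](\parti)>\denergy[p](\parti_3^0)$ for every exhaustive rigid $3$-partition $\parti\neq\parti_3^0$ of $\Graph$ (the case $p=\infty$ being the content of Example~\ref{ex:non-equi-3-star}). Since $\denergy[p](\parti)^p=\tfrac13\sum_{i=1}^3\lambda_1(\Graph_i)^p$ and $\parti_3^0$ has clusters $e_1,e_2,e_3$ with a Dirichlet condition at $v$, so that $\lambda_1(e_1)=\tfrac{\pi^2}{4(1+\varepsilon)^2}$ and $\lambda_1(e_2)=\lambda_1(e_3)=\tfrac{\pi^2}{4}$, it is enough to show $\sum_i\lambda_1(\Graph_i)^p>\bigl(\tfrac{\pi^2}{4(1+\varepsilon)^2}\bigr)^p+2\bigl(\tfrac{\pi^2}{4}\bigr)^p$. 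First I would use that connected subsets of a star are intervals or sub-stars, together with Example~\ref{ex:tree}, to enumerate the finitely many combinatorial types of such a partition (up to relabelling $e_1,e_2,e_3$, with $e_2\leftrightarrow e_3$ a symmetry): (I) $\cutset(\parti)=\{v\}$, i.e.\ exactly $\parti_3^0$; (II) $v$ is cut into two pieces and there is one further cut in the interior of an edge, so that $\parti$ has one cluster that decouples (after imposing the Dirichlet condition at $v$) into two intervals, plus one Dirichlet--Neumann interval and possibly one Dirichlet--Dirichlet interval; (III) $v$ is not cut and there are two interior cuts, so that $\parti$ has exactly one cluster $\Graph'$ which is a genuine $3$-star absorbing either one edge $e$ entirely (of length $L\in\{1,1+\varepsilon\}$) plus parts of the other two, or two edges entirely plus part of the third (in which case $\parti$ also contains a short Dirichlet--Dirichlet interval cluster).

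For every type except the one with a single $3$-star cluster absorbing a single edge, the inequality is elementary, using only $\lambda_1=\tfrac{\pi^2}{4\ell^2}$ for a Dirichlet--Neumann interval and $\lambda_1=\tfrac{\pi^2}{\ell^2}$ for a Dirichlet--Dirichlet interval, plus monotonicity of $t\mapsto t^{-2p}$. Any Dirichlet--Dirichlet cluster has length $<1+\varepsilon$, hence $\lambda_1>\tfrac{\pi^2}{(1+\varepsilon)^2}$; if $\varepsilon_0$ is chosen so small that $\bigl(\tfrac{2}{1+\varepsilon_0}\bigr)^2\ge 3$, this one cluster already contributes more than $3\bigl(\tfrac{\pi^2}{4}\bigr)^p\ge\sum_i\lambda_1(e_i)^p$, so those cases are strictly beaten. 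In the remaining type (II) subcase (interior cut on an edge of the "pair" half of the split), the shrunk Dirichlet--Neumann interval has $\lambda_1>\tfrac{\pi^2}{4}$ while the other two clusters reproduce $\tfrac{\pi^2}{4(1+\varepsilon)^2}$ and $\tfrac{\pi^2}{4}$, again giving strict inequality.

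The substantive case is type (III) with a single $3$-star cluster $\Graph'$ having one Neumann branch of length $L$ (the absorbed edge) and two Dirichlet branches of lengths $a,b>0$, together with two Dirichlet--Neumann leftover clusters of lengths obtained by subtracting $a,b$ from the other two edge lengths. The key input is the surgery bound
\begin{equation*}
	\lambda_1(\Graph')\;>\;\frac{\pi^2}{4\bigl(L+\min(a,b)\bigr)^2},
\end{equation*}
proved from the secular equation $\tan(\omega L)=\cot(\omega a)+\cot(\omega b)$ for the lowest eigenvalue $\omega^2$: defining $c>0$ by $\cot(\omega c)=\cot(\omega a)+\cot(\omega b)$ forces $\omega(L+c)=\pi/2$, hence $\omega^2=\tfrac{\pi^2}{4(L+c)^2}$, and since $\cot$ is strictly decreasing on $(0,\pi)$ and $\cot(\omega b)>0$ one gets $c<\min(a,b)$. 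Setting $\phi(t):=t^{-2p}$, matching $\Graph'$ against the $\parti_3^0$-cluster of length $L$ and each leftover against a $\parti_3^0$-cluster of length $1$ or $1+\varepsilon$, dividing by $\bigl(\tfrac{\pi^2}{4}\bigr)^p$ and inserting the surgery bound, the required inequality reduces to a sum of non-negative terms, because $\phi$ is convex and decreasing: one uses $\phi(1-t)+\phi(1+t)\ge 2\phi(1)$ and the fact that the decrease of $\phi$ across an interval of fixed length grows as the interval moves to the left, with strictness coming from $a,b>0$. The crucial subtlety — and the main obstacle — is that the eigenvalue gain $\phi(L)-\phi(L+\min(a,b))$ must be paired with the leftover corresponding to one of the \emph{short} edges (using $\phi(1-\max(a,b))\le\phi(1-\min(a,b))$ when necessary): the naive pairing with the long edge's leftover fails for $\varepsilon>0$, because borrowing length from $e_1$ is "cheap".

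Finally I would collect the finitely many explicit smallness conditions on $\varepsilon$ arising in the easy types (e.g.\ $\varepsilon_0\le 2/\sqrt{3}-1$ for the Dirichlet--Dirichlet bound, and similar thresholds for the two-edges-absorbed subcase), note that the type (III) estimate is in fact uniform in $\varepsilon\ge 0$, and take $\varepsilon_0$ to be the minimum. The genuinely delicate steps are the surgery bound on $\lambda_1(\Graph')$ — which must be sharp enough near the degenerate configurations $a\to 0$ or $b\to 0$, where $\lambda_1(\Graph')\to\tfrac{\pi^2}{4L^2}$ — and the correct bookkeeping in the convexity estimate so that the gain is dominated; verifying that the type enumeration is genuinely exhaustive also needs some care.
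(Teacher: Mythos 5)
Your overall architecture matches the paper's proof: reduce to finitely many cut configurations, kill every configuration containing a Dirichlet--Dirichlet piece or a shortened Dirichlet--Neumann interval by explicit interval eigenvalues plus a smallness condition on $\varepsilon$, and concentrate the work on the configuration whose central cluster is a genuine $3$-star; your type (II) even covers the partitions that cut through $v$ together with one interior point, which the paper's case list passes over. Where you diverge is the $3$-star case: the paper fixes one cut, proves via the secular equation that the normalised energy is strictly increasing in the position of the other cut, and reduces to the boundary case where that cut collapses into $v$; you instead want a closed-form lower bound $\lambda_1(\Graph')>\pi^2/\bigl(4(L+\min(a,b))^2\bigr)$ fed into a convexity argument.

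That bound is where the proposal fails. Writing $\omega(L+c)=\pi/2$ as you do, one has $c<\min(a,b)$ if and only if \emph{both} $\cot(\omega a)>0$ and $\cot(\omega b)>0$, i.e.\ $\omega\max(a,b)<\pi/2$, i.e.\ $\max(a,b)<L+c$; your assertion ``$\cot(\omega b)>0$'' is unjustified (and by itself would only give $c<a$). In the configuration where the absorbed Neumann edge is a short edge ($L=1$) and the Dirichlet branch into $e_1$ has length $a\in(1+c,1+\varepsilon)$ --- which occurs for every $\varepsilon>0$ once the other branch $b$ is small, since then $\omega\to\pi/2$ and $c\approx b$ --- one has $\cot(\omega a)<0$, hence $c>\min(a,b)$ and $\lambda_1(\Graph')<\pi^2/\bigl(4(L+\min(a,b))^2\bigr)$: the lemma is genuinely false there, so your pairing of the gain $\phi(L)-\phi(L+\min(a,b))$ against the short-edge leftover has no valid input in exactly that regime, and the claimed uniformity in $\varepsilon$ evaporates. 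The gap is repairable: for $a\le L$ the bound and your bookkeeping are fine, and for $a>1$ the leftover on $e_1$ has length less than $\varepsilon$, so that single cluster already has $\lambda_1>\pi^2/(4\varepsilon^2)$ and beats $3(\pi^2/4)^p$ for $\varepsilon$ small --- which is precisely how the paper disposes of its Case (iv) with $a_1>1$. Two smaller points: the inequality $\phi(1-\max(a,b))\le\phi(1-\min(a,b))$ is written with the wrong orientation (clearly a slip, since $\phi$ is decreasing); and for $p=\infty$ you cannot just cite Example~\ref{ex:non-equi-3-star}, which asserts uniqueness only among internally connected partitions and without proof --- the paper gives a short direct argument locating, in every competitor, a cluster strictly contained in an edge of length $1$, and you need something of that kind.
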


\begin{proof} Let us first prove the proposition for $p\in[1,\infty)$, by a straightforward discussion of the possible topological cases. Let us consider an exhaustive  $3$-partition \[\parti = \{\Graph_1,\Graph_2,\Graph_3\}\] different from $\parti_3^0$. Its cut set $\{w_1,w_2\}$ consists of two points, each distinct from the central vertex. This leaves us with four essentially distinct cases (all the other cases reduce to these four by relabelling of the edges and cut points):
\begin{enumerate}[(i)]
	\item the two points belong to $e_2$;
	\item the two points belong to $e_1$;
	\item $w_1$ belongs to $e_2$ and $w_2$ to $e_3$;
	\item $w_1$ belongs to $e_1$ and $w_2$ to $e_2$.
\end{enumerate} 
In all cases, we denote by $a_i$ the distance of $w_i$ from $v$, $i=1,2$. To simplify notation, we define
\begin{equation*}
	F_0(p):=\frac{3\cdot2^{2p}}{\pi^{2p}}\denergy[p] (\parti_3^0)^{p}=\frac1{(1+\varepsilon)^{2p}}+2.
\end{equation*} 

In Case (i), up to relabelling, we can assume that $a_1<a_2$. We can also assume that $\Graph_1$ is the $3$-star with edges $e_1$, $e_3$ and $[v,w_1]$, with a boundary condition at each pendant vertex respectively Neumann, Neumann and Dirichlet. We recall that here and in the rest of the proof we have a natural boundary condition at the central vertex $v$. We can finally assume that $\Graph_2$ is the segment $[w_1,w_2]$ with Dirichlet boundary conditions and $\Graph_3$ the segment  $[w_2,v_2]$ with a Dirichlet boundary condition at $w_2$ and a Neumann boundary condition at $v_2$. We have, recalling that $p\ge1$,
\[
\begin{split}
	\frac{3\cdot2^{2p}}{\pi^{2p}}\denergy[p] (\parti)^{p}&=\left(\frac{4}{\pi^2}\lambda_1(\Graph_1)\right)^{p}+\left(\frac{4}{\pi^2}\lambda_1(\Graph_2)\right)^{p}+\left(\frac{4}{\pi^2}\lambda_1(\Graph_2)\right)^{p}\\
	&= 
	\left(\frac{4}{\pi^2}\lambda_1(\Graph_1)\right)^{p}+\frac{2^{2p}}{(a_2-a_1)^{2p}}+\frac1{(1-a_2)^{2p}}>2^{2p}+1\ge5>3>F_0(p).
\end{split}
\]	

In Case (ii), we assume, without loss of generality, that $a_1<a_2$,  $\Graph_1$ is the three star with edges $e_2$, $e_3$ and $[v,w_1]$, with a boundary condition at each pendant vertex respectively Neumann, Neumann and Dirichlet, $\Graph_2$ is the segment $[w_1,w_2]$ with Dirichlet boundary conditions and $\Graph_3$ the segment  $[w_2,v_1]$ with a Dirichlet boundary condition at $w_2$ and a Neumann boundary condition at $v_1$. We have 
\[
\begin{split}
	\frac{3\cdot2^{2p}}{\pi^{2p}}\denergy[p] (\parti)^{p}&=\left(\frac{4}{\pi^2}\lambda_1(\Graph_1)\right)^{p}+\frac{2^{2p}}{(a_2-a_1)^{2p}}+\frac1{(1+\varepsilon-a_2)^{2p}}\\ 
	&	>\frac{2^{2p}}{(a_2-a_1)^{2p}}>\frac{2^{2p}}{(1+\varepsilon)^{2p}}.
	\end{split}
	\]
If we assume $\varepsilon\le2/\sqrt3-1<1$, it follows, since $p\ge1$, 
\begin{equation*}		
	\frac{3\cdot2^{2p}}{\pi^{2p}}\denergy[p] (\parti)^{p}>\frac{2^{2p}}{(1+\varepsilon)^{2p}}\ge\frac{2^{2}}{(1+\varepsilon)^{2}}\ge3>F_0(p).
\end{equation*}

In Case (iii), we assume, without loss of generality, that $\Graph_1$ is the $3$-star with edges $e_1$, $[v,w_1]$ and $[v,w_2]$, with boundary conditions respectively Neumann, Dirichlet and Dirichlet at the pendant vertices. We also assume that $\Graph_2$ and $\Graph_3$ are respectively the segments $[w_1,v_2]$ and $[w_2,v_3]$ with Dirichlet-Neumann boundary conditions. To simplify notation, we introduce
\begin{equation*}
	F(a_1,a_2):=\frac{3\cdot2^{2p}}{\pi^{2p}}\denergy[p] (\parti)^{p}=\left(\frac2\pi\omega(a_1,a_2)\right)^{2p}+\frac{1}{(1-a_1)^{2p}}+\frac{1}{(1-a_2)^{2p}},
\end{equation*}
where $\omega(a_1,a_1)$ is the smallest positive solution of the equation
\begin{equation}
\label{eqG1Caseiii}
	\mbox{cotan}(a_1\omega)+\mbox{cotan}(a_2 \omega)=\tan(\omega),
\end{equation}
so that $\lambda_1(\Graph_1)=\omega(a_1,a_2)^2$. To simplify notation, we write $\omega:=\omega(a_1,a_2)$. We now show that $\frac\partial{\partial a_2}F(a_1,a_2)$ is positive for all $(a_1,a_2)\in(0,1)$. This claim implies that $F(a_1,a_2)>\lim_{a\to0}F(a_1,a)$. Noticing that 
\begin{equation*}
	\lim_{a\to0}F(a_1,a)=\frac1{(1+\varepsilon)^{2p}}+\frac{1}{(1-a_1)^{2p}}+1>\frac1{(1+\varepsilon)^{2p}}+2=F_0(p),
\end{equation*}
we conclude that the claim implies $\denergy[p] (\parti)>\denergy[p] (\parti_3^0)$. Let us now prove the claim. By inspection of Equation \eqref{eqG1Caseiii}, it is clear that $\omega\in(0,\pi/2)$, since $\tan$ as a pole at $\pi/2$, so that $a_1\omega$ and $a_2\omega$ also belong to $(0,\pi/2)$. After differentiating Equation \eqref{eqG1Caseiii} with respect to $a_2$ and simplifying, we find
\begin{equation}
\label{eqDiffCaseiii}
	\frac{\partial \omega}{\partial a_2}=-\frac{\omega}{a_2+\left(\frac{\sin a_2\omega}{\cos\omega}\right)^2+\left(\frac{\sin a_2\omega}{\sin a_1\omega}\right)^2}.
\end{equation}
To go further, let us note that since $\omega$, $a_1\omega$ and $a_2\omega$ belong to $(0,\pi/2)$, all the terms in Equation \eqref{eqG1Caseiii} are positive, so that $0<\mbox{cotan}(a_2\omega)<\tan(\omega)$. Taking the square, we find
\begin{equation*}
	\frac1{(\sin a_2 \omega)^2}=\mbox{cotan}( a_2\omega)^2<\tan(\omega)^2=\frac1{(\cos\omega)^2},
\end{equation*}
that is to say
\begin{equation*}
	\frac{(\sin a_2\omega)^2}{(\cos\omega)^2}>1.
\end{equation*}
It follows that
\begin{equation}
\label{eqAbsCaseiii}
	\left\vert\frac{\partial \omega}{\partial a_2}\right\vert<\omega<\frac\pi2.
\end{equation}
Using Inequality \eqref{eqAbsCaseiii}, we now find
\[
\begin{split}
	\frac{\partial F}{\partial a_2}(a_1,a_2)&=2p\left(\left(\frac{2}{\pi}\right)^{2p}\frac{\partial\omega}{\partial a_2}\omega^{2p-1}+\frac1{(1-a_2)^{2p+1}}\right)\\
&\ge	2p\left(\frac1{(1-a_2)^{2p+1}}-\left(\frac{2}{\pi}\right)^{2p}\left\vert\frac{\partial\omega}{\partial a_2}\right\vert\omega^{2p-1}\right)>0,
\end{split}
\]
proving the claim. Let us note that the derivative of $F(a_1,a_2)$ with respect to $a_1$ is also positive, by symmetry.

Let us finally study Case (iv).  We assume, without loss of generality, that $\Graph_1$ is the $3$-star with edges $e_3$, $[v,w_1]$ and $[v,w_2]$, with boundary conditions respectively Neumann, Dirichlet and Dirichlet at the pendant vertices and that $\Graph_2$ and $\Graph_3$ are respectively the segments $[w_1,v_1]$ and $[w_2,v_2]$ with Dirichlet-Neumann boundary conditions. Assuming that $a_1\le1$, we can repeat the computation of Case (iii) and show that $\frac\partial{\partial a_2}F(a_1,a_2)$ is positive, and therefore 
\begin{equation*}
	\frac{3\cdot2^{2p}}{\pi^{2p}}\denergy[p] (\parti)^{p}=F(a_1,a_2)>\lim_{a\to0}F(a_1,a)>F_0(p).
\end{equation*}
If $a_1>1$ and $\varepsilon\le1/\sqrt2$, we have
\[
\begin{split}
	\frac{3\cdot2^{2p}}{\pi^{2p}}\denergy[p] (\parti)^{p}&>\left(\frac{4}{\pi^2}\lambda_1(\Graph_2)\right)^{p}+\left(\frac{4}{\pi^2}\lambda_1(\Graph_2)\right)^{p}\\
	&= \frac{1}{(1+\varepsilon-a_1)^{2p}}+\frac{1}{(1-a_2)^{2p}}>\frac1{\varepsilon^{2p}}+1\ge3>F_0(p).
\end{split}
\]

Altogether, assuming that $\varepsilon\le\varepsilon_0:=\min\left(2/\sqrt3-1,1/\sqrt2\right)=2/\sqrt3-1$, $\denergy[p] (\parti)>\denergy[p] (\parti_3^0)$ in all cases. By a limiting argument, we could show immediately that $\parti_3^0$ is a minimal partition realising $\doptenergy[3,\infty]$. This would however not prove uniqueness. It is better to give a direct proof. We first note that $\denergy[\infty](\parti_3^0)=\pi^{2}/4$. On the other hand, let us again consider $\parti$, an exhaustive $3$-partition distinct  from $\parti_3^0$. In Cases (i), (iii) and (iv), $\parti$ has a domain strictly contained in an edge of length $1$. In case (iii), assuming $\varepsilon\le1$, the edge $e_1$ contains a segment with Dirichlet-Neumann conditions and one with Dirichlet-Dirichlet conditions, one of which has length less than $1$. It follows that, in all cases, $\denergy[\infty](\parti)>\pi^2/4$.  
\end{proof}

The value $\varepsilon_0=2/\sqrt3-1$ arrived at during the proof is probably not the largest for which Proposition \ref{prop:3-star-partitions} holds and we have not tried to optimise it. Let us however note that $\varepsilon$ cannot be chosen too large. More explicitly, if $\varepsilon>\varepsilon_1:=2\sqrt3-1$, and if we choose for $\parti$ a partition of  type (iii) (see the proof of Proposition \ref{prop:3-star-partitions}) such  that $a_2-a_1>2(1+\varepsilon_1)/3$ and $1+\varepsilon-a_2>(1+\varepsilon_1)/3$, we have $\denergy[p](\parti)<\denergy[p](\parti_3^0)$ for all $p\in [1,\infty]$.

 A ``balancing formula'' which can be found in
 \cite[Proposition~10.54]{BNHe17} gives a sufficient condition under which the inequality $\doptenergy[k,1] < \doptenergy[k,\infty]$ holds  for domains $\Omega \subset \R^2$.
   For a $k$-partition $\parti = \{\Omega_1,\ldots,\Omega_k\}$ of $\Omega$, minimal for $\doptenergy[k,\infty]$, if $\Omega_i$ and $\Omega_j$ are neighbours and $\Omega_{ij}$ is the interior of the closure of $\Omega_i \cup \Omega_j$, then
\begin{equation}
\label{eq:domain-equality}
	\lambda_1 (\Omega_i) = \lambda_1 (\Omega_j) = \lambda_2 (\Omega_{ij}) = \doptenergy[k,\infty](\Omega);
\end{equation}
if $\psi_i$ and $\psi_j$ are the respective eigenfunctions on $\Omega_i$ and $\Omega_j$, extended by zero to the rest of $\Omega$, scaled in such a way that $\psi_i + \psi_j$ is an eigenfunction for $\Omega_{ij}$; then the formula states that $\doptenergy[k,1](\Omega) < \doptenergy[k,\infty](\Omega)$ if, under this normalisation,
\begin{equation}
\label{eq:hadamard-imbalance}
	\|\psi_i\|_{L^2(\Omega_i)} \neq \|\psi_j\|_{L^2(\Omega_j)}.
\end{equation}
This may fail on graphs, as Example~\ref{ex:p-dep-part-indep} shows. However, we still have the following positive result, based on a slightly different normalisation.

Here we assume for fixed $k$ that $\parti = \{\Graph_1,\ldots,\Graph_k\}$ is a $k$-partition that achieves $\doptenergy[k,\infty](\Graph)$, and we suppose $\Graph_i$ and $\Graph_j$ to be any two neighbours (in the sense of Definition~\ref{def:neighbours}), with respective eigenfunctions $\psi_i$ and $\psi_j$. We treat $\psi_i$ and $\psi_j$ as elements of $H^1 (\Graph)$, extending them by zero outside $\Omega_i$ and $\Omega_j$, respectively.

\begin{proposition}\label{prop:lk1lkinfty}
Under the conditions described above, if $v \in \partial\Omega_i \cap \partial\Omega_j$ is a vertex of degree two in $\Graph$ and $\psi_i$ and $\psi_j$ are normalised in such a way that $\psi_i + \psi_j$ is continuously differentiable at $v$, then condition \eqref{eq:hadamard-imbalance} implies that $\doptenergy[k,1](\Graph) < \doptenergy[k,\infty](\Graph)$.
\end{proposition}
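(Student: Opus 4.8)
The strategy is the standard perturbation argument underlying the ``balancing formula'' on domains (see \cite[Proposition~10.54]{BNHe17}), adapted to the metric-graph setting by exploiting the fact that the cut point $v$ has degree two, so that $\psi_i+\psi_j$ lives naturally in $H^1(\Graph)$ and, after the prescribed normalisation, is in fact an eigenfunction of $\mathcal G_i\cup\mathcal G_j$ glued along $v$. First I would set $\Omega_{ij}:=\Omega_i\cup\Omega_j$, let $\Graph_{ij}\in\rho_{\Omega_{ij}}$ be the rigid cluster obtained by gluing $\Graph_i$ and $\Graph_j$ at $v$ (and not cutting there), and observe that since $\parti$ achieves $\doptenergy[k,\infty](\Graph)$ we have $\lambda_1(\Graph_i)=\lambda_1(\Graph_j)=\doptenergy[k,\infty](\Graph)=:\mu$; monotonicity with respect to graph inclusion (as in Remark~\ref{rem:dirichlet-k-mon}) forces $\lambda_1(\Graph_{ij})\le\mu$, while the two-dimensional span of $\psi_i,\psi_j$ inside $H^1_0(\Graph_{ij})$ gives $\lambda_2(\Graph_{ij})\le\mu$. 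With the normalisation in the statement, $\psi:=\psi_i+\psi_j$ is continuously differentiable at the degree-two vertex $v$, hence satisfies the Kirchhoff (natural) condition there; since it already satisfies $-\psi''=\mu\psi$ edgewise, vanishes on the boundary cut points of $\Graph_{ij}$, and is continuous, it is an eigenfunction of $\Graph_{ij}$ for $\mu$, sign-changing, so $\lambda_1(\Graph_{ij})<\mu=\lambda_2(\Graph_{ij})$.

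Next I would introduce the one-parameter family of ``unbalanced'' test partitions: for $t$ near $1$ set $\psi_i^{(t)}:=t\psi_i$ on $\Omega_i$ and $\psi_j^{(t)}:=\psi_j$ on $\Omega_j$, and perturb the cut point from $v$ to a nearby point $v(t)$ on the edge through $v$ so that the new pair of functions, now with a jump-free but kinked profile, are the ground states of the two perturbed clusters $\Graph_i(t)$, $\Graph_j(t)$ with Dirichlet condition at $v(t)$. Concretely, moving $v(t)$ changes $\lambda_1$ of each of the two clusters; by the Hadamard-type shape derivative formula for the first Dirichlet eigenvalue under moving a single Dirichlet endpoint (which on an interval/star is an elementary computation with the secular equation, cf.\ the computations in the proof of Proposition~\ref{prop:pavels-task}), one gets
\begin{displaymath}
	\frac{d}{dt}\lambda_1(\Graph_i(t))\Big|_{t=1} = c\,\bigl(\partial_\nu\psi_i(v)\bigr)^2, \qquad
	\frac{d}{dt}\lambda_1(\Graph_j(t))\Big|_{t=1} = -c\,\bigl(\partial_\nu\psi_j(v)\bigr)^2
\end{displaymath}
for a common positive constant $c$ (the signs opposite because the two supports move in opposite directions). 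The normalisation that makes $\psi_i+\psi_j$ $C^1$ at $v$ forces $|\partial_\nu\psi_i(v)|=|\partial_\nu\psi_j(v)|$, so the two derivatives are negatives of each other: to first order the \emph{maximum} of the two eigenvalues is unchanged, i.e.\ $\denergy[\infty]$ is stationary, but the \emph{sum} (hence $\denergy[1]$) has a nonzero derivative precisely when the two $L^2$ norms differ, because the eigenvalue shift must be compared against the $L^2$ normalisation $\|\psi_i\|^2$, $\|\psi_j\|^2$.

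The cleanest way to package the last point, and the one I would actually write out, is via the ``balancing'' identity directly: evaluate
\begin{displaymath}
	\frac{d}{dt}\Big|_{t=1}\Bigl(\lambda_1(\Graph_i(t)) + \lambda_1(\Graph_j(t))\Bigr)
\end{displaymath}
by differentiating the Rayleigh quotients $a(\psi_i^{(t)},\psi_i^{(t)})/\|\psi_i^{(t)}\|^2$ and using that, since $v$ has degree two, the derivative of the length of the edge-piece in $\Omega_i$ equals minus that in $\Omega_j$; a short computation gives that this derivative is a nonzero multiple of $\|\psi_i\|_{L^2(\Omega_i)}^2-\|\psi_j\|_{L^2(\Omega_j)}^2$. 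Hence under \eqref{eq:hadamard-imbalance} one can choose the sign of the perturbation so that the $k$-partition $\parti_t:=\{\Graph_1,\dots,\Graph_i(t),\dots,\Graph_j(t),\dots,\Graph_k\}$ has $\denergy[1](\parti_t)<\denergy[1](\parti)$ while still $\denergy[\infty](\parti_t)\le\denergy[\infty](\parti)+o(t-1)$; a second, lower-order correction (shrinking $\Omega_i(t)$ very slightly further if its eigenvalue has increased) restores $\denergy[\infty](\parti_t)\le\doptenergy[k,\infty](\Graph)$ exactly, without destroying the strict decrease of $\denergy[1]$. This yields $\doptenergy[k,1](\Graph)\le\denergy[1](\parti_t)<\denergy[1](\parti)\le\denergy[\infty](\parti)=\doptenergy[k,\infty](\Graph)$, as claimed. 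The main obstacle is the bookkeeping in this last paragraph: making the two competing requirements ($\denergy[1]$ strictly down, $\denergy[\infty]$ not up) simultaneously rigorous requires controlling the second-order behaviour of $\max\{\lambda_1(\Graph_i(t)),\lambda_1(\Graph_j(t))\}$, which is only Lipschitz (not $C^1$) at $t=1$ where the two branches cross; the degree-two hypothesis on $v$ is exactly what keeps the geometry one-dimensional enough that the shape derivatives above are explicit and the argument goes through.
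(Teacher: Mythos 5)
Your route coincides with the one the paper itself has in mind: its proof is omitted precisely because the statement reduces to the Hadamard-type formula for the dependence of $\lambda_1$ on the position of the cut point (the metric-graph version in \cite[Remark~3.14]{BeKeKuMu18}), exactly as in your ``cleanest way'' paragraph. However, the displayed derivative formulas are wrong as written, and the error is not cosmetic: with a genuinely \emph{common} constant $c$ and $|\partial_\nu\psi_i(v)|=|\partial_\nu\psi_j(v)|$ (which is exactly what the $C^1$-matching at the degree-two vertex gives), the first-order variation of $\lambda_1(\Graph_i(t))+\lambda_1(\Graph_j(t))$ would be identically zero and the argument would yield nothing. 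The point of the normalisation in the statement is that $\psi_i,\psi_j$ are then \emph{not} $L^2$-normalised, so the Hadamard derivatives must carry the factors $\|\psi_i\|_{L^2(\Omega_i)}^{-2}$ and $\|\psi_j\|_{L^2(\Omega_j)}^{-2}$ respectively (with opposite signs from the opposite motions of the cut point); it is precisely these factors that make the derivative of the sum a nonzero multiple of $\|\psi_i\|_{L^2(\Omega_i)}^{-2}-\|\psi_j\|_{L^2(\Omega_j)}^{-2}$, hence nonzero under \eqref{eq:hadamard-imbalance}. Your later prose states this correct conclusion, but it does not follow from the formulas you display. (Relatedly, $\denergy[\infty]$ is \emph{not} stationary to first order -- one branch increases linearly, so the maximum does too -- but this claim is never used.)

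The final ``bookkeeping'' is a non-issue and should be deleted rather than patched: the chain $\doptenergy[k,1](\Graph)\le\denergy[1](\parti_t)<\denergy[1](\parti)\le\denergy[\infty](\parti)=\doptenergy[k,\infty](\Graph)$ closes as soon as $\denergy[1]$ strictly decreases along the perturbation; there is no need whatsoever for $\denergy[\infty](\parti_t)\le\doptenergy[k,\infty](\Graph)$, which would in general force $\parti_t$ to be another minimiser, and which your proposed fix (shrinking one cluster) could not achieve within the class of exhaustive rigid partitions anyway. So the ``main obstacle'' you identify (second-order control of the maximum at the crossing) does not exist. Likewise, the opening claim that minimality forces $\lambda_1(\Graph_i)=\lambda_1(\Graph_j)=\doptenergy[k,\infty](\Graph)$ is not available on graphs -- minimal partitions need not be equipartitions, cf.\ Example~\ref{ex:non-equi-3-star} -- but it is also unnecessary: either not all cluster eigenvalues equal $\denergy[\infty](\parti)$, in which case $\denergy[1](\parti)<\denergy[\infty](\parti)$ and the conclusion is immediate, or they do, and the perturbation argument (with the corrected Hadamard formulas) applies; in fact the perturbation step only uses $|\psi_i'(v)|=|\psi_j'(v)|$, which is your normalisation hypothesis, not eigenvalue equality.
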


The idea of the proof, based on a formula of \emph{Hadamard type} for domain perturbation (see, for example, \cite[Remark~3.14]{BeKeKuMu18} for the metric graph version), is essentially identical on graphs, and we omit it.

\subsection{Dependence on the edge lengths}

Here we give an example which shows that $\noptenergy[2,\infty]$, while still continuous, need not be a smooth function of the edge lengths of the underlying graph. This is caused by the existence of an isolated value of the length of a given edge for which there is non-uniqueness of the minimiser, and a transition from one type of minimiser to another at this point: one family of partitions ``leapfrogs'' the other. It would be interesting to know whether a similar phenomenon is possible if, instead of varying $p$, we vary the edge lengths.

\begin{example}
\label{ex:n-edge-lengths}
We consider the lasso graph of Figure~\ref{fig:basic-example}; we will denote by $\Graph_a$ the particular lasso graph for which $|e_2|=|e_3|=1$ but $|e_1|=a \in [2,\infty)$. We distinguish four cases:
\begin{enumerate}
\item $a=2$. Here the optimal partition for $\noptenergy[2,\infty](\Graph_2) = \pi^2/4$ corresponds to the partition depicted in Figure~\ref{fig:basic-example-rigid}, as follows from Lemma~\ref{lem:neumann-identify-observation}. It is clear that this partition is the unique minimiser.
\item $a \in (2,3)$. We claim that the unique minimiser realising $\noptenergy[2,\infty](\Graph_a)$ is still the partition from Figure~\ref{fig:basic-example-rigid}, meaning in particular that $\noptenergy[2,\infty](\Graph_a) = \pi^2/4$ for all $a \in [2,3)$, and that the optimal partition is not an equipartition for $a \in (2,3)$. These claims follow immediately from Lemma~\ref{lem:lasso} below, applied to any $b \in (0,1)$.
\item $a=3$. Here we have two minimisers, depicted in Figure~\ref{fig:lasso-2-n-partition}, and $\noptenergy[2,\infty](\Graph_3) = \pi^2/4$ still. To see that these are minimisers, observe that any optimal partition must involve cutting through $e_1$, as otherwise the right-hand cluster would have an eigenvalue larger than $\pi^2/4$. But the strict dependence of the eigenvalue of an interval on the length of the intercal, plus the strict monotonicity statement of Lemma~\ref{lem:lasso}, guarantees that any other partition of $\Graph_3$ which cuts through $e_1$ will have a higher energy than $\pi^2/4$.
\begin{figure}[h]
\begin{tikzpicture}[scale=1.2]
\coordinate (a) at (-3,0);
\coordinate (b) at (0,0);
\coordinate (c) at (0.5,0.5);
\coordinate (d) at (0.5,-0.5);
\coordinate (e) at (1.5,0);
\draw[thick] (a) -- (b);
\draw[thick,bend right=45] (d) edge (e);
\draw[thick,bend left=45] (c) edge (e);
\draw[fill] (a) circle (1.75pt);
\draw[fill] (b) circle (1.75pt);
\draw[fill] (c) circle (1.75pt);
\draw[fill] (d) circle (1.75pt);
\draw[fill] (e) circle (1.75pt);
\draw[thick,dashed] (0.1,0.5) -- (0.6,0.1);
\draw[thick,dashed] (0.1,-0.5) -- (0.6,-0.1);
\coordinate (f) at (3,0);
\coordinate (g) at (5,0);
\coordinate (h) at (6,0);
\coordinate (i) at (7,0);
\coordinate (j) at (8,0);
\draw[fill] (f) circle (1.75pt);
\draw[fill] (g) circle (1.75pt);
\draw[fill] (h) circle (1.75pt);
\draw[fill] (i) circle (1.75pt);
\draw[fill] (j) circle (1.75pt);
\draw[thick] (f) -- (g);
\draw[thick] (h) -- (i);
\draw[thick,bend right=60] (i) edge (j);
\draw[thick,bend left=60] (i) edge (j);
\draw[thick,dashed] (5.5,0.4) -- (5.5,-0.4);
\node at (-1.5,-0.25) [anchor=north] {$\pi^2/9$};
\node at (1.5,-0.25) [anchor=north west] {$\pi^2/4$};
\node at (4,-0.25) [anchor=north] {$\pi^2/4$};
\node at (6.75,-0.25) [anchor=north] {$\pi^2/4$};
\node at (-1.5,0) [anchor=south] {$e_1$};
\node at (1.25,0.4) [anchor=south] {$e_2$};
\node at (1.05,-0.55) [anchor=north] {$e_3$};
\node at (4,0) [anchor=south] {$2$};
\node at (6.5,0) [anchor=south] {$1$};
\node at (7.6,0.2) [anchor=south west] {$1$};
\node at (7.6,-0.2) [anchor=north west] {$1$};
\end{tikzpicture}
\caption{Two different partitions realising $\noptenergy[2,\infty](\Graph_3)$, together with the respective eigenvalues of the partition clusters. On the left the original edges of $\Graph_3$ are labelled; while on the right the edge lengths for the corresponding partition clusters are displayed.}
\label{fig:lasso-2-n-partition}
\end{figure}
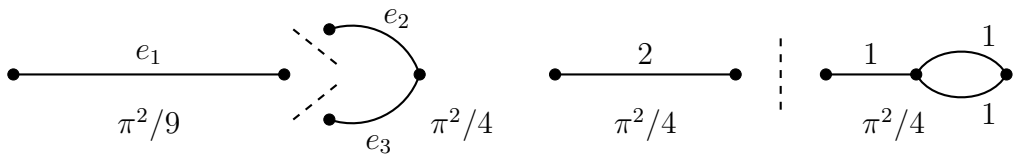
\item $a \in (3,\infty)$. Here there must be a unique minimal partition, which is a smooth evolution of the one depicted on the right of Figure~\ref{fig:lasso-2-n-partition} as $a>3$ becomes larger. Both clusters should grow, and their corresponding equal eigenvalues decrease, monotonically increasing in $a$. This follows, firstly, from the necessity of the partition being an equipartition, and the strictly negative derivative of the eigenvalues with respect to the edge lengths (use \cite[Remark~3.14]{BeKeKuMu18}).
\end{enumerate}
The optimal partition energy $\noptenergy[2,\infty](\Graph_a)$ is thus a continuous and (weakly!)\ monotonically decreasing, but not $C^1$, function of $a \in [2,\infty)$.
\end{example}

\begin{lemma}
\label{lem:lasso}
The lasso graph $\Graph_b$ of Figure~\ref{fig:basic-example}, with side lengths $|e_2|=|e_3|=1$ and $|e_1|=b > 0$, has the following properties:
\begin{enumerate}
\item $\mu_2 (\Graph_b)$ is simple and a smooth, strictly monotonically increasing function of $b>0$;
\item $\mu_2 (\Graph_1) = \pi^2/4$.
\end{enumerate}
In particular, $\mu_2 (\Graph_b) > \pi^2/4$ whenever $b<1$.
\end{lemma}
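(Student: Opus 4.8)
The plan is to reduce everything to an explicit one‑dimensional secular equation by using the reflection symmetry of the lasso, and then to obtain the monotonicity from a surgery/Hadamard argument. Write $v$ for the degree‑three vertex, $z$ for the degree‑two vertex of the loop and $w$ for the pendant vertex, so that $e_1$ joins $w$ and $v$ and the loop edges $e_2,e_3$ join $v$ and $z$. Since $|e_2|=|e_3|=1$, the involution $\sigma$ of $\Graph_b$ that swaps $e_2$ and $e_3$ while fixing $e_1$ pointwise is an isometry; it induces an orthogonal decomposition $L^2(\Graph_b)=L^2_+\oplus L^2_-$ into $\sigma$‑invariant and $\sigma$‑anti‑invariant functions, both summands being reducing for the Laplacian. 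An eigenfunction in $L^2_-$ vanishes at the fixed points $v,z$ of $\sigma$, and the Kirchhoff condition at $v$ (where the two equal‑and‑opposite loop derivatives cancel) then forces it to vanish identically on $e_1$ as well; hence the $L^2_-$‑part contributes exactly the Dirichlet spectrum $\{\pi^2 n^2:\ n\ge 1\}$ of an interval of length $1$. On $L^2_+$ the problem collapses to a path‑shaped quantum graph $w$--$v$--$z$ with edge lengths $b$ and $1$, natural (Neumann) conditions at $w$ and $z$, and at $v$ the matching condition ``$f$ continuous, $f'(v_-)+2f'(v_+)=0$'' (the factor $2$ coming from the two superimposed loop edges). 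Inserting cosine‑type solutions adapted to the Neumann ends and writing $\mu=\omega^2$, an elementary trigonometric simplification turns this into the secular equation
\[
	G_b(\omega):=3\sin\!\big(\omega(1+b)\big)+\sin\!\big(\omega(1-b)\big)=0 .
\]

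Next I would locate the least positive root $\omega_1(b)$ of $G_b$ and read off (1) and (2). We have $G_b(0)=0$ and $G_b'(0)=4+2b>0$, so $G_b>0$ just to the right of $0$; on the other hand $G_b(\pi)=-2\sin(\pi b)<0$ for $b\in(0,1)$, while for $b\ge1$ one computes $G_b\!\big(\tfrac{\pi}{1+b}\big)=\sin\!\big(\tfrac{\pi(1-b)}{1+b}\big)\le0$ with $\tfrac{\pi}{1+b}\le\tfrac{\pi}{2}$. In every case $G_b$ has a zero in $(0,\pi)$, so $\omega_1(b)<\pi$ and hence $\mu_2(\Graph_b)\le\omega_1(b)^2<\pi^2$ lies strictly below the whole $L^2_-$‑spectrum. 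Therefore $\mu_2(\Graph_b)$ is attained only inside $L^2_+$, where it is the lowest nonzero eigenvalue of a path‑type operator — its eigenfunction being fixed up to a scalar by the Neumann condition at $w$ and the matching conditions at $v$ — so $\mu_2(\Graph_b)$ is simple, for every $b>0$. Specialising to $b=1$ collapses $G_b$ to $3\sin(2\omega)$, whose least positive zero is $\omega=\pi/2$; thus $\mu_2(\Graph_1)=\pi^2/4$.

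It remains to show that $b\mapsto\mu_2(\Graph_b)$ is smooth and strictly decreasing. Smoothness (indeed real‑analyticity) follows either from the implicit function theorem applied to $G_b(\omega)=0$ at its simple root $\omega_1(b)$, or from analytic perturbation theory, $\mu_2(\Graph_b)$ having just been shown to be a simple eigenvalue of a family of Laplacians depending real‑analytically on $b$. For the monotonicity, the Hadamard‑type formula for edge‑length variation of \cite[Remark~3.14]{BeKeKuMu18}, applied to lengthening the pendant $e_1$ whose far endpoint $w$ carries a Neumann condition, gives $\tfrac{d}{db}\mu_2(\Graph_b)=-\mu_2(\Graph_b)\,\psi(w)^2$ when the eigenfunction $\psi$ is $L^2$‑normalised; this is strictly negative because $\psi(w)\ne0$ — were $\psi(w)=0$, then together with $\psi'(w)=0$ uniqueness for $-\psi''=\mu_2(\Graph_b)\psi$ on $e_1$ would force $\psi\equiv0$ on $e_1$, and continuity plus the Kirchhoff condition at $v$ (and $\sigma$‑symmetry) would propagate this to all of $\Graph_b$. (Alternatively, viewing $\Graph_{b'}$ for $b'>b$ as $\Graph_b$ with a pendant interval of length $b'-b$ attached at $w$ yields $\mu_2(\Graph_{b'})\le\mu_2(\Graph_b)$ by the surgery principle of \cite[Theorem~3.4]{BeKeKuMu18} or \cite[Section~5]{KuMaNa13}, and this weak monotonicity upgrades to strict monotonicity by real‑analyticity together with the fact that $b\mapsto\mu_2(\Graph_b)$ is non‑constant, since $\mu_2(\Graph_b)\to\pi^2$ as $b\to0^+$.) Finally, combining strict monotonicity with $\mu_2(\Graph_1)=\pi^2/4$ yields $\mu_2(\Graph_b)>\pi^2/4$ for all $b<1$, which is the concluding assertion.

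The step I expect to be most delicate is the symmetry reduction and the correct bookkeeping of the non‑standard matching condition at $v$ (in particular the factor $2$), since simplicity, the exact value $\pi^2/4$ and the sign of the derivative all flow from the resulting equation $G_b(\omega)=0$; the only other genuine point is the upgrade from weak to strict monotonicity, which is routine once analyticity is available.
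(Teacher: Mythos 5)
Your argument is correct, but it takes a genuinely different route from the paper's. The paper handles the lemma in a few lines by quoting its surgery toolbox: simplicity and strict monotonicity in $b$ come from \cite[Lemma~5.5 and Corollary~3.12(1)]{BeKeKuMu18} together with the standard analyticity of eigenvalues in the edge lengths, and the value $\mu_2(\Graph_1)=\pi^2/4$ is obtained by observing that $\Graph_1$ arises from the equilateral $3$-star by gluing two pendant vertices at which a suitable $\mu_2$-eigenfunction takes equal values, so that $\mu_2$ is unchanged by \cite[Corollary~3.6]{BeKeKuMu18}. You instead make everything explicit: the reduction with respect to the symmetry swapping $e_2$ and $e_3$ correctly yields the antisymmetric spectrum $\{\pi^2n^2\}$ plus a symmetric sector whose secular equation is $2\sin\omega\cos(\omega b)+\cos\omega\sin(\omega b)=0$, which is exactly your $G_b(\omega)=0$ (I checked that the $2\times 2$ determinant, with the factor $2$ in the matching condition at $v$, reproduces it); locating the first root strictly below $\pi$ gives simplicity of $\mu_2(\Graph_b)$ for every $b>0$, $b=1$ gives $\pi^2/4$, and the Hadamard formula at the Neumann pendant $w$ (the same \cite[Remark~3.14]{BeKeKuMu18} the paper invokes elsewhere in Section~\ref{sec:examples}) gives $\tfrac{d}{db}\mu_2(\Graph_b)=-\mu_2(\Graph_b)\psi(w)^2<0$, with $\psi(w)\neq 0$ justified correctly. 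Your route is longer but self-contained and buys extra information (the explicit secular equation and, e.g., $\mu_2(\Graph_b)\to\pi^2$ as $b\to 0^+$), whereas the paper's proof is shorter but entirely citation-driven; the only assertion you leave unverified, that $\omega_1(b)$ is a simple root of $G_b$, is harmless because your fallback to analytic perturbation theory for the (already established) simple eigenvalue gives smoothness anyway.

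One further remark: you prove that $\mu_2(\Graph_b)$ is strictly \emph{decreasing} in $b$, while the lemma as printed says ``increasing''. The printed direction is evidently a slip: the ``in particular'' clause $\mu_2(\Graph_b)>\pi^2/4$ for $b<1$, as well as the way the lemma is used in Example~\ref{ex:n-edge-lengths}, require precisely the decrease you establish, and the surgery results the paper cites give decrease of eigenvalues under edge lengthening. So your proof delivers the intended statement.
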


\begin{proof}
(1) is an immediate consequence of \cite[Lemma~5.5 and Corollary~3.12(1)]{BeKeKuMu18}, together with the well-known analyticity of the eigenvalues in dependence on the edge lengths (see, e.g., \cite{BerKuc12} or \cite[\S 3.1]{BerKuc13}). For (2), we compare $\Graph_1$ with the equilateral 3-star $\Graph$ whose edges are each of length one. Now $\Graph$ can be realised from $\Graph_1$ by cutting through the vertex $z$; moreover, since there exists an eigenfunction for $\mu_2(\Graph)$ which takes on the same value at the two degree one vertices of $\Graph$ which can be glued together to obtain $\Graph_1$, we have $\mu_2 (\Graph_1) = \mu_2 (\Graph) = \pi^2/4$ by \cite[Corollary~3.6]{BeKeKuMu18}.
\end{proof}

\section{Comparison of different partition problems}
\label{sec:comparison}

In this section we will compare different types of partitions problems and their corresponding optimal partitions on a fixed graph, mostly via illustrative examples. We will mostly restrict ourselves to the case of rigid partitions and in particular to the four quantities $\noptenergy[k,p]$, $\doptenergy[k,p]$, $\nmaxmin[k]$ and $\dmaxmin[k]$. We will, however, also look briefly at properties of non-exhaustive partitions and in particular consider their relationship to non-rigid partitions.

\subsection{Comparison of $\noptenergy[k,p]$, $\doptenergy[k,p]$, $\nmaxmin[k]$ and $\dmaxmin[k]$}\label{sec:lndmnd}

Here we give a few prototypical examples illustrating how these four problems give rise to different optimal partitions, and compare the actual optimal energies. In terms of the form of the optimal partitions, the examples provide at least preliminary evidence to suggest that (very roughly speaking) $\noptenergy[k,p]$ tends to seek out the longest possible paths within the graph; $\doptenergy[k,p]$ (and possibly also $\nmaxmin[k]$) tends to divide the graph into $k$ roughly equal pieces, preserving highly connected parts of the graph; while the optimal partition for $\dmaxmin[k]$ tends to cut through the highly connected parts. It would be worthwhile to investigate this systematically and try to provide a rigorous basis for these heuristic claims.

We start with a simple criterion for identifying optimal partitions for the Neumann problem in some cases.

\begin{lemma}
\label{lem:neumann-identify-observation}
Suppose $\Graph$ has total length $L>0$ and $A$ is any set of $k$-partitions of $\Graph$. Suppose there exists a partition $\parti^\ast \in A$ such that
\begin{displaymath}
	\nenergy[\infty] (\parti^\ast) = \frac{\pi^2 k^2}{L^2}.
\end{displaymath}
Then
\begin{equation}
\label{eq:nenergy-min-a}
	\nenergy[\infty] (\parti^\ast) = \inf \left\{\nenergy[\infty] (\parti) : \parti \in A \right\},
\end{equation}
$\parti^\ast$ is exhaustive, and the clusters $\Graph_1,\ldots,\Graph_k$ of $\parti^\ast$ are all path graphs (that is, intervals) of length $L/k$. In particular, $\parti^\ast$ is also a minimising partition for $\noptenergyloose[k,p]$, for all $p \in (0,\infty]$.
\end{lemma}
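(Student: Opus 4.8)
\textbf{Proof plan for Lemma~\ref{lem:neumann-identify-observation}.}

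The key ingredient is Nicaise' inequality for $\mu_2$ (Theorem~\ref{thm:nicaise}), together with the additivity of total length over a partition. First I would observe that for \emph{any} exhaustive $k$-partition $\parti = \{\Graph_1,\ldots,\Graph_k\}$ of $\Graph$, the cluster supports $\Omega_1,\ldots,\Omega_k$ have pairwise disjoint interiors and cover $\Graph$ up to a finite set, so $\sum_{i=1}^k |\Graph_i| = \sum_{i=1}^k |\Omega_i| = |\Graph| = L$. Hence the longest cluster has length at least $L/k$, say $|\Graph_{i_0}| \geq L/k$, and by the second Nicaise inequality in \eqref{eq:nicaise} applied to every cluster,
\[
	\nenergy[\infty](\parti) = \max_{i=1,\ldots,k} \mu_2(\Graph_i) \geq \mu_2(\Graph_{i_0}) \geq \frac{\pi^2}{|\Graph_{i_0}|^2} \geq \frac{\pi^2 k^2}{L^2}.
\]
(For non-exhaustive partitions the same bound holds \emph{a fortiori}, since then $\sum |\Graph_i| \leq L$, so the longest cluster is again at least $L/k$.) This shows $\pi^2k^2/L^2$ is a lower bound for $\nenergy[\infty]$ over \emph{all} $k$-partitions of $\Graph$, hence over $A$; since $\parti^\ast \in A$ attains this value, \eqref{eq:nenergy-min-a} follows immediately.

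Next I would extract the structural consequences from the equality case. If $\parti^\ast$ attains $\pi^2 k^2/L^2$, then every inequality in the chain above must be an equality for $\parti^\ast$. In particular $\mu_2(\Graph_i) = \pi^2/|\Graph_i|^2$ for the cluster realising the maximum, and more carefully: since $\mu_2(\Graph_i) \geq \pi^2/|\Graph_i|^2$ for every $i$ and $\max_i \pi^2/|\Graph_i|^2 \geq \pi^2 k^2/L^2$ with equality forcing $\max_i |\Graph_i| = L/k$, combined with $\sum_i |\Graph_i| \leq L$ we deduce $|\Graph_i| = L/k$ for all $i$ and $\sum_i |\Graph_i| = L$, so $\parti^\ast$ is exhaustive. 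Moreover, for each $i$ we then have $\mu_2(\Graph_i) \leq \nenergy[\infty](\parti^\ast) = \pi^2 k^2/L^2 = \pi^2/|\Graph_i|^2$, so in fact $\mu_2(\Graph_i) = \pi^2/|\Graph_i|^2$; the equality characterisation in Theorem~\ref{thm:nicaise} then forces each $\Graph_i$ to be a path graph (interval) of length $L/k$, with natural conditions at both endpoints.

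Finally, for the statement about $\noptenergyloose[k,p]$: by Proposition~\ref{prop:lknp} (or directly from the Hölder inequality \eqref{eq:cont-mon-part}), for any rigid or loose $k$-partition $\parti$ and any $p \in (0,\infty]$ one has $\nenergy[p](\parti) \geq \nenergy[\infty](\parti) \cdot k^{1/p - 1} \cdot$ --- more to the point, I would instead argue directly: for $p \in (0,\infty)$ and any $k$-partition $\parti$,
\[
	\nenergy[p](\parti) = \left(\frac{1}{k}\sum_{i=1}^k \mu_2(\Graph_i)^p\right)^{1/p} \geq \left(\frac{1}{k}\sum_{i=1}^k \left(\frac{\pi^2}{|\Graph_i|^2}\right)^p\right)^{1/p} \geq \frac{\pi^2 k^2}{L^2},
\]
where the first step is again Nicaise and the second is convexity of $t\mapsto (\pi^2/t^2)^p$ combined with $\sum_i |\Graph_i| \leq L$ (Jensen's inequality: the mean of $(\pi^2/|\Graph_i|^2)^p$ is minimised when all $|\Graph_i|$ are equal to $L/k$). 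Since $\parti^\ast$ consists of $k$ intervals of length $L/k$ each with $\mu_2 = \pi^2 k^2/L^2$, it is a Dirichlet--Neumann-free equipartition with $\nenergy[p](\parti^\ast) = \pi^2 k^2/L^2$ for every $p$, hence $\parti^\ast$ realises $\noptenergyloose[k,p](\Graph)$ (and also $\noptenergy[k,p](\Graph)$, as $\parti^\ast$ is rigid) for all $p \in (0,\infty]$. The only mild subtlety --- and the one place requiring a little care --- is justifying the Jensen step with the constraint $\sum_i|\Graph_i|\le L$ rather than $=L$; since $(\pi^2/t^2)^p$ is decreasing in $t$, allowing the total to be smaller only increases the sum, so the bound is unaffected, and I would spell this out in one line.
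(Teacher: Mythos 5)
Your approach is the same as the paper's---the paper's entire proof consists of applying the sharp form of Nicaise' inequality, $\mu_2(\Graph_i) \geq \pi^2/|\Graph_i|^2$ with equality only for path graphs, to each cluster and reading the conclusion off the definition of $\nenergy[\infty]$---and your Jensen argument for $p<\infty$ is a correct way of making the final ``in particular'' explicit. However, the central displayed chain in your first paragraph has the pigeonhole pointing the wrong way: you select the \emph{longest} cluster, with $|\Graph_{i_0}| \geq L/k$, and then assert $\pi^2/|\Graph_{i_0}|^2 \geq \pi^2k^2/L^2$; that last inequality requires $|\Graph_{i_0}| \leq L/k$, so the chain as written is false and proves nothing. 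What you need is the \emph{shortest} cluster: since $\sum_{i=1}^k |\Graph_i| \leq L$ (with equality exactly when $\parti$ is exhaustive), the minimal cluster length is at most the average and hence at most $L/k$, and Nicaise applied to \emph{that} cluster gives $\nenergy[\infty](\parti) \geq \pi^2k^2/L^2$ for every $\parti$. The same directional slip infects your parenthetical on non-exhaustive partitions (``the longest cluster is again at least $L/k$'' is simply false when $\sum_i|\Graph_i|<L$, and is not what is needed) and the equality discussion (``forcing $\max_i|\Graph_i| = L/k$'' should concern the minimum).

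The repair is short and is essentially already contained in your second paragraph: in the equality case argue clusterwise, namely $\pi^2/|\Graph_i|^2 \leq \mu_2(\Graph_i) \leq \nenergy[\infty](\parti^\ast) = \pi^2k^2/L^2$ gives $|\Graph_i| \geq L/k$ for \emph{every} $i$, which combined with $\sum_i|\Graph_i| \leq L$ forces $|\Graph_i| = L/k$ for all $i$ and $\sum_i|\Graph_i| = L$ (exhaustiveness); then $\mu_2(\Graph_i) = \pi^2/|\Graph_i|^2$ for all $i$, and the equality statement of Theorem~\ref{thm:nicaise} makes each cluster a path of length $L/k$ with natural endpoints. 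One further small point: your parenthetical claim that $\parti^\ast$ also realises $\noptenergy[k,p]$ ``as $\parti^\ast$ is rigid'' is unjustified---$A$ is an arbitrary set of $k$-partitions, and a partition all of whose clusters are paths may well be loose and non-rigid (cf.\ Figure~\ref{fig:basic-example-loose}); this extra claim is not part of the lemma and should be dropped. With these corrections your proof is complete and coincides with the paper's argument, plus the explicit Jensen step for finite $p$ which the paper leaves implicit.
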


We will generally take $A=\mathfrak{R}_k$ to be the set of exhaustive rigid $k$-partitions, in which case the infimum in \eqref{eq:nenergy-min-a} is, by definition, $\noptenergy[k,\infty]$, but the result holds for any set $A$ of $k$-partitions. There is also a corresponding statement for Dirichlet minimal $k$-partitions, namely that the same conclusion holds if $\denergy[\infty] (\parti^\ast) = \pi^2 k^2/(4L^2)$, but in this case each cluster must be a path graph of length $L/k$ with a Dirichlet vertex at one endpoint, which in particular requires $\Graph$ to be an equilateral $k$-star.

\begin{proof}[Proof of Lemma~\ref{lem:neumann-identify-observation}]
Let $\parti \in A$ be a $k$-partition with clusters $\Graph_1,\ldots,\Graph_k$. Then the sharp form of Nicaise' inequality (Theorem~\ref{thm:nicaise}) implies that $\mu_2 (\Graph_i) \geq \pi^2/|\Graph_i|^2$ for all $i=1,\ldots,k$, with equality if and only if $\Graph_i$ is a path graph. The claim now follows immediately from the definition of $\nenergy[\infty]$.
\end{proof}

\begin{example}
\label{ex:reinforced-loop}
Consider the graph $\Graph$ depicted in Figure~\ref{fig:comparison-base}, consisting of two longer and four shorter edges, the former of length $1$ and the latter of length $a$, which we imagine to be considerably smaller than $1$. One may visualise $\Graph$ as a loop with two short equal ``reinforcing'' edges placed at antipodal points. On $\Graph$, we will consider the respective partitions achieving the four quantities $\doptenergy[2,\infty]$, $\noptenergy[2,\infty]$, $\dmaxmin[2]$ and $\nmaxmin[2]$; by symmetry considerations, the two clusters must always have equal energies and hence be congruent to each other. Since the purpose of this example is essentially heuristic, we will not give detailed proofs of our claims. This would be possible, albeit tedious, via case-bashing arguments based for example on the advanced surgery techniques of \cite{BeKeKuMu18} together with extensive use of the symmetry of the problems.
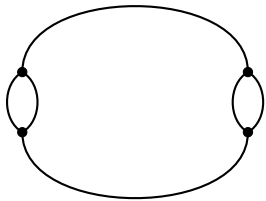
\begin{figure}[H]
\begin{tikzpicture}
\coordinate (a) at (0,0.4);
\coordinate (b) at (3,0.4);
\coordinate (c) at (0,-0.4);
\coordinate (d) at (3,-0.4);
\draw[thick,bend left=90]  (a) edge (b);
\draw[thick,bend right=90]  (c) edge (d);
\draw[thick,bend left=60] (a) edge (c);
\draw[thick,bend left=60] (b) edge (d);
\draw[thick,bend right=60] (a) edge (c);
\draw[thick,bend right=60] (b) edge (d);
\draw[fill] (0,0.4) circle (1.75pt);
\draw[fill] (3,0.4) circle (1.75pt);
\draw[fill] (0,-0.4) circle (1.75pt);
\draw[fill] (3,-0.4) circle (1.75pt);
\end{tikzpicture}
\caption{The graph $\Graph$. We assume the longer edges both have length $1$, and the four shorter edges all have the same length $a<1$.}
\label{fig:comparison-base}
\end{figure}
We start by considering the minimal partitions; the optimisers are depicted in Figure~\ref{fig:min-max-comparison}. For $\doptenergy[2,\infty]$, we are seeking the nodal partition corresponding to $\mu_2(\Graph)$ (see Proposition~\ref{prop:link-to-second-eigenvalue}). An argument similar to the ones in \cite[\S~5.1]{BeKeKuMu18} shows that the eigenfunction of $\mu_2(\Graph)$ must be invariant with respect to permutation of the longer two edges, and of each of the shorter ones within each set of two adjacent short edges. This leaves only the possibilities depicted in Figure~\ref{fig:min-max-comparison} (left) and Figure~\ref{fig:max-min-comparison} (left). Either a direct calculation involving the secular equations or an argument analogous to \cite[Proposition~5.10]{BeKeKuMu18} shows that the former has lower energy and hence corresponds to $\mu_2$ and $\doptenergy[2,\infty]$. For $\noptenergy[2,\infty]$, we see that it is possible to partition $\Graph$ into two equal path graphs as shown in Figure~\ref{fig:min-max-comparison} (right); hence, by Lemma~\ref{lem:neumann-identify-observation}, this is the optimal partition.
\begin{figure}[H]
\begin{tikzpicture}
\coordinate (a) at (0,0.4);
\coordinate (b) at (3.5,0.4);
\coordinate (c) at (0,-0.4);
\coordinate (d) at (3.5,-0.4);
\coordinate (e) at (1.5,1.2);
\coordinate (f) at (2,1.2);
\coordinate (g) at (1.5,-1.2);
\coordinate (h) at (2,-1.2);
\draw[thick,bend left=30]  (a) edge (e);
\draw[thick,bend left=30]  (f) edge (b);
\draw[thick,bend right=30]  (c) edge (g);
\draw[thick,bend right=30]  (h) edge (d);
\draw[thick,bend left=60] (a) edge (c);
\draw[thick,bend left=60] (b) edge (d);
\draw[thick,bend right=60] (a) edge (c);
\draw[thick,bend right=60] (b) edge (d);
\draw[fill] (0,0.4) circle (1.75pt);
\draw[fill] (3.5,0.4) circle (1.75pt);
\draw[fill] (0,-0.4) circle (1.75pt);
\draw[fill] (3.5,-0.4) circle (1.75pt);
\filldraw[thick, draw=black, fill=white] (1.5,1.2) circle (1.75pt);
\filldraw[thick, draw=black, fill=white] (2,1.2) circle (1.75pt);
\filldraw[thick, draw=black, fill=white] (1.5,-1.2) circle (1.75pt);
\filldraw[thick, draw=black, fill=white] (2,-1.2) circle (1.75pt);
\draw[thick,dashed] (1.75,1.5) -- (1.75,0.9);
\draw[thick,dashed] (1.75,-1.5) -- (1.75,-0.9);
\draw[fill] (5,-0.4) circle (1.75pt);
\draw[thick,bend left=60] (5,-0.4) edge (5,0.4);
\draw[fill] (5,0.4) circle (1.75pt);
\draw[thick,bend left=90] (5,0.4) edge (8.5,0.4);
\draw[fill] (8.5,0.4) circle (1.75pt);
\draw[thick,bend right=60] (8.5,0.4) edge (8.5,-0.4);
\draw[fill] (8.5,-0.4) circle (1.75pt);
\draw[fill] (5.5,-0.4) circle (1.75pt);
\draw[thick,bend right=60] (5.5,-0.4) edge (5.5,0.4);
\draw[fill] (5.5,0.4) circle (1.75pt);
\draw[thick,bend right=90] (5.5,-0.4) edge (9,-0.4);
\draw[fill] (9,-0.4) circle (1.75pt);
\draw[thick,bend right=60] (9,-0.4) edge (9,0.4);
\draw[fill] (9,0.4) circle (1.75pt);
\draw[thick,dashed] (5.25,-0.7) -- (5.25,0.7);
\draw[thick,dashed] (8.75,-0.7) -- (8.75,0.7);
\end{tikzpicture}
\qquad
\begin{tikzpicture}
\draw[fill] (5,-0.2) circle (1.75pt);
\draw[fill] (5,0.6) circle (1.75pt);
\draw[fill] (5.5,-0.4) circle (1.75pt);
\draw[fill] (5.5,0.6) circle (1.75pt);
\draw[fill] (8.5,0.6) circle (1.75pt);
\draw[fill] (8.5,-0.4) circle (1.75pt);
\draw[fill] (9,-0.4) circle (1.75pt);
\draw[fill] (9,0.4) circle (1.75pt);
\draw[thick,bend left=60] (5,-0.2) edge (5,0.6);
\draw[thick,bend left=90] (5,0.6) edge (8.5,0.6);
\draw[thick,bend right=60] (9,0.4) edge (8.5,-0.4);
\draw[thick,bend right=60] (5,-0.2) edge (5.5,0.6);
\draw[thick,bend right=90] (5.5,-0.4) edge (9,-0.4);
\draw[thick,bend right=60] (9,-0.4) edge (9,0.4);
\draw[thick,dashed] (5.25,.2) -- (5.25,.9);
\draw[thick,dashed] (8.75,-0.7) -- (8.75,0);
\draw[thick,dashed] (4.9,-0.65) -- (5.8,.1);
\draw[thick,dashed] (9.1,.75) -- (8.2,.1);
\end{tikzpicture}

\caption{Rigid 2-partitions of $\Graph$ realising $\doptenergy[2,\infty](\Graph)$ (left) and $\noptenergy[2,\infty](\Graph)=\noptenergyloose[2,\infty](\Graph)$ (middle). There exists a further loose partition realising $\noptenergyloose[2,\infty](\Graph)$ (right). Black dots denote Neumann-Kirchhoff vertices, white dots denote Dirichlet vertices.}\label{fig:min-max-comparison}
\end{figure}
If we turn to maximal partitions, in the Dirichlet case the same argument as in \cite[Proposition~5.10]{BeKeKuMu18} implies that among all Dirichlet candidates, the eigenvalue $\lambda_1$ is largest when the shorter edges are as close to the Dirichlet (cut) set and as equal as possible, yielding Figure~\ref{fig:max-min-comparison} (left). To maximise the Neumann eigenvalues, the doubled edges should be located as close to, and as symmetrically about, the zero set of the eigenfunctions as possible, corresponding to Figure~\ref{fig:max-min-comparison} (right). We omit the details, which closely follow the principles laid out in \cite[\S~5]{BeKeKuMu18}.
\begin{figure}[h]
\begin{tikzpicture}
\coordinate (i) at (0,0.65);
\coordinate (j) at (3,0.65);
\coordinate (k) at (0,-0.65);
\coordinate (l) at (3,-0.65);
\draw[thick,bend left=90]  (i) edge (j);
\draw[thick,bend right=90]  (k) edge (l);
\draw[fill] (0,0.65) circle (1.75pt);
\draw[fill] (3,0.65) circle (1.75pt);
\draw[fill] (0,-0.65) circle (1.75pt);
\draw[fill] (3,-0.65) circle (1.75pt);
\draw[thick, bend left=45] (-0.3,0.25) edge (i);
\draw[thick, bend right=45] (0.3,0.25) edge (i);
\draw[thick, bend left=45] (k) edge (-0.3,-0.25);
\draw[thick, bend right=45] (k) edge (0.3,-0.25);
\filldraw[thick, draw=black, fill=white] (-0.3,0.25) circle (1.75pt);
\filldraw[thick, draw=black, fill=white] (0.3,0.25) circle (1.75pt);
\filldraw[thick, draw=black, fill=white] (-0.3,-0.25) circle (1.75pt);
\filldraw[thick, draw=black, fill=white] (0.3,-0.25) circle (1.75pt);
\draw[thick, bend left=45] (2.7,0.25) edge (j);
\draw[thick, bend right=45] (3.3,0.25) edge (j);
\draw[thick, bend left=45] (l) edge (2.7,-0.25);
\draw[thick, bend right=45] (l) edge (3.3,-0.25);
\filldraw[thick, draw=black, fill=white] (2.7,0.25) circle (1.75pt);
\filldraw[thick, draw=black, fill=white] (3.3,0.25) circle (1.75pt);
\filldraw[thick, draw=black, fill=white] (2.7,-0.25) circle (1.75pt);
\filldraw[thick, draw=black, fill=white] (3.3,-0.25) circle (1.75pt);
\draw[thick, dashed] (-0.6,0) -- (0.6,0);
\draw[thick, dashed] (2.4,0) -- (3.6,0);
\coordinate (a) at (5,0.4);
\coordinate (b) at (8.5,0.4);
\coordinate (c) at (5,-0.4);
\coordinate (d) at (8.5,-0.4);
\coordinate (e) at (6.5,1.2);
\coordinate (f) at (7,1.2);
\coordinate (g) at (6.5,-1.2);
\coordinate (h) at (7,-1.2);
\draw[thick,bend left=30]  (a) edge (e);
\draw[thick,bend left=30]  (f) edge (b);
\draw[thick,bend right=30]  (c) edge (g);
\draw[thick,bend right=30]  (h) edge (d);
\draw[thick,bend left=60] (a) edge (c);
\draw[thick,bend left=60] (b) edge (d);
\draw[thick,bend right=60] (a) edge (c);
\draw[thick,bend right=60] (b) edge (d);
\draw[fill] (5,0.4) circle (1.75pt);
\draw[fill] (8.5,0.4) circle (1.75pt);
\draw[fill] (5,-0.4) circle (1.75pt);
\draw[fill] (8.5,-0.4) circle (1.75pt);
\draw[fill] (6.5,1.2) circle (1.75pt);
\draw[fill] (7,1.2) circle (1.75pt);
\draw[fill] (6.5,-1.2) circle (1.75pt);
\draw[fill] (7,-1.2) circle (1.75pt);
\draw[thick,dashed] (6.75,1.5) -- (6.75,0.9);
\draw[thick,dashed] (6.75,-1.5) -- (6.75,-0.9);
\end{tikzpicture}
\caption{Proper partitions of $\Graph$ realising $\dmaxmin[2](\Graph)$ (left) and $\nmaxmin[2](\Graph)$ (right).}
\label{fig:max-min-comparison}
\end{figure}
\end{example}

\begin{example}
\label{ex:pumpkin-example}
Our second example will be the \emph{equilateral $6$-pumpkin} $\mathcal{H}$ depicted in Figure~\ref{fig:pumpkin-example}: it is the graph with two vertices and six parallel edges running between them, all of the same length (say, length $1$ each).
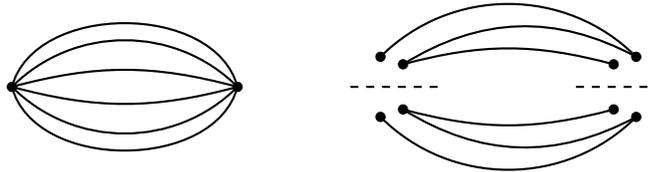
\begin{figure}[H]
\begin{tikzpicture}
\coordinate (a) at (0,0);
\coordinate (b) at (3,0);
\draw[fill] (0,0) circle (1.75pt);
\draw[fill] (3,0) circle (1.75pt);
\draw[thick, bend left=15] (a) edge (b);
\draw[thick, bend left=45] (a) edge (b);
\draw[thick, bend left=75] (a) edge (b);
\draw[thick, bend right=15] (a) edge (b);
\draw[thick, bend right=45] (a) edge (b);
\draw[thick, bend right=75] (a) edge (b);
\draw[fill] (4.9,0.4) circle (1.75pt);
\draw[thick, bend left=45] (4.9,0.4) edge (8.3,0.4);
\draw[fill] (8.3,0.4) circle (1.75pt);
\draw[thick, bend left=30] (5.2,0.3) edge (8.3,0.4);
\draw[fill] (5.2,0.3) circle (1.75pt);
\draw[thick, bend left=15] (5.2,0.3) edge (8,0.3);
\draw[fill] (8,0.3) circle (1.75pt);
\draw[fill] (4.9,-0.4) circle (1.75pt);
\draw[thick, bend right=45] (4.9,-0.4) edge (8.3,-0.4);
\draw[fill] (8.3,-0.4) circle (1.75pt);
\draw[thick, bend right=30] (5.2,-0.3) edge (8.3,-0.4);
\draw[fill] (5.2,-0.3) circle (1.75pt);
\draw[thick, bend right=15] (5.2,-0.3) edge (8,-0.3);
\draw[fill] (8,-0.3) circle (1.75pt);
\draw[thick, dashed] (4.5,0) -- (5.7,0);
\draw[thick, dashed] (7.5,0) -- (8.7,0);
\end{tikzpicture}
\caption{An equilateral ``$6$-pumpkin'' (left) and a rigid partition realising $\noptenergy[2,\infty]$ on it (right). Again, no loose partition achieves an energy lower than $\noptenergy[2,\infty]$.}
\label{fig:pumpkin-example}
\end{figure}
In Figure~\ref{fig:pumpkin-example} we also see how $\mathcal{H}$ can be partitioned into two path graphs of length $3$ each; by Lemma~\ref{lem:neumann-identify-observation}, this is an optimal $2$-partition, corresponding to $\noptenergy[2,\infty](\mathcal{H})$. This is thus another illustration of the assertion that ``$\noptenergy[k,\infty]$ tends to seek out paths embedded in the graph''. In terms of Dirichlet minimal partitions, to find partitions achieving $\doptenergy[2,\infty](\mathcal{H})$, by Proposition~\ref{prop:link-to-second-eigenvalue} we merely have to consider the nodal patterns of eigenfunctions corresponding to $\mu_2 (\mathcal{H})$; these are depicted in Figure~\ref{fig:pumpkin-dirichlet}. We observe explicitly that this is an (easy) example of an intrinsic non-uniqueness: there are two completely different partitions of $\mathcal{H}$ which both yield $\doptenergy[2,\infty](\mathcal{H})$.
\begin{figure}[H]
\begin{tikzpicture}
\coordinate (a) at (-0.5,0);
\coordinate (b) at (3,0);
\draw[thick, bend left=12] (a) edge (1,0.3);
\draw[thick, bend left=24] (a) edge (1,0.7);
\draw[thick, bend left=36] (a) edge (1,1);
\draw[thick, bend right=12] (a) edge (1,-0.3);
\draw[thick, bend right=24] (a) edge (1,-0.7);
\draw[thick, bend right=36] (a) edge (1,-1);
\draw[thick, bend left=12] (1.5,0.3) edge (b);
\draw[thick, bend left=24] (1.5,0.7) edge (b);
\draw[thick, bend left=36] (1.5,1) edge (b);
\draw[thick, bend right=12] (1.5,-0.3) edge (b);
\draw[thick, bend right=24] (1.5,-0.7) edge (b);
\draw[thick, bend right=36] (1.5,-1) edge (b);
\draw[fill] (-0.5,0) circle (1.75pt);
\draw[fill] (3,0) circle (1.75pt);
\filldraw[thick, draw=black, fill=white] (1,1) circle (1.75pt);
\filldraw[thick, draw=black, fill=white] (1,0.7) circle (1.75pt);
\filldraw[thick, draw=black, fill=white] (1,0.3) circle (1.75pt);
\filldraw[thick, draw=black, fill=white] (1,-0.3) circle (1.75pt);
\filldraw[thick, draw=black, fill=white] (1,-0.7) circle (1.75pt);
\filldraw[thick, draw=black, fill=white] (1,-1) circle (1.75pt);
\filldraw[thick, draw=black, fill=white] (1.5,1) circle (1.75pt);
\filldraw[thick, draw=black, fill=white] (1.5,0.7) circle (1.75pt);
\filldraw[thick, draw=black, fill=white] (1.5,0.3) circle (1.75pt);
\filldraw[thick, draw=black, fill=white] (1.5,-0.3) circle (1.75pt);
\filldraw[thick, draw=black, fill=white] (1.5,-0.7) circle (1.75pt);
\filldraw[thick, draw=black, fill=white] (1.5,-1) circle (1.75pt);
\draw[thick, dashed] (1.25,1.3) -- (1.25,-1.3);
\draw[thick, bend left=45] (4.9,0.4) edge (8.3,0.4);
\draw[thick, bend left=30] (5.2,0.3) edge (8.3,0.4);
\draw[thick, bend left=15] (5.2,0.3) edge (8,0.3);
\draw[thick, bend right=45] (4.9,-0.4) edge (8.3,-0.4);
\draw[thick, bend right=30] (5.2,-0.3) edge (8.3,-0.4);
\draw[thick, bend right=15] (5.2,-0.3) edge (8,-0.3);
\filldraw[thick, draw=black, fill=white] (4.9,0.4) circle (1.75pt);
\filldraw[thick, draw=black, fill=white] (8.3,0.4) circle (1.75pt);
\filldraw[thick, draw=black, fill=white] (5.2,0.3) circle (1.75pt);
\filldraw[thick, draw=black, fill=white] (8,0.3) circle (1.75pt);
\filldraw[thick, draw=black, fill=white] (4.9,-0.4) circle (1.75pt);
\filldraw[thick, draw=black, fill=white] (8.3,-0.4) circle (1.75pt);
\filldraw[thick, draw=black, fill=white] (5.2,-0.3) circle (1.75pt);
\filldraw[thick, draw=black, fill=white] (8,-0.3) circle (1.75pt);
\draw[thick, dashed] (4.5,0) -- (5.7,0);
\draw[thick, dashed] (7.5,0) -- (8.7,0);
\end{tikzpicture}
\caption{Two different partitions realising $\doptenergy[2,\infty]$ on the equilateral $6$-pumpkin. The partition on the left is internally connected, the one on the right is not as the Dirichlet vertices disconnect the clusters.}
\label{fig:pumpkin-dirichlet}
\end{figure}
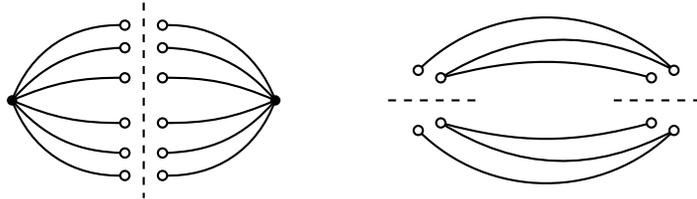
\end{example}

\begin{example}\label{ex:spantree}
Let us finally present an example of a graph with different rigid and loose minimal partitions: an equilateral dumbbell graph $\Graph$. The unique minimiser for $\noptenergy[2,\infty](\Graph)$ and the different minimiser for $\noptenergyloose[2,\infty](\Graph)$ are shown in Figure~\ref{fig:dumbbell-2-partition}. The former is also a minimiser for $\doptenergy[2,p](\Graph)$, $p\in (0,\infty]$.
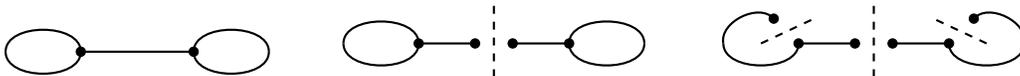
\begin{figure}[H]
\begin{tikzpicture}
\coordinate (a) at (0,0);
\coordinate (b) at (1,0);
\coordinate (c) at (2.5,0);
\coordinate (d) at (3.5,0);
\draw[fill] (b) circle (1.75pt);
\draw[fill] (c) circle (1.75pt);
\draw[thick,bend left=90] (a) edge (b);
\draw[thick,bend right=90] (a) edge (b);
\draw[thick,bend left=90] (c) edge (d);
\draw[thick,bend right=90] (c) edge (d);
\draw[thick] (b) -- (c);
\end{tikzpicture}
\qquad 
\begin{tikzpicture}
\coordinate (a) at (0,0);
\coordinate (b) at (1,0);
\coordinate (c1) at (1.75,0);
\coordinate (c2) at (2.25,0);
\coordinate (d) at (3,0);
\coordinate (e) at (4,0);
\draw[fill] (c1) circle (1.75pt);
\draw[fill] (c2) circle (1.75pt);
\draw[fill] (b) circle (1.75pt);
\draw[fill] (d) circle (1.75pt);
\draw[thick,bend left=90] (a) edge (b);
\draw[thick,bend right=90] (a) edge (b);
\draw[thick] (b) edge (c1);
\draw[thick] (c2) edge (d);
\draw[thick,bend left=90] (e) edge (d);
\draw[thick,bend right=90] (e) edge (d);
\draw[thick,dashed] (2,.5) -- (2,-.5);
\end{tikzpicture}
\qquad
\begin{tikzpicture}
\coordinate (a) at (0,0);
\coordinate (b1) at (1,0);
\coordinate (b2) at (0.67,0.33);
\coordinate (c1) at (1.75,0);
\coordinate (c2) at (2.25,0);
\coordinate (d1) at (3,0);
\coordinate (d2) at (3.33,0.33);
\coordinate (e) at (4,0);
\draw[fill] (b1) circle (1.75pt);
\draw[fill] (b2) circle (1.75pt);
\draw[fill] (c1) circle (1.75pt);
\draw[fill] (c2) circle (1.75pt);
\draw[fill] (d1) circle (1.75pt);
\draw[fill] (d2) circle (1.75pt);
\draw[thick,bend left=75] (a) edge (b2);
\draw[thick,bend right=90] (a) edge (b1);
\draw[thick] (b1) edge (c1);
\draw[thick] (c2) edge (d1);
\draw[thick,bend left=90] (e) edge (d1);
\draw[thick,bend right=75] (e) edge (d2);
\draw[thick,dashed] (2,0.5) -- (2,-0.5);
\draw[thick,dashed] (0.5,0) -- (1.25,0.35);
\draw[thick,dashed] (3.5,0) -- (2.75,0.35);
\end{tikzpicture}\\[10pt]
\caption{A dumbbell graph, its rigid Neumann/Dirichlet minimal partition and a loose Neumann minimal partition}\label{fig:dumbbell-2-partition}
\end{figure}

\end{example}

\subsection{Exhaustive versus non-exhaustive partitions}
\label{sec:exhaustive-issue}

We now give a few remarks on what can be expected if we allow, or disallow, non-exhaustive partitions.

\begin{example}
\label{ex:n-exhaustion}
We give a basic example to show that if instead of $\noptenergy[k,p]$ we consider the corresponding minimisation problem among all  non-exhaustive rigid partitions, then there may be no exhaustive minimising partition. If we take $k=1$ and any $p \in (0,\infty]$, then the claim is that there exist a graph $\Graph$ and a proper subset (i.e.\ a non-exhaustive one-partition) $\Omega_1 \subset \Graph$, as well as a graph $\Graph_1$ associated with $\Omega_1$, such that $\mu_2 (\Graph_1) < \mu_2 (\Graph)$. Take $\Graph$ to be a loop of length $1$ and $\Omega_1$ to be a connected subset of it of length $1-\varepsilon$, so that $\Graph_1$ is a path graph of length $1-\varepsilon$. Then
\begin{displaymath}
	\mu_2 (\Graph_1) = \frac{\pi^2}{(1-\varepsilon)^2} < 4\pi^2 = \mu_2 (\Graph)
\end{displaymath}
as long as $\varepsilon \in (0,1/2)$. In such cases, $\parti := \{\Graph \}$ is not the optimal $1$-partition of $\Graph$ if non-exhaustive partitions are allowed.
\end{example}

\begin{example}
\label{ex:n-exhaustion-2}
Here is another example akin to Example~\ref{ex:n-exhaustion}, but where we can more clearly see the similar role to non-exhaustive partitions played by non-rigid (but exhaustive) ones. We take $\Graph$ to be an equilateral pumpkin graph on three edges of length one each (cf.~Example~\ref{ex:pumpkin-example}) and, analogously to the previous example, seek a $1$-partition minimising $\noptenergy[1,p]$, $p \in (0,\infty]$. Allowing exhaustive loose partitions, we can cut through the vertices of $\Graph$ to produce a path graph $\mathcal{I}$, much as was done in Figure~\ref{fig:pumpkin-example} (right); see Figure~\ref{fig:3-pumpkin-1-partition} (left). Lemma~\ref{lem:neumann-identify-observation} (with $A=\mathfrak{P}_k$) shows that $\parti^\ast = \{\mathcal{I}\}$, and not $\parti = \{\Graph\}$, is the optimal partition for $\noptenergyloose[1,p] (\Graph)$, $p \in (0,\infty]$. Indeed, the former has energy $\nenergy[p](\parti^\ast) = \mu_2 (\mathcal{I}) = \pi^2/9$, while the latter has energy $\nenergy[p](\parti^\ast) = \mu_2 (\Graph) = \pi^2$.
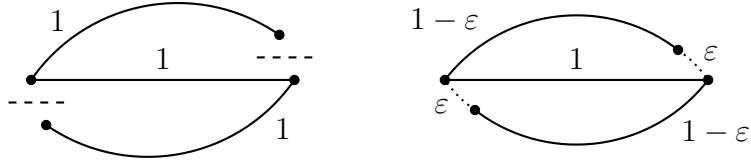
\begin{figure}[H]
\begin{tikzpicture}
\coordinate (a) at (-0.5,0);
\coordinate (b) at (3,0);
\draw[fill] (-0.5,0) circle (1.75pt);
\draw[fill] (3,0) circle (1.75pt);
\draw[thick] (a) -- (b);
\draw[thick, bend left=45] (a) edge (2.8,0.6);
\draw[fill] (2.8,0.6) circle (1.75pt);
\draw[thick, bend left=45] (b) edge (-0.3,-0.6);
\draw[fill] (-0.3,-0.6) circle (1.75pt);
\draw[thick, dashed] (2.5,0.3) -- (3.3,0.3);
\draw[thick, dashed] (-0.8,-0.3) -- (0,-0.3);
\node at (1.25,0) [anchor=south] {$1$};
\node at (0.1,0.5) [anchor=south east] {$1$};
\node at (2.6,-0.65) [anchor=west] {$1$};
\coordinate (c) at (5,0);
\coordinate (d) at (8.5,0);
\draw[fill] (5,0) circle (1.75pt);
\draw[fill] (8.5,0) circle (1.75pt);
\draw[thick] (c) -- (d);
\draw[thick, bend left=45] (c) edge (8.1,0.4);
\draw[thick, dotted, bend left=9] (8.1,0.4) edge (d);
\draw[fill] (8.1,0.4) circle (1.75pt);
\draw[thick, bend left=45] (d) edge (5.4,-0.4);
\draw[thick, dotted, bend left=9] (5.4,-0.4) edge (c);
\draw[fill] (5.4,-0.4) circle (1.75pt);
\node at (6.75,0) [anchor=south] {$1$};
\node at (5.6,0.5) [anchor=south east] {$1-\varepsilon$};
\node at (8.3,0.35) [anchor=west] {$\varepsilon$};
\node at (5.2,-0.35) [anchor=east] {$\varepsilon$};
\node at (8,-0.7) [anchor=west] {$1-\varepsilon$};
\end{tikzpicture}
\caption{A minimal $1$-partition of the 3-pumpkin of Example~\ref{ex:n-exhaustion-2} among loose but exhaustive partitions (left) and a non-exhaustive but rigid $1$-partition approximating it (right; the dotted lines represent the parts of the graph excluded from the cluster support). The numbers give the lengths of the respective edges.}
\label{fig:3-pumpkin-1-partition}
\end{figure}
But now consider rigid non-exhaustive $1$-partitions of $\Graph$. By removing an arbitrarily small piece of edge near each of the vertices, we can produce a partition consisting of a single path graph of length arbitrarily close to $3$; see Figure~\ref{fig:3-pumpkin-1-partition} (right). As the piece removed becomes smaller, we have convergence of the length to $3$, and thus convergence of the eigenvalue (equally, the energy of the partition) to $\pi^2/9 = \nenergy[p](\parti^\ast)$.
\end{example}

The preceding example suggests that in the natural case, infimising over non-exhaustive partitions is equivalent to minimising over loose exhaustive partitions. Indeed, whenever, on forming an optimal partition, one wishes to detach a single edge from a vertex (as was done in Example~\ref{ex:n-exhaustion-2}), one can do this either by allowing loose partitions or infimising over non-exhaustive ones. At work here are two ``surgery'' principles: cutting through vertices decreases $\mu_2$, as does lengthening pendant edges (see, e.g., \cite[Lemma~2.3]{KeKuMaMu16}). However, in general the two are \emph{not} equivalent, as Example~\ref{ex:matthias-exhaustive-example} below shows. Since allowing loose partitions in general seems to give more freedom to cut through vertices than allowing non-exhaustive rigid ones, we still expect the following general principle to hold. Since it would take us too far afield to prove the claim in the current context, we record it as a conjecture for future work.

\begin{conjecture}
\label{conj:non-admissible-exhaustion}
Fix a graph $\Graph$, a number $k\geq 1$ and $p \in (0,\infty]$. Then the quantity
\begin{equation}
\label{eq:non-exhaustive-optimiser}
	\inf \left\{ \nenergy[p](\parti): \parti \text{ is a non-exhaustive but rigid $k$-partition of } \Graph \right\}
\end{equation}
is no smaller than
\begin{equation}
\label{eq:non-admissible-optimiser}
	\noptenergyloose[k,p] = \inf \left\{ \nenergy[p](\parti): \parti \text{ is an exhaustive loose $k$-partition of } \Graph \right\}.
\end{equation}
\end{conjecture}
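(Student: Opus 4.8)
The plan is to reduce the conjecture to a single surgery statement: \emph{for every non-exhaustive rigid $k$-partition $\parti = \{\Graph_1,\ldots,\Graph_k\}$ of $\Graph$ there is an exhaustive loose $k$-partition $\parti' = \{\Graph_1',\ldots,\Graph_k'\}$ with $\mu_2(\Graph_i') \le \mu_2(\Graph_i)$ for every $i$}, and hence $\nenergy[p](\parti') \le \nenergy[p](\parti)$ for all $p\in(0,\infty]$ by monotonicity of the power mean (and of the maximum, when $p=\infty$) in each argument. Granting this, for any non-exhaustive rigid $\parti$ we get $\nenergy[p](\parti) \ge \nenergy[p](\parti') \ge \noptenergyloose[k,p]$, since $\parti'$ lies in the class over which $\noptenergyloose[k,p]$ is an infimum; taking the infimum over all non-exhaustive rigid $\parti$ then shows that \eqref{eq:non-exhaustive-optimiser} is no smaller than \eqref{eq:non-admissible-optimiser}.

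To construct $\parti'$ I would first fix a representative of $\Graph$ in which each cluster support $\Omega_i$ is a union of whole edges, so that the residual set $R := \overline{\Graph \setminus \bigcup_{i=1}^k \Omega_i}$ is a non-empty union of edges of $\Graph$ (non-empty precisely because $\parti$ is non-exhaustive). Since $\Graph$ is connected, every edge of $R$ can be reached from $\bigcup_i \Omega_i$ along a path of edges of $R$; a breadth-first ``flood-fill'' then assigns to each edge of $R$ an index $i$ of a cluster from which it is first reached. Letting $\Omega_i'$ be $\Omega_i$ together with the edges assigned to it, the sets $\Omega_1',\ldots,\Omega_k'$ are closed, connected, cover $\Graph$, and have pairwise disjoint interiors. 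Now build the clusters: starting from $\Graph_i$, attach the edges assigned to $\Omega_i$ one at a time, in the order the flood-fill discovered them. Each such edge $e$ meets the region built so far at some vertex $v$; glue the corresponding endpoint of $e$ into the class of (an image of) $v$ and let the other endpoint of $e$ form its own vertex class — i.e.\ attach $e$ as a \emph{pendant edge}, cutting through its far endpoint if that endpoint carried further edges. No edge is ever subdivided; only vertices of $\Graph$ are regrouped. Collecting the resulting connected graphs $\Graph_1',\ldots,\Graph_k'$ yields a graph $\NewGraph'$ with $\EdgeSet(\NewGraph') = \EdgeSet(\Graph)$ each of whose vertex classes is contained in a vertex class of $\Graph$, hence a cut $\NewGraph' = \bigsqcup_{i=1}^k \Graph_i'$ of $\Graph$; connectivity is preserved at every step and vertices in different clusters are never merged, so $\parti' = \{\Graph_1',\ldots,\Graph_k'\}$ is a genuine exhaustive loose $k$-partition. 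Since each $\Graph_i'$ arises from $\Graph_i$ by finitely many single-vertex pendant attachments, the standard surgery bound — attaching a pendant edge does not increase $\mu_2$, as already used in the proofs of Lemma~\ref{lem:kirchhoff-two-cut} and Proposition~\ref{prop:natural-monotonicity}, cf.\ \cite[Theorem~3.4]{BeKeKuMu18} — gives $\mu_2(\Graph_i') \le \mu_2(\Graph_i)$ by induction on the number of attached edges, and we are done.

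The main obstacle is not any individual estimate (the pendant inequality and the monotonicity of the $p$-mean are both elementary) but the bookkeeping in the construction: one must verify carefully, within the cut/gluing formalism of Section~\ref{sec:part}, that the flood-fill reassignment really produces an exhaustive loose $k$-partition — no edge split, exactly $k$ connected clusters, connectivity retained throughout — and, crucially, that each incremental enlargement is \emph{literally} a single-vertex pendant attachment, so that the surgery monotonicity applies verbatim. The cases requiring a little attention are residual edges both of whose endpoints already lie in the region assigned to one cluster (attach at one end, cut the other), residual edges or sub-components touching several clusters (the flood-fill still assigns each edge to exactly one of them), and residual loops (opened into a pendant edge); none introduces a new idea. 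As a by-product this construction always produces loose minimisers with tree clusters, consistently with Remark~\ref{rem:noptenergyloose}.
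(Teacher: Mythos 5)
You should first note that the paper itself contains no proof of this statement: it is recorded precisely as Conjecture~\ref{conj:non-admissible-exhaustion}, with only the heuristic justification that loose partitions give at least as much freedom to cut through vertices as non-exhaustive rigid ones. So there is nothing to compare your argument against; judged on its own merits, I find it correct in outline, and it would settle the conjecture once the deferred bookkeeping is written out. The reduction is the right one: for each non-exhaustive rigid $k$-partition $\parti$ you build an exhaustive loose $k$-partition $\parti'$ with $\mu_2(\Graph_i')\le\mu_2(\Graph_i)$ clusterwise, and then $\nenergy[p](\parti)\ge\nenergy[p](\parti')\ge\noptenergyloose[k,p]$ by monotonicity of the $p$-mean (or maximum) in each entry. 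The analytic input — attaching a pendant edge at a single vertex does not increase $\mu_2$ — is exactly the surgery fact the paper already invokes in the proofs of Lemma~\ref{lem:kirchhoff-two-cut} and Proposition~\ref{prop:natural-monotonicity}, so nothing new is needed there. The combinatorial part is indeed routine: in a representative where each $\Omega_i$ is a union of whole edges the residual set is a nonempty union of whole edges; connectedness of $\Graph$ makes the breadth-first assignment terminate with every residual edge assigned; each attachment glues one endpoint of the new edge into a vertex class that is still a subset of the corresponding class $V_n$ of $\Graph$ and leaves the far endpoint as its own class, so the final object is a cut of $\Graph$ in the sense of Definition~\ref{def:graph-cut-new1} whose connected components are precisely the $k$ clusters, i.e.\ an exhaustive loose $k$-partition, and every incremental step is literally a single-vertex pendant attachment, so the eigenvalue inequality follows by induction. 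The three special cases you flag (both endpoints in one cluster, edges touching several clusters, loops) are exactly the ones that need a sentence each, and your resolutions are the right ones.

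Two minor points, neither affecting the argument. First, rigidity of $\parti$ is never used, so you in fact prove the stronger statement with ``rigid'' replaced by ``loose'' in \eqref{eq:non-exhaustive-optimiser} (and if the set of non-exhaustive $k$-partitions happens to be empty the inequality is vacuous, the infimum being $+\infty$). Second, the closing ``by-product'' claim is overstated: your clusters $\Graph_i'$ are the original clusters with pendant trees grafted on, so they are trees only if the $\Graph_i$ already were; the correct statement in that direction is Remark~\ref{rem:noptenergyloose}, which is independent of your construction.
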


\begin{example}
\label{ex:matthias-exhaustive-example}
We sketch an example of a graph $\Graph$, with $k=2$ and $p=\infty$, where the quantity in \eqref{eq:non-admissible-optimiser} is strictly smaller than the one in \eqref{eq:non-exhaustive-optimiser}, which in turn is strictly smaller than $\noptenergy[2,\infty] (\Graph)$. We take $\Graph$ to be the ``dumbbell''-type graph depicted in Figure~\ref{fig:double-dumbbell}, consisting of a bridge (``handle'') $e_0$ of length $1$, with a chain of two loops of total length $\varepsilon > 0$ attached at each end of $e_0$; thus $\Graph$ has total length $1+2\varepsilon$.
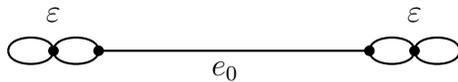
\begin{figure}[H]
\begin{tikzpicture}[scale=1.2]
\coordinate (a) at (-2.5,0);
\coordinate (b) at (-2,0);
\coordinate (c) at (-1.5,0);
\coordinate (d) at (1.5,0);
\coordinate (e) at (2,0);
\coordinate (f) at (2.5,0);
\draw[thick] (c) -- (d);
\draw[thick,bend left=90]  (a) edge (b);
\draw[thick,bend right=90]  (a) edge (b);
\draw[thick,bend left=90] (b) edge (c);
\draw[thick,bend right=90] (b) edge (c);
\draw[thick,bend left=90] (d) edge (e);
\draw[thick,bend right=90] (d) edge (e);
\draw[thick,bend left=90] (e) edge (f);
\draw[thick,bend right=90] (e) edge (f);
\draw[fill] (b) circle (1.5pt);
\draw[fill] (c) circle (1.5pt);
\draw[fill] (d) circle (1.5pt);
\draw[fill] (e) circle (1.5pt);
\node at (-0.1,0) [anchor=north] {$e_0$};
\node at (-2,0.2) [anchor=south] {$\varepsilon$};
\node at (2,0.2) [anchor=south] {$\varepsilon$};
\end{tikzpicture}
\caption{A dumbbell with ``double weights''. The \emph{handle} $e_0$ has length $1$ and the total length of each of the figure-8 ``weights'' is $\varepsilon$.}
\label{fig:double-dumbbell}
\end{figure}

Firstly, a short symmetry argument shows that $\noptenergy[2,\infty] (\Graph)$ is given by the partition bisecting $\Graph$ at the midpoint of $e_0$.

Secondly, if we allow loose partitions, then we can partition $\Graph$ into two equal path graphs of length $1/2+\varepsilon$ each, as depicted in Figure~\ref{fig:double-dumbbell-partitions} (left). Since this gives the smallest possible energy for a $2$-partition, by Lemma~\ref{lem:neumann-identify-observation} it yields the minimum, and the minimum is only achieved by two equal path graphs. (In particular, $\noptenergyloose[2,\infty] (\Graph) < \noptenergy[2,\infty] (\Graph)$.)
\begin{figure}[H]
\begin{tikzpicture}
\coordinate (a) at (-2.8,0);
\coordinate (b) at (-2.3,0);
\coordinate (c) at (-1.8,0);
\coordinate (d) at (-0.3,0);
\coordinate (e) at (0.2,0);
\coordinate (f) at (1.7,0);
\coordinate (g) at (2.2,0);
\coordinate (h) at (2.7,0);
\draw[thick] (c) -- (d);
\draw[thick] (e) -- (f);
\draw[thick,bend left=90]  (a) edge (-2.3,0.3);
\draw[thick,bend right=90]  (a) edge (b);
\draw[thick,bend left=90] (-2.3,0.3) edge (-1.8,0.3);
\draw[thick,bend right=90] (b) edge (c);
\draw[fill] (b) circle (1.5pt);
\draw[fill] (c) circle (1.5pt);
\draw[fill] (d) circle (1.5pt);
\draw[fill] (e) circle (1.5pt);
\draw[fill] (f) circle (1.5pt);
\draw[fill] (g) circle (1.5pt);
\draw[fill] (-2.3,0.3) circle (1.5pt);
\draw[fill] (-1.8,0.3) circle (1.5pt);
\draw[thick,dashed] (-0.05,0.4) -- (-0.05,-0.4);
\draw[thick,dashed] (-2.45,0.15) -- (-2.15,0.15);
\draw[thick,dashed] (-1.95,0.15) -- (-1.65,0.15);
\draw[thick,bend right=90] (f) edge (g);
\draw[thick,bend right=90] (g) edge (h);
\draw[thick,bend right=90] (h) edge (2.2,0.3);
\draw[thick,bend right=90] (2.2,0.3) edge (1.7,0.3);
\draw[fill] (2.2,0.3) circle (1.5pt);
\draw[fill] (1.7,0.3) circle (1.5pt);
\draw[thick,dashed] (2.35,0.15) -- (2.05,0.15);
\draw[thick,dashed] (1.85,0.15) -- (1.55,0.15);
\coordinate (i) at (4,0);
\coordinate (j) at (4.5,0);
\coordinate (k) at (5,0);
\coordinate (l) at (6.5,0);
\coordinate (m) at (7,0);
\coordinate (n) at (8.5,0);
\coordinate (o) at (9,0);
\coordinate (p) at (9.5,0);
\draw[thick] (k) -- (l);
\draw[thick] (m) -- (n);
\draw[fill] (j) circle (1.5pt);
\draw[fill] (k) circle (1.5pt);
\draw[fill] (l) circle (1.5pt);
\draw[fill] (m) circle (1.5pt);
\draw[fill] (n) circle (1.5pt);
\draw[fill] (o) circle (1.5pt);
\draw[thick,bend right=90] (i) edge (4.4,-0.3);
\draw[thick,bend right=90] (j) edge (k);
\draw[thick,bend right=90] (n) edge (o);
\draw[thick,bend right=90] (9.1,-0.3) edge (p);
\draw[thick,bend left=90] (i) edge (j);
\draw[thick,bend left=90] (o) edge (p);
\draw[thick,bend left=90] (j) edge (4.9,0.3);
\draw[thick,bend right=90] (o) edge (8.6,0.3);
\draw[fill] (4.9,0.3) circle (1.5pt);
\draw[fill] (8.6,0.3) circle (1.5pt);
\draw[thick,dashed] (6.75,0.4) -- (6.75,-0.4);
\draw[thick,dashed] (4.8,0.12) -- (5.15,0.2);
\draw[thick,dashed] (8.7,0.12) -- (8.35,0.2);
\draw[fill] (4.4,-0.3) circle (1.5pt);
\draw[fill] (9.1,-0.3) circle (1.5pt);
\draw[thick,dashed] (4.3,-0.12) -- (4.65,-0.25);
\draw[thick,dashed] (9.2,-0.12) -- (8.85,-0.25);
\end{tikzpicture}
\caption{The optimal partition among all loose exhaustive ones (left) and a candidate for the infimum among all rigid but non-exhaustive ones (right).}
\label{fig:double-dumbbell-partitions}
\end{figure}
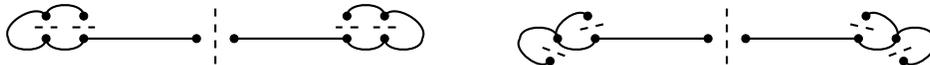

Finally, we consider rigid but non-exhaustive partitions. In this case, a topological argument shows that it is impossible to obtain two equal path graphs, since there is no way to cut through a degree-four vertex to obtain two degree-two vertices within a cluster support. Thus there is strict inequality between \eqref{eq:non-exhaustive-optimiser} and \eqref{eq:non-admissible-optimiser} in this case. However, if we take the partition in Figure~\ref{fig:double-dumbbell-partitions} (right), which is achievable as the limit of a sequence of non-exhaustive rigid partitions, then this partition is obtainable from the optimal one for $\noptenergy[2,\infty] (\Graph)$ by cutting through certain vertices of the latter. One may then show using the strictness statement in \cite[Theorem~3.4]{BeKeKuMu18} (in the form of Remark~3.5) that this partition has strictly lower energy than $\noptenergy[2,\infty] (\Graph)$; thus the infimum \eqref{eq:non-exhaustive-optimiser} is also lower. This completes the proof of the claimed chain of strict inequalities.
\end{example}

We shall finish with a different observation on non-exhaustive partitions. We saw in Example~\ref{ex:non-equi-3-star} that a partition $\parti^\ast$ achieving $\doptenergy[k,\infty]$ need not be an equipartition. However, if we allow non-exhaustive partitions, then we can always ``artificially generate'' a minimal equipartition by shrinking every cluster in $\parti^\ast$ whose first eigenvalue is too large. Similarly, by then discarding superfluous connected components in each cluster, we can guarantee that the resulting minimal equipartition consists only of internally connected clusters.

\begin{proposition}
\label{prop:exhaustive-shrinking}
Suppose $\parti = \{\Graph_1,\ldots,\Graph_k\}$ is an (exhaustive) rigid $k$-partition of $\Graph$ such that $\denergy[\infty](\parti) = \doptenergy[k,\infty](\Graph)$ for some $k\geq 2$. Then there exists another, possibly non-exhaustive but equilateral rigid $k$-partition $\parti' = \{\Graph_1',\ldots,\Graph_k'\}$ such that also $\denergy[\infty](\parti') = \doptenergy[k,\infty](\Graph)$. The partition $\parti'$ may additionally be chosen in such a way that the clusters $\Graph_1',\ldots,\Graph_k'$ are all internally connected.
\end{proposition}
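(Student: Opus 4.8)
The plan is to keep the given partition $\parti$ but modify each cluster separately: shrink those clusters whose ground-state eigenvalue lies below the common optimal value, and at the end pass to internally connected pieces. Write $\Lambda := \doptenergy[k,\infty](\Graph) = \denergy[\infty](\parti) = \max_{i} \lambda_1(\Graph_i)$. Since $k\ge 2$ and $\Graph$ is connected, each cluster support satisfies $\Omega_i \subsetneq \Graph$, hence $\partial\Omega_i\neq\emptyset$ and $\VertexSet_D(\Graph_i)\neq\emptyset$; in particular $\lambda_1(\Graph_i)>0$ for every $i$.

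First I would reduce to internally connected clusters. For each $i$, let $C_{i,1},\dots,C_{i,m_i}$ be the connected components of $\Graph_i\setminus\VertexSet_D(\Graph_i)$; taking closures and imposing Dirichlet conditions at the finitely many points where $\overline{C_{i,j}}$ meets $\VertexSet_D(\Graph_i)$ gives a decomposition $H^1_0(\Graph_i)=\bigoplus_j H^1_0(\overline{C_{i,j}})$, whence $\lambda_1(\Graph_i)=\min_j\lambda_1(\overline{C_{i,j}})$. Replacing $\Graph_i$ by a component attaining this minimum yields an internally connected cluster with the same first eigenvalue and a support contained in $\Omega_i$, so the collection remains a rigid (possibly non-exhaustive) $k$-partition of $\Graph$ of energy $\Lambda$. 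Relabelling, we may assume each $\Graph_i$ is internally connected, $\lambda_1(\Graph_i)\le\Lambda$ for all $i$, and $\lambda_1(\Graph_{i_0})=\Lambda$ for at least one $i_0$.

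Next, for each $i$ with $\lambda_1(\Graph_i)<\Lambda$ I would shrink $\Graph_i$ by \emph{eroding} it from its Dirichlet set: pick a cut point $w$ of $\Graph_i$ and an incident edge $e$, remove the sub-arc of $e$ of length $s$ issuing from $w$, and declare the new endpoint a Dirichlet vertex; once $e$ is used up, continue from a cut point of the new cluster, and so on. This defines clusters $\Graph_i(s)$, $s\in[0,|\Graph_i|)$, with $\Graph_i(0)=\Graph_i$, total length $|\Graph_i|-s$, nested supports contained in $\Omega_i$, and each one rigid (cuts are only ever made at boundary vertices). The function $s\mapsto\lambda_1(\Graph_i(s))$ is continuous: on each erosion step it is the lowest Dirichlet eigenvalue of a graph one of whose edge lengths varies continuously, and at the finitely many instants when an edge disappears continuity follows from Lemma~\ref{lem:eig-convergence}(2), the vanishing edge merging a Dirichlet vertex into another one so that the limiting vertex is again Dirichlet. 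By Theorem~\ref{thm:nicaise}, $\lambda_1(\Graph_i(s))\ge\pi^2/\bigl(4(|\Graph_i|-s)^2\bigr)\to\infty$ as $s\uparrow|\Graph_i|$, while $\lambda_1(\Graph_i(0))<\Lambda$, so by the intermediate value theorem there is $s^\ast$ with $\lambda_1(\Graph_i(s^\ast))=\Lambda$; set $\Graph_i':=\Graph_i(s^\ast)$, and $\Graph_i':=\Graph_i$ when $\lambda_1(\Graph_i)=\Lambda$ already. Applying the reduction of the previous paragraph once more to each $\Graph_i'$ (an erosion step exhausting an edge may leave $\Graph_i'$ not internally connected) replaces it by an internally connected cluster with first eigenvalue still $\Lambda$. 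The resulting $\parti'=\{\Graph_1',\dots,\Graph_k'\}$ consists of $k$ connected subgraphs of $\Graph$ with pairwise disjoint interior supports, hence arises, after choosing a representative of $\Graph$ with enough dummy vertices, from a cut of $\Graph$; it is a rigid $k$-partition, a Dirichlet equipartition (Definition~\ref{def:equipartition}) with $\denergy[\infty](\parti')=\Lambda=\doptenergy[k,\infty](\Graph)$, and all its clusters are internally connected, as required.

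The main thing to be careful about is the continuity of $s\mapsto\lambda_1(\Graph_i(s))$ across the parameter values where an edge collapses and the underlying discrete graph changes, which is precisely the content of Lemma~\ref{lem:eig-convergence} once one verifies that the limiting Dirichlet conditions are exactly those inherited from the erosion. Everything else — that each intermediate object is a genuine rigid, possibly non-exhaustive, $k$-partition of $\Graph$ — is routine bookkeeping, since all supports produced along the way are nested inside the original cluster supports $\Omega_1,\dots,\Omega_k$.
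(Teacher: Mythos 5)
Your proof is correct and takes essentially the same route as the paper's: continuously shrink every cluster with $\lambda_1<\doptenergy[k,\infty](\Graph)$ (the paper removes a growing metric ball around a Dirichlet vertex rather than eroding edge-by-edge, but the mechanism — continuity via Lemma~\ref{lem:eig-convergence}, divergence via Nicaise's inequality, then the intermediate value theorem — is identical), and afterwards pass to an internally connected piece with the same eigenvalue (the paper via the support of a ground state, you via the decomposition of $H^1_0(\Graph_i)$ over the components of $\Graph_i\setminus\VertexSet_D(\Graph_i)$, which is equivalent). No gaps worth flagging.
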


\begin{proof}
It suffices to prove that if $\HGraph$ is any graph with a non-empty set of Dirichlet vertices $\VertexSet_D (\HGraph)$ and $\lambda \geq \lambda_1 (\HGraph;\VertexSet_D(\HGraph))$, then there exists a subgraph
 $\NewHGraph \subset \HGraph$ with boundary (equivalently, Dirichlet vertex set)
\begin{equation}
\label{eq:shrunk-subgraph}
	\partial \NewHGraph := \left(\NewHGraph \cap \overline{\HGraph \setminus \NewHGraph}\right) \cup \left(\NewHGraph \cap \VertexSet_D (\HGraph)
	\right)
\end{equation}
such that $\lambda_1 (\NewHGraph;\partial \NewHGraph) = \lambda$.\footnote{Throughout this proof, in accordance with Remark~\ref{rem:dirichlet-subset-graph-distinction} we will not distinguish between the clusters and the cluster supports of a partition.} Indeed, in this case, whenever $\lambda_1 (\Graph_i) < \denergy[\infty](\parti)$, we simply find some $\Graph_i' \subset \Graph_i$ such that $\lambda_1 (\Graph_i') = \denergy[\infty](\parti)$.

To this end, simply choose any fixed $z \in \VertexSet_D (\HGraph)$ and for $t\in [0,\max_{x \in \HGraph} \dist (x,z)]$ set
\begin{displaymath}
	\HGraph_t := \HGraph \setminus \{x \in \HGraph: \dist (x,z) < t \}
\end{displaymath}
(where $\dist$ denotes the Euclidean distance within $\HGraph$, defined such that paths may not pass through vertices in $\VertexSet_D (\HGraph)$, and $\partial \HGraph_t$ is defined as in \eqref{eq:shrunk-subgraph}). We claim that $t \mapsto \lambda_1 (\HGraph_t; \partial \NewHGraph_t)$ is continuous for $t\in [0,\max_{x \in \HGraph} \dist (x,z)]$ (although $\HGraph_t$ may not necessarily be connected). Assuming this claim, since also $|\HGraph_t| \to 0$ as $t \to \max_{x \in \HGraph} \dist (x,z)]$, we also have $\lambda_1 (\HGraph_t; \partial \NewHGraph_t) \to \infty)$ by \cite[Th\'eor\`eme~3.1]{Nic87}. The statement of the proposition thus follows.

To prove the claim, we simply observe that this is a special case of Lemma~\ref{lem:eig-convergence}, where the graphs need not be connected. Indeed, fix $t \in [0,\max_{x \in \HGraph} \dist (x,z)]$ and a sequence $t_n \to t$; then the conclusion follows with $\HGraph_t = \Graph_\infty$ and $\HGraph_{t_n} = \Graph_n$.

Finally, fix an arbitrary cluster $\Graph_i'$ and suppose that it is not internally connected. Without loss of generality, we regard all points in $\partial\Graph_i'$, which are equipped with a Dirichlet condition, as having degree one: then in particular $\Graph_i'$ is not connected. Let $\psi_i$ be any eigenfunction corresponding to $\lambda_1 (\Graph_i')$ (which may be multiple since $\Graph_i'$ is not connected; a minimal $2$-partition of an equilateral $3$-star gives an example), such that the support of $\psi_i$ is connected. Upon replacing $\Graph_i'$ by $\Graph_i^\dagger := \supp \psi_i$ and discarding $\Graph_i' \setminus \Graph_i^\dagger$ from the support of the partition, we have obtained an internally connected cluster whose energy is still equal to $\lambda_1 (\Graph_i') = \doptenergy[k,\infty] (\Graph)$. Repeating this process for all $i=1,\ldots,k$ yields an equilateral internally connected $k$-partition realising $\doptenergy[k,\infty] (\Graph)$.
\end{proof}

\begin{remark}
In the case of the Neumann problem, the situation is more complicated. Here we limit ourselves to the observation that an (exhaustive) Neumann minimal partition need not be an equipartition, even when $p=\infty$, and even \emph{generically}: i.e., there exist graphs for which the minimal partition is not an equipartition, and there is some $\varepsilon>0$ for which no perturbation of the edge lengths less than $\varepsilon$ will produce a graph whose (exhaustive) minimal partition is an equipartition.

As an example, take sufficiently small but fixed numbers $0 < \varepsilon_1 < \varepsilon_2 < \varepsilon_3 < \varepsilon_4$ and let $\Graph$ be the $4$-star having edges $e_1,\ldots,e_4$ of lengths $1+\varepsilon_1,\ldots,1+\varepsilon_4$, respectively. Then an easy but tedious case bash shows that the unique (up to relabelling) partition achieving $\noptenergy[2,\infty](\Graph)$ is given by $\Graph_1 = e_1 \cup e_4$, $\Graph_2 = e_2 \cup e_3$ (that is, whose only cut is through the central vertex). If the $\varepsilon_i$ are chosen in such a way that $\varepsilon_1 + \varepsilon_4 \neq \varepsilon_2 + \varepsilon_3$, then not only is the partition not equilateral, but no sufficiently small perturbation will change this.
\end{remark}

\bibliographystyle{alpha}

\end{document}